\newtheorem{theorem}{Theorem}[section] 
\newtheorem{thm}{Theorem}
\newtheorem{lemma}[theorem]{Lemma}
\newtheorem{corollary}[theorem]{Corollary}
\newtheorem{proposition}[theorem]{Proposition} 
\newtheorem{conjecture}[theorem]{Conjecture}   
\newtheorem{conj}[thm]{Conjecture}
\theoremstyle{remark}
\newtheorem{remark}[theorem]{Remark}
\theoremstyle{definition}
\newtheorem{definition}[theorem]{Definition} 
\newtheorem{example}[theorem]{Example} 
\numberwithin{equation}{section}
\newcommand{\rk}{\operatorname{rank}\nolimits}
\newcommand{\Hom}{\operatorname{Hom}\nolimits}
\newcommand{\Id}{\operatorname{Id}\nolimits}
\newcommand{\Ext}{\operatorname{Ext}\nolimits}
\newcommand{\CC}{\mathbb{C}}
\newcommand{\ZZ}{\mathbb{Z}}
\newcommand{\Gr}{\mathrm{Gr}}
\newcommand{\con}{{\rm con}}
\newcommand{\dec}{{\rm dec}}
\newcommand{\inc}{{\rm inc}}
\newcommand{\thfrac}[3]{ 	
\begin{aligned} #1\\ \hline \\[-3\jot] #2 \\ \hline \\[-3\jot] #3 \end{aligned}
}
\newcommand{\ffrac}[4]{ 	
\begin{aligned} #1\\ \hline \\[-3\jot] #2 \\ \hline \\[-3\jot] #3 \\ \hline \\[-3\jot] #4 \end{aligned}
}
\newcommand\scalemath[2]{\scalebox{#1}{\mbox{\ensuremath{\displaystyle #2}}}}
\title{Rigid indecomposable modules in Grassmannian cluster categories}
\author{Karin Baur, Dusko Bogdanic, Ana Garcia Elsener, and Jian-Rong Li}
\address{Karin Baur, School of Mathematics, University of Leeds,
Leeds, LS2 9JT, UK\\
On leave from the University of Graz}
\email{K.U.Baur@leeds.ac.uk}
\address{Dusko Bogdanic, Faculty of Natural Sciences and Mathematics, University of Banja Luka, Mladena Stojanovica 2, 78000 Banja Luka, Bosnia and Herzegovina}
\email{dusko.bogdanic@pmf.unibl.org}
\address{Ana Garcia Elsener, Universidad Nacional de Mar del Plata, Facultad de Ciencias Exactas y Naturales, Departamento de Matem\'atica. Dean Funes 3350 CP7600 Buenos Aires, Argentina}
\email{elsener@mdp.edu.ar}
\address{Jian-Rong Li, Faculty of Mathematics, University of Vienna, Oskar-Morgenstern-Platz 1, 1090 Vienna, Austria.}
\email{lijr07@gmail.com}
\newcommand{\monthword}[1]{\ifcase#1\or January\or February\or March\or April\or May\or 
June\or July\or August\or September\or October\or November\or December\fi}
\date{\monthword{\the\month} \the\day, \the\year } 
\begin{document}

\maketitle 

\begin{abstract}
The coordinate ring  
of the Grassmannian variety of $k$-dimensional subspaces in $\CC^n$ 
has a cluster algebra structure with Pl\"ucker relations giving rise to exchange relations. 
In this paper, we study indecomposable modules of the corresponding 
Grassmannian cluster categories ${\rm CM}(B_{k,n})$.\  Jensen, King, and Su have associated a Kac-Moody  root system 
$J_{k,n}$ to ${\rm CM}(B_{k,n})$ and shown that in the finite types, rigid indecomposable modules 
correspond to roots. In general, the link between the category ${\rm CM}(B_{k,n})$ and 
the root system $J_{k,n}$ remains mysterious and it is an open question whether indecomposables 
always give roots. 
In this paper, we provide evidence for this association in the infinite types: 
we show that every indecomposable rank 2 module corresponds to a root of the 
associated root system. 
We also show that indecomposable rank 3 modules in ${\rm CM}(B_{3,n})$ all give rise to 
roots of $J_{3,n}$. 
For the rank 3 modules in ${\rm CM}(B_{3,n})$ corresponding to real roots, we show that their 
underlying profiles are cyclic permutations of a certain canonical one. 
We also characterize the rank 3 modules in ${\rm CM}(B_{3,n})$ 
corresponding to imaginary roots. 
By proving that there are exactly 225 profiles of 
rigid indecomposable rank 3 modules in ${\rm CM}(B_{3,9})$ we 
confirm the link between the Grassmannian cluster category and the associated root system in this case. 
We conjecture that the profile of any rigid indecomposable module in ${\rm CM}(B_{k,n})$ corresponding to a real root is a cyclic permutation of a canonical profile. 
\end{abstract}

\tableofcontents

\section{Introduction}

Consider the homogeneous coordinate ring $\CC[\Gr(2,n)]$ of the Grassmannian of 2-dimensional 
subspaces of $\CC^n$. This is one of the key initial examples of Fomin and Zelevinsky's 
theory of cluster algebras, \cite[\S 12.2]{fz}: the cluster variables are the Pl\"ucker coordinates, 
the exchange relations arise from the 
short Pl\"ucker relations, and clusters are in bijection with triangulations of a convex $n$-gon. 
Scott then proved in~\cite{scott} 
that this cluster structure can be generalized to the coordinate ring $\CC[\Gr(k,n)]$, 
where additional cluster variables appear (in general, infinitely many) and more exchange relations. 
This has sparked a lot of research activities in cluster theory, 
e.g.  \cite{SW, gls, HL10, gssv, MuS, BKM, JKS, mr,fraser}.

In particular, 
Jensen, King and Su showed in~\cite{JKS} that the category ${\rm CM}(B_{k,n})$ 
of Cohen-Macaulay modules over a quotient $B_{k,n}$ of a preprojective algebra of affine type 
$A$ provides an additive categorification of Scott's cluster algebra structure. 
The category ${\rm CM}(B_{k,n})$ is called the Grassmannian cluster category. 
They also show that there is a cluster character 
on this category, sending rigid indecomposable objects to cluster variables (\cite[Section 9]{JKS}). 
Without loss of generality, we will assume $1\le k\le n/2$ from now on.

Through this categorification, the classification of rigid indecomposable modules in ${\rm CM}(B_{k,n})$ 
(i.e., indecomposable modules $M$ with Ext$^1(M,M)=0$) 
becomes an important tool towards characterising cluster variables in $\CC[\Gr(k,n)]$ as well as in the 
classification of real prime modules of quantum affine algebras of type $A$, \cite{HL10, KKKO18}. 

In this paper, we study indecomposable modules of ${\rm CM}(B_{k,n})$ with the goal of 
providing an understanding of the associated cluster algebras. A first contribution to this is the fact that 
in the infinite types, all components in the Auslander-Reiten quiver are tubes, Proposition \ref{propos:AR-tubes}. With this, 
we have some control over certain types of indecomposable modules.

Among the indecomposable modules are the rank 1 modules which are known to be in 
bijection with $k$-subsets of $\{1,2,\dots, n\}$. These are the building blocks of the 
category as any module 
in ${\rm CM}(B_{k,n})$ can be filtered by rank 1 modules (see Section~\ref{ssec:CM-setup}). 
Using this, in \cite[Section 8]{JKS}, a map  is 
defined from indecomposable modules of ${\rm CM}(B_{k,n})$ to a root lattice 
by associating a module with its class in the Grothendieck group and identifying the 
latter with a root lattice (see Section~\ref{ssec:roots-Jkn}). 
Let $J_{k,n}$ be the graph with nodes $1,2,\dots, n-1$ on a line 
and node $n$ attached to node $k$, see Figure~\ref{fig:diagram-Jkn}. 
If $k=2$ or $k=3$ and $n\in \{6,7,8\}$, $J_{k,n}$ is a Dynkin diagram, 
in general, it gives rise to a Kac-Moody algebra. 
In the Dynkin cases, 
the categories ${\rm CM}(B_{k,n})$ have only finitely many 
indecomposable objects and they are known to correspond to positive real roots for the associated 
root system of $J_{k,n}$, \cite[Section 2]{JKS}. 
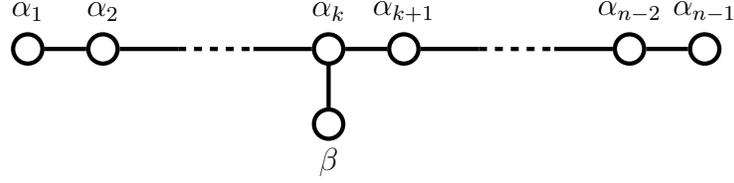
\begin{figure}[H]
\begin{center}
\begin{tikzpicture}[ultra thick]
\def\a{1}
\tikzset{dynkin/.style={circle,draw,minimum size=2mm}}
\path
(0,0)      node[dynkin] (N1) {} +(+90:.5) node{$\alpha_1$}
++(0:\a)   node[dynkin] (N2) {} +(+90:.5) node{$\alpha_2$}
++(0:\a)   coordinate (A1) 
++(0:\a)   coordinate (B1)
++(0:\a) node[dynkin] (N3) {} +(+90:.5) node{$\alpha_k$}
++(0:\a) node[dynkin] (N4) {} +(+90:.5) node{$\alpha_{k+1}$}
++(0:\a)   coordinate (A) 
++(0:\a)   coordinate (B)
++(0:\a) node[dynkin] (Nn-2) {} +(+90:.5) node{$\alpha_{n-2}$}
++(0:\a) node[dynkin] (Nn-1) {} +(+90:.5) node{$\alpha_{n-1}$};
\path 
(4*\a,-\a) node[dynkin] (Nn) {} +(-90:.5) node{$\beta$};
 
\draw  (N1)--(N2);
\draw (N3)--(N4);
\draw (N4)--(A);
\draw (N2)--(A1);
\draw (B1)--(N3);
\draw (B)--(Nn-2)--(Nn-1);
\draw[dashed] (A1)--(B1);
\draw[dashed] (A)--(B);
\draw (N3)--(Nn);
\end{tikzpicture}
\end{center}
\caption{The diagram of the root system $J_{k,n}$ associated with $\Gr(k,n)$, we write $\beta$ for $\alpha_n$.}
\label{fig:diagram-Jkn}
\end{figure}

In general, it is a very difficult problem to describe the structure of the category or to classify indecomposable 
modules in ${\rm CM}(B_{k,n})$. 
In contrast to the finite case, it is not clear how the 
correspondence between modules in the Grassmannian cluster categories 
and the root system arises. 
However, Jensen, King and Su \cite{JKS} suspect that the classes of rigid indecomposable 
modules indeed are roots for $J_{k,n}$. Evidence for this was given in the small rank cases 
for certain infinite cases in ~\cite{BBG}. 
The authors give a construction of rank 2 modules via short exact 
sequences, and find conditions that filtration factors of these modules have to fulfill. 
They also show that ${\rm CM}(B_{k,n})$ has at most 
$2{n \choose 6} {n-6 \choose k-3}$ 
(profiles of) rigid indecomposable rank 2 modules which correspond to real roots. 
Here, we also consider the imaginary roots and show in Theorem~\ref{thm:rank2-bound} (2) that 
{there are at least} 
\begin{align*}
N_{k,n} = \sum_{r=3}^{k} \left( \frac{2r}{3} \cdot p_1(r) +  2r \cdot p_2(r) + 4r \cdot p_3(r) \right) \cdot {n \choose 2r} {n-2r \choose k-r}
\end{align*} 
profiles of 
rigid rank 2 indecomposable modules in ${\rm CM}(B_{k,n})$, 
where $p_i(r)$ is the number of partitions $r=r_1+r_2+r_3$ such that $r_1,r_2,r_3 \in \ZZ_{\ge 1}$ and 
$|\{r_1,r_2,r_3\}|=i$. {Furthermore, every rank 2 indecomposable module where the rims of the filtration factors form three rectangular boxes is rigid.}

Moreover, any indecomposable rank 2 module corresponds to a root of $J_{k,n}$ and for 
$k=3$, we show that all rank 3 modules in ${\rm CM}(B_{3,n})$ map to roots for $J_{3,n}$.

\begin{thm}[Lemma~\ref{lm:rk2-condition} and Theorem~\ref{thm:rank3-in-CM-3-n}]
(1) Every indecomposable rank $2$ module in ${\rm CM}(B_{k,n})$ corresponds to a root for 
$J_{k,n}$. \\
(2) Every indecomposable module of rank $3$ in ${\rm CM}(B_{3,n})$ corresponds to 
a root for $J_{3,n}$. 
\end{thm}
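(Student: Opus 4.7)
The strategy is to translate the problem into a root-lattice computation. Every module $M \in \CM(B_{k,n})$ admits a filtration by rank-$1$ modules with profile $(I_1, \ldots, I_r)$, so that $[M] = \sum_i [L_{I_i}]$ in the Grothendieck group, and this group is identified with the root lattice of $J_{k,n}$ under the map of \cite{JKS}. Each $[L_I]$ is already known to be a positive real root under this identification, so the theorem reduces to checking, for each indecomposable rank $2$ profile in $\CM(B_{k,n})$ and each indecomposable rank $3$ profile in $\CM(B_{3,n})$, that the class $c = \sum_i [L_{I_i}]$ is again a root of $J_{k,n}$. The main tool is the standard Kac-Moody criterion: a nonzero $c \in Q_+$ with connected support in the Dynkin diagram is a root whenever $(c, c) \le 2$, with equality giving a real root and strict inequality an imaginary root.

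For part (1), let $M$ be an indecomposable rank $2$ module with profile $(I, J)$. Indecomposability of $M$ guarantees that the support of $c = [L_I] + [L_J]$ is connected in $J_{k,n}$. Bilinearity yields $(c, c) = 4 + 2([L_I], [L_J])$, so the task reduces to verifying $([L_I], [L_J]) \le -1$. This pairing can be read off the combinatorics of the two rims of $I$ and $J$ via the Euler form description of $K_0$, and I would use the compatibility conditions for indecomposable rank $2$ profiles established in \cite{BBG} to establish the required bound. The equality $(c, c) = 2$ then covers the real-root profiles already enumerated in \cite{BBG}, while the strict inequality $(c, c) \le 0$ covers the imaginary-root profiles counted in Theorem~\ref{thm:rank2-bound}.

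For part (2), the analogous expansion for a rank $3$ profile $(I_1, I_2, I_3)$ in $\CM(B_{3,n})$ gives $(c, c) = 6 + 2 \sum_{i<j} ([L_{I_i}], [L_{I_j}])$, so the required bound becomes $\sum_{i<j} ([L_{I_i}], [L_{I_j}]) \le -2$. I would establish this by a case analysis organized by the sizes $|I_i \cap I_j|$ of pairwise intersections and the global combinatorial pattern of the three rims, drawing on the structural results for rank $3$ profiles developed in earlier sections of the paper. The main obstacle is precisely this case analysis: for $n \ge 10$ the algebra $J_{3,n}$ is of indefinite Kac-Moody type, so many of the classes will be imaginary rather than real roots, and one must then verify that $c$ lies in the $W$-orbit of the fundamental imaginary region. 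I would handle this by reducing every profile pattern to a canonical representative under cyclic rotation and a bounded sequence of Weyl reflections, and then checking $(c, \alpha_i) \le 0$ for each simple root $\alpha_i$ in the support of the reduced class.
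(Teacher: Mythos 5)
Your overall strategy is the same as the paper's: pass to the class $x(M)$ in the root lattice, compute the norm $q(M)$, and invoke Kac--Moody theory. However, there are two genuine gaps. First, your ``main tool'' --- that any nonzero $c\in Q_+$ with connected support and $(c,c)\le 2$ is a root --- is not a theorem for general Kac--Moody root systems. It holds in finite and hyperbolic types, but $J_{k,n}$ is neither for most $(k,n)$ (e.g.\ $J_{3,n}$ for $n\ge 11$ contains the non-affine, non-finite subdiagram $E_{10}$, so it is not hyperbolic). Having $q(M)=2$ does not by itself make $\varphi(M)$ a real root, and $q(M)\le 0$ with connected support does not by itself make it imaginary: Kac's Theorem 5.4 requires membership in $\bigcup_{w\in W}w(K)$ with $K=\{\alpha\in Q^+\setminus\{0\}:\langle\alpha,\alpha_i^\vee\rangle\le 0\ \forall i,\ \mathrm{supp}(\alpha)\ \text{connected}\}$. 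The paper therefore normalizes $x(M)$ to $(2,\dots,2,1,\dots,1,0,\dots,0)$ by the Weyl group and then either exhibits an explicit Weyl word carrying $\varphi(M)$ to the simple root $\beta$ (real case) or verifies $\langle\varphi(M),\alpha_i^\vee\rangle\le 0$ for every $i$ (imaginary case). You mention the imaginary verification at the very end of part (2), but the real-root case is nowhere addressed, and the reduction of part (1) rests entirely on the false criterion.

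Second, the combinatorial heart of the argument is missing. For rank $2$ you correctly reduce to $([L_I],[L_J])\le -1$, which is equivalent to the paper's input that the two rims form at least three quasi-boxes ($|I\setminus J|\ge 3$), but you only gesture at this. For rank $3$ the situation is worse: bounding $\sum_{i<j}([L_{I_i}],[L_{I_j}])\le -2$ is strictly weaker than what is needed. The paper proves that indecomposability forces $|I\cup J\cup K|\ge 8$, so that $x(M)$ is a permutation of $(2,1^7,0,\dots)$ or $(1^9,0,\dots)$ and hence $q(M)\in\{0,2\}$ exactly; a bound $q(M)\le 2$ would leave classes with $q(M)<0$ to rule out or to verify against Kac's criterion. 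Establishing $|I\cup J\cup K|\ge 8$ is exactly the content of the quasi-box and subspace-configuration analysis (Lemmas \ref{lm:IJK-indec-0-4-boxes}--\ref{lem:k3_rk3_interlacing-r1-r2}), which your proposal defers to ``structural results developed in earlier sections'' without carrying out. As written, the proposal is a plausible outline of the paper's method but does not close either the root-theoretic or the combinatorial step.
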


Recall that the modules in ${\rm CM}(B_{k,n})$ can be filtered by rank 1 modules which in turn correspond 
to $k$-subsets of $[n]=\{1,\ldots,n\}$ (see Section~\ref{ssec:CM-setup}). 
The {\bf profile} of a module is the collection of $k$-subsets corresponding to rank 1 modules in the generic 
filtration (\cite[\S 8]{JKS}). If the filtration of $M$ has rank 1 factors $I_1,\dots, I_m$, 
for some $m>0$, where the rank 1 module $I_m$ is a submodule of $M$, 
we write $M=I_1\mid I_2\mid \dots \mid I_m$. We also write 
$P_M$ for the profile of $M$. 

We denote by $\mathcal{P}_{k,n}$ the set of profiles of 
indecomposable modules in ${\rm CM}(B_{k,n})$. Its elements have $m$ rows with $k$ entries in 
$[n]$. 

Let $P \in \mathcal{P}_{k,n}$ be a profile with $m$ rows. 
Write $P=(P_{ij})$, $1\le i\le m$, $1\le j\le k$, for the profile $P$ where the entries 
in every row are written in increasing order. 
Then $P$ is called {\bf weakly column decreasing} if for every $j\in [k]$ and every $i\in [m-1]$, 
$P_{i,j}\ge P_{i+1,j}$. 
We call $P$ {\bf canonical}, if $P$ is weakly column decreasing and if, in addition,  
$P_{m,j}\ge P_{1,j-1}$ for all $j\in[2,k]$. 
We write $C_{k,n}^{\rm re}$ to denote the set of all canonical profiles in $\mathcal{P}_{k,n}$ such that 
the corresponding module gives rise to a real root for $J_{k,n}$ (see Section~\ref{ssec:roots-Jkn}).

With this notion, we are able to prove the following.  
\begin{thm}[Theorem \ref{thm:k=3_rk=3_real_canonical}]
\label{thm:k=rk=3_real_rigid_canonical}
If $M$ is an indecomposable rank $3$ module in  ${\rm CM}(B_{3,n})$ such that 
$M$ corresponds to a real root for $J_{3,n}$, 
then the profile $P_M$ is a cyclic permutation of a canonical profile (i.e., a 
cyclic permutation of the rows of the profile of $M$ is canonical). 
\end{thm}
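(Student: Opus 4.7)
The plan is to exploit the rank 1 filtration $M = I_1 \mid I_2 \mid I_3$ of $M$, the cyclic symmetry coming from the tube structure established in Proposition~\ref{propos:AR-tubes}, and the tight constraints imposed by the module's Grothendieck class being a \emph{real} root of $J_{3,n}$. Since each $I_j$ is a rank 1 module, it corresponds to a 3-subset of $[n]$, which gives the rows of $P_M$ (with the entries in increasing order). Because rank 3 indecomposable modules in ${\rm CM}(B_{3,n})$ correspond to roots (Theorem~\ref{thm:rank3-in-CM-3-n}), I already have a finite list of candidate dimension vectors for $M$ to work with.

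First I would use the filtration combinatorics from Section~\ref{ssec:CM-setup} to translate indecomposability into overlap conditions on consecutive filtration factors $I_j, I_{j+1}$: the 3-subsets associated to $I_j$ and $I_{j+1}$ must be comparable entrywise in a specific way (essentially $I_{j+1}$ dominated by $I_j$ in the sense coming from the rim description used in~\cite{BBG}). Recording this information in matrix form gives a preliminary weak column-decrease between successive rows of $P_M$. I would then normalize by a cyclic rotation of the filtration: among the three cyclic reorderings $I_1\mid I_2\mid I_3$, $I_2\mid I_3\mid I_1$, $I_3\mid I_1\mid I_2$, only one can simultaneously be weakly column decreasing (the rotation whose top row is ``maximal'' in the lexicographic sense); choose this one.

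Next I would establish the additional interlocking conditions $P_{3,2}\ge P_{1,1}$ and $P_{3,3}\ge P_{1,2}$. Here the real root assumption is essential. Real roots of $J_{3,n}$ have simple-root coefficients that stay within the range prescribed by the Weyl group orbit of a simple root, which translates to a bound on how many times each index of $[n]$ can appear across the three rows of $P_M$ (equivalently, on the multiplicities of the corresponding simple roots $\alpha_i$ in the Grothendieck class). Combining this bound with the weak column-decrease already obtained, I would show that if either of the interlocking inequalities failed, the resulting multiplicity vector would exceed the values permitted for a real root — so a cyclic rotation achieving both canonical conditions must exist.

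The main obstacle will be the interlocking step: translating the arithmetic bound ``this root is real'' into the combinatorial inequality $P_{3,j}\ge P_{1,j-1}$. I expect this to require a careful case analysis, broken down by how the three 3-subsets overlap (which values of $|I_i\cap I_j|$ occur), together with the characterization of real versus imaginary roots for $J_{3,n}$ used elsewhere in the paper. Once the interlocking is secured in each case, the canonical condition holds for the chosen cyclic rotation, completing the proof.
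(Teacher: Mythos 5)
There is a genuine gap, on two counts. First, your opening claim that indecomposability forces consecutive filtration factors to be comparable entrywise (giving ``a preliminary weak column-decrease between successive rows'') is false as stated: the indecomposable middle term of the Auslander--Reiten sequence in Theorem~\ref{thm:AR-sequences-3-peaks} has profile $X\mid I\mid Y$ with $X=\{i_1+1,i_2+1,i_3+1\}$, $I=\{i_1,i_2,i_3\}$, $Y=\{i_1+2,i_2+2,i_3+2\}$, and here row $2$ does \emph{not} dominate row $3$. What indecomposability actually gives is the interlacing constraints of Lemmas~\ref{lm:IJK-indec-0-4-boxes}--\ref{lem:k3_rk3_interlacing-r1-r2} (via quasi-box counts and subspace configurations), and the existence of \emph{some} weakly column decreasing cyclic rotation is a conclusion that has to be proved, not an input.

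Second, and more seriously, the interlocking inequalities $P_{m,j}\ge P_{1,j-1}$ cannot be extracted from the real-root condition in the way you propose. The class $\varphi(M)$, hence the multiplicity vector $x(M)$, depends only on the \emph{content} of the profile (the multiset of all entries), not on how those entries are distributed among the three rows. For a fixed real-root content $i_1=i_2<i_3<\cdots<i_9$ there are many distributions into three $3$-subsets consistent with the interlacing constraints --- the paper lists eight in one subcase alone, items (a)--(h) in the proof of Theorem~\ref{thm:k=3_rk=3_real_canonical} --- all mapping to the same real root, and only one of which is a cyclic permutation of a canonical profile. The others are eliminated not by any root-theoretic bound but by checking that their subspace configurations are decomposable. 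This finite enumeration plus subspace-configuration check is the heart of the paper's argument and is absent from your plan; without it, your step ``if the interlocking fails then the multiplicity vector exceeds what a real root permits'' has counterexamples (e.g., a weakly column decreasing arrangement such as $i_6i_7i_8\mid i_3i_4i_5\mid i_1i_2i_3$ has real-root content but is not canonical; it is excluded only because it violates the indecomposability constraints, which your argument does not invoke at that stage).
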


We expect that Theorem~\ref{thm:k=rk=3_real_rigid_canonical} is true for all indecomposable 
modules corresponding to real roots.

\begin{conj} [Conjecture \ref{conj:real-roots-interlacing}]
If $M\in{\rm CM}(B_{k,n})$ is rigid indecomposable and corresponds to a real root for $J_{k,n}$, 
then $P_M$ is a cyclic permutation of a canonical profile. 
\end{conj}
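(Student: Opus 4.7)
My plan is to proceed by induction on the rank $r$ of $M$, with base cases $r=2$ (Lemma~\ref{lm:rk2-condition} combined with~\cite{BBG}) and $r=3$ for $k=3$ (Theorem~\ref{thm:k=rk=3_real_rigid_canonical}). Two preliminary reductions would help. First, the cyclic shift $i\mapsto i+1\pmod n$ on $[n]$ induces an auto-equivalence of $\CM(B_{k,n})$ that preserves rigidity and the root-lattice class and permutes rows of profiles cyclically, so the problem may be studied up to cyclic rotation. Second, Proposition~\ref{propos:AR-tubes} shows that in infinite type every indecomposable lies in an AR tube, and a rigid indecomposable is pinned in a fundamental domain of its tube; by moving $M$ through AR translates one gains access to short exact sequences $0 \to M_1 \to M \to M_2 \to 0$ with $M_1, M_2$ of strictly smaller rank.

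The inductive step would analyse such a sequence. The profiles $P_{M_1}$ and $P_{M_2}$ concatenate, in an order consistent with a rank 1 filtration of $M$, to give $P_M$. Provided each $M_i$ is itself rigid and its class is a real root for $J_{k,n}$, the inductive hypothesis realises $P_{M_1}$ and $P_{M_2}$ as cyclic permutations of canonical profiles. The combinatorial core of the argument is then to use the $\Ext^1$-vanishing condition between consecutive rank 1 factors of $M$, phrased via the intersection combinatorics of $k$-subsets developed in \cite[Section 8]{JKS} and~\cite{BBG}, to force the column inequalities $P_{m,j}\ge P_{1,j-1}$ across the interface between the two canonicalised blocks. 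This would produce a canonical $P_M$ after a common cyclic rotation, completing the induction.

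The main obstacles are threefold. First, neither rigidity nor the real-root property need descend to submodules or quotients, so selecting a suitable short exact sequence from the tube structure is delicate and may require one to replace $M$ by a carefully chosen AR neighbour before splitting. Second, even once both $M_i$ are assumed canonical, the cyclic rotations that canonicalise $P_{M_1}$ and $P_{M_2}$ generally disagree, and forcing their compatibility appears to need more than pairwise $\Ext^1$-vanishing; a global input, such as the AR-tube geometry or a braid-type action on profiles coming from the Weyl group of $J_{k,n}$, seems necessary. Third, real versus imaginary roots must be distinguished at the profile level, since the condition $(\alpha,\alpha)=2$ is an explicit quadratic form on the Grothendieck group; translating it into a usable combinatorial identity on the multiset of rank 1 filtration factors, and using it to rule out non-canonical candidates produced by the induction, is where I would expect the bulk of the technical work to lie.
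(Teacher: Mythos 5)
The statement you are asked to prove is Conjecture~\ref{conj:real-roots-interlacing}: the paper does not prove it, and explicitly leaves it open. The only cases established are rank $2$ (where the real-root case forces the two $k$-subsets to be $3$-interlacing, via Lemma~\ref{lm:rk2-condition} and the results of \cite{BBG}) and rank $3$ for $k=3$ (Theorem~\ref{thm:k=3_rk=3_real_canonical}). Those cases are \emph{not} proved by induction on rank: the argument there uses the constraint $q(M)=2$ to pin the multiplicity vector down to $(2,1,\dots,1,0,\dots,0)$ up to reordering, and then runs a finite enumeration of the possible interlacing patterns among $I,J,K$, discarding the decomposable subspace configurations one by one. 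Your proposal is therefore an outline of a strategy for an open problem, not a proof, and you candidly flag the places where it is incomplete; those flagged obstacles are genuine and none of them is resolved by anything in the paper.

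To be concrete about why the inductive step as described would fail. First, neither rigidity nor the real-root property passes to the end terms of a short exact sequence obtained from the tube structure, and outside the tame cases the tubes are connected by infinitely many radical morphisms, so ``rigid $=$ low in a tube'' is false (the paper warns about exactly this after Proposition~\ref{propos:AR-tubes}); you therefore cannot assume the inductive hypothesis applies to $M_1$ and $M_2$. Second, and more fundamentally, canonicality is a \emph{global} condition on the profile: by Proposition~\ref{prop:canonical-interlace} a weakly column decreasing profile is canonical if and only if \emph{every} pair of rows is interlacing, and the paper exhibits a profile in which all consecutive pairs and the first/last pair interlace while a non-adjacent pair does not. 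Hence controlling only the interface between two canonicalised blocks $P_{M_1}$ and $P_{M_2}$, as your inductive step proposes, cannot force canonicality of the concatenation; you would need to compare every row of one block with every row of the other, and the $\Ext^1$-vanishing between non-consecutive filtration factors is not something the filtration-interface combinatorics gives you. Third, the translation of $q(M)=2$ into profile combinatorics, which does all the work in rank $3$ (it forces exactly one repeated entry among $3\cdot 3=9$ labels), becomes far weaker for higher rank, where many multiplicity vectors satisfy $q=2$. Any of these three points by itself leaves the argument with a gap; together they explain why the statement remains a conjecture.
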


We have the following result about modules corresponding to imaginary roots.

\begin{thm} [Theorem \ref{thm:k=3-mod-imaginary}]\label{thm:modules-imaginary}
Suppose that the indecomposable module $M \in {\rm CM}(B_{3, n})$ 
corresponds to an imaginary root of $J_{3,n}$. 
Then $P_M$ is one of the following:
\begin{align*}
\thfrac{ i_1   i_5   i_7}{ i_3   i_6   i_9}{ i_2   i_4   i_8 },\thfrac{ i_2   i_6   i_8}{ i_1   i_4   i_7}{ i_3   i_5   i_9 },\thfrac{ i_3   i_7   i_9}{ i_2   i_5   i_8}{ i_1   i_4   i_6 },\thfrac{ i_1   i_4   i_8}{ i_3   i_6   i_9}{ i_2   i_5   i_7 },\thfrac{ i_2   i_5   i_9}{ i_1   i_4   i_7}{ i_3   i_6   i_8 },\thfrac{ i_1   i_3   i_6}{ i_2   i_5   i_8}{ i_4   i_7   i_9 },\thfrac{ i_2   i_4   i_7}{ i_3   i_6   i_9}{ i_1   i_5   i_8 },\thfrac{ i_3   i_5   i_8}{ i_1   i_4   i_7}{ i_2   i_6   i_9 },\thfrac{ i_4   i_6   i_9}{ i_2   i_5   i_8}{ i_1   i_3   i_7 },
\thfrac{i_1 i_4 i_7}{i_3 i_6 i_9}{i_2 i_5 i_8}, \thfrac{i_2 i_5 i_8}{i_1 i_4 i_7}{i_3 i_6 i_9}, \thfrac{i_3 i_6 i_9}{i_2 i_5 i_8}{i_1 i_4 i_7},
\end{align*}
where $1\le i_1<i_2<\cdots <i_9\le n$. 
\end{thm}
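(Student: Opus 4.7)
The plan is to convert the hypothesis ``$[M]$ is an imaginary root of $J_{3,n}$'' into a quadratic inequality on the profile of $M$ and then to enumerate the admissible profiles combinatorially. Writing $P_M=(I_1,I_2,I_3)$ with each $I_j$ a $3$-subset of $[n]$, we have
\[
[M] \;=\; [M_{I_1}] + [M_{I_2}] + [M_{I_3}]
\]
in the root lattice of $J_{3,n}$ (see Section~\ref{ssec:roots-Jkn}). Each rank $1$ class $[M_{I_j}]$ is a real root, so $([M_{I_j}],[M_{I_j}])=2$, and the imaginary-root condition $([M],[M])\le 0$ rewrites as
\[
\sum_{1\le j<\ell\le 3}\bigl([M_{I_j}],[M_{I_\ell}]\bigr)\;\le\;-3.
\]

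The first step is to read off an explicit formula for the pairing $([M_I],[M_J])$ for two $3$-subsets $I,J\subseteq[n]$ from the description of classes of rank $1$ modules in terms of the simple roots of $J_{3,n}$. I expect this pairing to depend only on the cyclic arrangement of $I\cup J$ inside $[n]$ and on $|I\cap J|$. Using the formula, I would show that whenever two of the $I_j$ share an element, or coincide, the cross-terms are too positive for the above bound to hold. Consequently the three $I_j$ are pairwise disjoint, and their union has the form $\{i_1<i_2<\cdots<i_9\}\subseteq[n]$. Because the pairing is invariant under strictly increasing relabeling, the remaining analysis reduces to the case $n=9$, $i_j=j$.

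What remains is a finite enumeration: list the ordered triples of pairwise disjoint $3$-subsets of $[9]$ meeting the bound. Up to the cyclic $\mathbb{Z}/9$-action $j\mapsto j+1$ on indices and up to cyclic permutation of the rows, there are only a handful of cases to check. I expect to find exactly four orbits of unordered partitions of $[9]$ into three $3$-blocks: the residue-class partition $\{1,4,7\},\{2,5,8\},\{3,6,9\}$ (which produces the last three profiles in the statement) together with three further modifications of it (producing the other nine). Each orbit contributes exactly the three ordered profiles coming from cyclically permuting its rows, for a total of twelve. To close the argument, one verifies that each of the twelve listed profiles is actually realized by some indecomposable $M\in{\rm CM}(B_{3,n})$, using the rank $3$ constructions from the preceding sections.

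The main obstacle is twofold: deriving a clean formula for $([M_I],[M_J])$ that makes the overlap-exclusion step transparent, and organizing the enumeration of ordered triples so as not to miss or double-count profiles that are related by cyclic re-indexing. Exploiting the cyclic $\mathbb{Z}/n$-symmetry of both the pairing and the class map is what keeps the case check at $n=9$ small and tractable.
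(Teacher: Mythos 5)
There is a genuine gap in your plan: the imaginary-root condition alone cannot single out the twelve listed profiles. Your first step is fine and matches the paper --- if two of the rows share an entry then the multiplicity vector $x(M)$ has an entry $\ge 2$, so $q(M)=\sum_i x_i^2-\tfrac19(\sum_i x_i)^2\ge 11-9=2>0$; hence the imaginary hypothesis forces the nine entries of the profile to be pairwise distinct. But that is \emph{all} the quadratic form gives you. For any ordered triple of pairwise disjoint $3$-subsets of a $9$-element set one has $x(M)=(1,\dots,1,0,\dots,0)$ and therefore $q(M)=0$: every one of the $\binom{9}{3}\binom{6}{3}=1680$ ordered partitions ``meets the bound,'' not just twelve of them. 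Your expectation of ``exactly four orbits of unordered partitions'' satisfying the inequality is false; there are $280$ unordered partitions and all of them satisfy it. The pairing $([M_I],[M_J])$ cannot distinguish, say, $\{1,2,3\},\{4,5,6\},\{7,8,9\}$ from $\{1,4,7\},\{2,5,8\},\{3,6,9\}$ by sign considerations of this kind.

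What actually cuts the list down to twelve is the \emph{indecomposability} of $M$, which your proposal never invokes in the forward direction (you only mention realizability at the very end, which is the converse). The paper's proof proceeds by: (i) using Lemma~\ref{lem:k3_rk3_interlacing-r1-r2} to force the first two rows to be $3$-interlacing (disjointness plus the quasi-box analysis rules out $0$-, $1$-, $2$-interlacing); (ii) arguing that the second and third rows must also be $3$-interlacing, since otherwise the subspace configuration is a decomposable star; and (iii) for each position of $i_1$ among the three rows, listing the eight arrangements compatible with these interlacing constraints and checking case by case which subspace configurations are indecomposable --- only four of the eight survive in each case. Without steps (i)--(iii), or some substitute argument that uses the module structure rather than only the class in the Grothendieck group, your enumeration cannot close.
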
 

Note that for $n=9$, there are exactly 12 indecomposable rank 3 modules 
corresponding to an imaginary root. 
The first 9 in the list are rigid and the last three are non-rigid; we show that in this case, there are 
exactly 225 rigid indecomposable rank 3 modules corresponding to roots of $J_{3,9}$. 

For arbitrary $n>9$, we expect that modules such as the last three in the list of 
Theorem~\ref{thm:modules-imaginary} are always non-rigid. 

A module $M' \in {\rm CM}(B_{k,n})$ is said to be an $a$-shift of the module 
$M \in {\rm CM}(B_{k,n})$ if the profile of $M'$ is obtained from the profile $P$ of $M$ 
by adding a fixed number (mod $n$) 
to every entry of $P$ (Definition~\ref{def:shift}). 

\begin{thm}[Theorem~\ref{thm:rigid-ind-rk3-39} and Corollary~\ref{cor:count-rigid-3-9}]
\label{thm:225-rigid-ind-rk3-39}
Consider the category ${\rm CM}(B_{3,9})$.  Every rigid indecomposable rank $3$ module 
maps to a root of $J_{3,9}$. 
Among the rigid indecomposable rank $3$ modules, $216$ correspond to a real root and the profile of 
each of them is a cyclic permutation of a canonical one. 
Furthermore, there are $9$ modules mapping to an imaginary root and their profiles are 
all a shift of $\thfrac{157}{369}{248}$. 
\end{thm}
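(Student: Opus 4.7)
The plan is to split the statement into three parts and dispatch each using the general theorems already established in the paper, with a finite case analysis in the $n = 9$ case for the numerical claims.

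For the first assertion (every rigid indecomposable rank 3 module maps to a root of $J_{3,9}$) I would simply specialize Theorem \ref{thm:rank3-in-CM-3-n} to $n = 9$: that theorem provides the root correspondence for every indecomposable rank 3 module in ${\rm CM}(B_{3,n})$, so the conclusion for the rigid ones is immediate and rigidity is not even needed at this step.

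For the imaginary-root count, I would apply Theorem \ref{thm:k=3-mod-imaginary} with $n = 9$. Since $1 \le i_1 < \cdots < i_9 \le 9$ forces $i_j = j$, each of the twelve listed profile templates collapses to a single concrete profile in ${\rm CM}(B_{3,9})$, and the last three are already declared non-rigid by that theorem, leaving nine candidates. I would then compute the nine shifts of $\thfrac{157}{369}{248}$ under $i \mapsto i + a \pmod 9$ for $a = 0, 1, \dots, 8$ and match them, after reordering entries within each row, against the nine remaining profiles in the list. Rigidity throughout the orbit follows from rigidity for one representative, since the index shift acts as an automorphism of ${\rm CM}(B_{3,9})$.

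The bulk of the work, and the principal obstacle, is the count of $216$ real-root modules. By Theorem \ref{thm:k=3_rk=3_real_canonical}, every such module has profile equal to a cyclic row permutation of a canonical profile, so I would proceed as follows: (i) enumerate the canonical profiles consisting of three rows of $3$-subsets of $[9]$ satisfying the weak column decreasing condition $P_{i,j} \ge P_{i+1,j}$ together with the boundary condition $P_{3,j} \ge P_{1,j-1}$; (ii) discard those whose class in the Grothendieck group is an imaginary root (which are covered by the previous step); (iii) for each remaining candidate, confirm the existence of an indecomposable module with that profile via the short-exact-sequence constructions developed earlier and test rigidity by checking that $\Ext^1(M,M) = 0$ using the explicit rank 1 filtration description; and (iv) multiply by the size of the cyclic row permutation orbit of each canonical profile to recover the full count of $216$. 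To keep the enumeration tractable I would exploit the $\ZZ/9$-symmetry of ${\rm CM}(B_{3,9})$ induced by the index shift $i \mapsto i + 1 \pmod 9$: it preserves rigidity and the real/imaginary distinction, so it partitions the $216$ modules into shift-orbits (typically of size $9$), reducing the verification to a much smaller list of orbit representatives. The hardest part will be the rigidity check itself, where one must carefully analyze the rims of the rank 1 filtration factors to certify the $\Ext^1$ vanishing in each case of the finite enumeration.
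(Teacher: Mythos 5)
Your outline for the first two assertions tracks the paper: the root correspondence is indeed just Theorem~\ref{thm:rank3-in-CM-3-n} specialized to $n=9$, and the list of candidate imaginary profiles comes from Theorem~\ref{thm:k=3-mod-imaginary} (though note that the non-rigidity of the three profiles that are cyclic permutations of $\thfrac{369}{258}{147}$ is not ``declared by that theorem''; it is a separate argument, Proposition~\ref{proposition:non-rigid modules}, which requires computing that $\tau^{-1}(M)$ has the same profile as $M$ and invoking Lemma~\ref{lem:tauM and M have the same profile implies that M is non-rigid} --- you would need to supply that computation). Likewise, Theorem~\ref{thm:k=3_rk=3_real_canonical} gives the upper bound of $72\times 3=216$ real candidates, and reducing to $24$ shift-orbit representatives plus one imaginary representative is exactly what the paper does.

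The genuine gap is in your step (iii), which is where essentially all of the work lies. You propose to ``test rigidity by checking that $\Ext^1(M,M)=0$ using the explicit rank 1 filtration description,'' but no such method exists in the paper for rank $3$ modules, and it is not clear how one would carry out a direct $\Ext^1$ computation from a filtration by three rank $1$ modules; this is precisely the obstacle the paper's Section~\ref{sec:Auslander-Reiten quiver} is built to circumvent. The paper's mechanism is structural: by Proposition~\ref{propos:AR-tubes} every Auslander--Reiten component of $\underline{{\rm CM}}(B_{3,9})$ is a tube, and every indecomposable sitting in rows $1,\dots,r-1$ of a tube of rank $r$ is automatically rigid. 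The proof then explicitly computes the relevant tubes (starting from rank $1$ modules, then rank $2$ modules, then the remaining rank $3$ candidates at the mouths of further tubes, using Remark~\ref{rem:profile-tau} and projective covers to propagate profiles along Auslander--Reiten sequences), verifies that each of the $25$ representative profiles --- including the imaginary one $\thfrac{469}{258}{137}$, whose rigidity you never establish --- occurs in the rigid region of some tube, and counts $81+36+108=225$. Without this tube argument, or a concrete replacement for it, the existence and rigidity of the $225$ modules is asserted rather than proved, so the count of $216$ real and $9$ imaginary modules does not follow from your proposal as written.
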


Theorem~\ref{thm:225-rigid-ind-rk3-39} is proved by studying the tubes of the Auslander-Reiten quiver of 
${\rm CM}(B_{3,9})$ containing rigid rank 3 modules.

We also prove the 
following result and its dual version (Theorem 
\ref{thm:AR-sequence-3-peaks-dual}). 
If $I$ is a $k$-subset of $[n]$, we write $L_I$ for the corresponding rank 1 module. If $I$ and $J$ are 
two $k$-subsets, we write $L_I\mid L_J$ for the rank 2 module with submodule $L_J$ and quotient 
$L_I$. 

\begin{thm} [{Theorem \ref{thm:AR-sequences-3-peaks}}]
We consider the category ${\rm CM}(B_{3,n})$. Let $I=\{i_1, i_2, i_3\}$ be a $3$-subset where 
$i_j<i_{j+1}-1$ for $j=1,2,3$ (reducing modulo $n$). Let $X=\{i_1+1, i_2+1, i_3+1\}$ and 
$Y=\{i_1+2, i_2+2, i_3+2\}$. 
Then there is an Auslander-Reiten sequence 
\[
L_I \hookrightarrow M \twoheadrightarrow \frac{L_X}{L_Y}
\]
where $M$ is a rank $3$ module. If $M$ is indecomposable, its profile is $X\mid I\mid Y$. 
\end{thm}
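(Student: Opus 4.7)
The strategy is to identify the Auslander--Reiten sequence ending at the rank $2$ indecomposable module $L_X \mid L_Y$, confirm its starting term is $L_I$, and then analyze the middle term. First, the separation hypothesis $i_j < i_{j+1}-1$ (cyclically) guarantees that $X$ and $Y$ are $3$-subsets of $[n]$ and that the rank $2$ module $L_X \mid L_Y$ is a non-split extension of $L_X$ by $L_Y$, hence indecomposable (the case analysis becoming slightly delicate when some consecutive gap $i_{j+1}-i_j$ equals $2$, so that a rim collision occurs between $I$ and $Y$).

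Next, I would compute the AR translate $\tau(L_X \mid L_Y)$. Using that $\underline{{\rm CM}}(B_{3,n})$ is $2$-Calabi--Yau and the resulting description of $\tau$ via the syzygy $\Omega$ modulo projectives, this reduces to a direct rim-level computation: a projective cover of $L_I$ can be described combinatorially, and its kernel $\Omega(L_I)$ is to be identified with $L_X \mid L_Y$. Via the $2$-Calabi--Yau identification $\tau \cong \Omega^{-1}$, this yields $\tau(L_X \mid L_Y) \cong L_I$, so the AR sequence ending at $L_X \mid L_Y$ has the form
\[
0 \to L_I \to M \to L_X \mid L_Y \to 0
\]
for some middle term $M$. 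Since rank is additive along short exact sequences in ${\rm CM}(B_{3,n})$, the module $M$ has rank $1+2=3$.

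If $M$ is indecomposable, I would determine its profile from the combinatorial lattice description recalled in Section~\ref{ssec:CM-setup}. The rank $3$ module $M$ has exactly three rank $1$ subquotients, namely $L_X$, $L_I$, and $L_Y$, and unpacking the generic filtration of $M$ using the short exact sequence above forces these factors into the order $X \mid I \mid Y$ from top to bottom. The appearance of $I$ in the middle (rather than at the bottom, where the named submodule might naively seem to sit) reflects the fact that the canonical socle of $M$ is $L_Y$ and that $L_I$ embeds into $M$ via a non-socle rank $1$ submodule, whose quotient then inherits the filtration $X \mid Y$.

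The main obstacle is the rim-level computation of $\Omega(L_I)$ (equivalently, of $\tau(L_X \mid L_Y)$). The cyclic structure of $B_{3,n}$ and the possibility of near-collisions when some consecutive gap in $I$ is exactly $2$ require careful case analysis to ensure that the syzygy is correctly identified with $L_X \mid L_Y$ rather than with some other rank $2$ module or with a direct sum of rank $1$ modules. A secondary technical point is verifying that the generic filtration rule, when applied to $M$, indeed outputs $X \mid I \mid Y$ and not one of the permutations $X \mid Y \mid I$ or $Y \mid X \mid I$; this again comes down to tracking the rim order carefully in the lattice model.
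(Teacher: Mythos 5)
Your first half matches the paper's argument: the paper likewise obtains the sequence by computing the first syzygy of $L_I$ from its projective cover (Remark~\ref{rem:profile-tau}), identifying $\Omega(L_I)=\tau^{-1}(L_I)$ with $L_X\mid L_Y$, and concluding $\rk M=3$ by additivity of rank. (In fact the paper does not separately check that $L_X\mid L_Y$ is indecomposable; that comes for free once it is exhibited as $\tau^{-1}$ of an indecomposable.)

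The genuine gap is in your determination of the profile. You assert that $M$ ``has exactly three rank $1$ subquotients, namely $L_X$, $L_I$, and $L_Y$'' and that unpacking the generic filtration ``forces'' the order $X\mid I\mid Y$. But the short exact sequence only hands you \emph{one} filtration of $M$, namely the one with submodule chain $L_I\subset M_2\subset M$, $M_2/L_I\cong L_Y$, $M/M_2\cong L_X$, i.e.\ $X\mid Y\mid I$ in the paper's convention --- which is \emph{not} the claimed profile. Filtrations by rank $1$ modules are not unique (the paper stresses this, and $M$ is only assumed indecomposable, not rigid), so a priori the generic filtration need not even have $X$, $I$, $Y$ as its set of factors: only the content (the multiset $\con(M)=\con(L_I)\cup\con(L_X)\cup\con(L_Y)$) is determined by the sequence. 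The paper closes this gap by a genuinely different mechanism: it shows the content has $8$ or $9$ distinct elements, invokes the classification results Theorem~\ref{thm:k=3_rk=3_real_canonical} (real case) and Theorem~\ref{thm:k=3-mod-imaginary} (imaginary case) to enumerate the finitely many possible profiles of an indecomposable rank $3$ module with that content, and then eliminates all but $X\mid I\mid Y$ by comparing the projective cover of $M$ with that of $L_I\oplus(L_X\mid L_Y)$. Your plan flags the ordering issue as a ``secondary technical point,'' but without some substitute for this classification-plus-projective-cover step (or a direct computation of the generic filtration of $M$ as an explicit representation), the assertion that the profile is $X\mid I\mid Y$ rather than some other profile with the same content does not follow from what you have written.
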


The paper is organized as follows. In Section \ref{sec:Grassmannian cluster categories}, we recall key definitions and results about Grassmannian cluster categories. 
In Section \ref{sec:canonical profiles and interlacing property}, we study profiles and show that a weakly 
column decreasing profile is canonical if and only if any two rows of its profile are interlacing. 
In Section~\ref{sec:rk2-rigid}, we give a lower bound for the number of 
 indecomposable rank 2 modules ${\rm CM}(B_{k,n})$ corresponding to roots for $J_{k,n}$. 
In Section~\ref{sec:subspace configurations}, we concentrate on rank 3 modules and study the 
subspace configurations. In Section~\ref{sec:Auslander-Reiten quiver}, we provide short exact sequences  
to describe the structure of the Auslander-Reiten quiver for ${\rm CM}(B_{3,n})$. We use this to 
determine the number of rigid indecomposable rank 3 modules in the infinite case ${\rm CM}(B_{3,9})$. 

\subsection*{Acknowledgments} 
We thank Christof Geiss, Alastair King, Matthew Pressland, Markus Reineke, Andrei Smolensky, Sonia Trepode, and Michael Wemyss for helpful discussions.  
K. B. was supported by a Royal Society Wolfson Fellowship. 
She is currently on leave from the University of Graz. D.B.\ was supported by the Austrian Science Fund Project Number P29807-35. J.-R.L. was supported by the Austrian Science 
Fund (FWF): M 2633-N32 Meitner Program and P 34602 Einzelprojekt. A.G.E. was supported by PICT(2017-2533) Agencia Nacional de Promoci\'on Cient\'ifica. 

\section{Grassmannian cluster categories} \label{sec:Grassmannian cluster categories}

In this section, we recall the definition of the Grassmannian cluster categories from~\cite{JKS} 
and some of their properties, see also~\cite{BBG}. Let $n\ge 4$ and recall that we always assume 
$1\le k\le \frac{n}{2}$.

\subsection{Cohen-Macaulay modules}\label{ssec:CM-setup}

Denote by $C=(C_0, C_1)$ the circular graph with 
vertex set $C_0=\ZZ_{n}$ clockwise around the circle, and with the edge set $C_1=\ZZ_n$, with edge 
$i$ joining vertices $i-1$ and $i$, see Figure \ref{fig:graph C and quiver Q_C}. 
Denote by $Q_C$ the quiver with the same vertex set $C_0$ and with arrows 
$x_i: i-1 \to i$, $y_i: i \to i-1$ for every $i \in C_0$, see Figure \ref{fig:graph C and quiver Q_C}.

\begin{figure}
\centering
    \begin{minipage}{.45\textwidth}
\begin{tikzpicture}[
    myedge/.style={thick,draw=black, postaction={decorate},
      decoration={markings,mark=at position .6 with {\arrow[black]{triangle 45}}}},
    myshorten/.style={shorten <= 2pt, shorten >= 2pt}]
    
    \node (A1) at (360/6*1:2cm) [circle,fill,inner sep=2pt, label=above:6] {}; 
    \node (A2) at (360/6*2:2cm) [circle,fill,inner sep=2pt, label=above:5] {}; 
    \node (A3) at (360/6*3:2cm) [circle,fill,inner sep=2pt, label=left:4] {};
    \node (A4) at (360/6*4:2cm) [circle,fill,inner sep=2pt, label=below:3] {}; 
    \node (A5) at (360/6*5:2cm) [circle,fill,inner sep=2pt, label=below:2] {}; 
    \node (A6) at (360/6*6:2cm) [circle,fill,inner sep=2pt, label=right:1] {};

  \draw (A1) to [bend right=25] node[midway,above] {6} (A2);
  \draw (A2) to [bend right=25] node[midway,left] {5} (A3);
  \draw (A3) to [bend right=25] node[midway,left] {4} (A4);
  \draw (A4) to [bend right=25] node[midway,below] {3} (A5);
  \draw (A5) to [bend right=25] node[midway,right] {2} (A6);
  \draw (A6) to [bend right=25] node[midway,right] {1} (A1);

\end{tikzpicture}
\end{minipage}
\centering
    \begin{minipage}{.45\textwidth}
\begin{tikzpicture}[
    myedge/.style={thick,draw=black, postaction={decorate},
      decoration={markings,mark=at position .6 with {\arrow[black]{triangle 45}}}},
    myshorten/.style={shorten <= 2pt, shorten >= 2pt}]
    
    \node (A1) at (360/6*1:2cm) [circle,fill,inner sep=2pt, label=above:6] {}; 
    \node (A2) at (360/6*2:2cm) [circle,fill,inner sep=2pt, label=above:5] {}; 
    \node (A3) at (360/6*3:2cm) [circle,fill,inner sep=2pt, label=left:4] {};
    \node (A4) at (360/6*4:2cm) [circle,fill,inner sep=2pt, label=below:3] {}; 
    \node (A5) at (360/6*5:2cm) [circle,fill,inner sep=2pt, label=below:2] {}; 
    \node (A6) at (360/6*6:2cm) [circle,fill,inner sep=2pt, label=right:1] {};

  \draw[<-] (A1) to [bend right=25] node[midway,above] {$x_6$} (A2);
  \draw[<-] (A2) to [bend right=25] node[midway,left] {$x_5$} (A3);
  \draw[<-] (A3) to [bend right=25] node[midway,left] {$x_4$} (A4);
  \draw[<-] (A4) to [bend right=25] node[midway,below] {$x_3$} (A5);
  \draw[<-] (A5) to [bend right=25] node[midway,right] {$x_2$} (A6);
  \draw[<-] (A6) to [bend right=25] node[midway,right] {$x_1$} (A1);
  
  \draw[<-] (A2) to [bend right=25] node[midway,below] {$y_6$} (A1);
  \draw[<-] (A3) to [bend right=25] node[midway,right] {$y_5$} (A2);
  \draw[<-] (A4) to [bend right=25] node[midway,right] {$y_4$} (A3);
  \draw[<-] (A5) to [bend right=25] node[midway,above] {$y_3$} (A4);
  \draw[<-] (A6) to [bend right=25] node[midway,left] {$y_2$} (A5);
  \draw[<-] (A1) to [bend right=25] node[midway,left] {$y_1$} (A6);

\end{tikzpicture}
\end{minipage}

\caption{The graph $C$ and the quiver $Q_C$, $n=6$.}
\label{fig:graph C and quiver Q_C}
\end{figure}
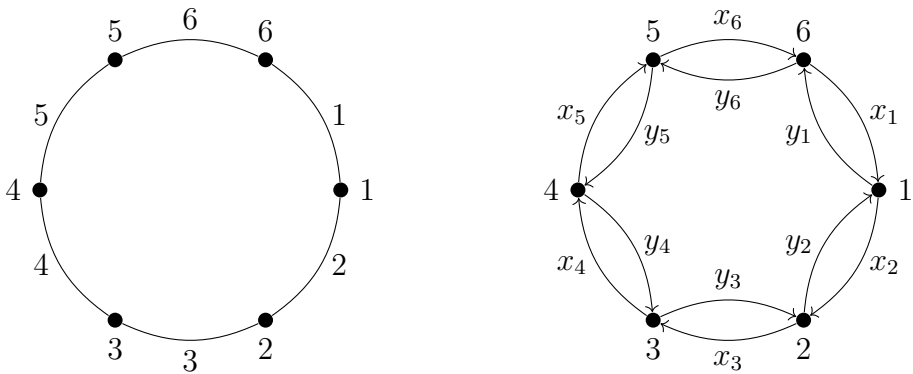

Denote by $B_{k,n}$ the quotient of the complete path algebra $\widehat{\CC Q_C}$ by the ideal generated 
by the $2n$ relations $x y = y x$, $x^{k} = y^{n-k}$, where $x, y$ are arrows of the form $x_i, y_j$ 
for appropriate $i,j$ (two relations for each vertex of $Q_C$). 

The center $Z$ of $B_{k,n}$ is the ring of formal power series $\CC[[t]]$, where $t = \sum_{i=1}^n x_i y_i$. A (maximal) Cohen-Macaulay $B_{k,n}$-module is given by a representation $\{M_i: i \in C_0\}$ of $Q_C$, where each $M_i$ is a free $Z$-module of the same rank (cf. \cite[Section 3]{JKS}).

\begin{definition}[{\cite[Definition 3.5]{JKS}}]
For any $B_{k,n}$-module $M$ and $K$ the field of fractions of $Z$, the {\bf rank} 
of $M$, denoted by ${\rm rk}(M)$,  is defined 
to be ${\rm rk}(M) = {\rm len}(M \otimes_Z K)$. 
\end{definition}

Jensen, King, and Su proved that the category ${\rm CM}(B_{k,n})$ 
is an additive categorification of the cluster algebra structure on $\CC[\Gr(k,n)]$. 

The category ${\rm CM}(B_{k,n})$ is exact and Frobenius with projective-injective objects given by the $B_{k,n}$ projective modules, and it has an Auslander-Reiten quiver (\cite[Remark 3.3]{JKS}). We 
denote by $\tau(M)$ the Auslander-Reiten translation of $M$ and by $\tau^{-1}(M)$ the inverse 
Auslander-Reiten translation of $M$. 

\begin{remark}\label{rem:rigid-tau}
A module $M$ in ${\rm CM}(B_{k,n})$ is {\bf rigid} if $ \Ext^1_{{\rm CM}} (M,M)=0 $. Since ${\rm CM}(B_{k,n})$ is Frobenius, \cite[Corollary 3.7]{JKS} we have
\[ 
\Ext^i_{{\rm CM}} (M,N)= \underline{\Hom}_{\rm CM}(M,\Omega^{-i}N), 
\] 
where $\underline{\Hom}_{\rm CM}(M,\Omega^{-i}N)$ is the stable space of $\Hom_{\rm CM}(M,\Omega^{-i}N)$. 
By construction, $\Omega^{-1}$ is the shift for the triangulated category $\underline{\rm CM}(B_{k,n})$ (underline 
indicating the stable category). Note that $\tau=\Omega^{-1}$ as the category 
$\underline{\rm CM}(B_{k,n})$ is 2-Calabi--Yau.
It follows that a non-projective module $M$ in ${\rm CM}(B_{k,n})$ is rigid if and only if $\tau (M)$ is rigid, as $\tau$ is 
an autoequivalence for the triangulated category $\underline{\rm CM}(B_{k,n})$.
\end{remark}

A special class of objects of ${\rm CM}(B_{k,n})$ are the rank 1 modules which are known to be rigid, \cite[Proposition 5.6]{JKS}. 

\begin{definition}[{\cite[Definition 5.1]{JKS}}]\label{def:rank1}
For any $k$-subset $I$ of $C_1$, a rank 1 module $L_I$ in ${\rm CM}(B_{k,n})$ 
is defined by
\begin{align*}
L_I=(U_i, i \in C_0; x_i, y_i, i \in C_1),
\end{align*}
where $U_i = \CC[[t]]$, $i \in C_0$, $e_i$ acts as the identity on $U_i$ and $e_iU_j=0$ for $i\neq j$, and  
\begin{align*}
x_i: U_{i-1} \to U_i \text{ is given by multiplication by $1$ if $i \in I$, and by $t$ if $i \not\in I$,} \\
y_i: U_{i} \to U_{i-1} \text{ is given by multiplication by $t$ if $i \in I$, and by $1$ if $i \not\in I$.}
\end{align*}
\end{definition}

By \cite[Proposition 5.2]{JKS}, every rank 1 module is isomorphic to $L_I$ for some 
$k$-subset $I$ of $[n]$. So there is a bijection between the rank 1 modules in ${\rm CM}(B_{k,n})$ 
with the $k$-subsets of $[n]$ and the cluster variables of $\CC[\Gr(k,n)]$ which are Pl\"ucker coordinates. 

It is convenient to represent the module $L_I$ by a lattice diagram, see 
Figure~\ref{fig:lattice diagram of L156 in B39}. The spaces $U_0, \ldots, U_n$ are represented by 
columns from left to right and $U_0$ and $U_n$ are identified. The vertices in each column correspond 
to the natural monomial $\CC$-basis of $\CC[t]$. 
The column corresponding to $U_{i+1}$ is displaced half a step vertically downwards (resp.\ upwards) in 
relation to $U_i$ if $i + 1 \in I$ (resp. $i+1 \not\in I$).

\begin{figure}
\includegraphics[width=6cm]{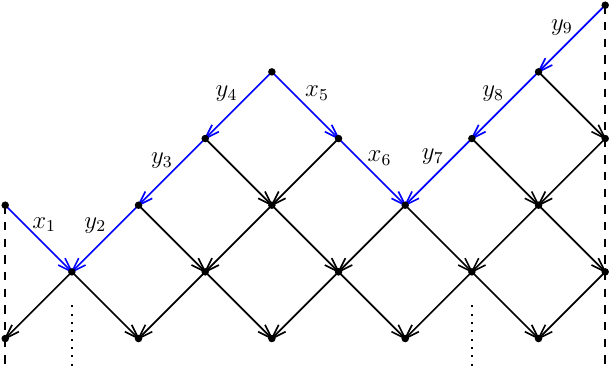}
\caption{The lattice diagram of $L_{\{1,5,6\}}$ in ${\rm CM}(B_{3,9})$ 
with its rim (blue arrows, with labels $x_1,y_2,y_3,y_4,x_5,x_6,y_7,y_8,y_9$).}
\label{fig:lattice diagram of L156 in B39}
\end{figure}

The upper boundary of the lattice diagram of $L_I$ is called the {\bf rim of $L_I$} (we usually 
omit the arrows when drawing the rim). The $k$ subset $I \subset [n]$ of the rank $1$ module $L_I$ can be read off as the set of labels on 
the edges 
going down to the right which are on the rim of $L_I$, i.e., the labels of the $x_i$'s appearing in the rim. 
For simplicity, we usually do not draw the labels of the rim in the figures. 

We recall the notion of peaks and valleys of rank 1 modules from~\cite{bb16}. 

\begin{definition}\label{def:peak}
If $I$ is a $k$-subset of $[n]$, the set of {\bf peaks of $L_I$} is the set 
$\{i\mid i\notin I, i+1\in I\}$. 
The set of {\bf valleys of $L_I$} is $\{i\mid i\in I, i+1\notin I\}$. 
\end{definition}

The rank 1 modules can be viewed as building blocks for the category as every module in 
${\rm CM}(B_{k,n})$ has a filtration with factors which are rank $1$ modules (cf. \cite[Proposition 6.6]{JKS}).
Let $M$ be a rank $m$ module in ${\rm CM}(B_{k,n})$ with factors $L_{I_1},\dots, L_{I_m}$ in its generic filtration, 
where $L_{I_m}$ is a submodule of $M$. 
We write $M=L_{I_1}|L_{I_2}| \cdots |L_{I_m}$ or 
$M=\begin{small}\begin{array}{c} L_{I_1}\\ \hline \vdots \\ \hline L_{I_m} \end{array}\end{small}$\,.
The ordered 
collection of $k$-subsets $I_1, \ldots, I_m$ in the generic filtration of $M$ is called the {\bf profile} of $M$, denoted $P_M$. We write 
$P_M=\begin{small}\begin{array}{c} I_1\\ \hline \vdots \\ \hline I_m \end{array}\end{small}$ 
or $P_M=I_1| \cdots |I_m$ if $M$ has a filtration having factors 
$L_{I_1}, \ldots, L_{I_m}$ (in this order). 
We sometimes write $M=P_M$ to indicate that $M$ is a module with profile $P_M$. 
Note that in general, such a filtration is not unique, but in case $M$ is rigid, the filtration is unique 
in the sense that it gives a canonical ordered set of rank 1 composition factors (see next subsection, \cite[Definition 6.4]{JKS}, and \cite[Proposition 6.6]{JKS} for more details on the uniqueness of the profile of the Cohen-Macaulay modules which are given as subspace configurations). 


\subsection{Subspace configurations}\label{ssec:subspace}

Here, we follow the set-up of~\cite[Section 6]{JKS}. 
Let $M$ be a module in ${\rm CM}(B_{k,n})$ and consider a filtration of $M$. We draw the lattice 
diagrams of all the rank 1 modules appearing in the filtration, in the filtration order. The 
rims of the filtration factors in this picture, together with the multiplicities of the elements in the monimial 
bases are called the set of {\bf contours of $M$}. 
For indecomposable modules, contours are close-packed in the sense that there can be no 
``walk'' (unoriented path) going around between two layers of the contours of $M$. 
The result is a lattice diagram where the multiplicities of the elements of the monomial basis are equal 
to the rank of $M$ below the lowest contour and equal to 1 between the top two contours. 
For the representation $M$, connected regions with constant multiplicity in the lattice diagram 
correspond to isomorphic vector spaces, e.g., we can consider the region below the lowest contour to be 
$\CC^{\rk M}$. This view-point gives a subspace configuration associated with $M$, drawn as a poset, with 
the convention that an edge $i\-- j$ with $i<j$ denotes an inclusion of an $i$-dimensional space into 
a $j$-dimensional space. 
With slight abuse of notation, we also call the poset of a subspace 
configuration a subspace configuration. The poset (or subspace configuration) is a graph where the 
labels on the vertices are viewed as multiplicities, they are elements of $\{1,2,\dots, \rk M\}$, the 
multiplicity $\rk M$ only appears in one vertex. 

The poset can thus be viewed as a representation for the underlying graph, with the orientations on the 
edges coming from the inclusions in $M$, i.e., pointing from the smaller to the larger numbers. 

\begin{example}\label{ex:contours-poset}
Let $M$ be a module in ${\rm CM}(B_{3,9})$ with profile $169\mid 147\mid 358$. 
Figure \ref{fig:subspace-poset-simplify} (A) shows the contours of $M$. 
In (A), below the first rim, every region has an upper boundary which is a part of the rim of the 
quotient $M$ by the rank 1 modules above it. 
In (B), we show the poset of the subspace configuration of $M$. This graph can be simplified to 
(C) as we explain below. 
\end{example}

\begin{figure}
\begin{center} 
\begin{subfigure}{0.23\textwidth}
\includegraphics[width=3cm]{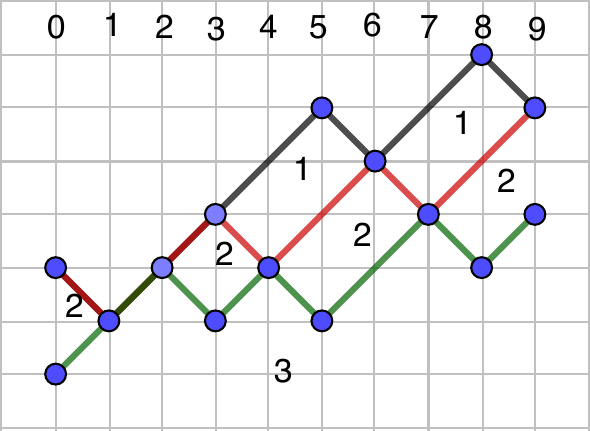}
\caption{}
\end{subfigure}
\begin{subfigure}{0.23\textwidth}
\begin{tikzpicture}[scale=0.6]
\tikzstyle{state}=[thick,minimum size=2mm]
		\node [state] (1) at (-10, 3) {$1$};
		\node [state] (2) at (-8, 3) {$1$};
		\node [state] (3) at (-10, 1.75) {$2$};
		\node [state] (4) at (-8, 1.75) {$2$};
		\node [state] (5) at (-6, 1.75) {$2$};
		\node [state] (6) at (-8, 0.5) {$3$};
		\draw (1) to (3);
		\draw (1) to (4);
		\draw (2) to (4);
		\draw (2) to (5);
		\draw (3) to (6);
		\draw (4) to (6);
		\draw (5) to (6); 
\end{tikzpicture}
\caption{} 
\end{subfigure}  
\begin{subfigure}{0.23\textwidth}
\begin{tikzpicture}[scale=0.6]
\tikzstyle{state}=[thick,minimum size=2mm]
		\node [state] (3) at (-10, 1.75) {$2$};
		\node [state] (4) at (-8, 1.75) {$2$};
		\node [state] (5) at (-6, 1.75) {$2$};
		\node [state] (6) at (-8, 0.5) {$3$};
		\draw (3) to (6);
		\draw (4) to (6);
		\draw (5) to (6); 
\end{tikzpicture}
\caption{} 
\end{subfigure}  
\end{center}
\caption{The subspace configuration (B) corresponds to the graph of contours (A). 
The subspace configuration (B) simplifies to (C).}
\label{fig:subspace-poset-simplify}
\end{figure}


The following lemma follows from the discussion below Definition 6.4 in \cite{JKS}.
\begin{lemma} \label{lem:if M is indecomposable then the subspace configuration is indecomposable}
If $M$ is indecomposable, then the subspace configuration is indecomposable, i.e., the poset diagram is an indecomposable representation for the oriented poset graph. 
\end{lemma} 

Using this, we get that if the 
poset diagram is a direct sum of two non-zero representations for the poset graph, it 
induces a direct sum decomposition of $M$ in ${\rm CM}(B_{k,n})$.

To see whether the subspace configuration is indecomposable, it is convenient to simplify 
the poset diagrams. 

The poset diagrams of subspace configurations can be simplified as follows.
A subspace can be identified 
as the intersection of several subspaces it maps into and may be omitted: 
for example, in Figure~\ref{fig:subspace-poset-simplify} (B), 
each of the 1-dimensional subspaces 
has to map in two different 2-dimensional subspaces of the 3-dimensional space at the bottom. 
In each case, these two different 2-dimensional subspaces intersect in a subspace of dimension 1. 
Hence this 1-dimensional subspace mapping into them is already determined and can be omitted from the diagram. 

If after applying simplifications, the underlying graph of the poset is the graph of a Dynkin diagram, 
we know that if the poset diagram is indecomposable, it has to be a positive root for it. 

Under the above rules, 
the subspace configuration in Figure \ref{fig:subspace-poset-simplify} (B) from 
Example~\ref{ex:contours-poset} 
simplifies to (C). This is of Dynkin type $D_4$ and the simplified poset is not a root for $D_4$, hence 
the poset diagram is not indecomposable.

\subsection{Auslander-Reiten translations and shifts}\label{sec:AR-and-shift}

By the symmetry of the algebra $B_{k,n}$, adding a fixed number 
to every element in every $k$-subset of the profile of a module 
and changing the linear maps in the representation accordingly preserves indecomposability and rigidness.

\begin{definition}\label{def:shift} 
Let $M$ and $M'$ be indecomposable modules in ${\rm CM}(B_{k,n})$, with profiles $P$ and 
$P'$. If there is a number $a\in [n]$ such that $P'$ is obtained by adding $a$ to each entry of $P$ 
(reducing modulo $n$), we say that $M'$ is a {\bf shift of} $M$ {\bf by} $a$ 
and that the profile $P'$ is a 
{\bf shift of} $P$ ({\bf by} $a$). 
\end{definition}

\begin{lemma} \label{lem:shift-rigid}
Let $M$ and $M'\in {\rm CM}(B_{k,n})$ be indecomposable and let $M'$ be a shift of $M$. 
Then $M$ is rigid if and only if $M'$ is rigid. 
\end{lemma}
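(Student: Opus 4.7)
The plan is to exhibit the shift operation as an autoequivalence of ${\rm CM}(B_{k,n})$ coming from an algebra automorphism of $B_{k,n}$, and then invoke the fact that autoequivalences preserve $\Ext^1$.

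Concretely, I would define the rotation $\rho_a$ of the quiver $Q_C$ sending vertex $i \mapsto i+a$ and arrows $x_i \mapsto x_{i+a}$, $y_i \mapsto y_{i+a}$ (subscripts mod $n$); this extends to an automorphism of the complete path algebra $\widehat{\CC Q_C}$. Because the defining relations $xy=yx$ and $x^{k}=y^{n-k}$ are imposed uniformly at every vertex of $Q_C$, they are preserved by $\rho_a$, so $\rho_a$ descends to an algebra automorphism of $B_{k,n}$ that in addition fixes the central element $t=\sum_i x_iy_i$. Consequently the pullback functor $\rho_a^{*}: {\rm CM}(B_{k,n}) \to {\rm CM}(B_{k,n})$ is an exact autoequivalence, with quasi-inverse $\rho_{-a}^{*}$, and it preserves the Cohen--Macaulay condition since the $Z$-module structure is unchanged.

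Next I would compare with Definition~\ref{def:rank1} to see that $\rho_a^{*}(L_I)\cong L_{I+a}$: the prescription ``$x_i$ acts as $1$ on $L_I$ iff $i\in I$'' transports under relabeling to ``$x_{i+a}$ acts as $1$ iff $i+a\in I+a$''. Applying $\rho_a^{*}$ to a rank $1$ filtration of $M$ therefore yields a rank $1$ filtration of $\rho_a^{*}(M)$ whose profile is precisely the $a$-shift of $P_M$. Thus $M \mapsto \rho_a^{*}(M)$ realizes the $a$-shift on isomorphism classes of indecomposable modules, and because $\rho_a^{*}$ is an autoequivalence it induces an isomorphism
\[
\Ext^{1}_{B_{k,n}}(M,M)\;\cong\;\Ext^{1}_{B_{k,n}}(\rho_a^{*}(M),\rho_a^{*}(M)),
\]
so $\rho_a^{*}(M)$ is rigid iff $M$ is. Identifying $M'$ with $\rho_a^{*}(M)$ via this bijection on indecomposables sharing the profile $P_M+a$ yields the lemma.

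The only step that needs actual checking is that $\rho_a$ respects the ideal of relations of $B_{k,n}$, which is immediate from the manifestly cyclically symmetric presentation; everything else is a formal consequence of autoequivalence properties. I therefore do not foresee any substantive obstacle.
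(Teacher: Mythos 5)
Your proposal is correct and is essentially the paper's own argument: the paper states this lemma without proof, treating it as an immediate consequence of the remark preceding Definition~\ref{def:shift} (``by the symmetry of the algebra $B_{k,n}$, adding a fixed number to every element\dots preserves indecomposability and rigidness''), and your write-up simply supplies the details of that rotational symmetry --- the automorphism $\rho_a$ of $Q_C$, the check that it preserves the (vertex-uniform) relations and the central element $t$, and the identification $\rho_a^{*}(L_I)\cong L_{I+a}$. The only point glossed over (identifying the given $M'$ with $\rho_a^{*}(M)$, i.e.\ tacitly assuming an indecomposable with the shifted profile is the relabeled module) is glossed over to exactly the same extent in the paper, so no further comment is needed.
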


If the Auslander-Reiten translation leaves a module invariant, it cannot be rigid.

\begin{lemma} \label{lem:tauM=M_M-non-rigid} 
Let $M$ be a module in $\underline{{\rm CM}}(B_{k,n})$. If $\tau^{-1}(M)=M$, then $M$ is non-rigid. 
\end{lemma}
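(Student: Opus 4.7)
The plan is to use the machinery already recorded in Remark \ref{rem:rigid-tau}: the formula
\[
\Ext^1_{{\rm CM}}(M,N) \;=\; \underline{\Hom}_{\rm CM}(M,\Omega^{-1}N)
\]
together with $\tau = \Omega^{-1}$ on the stable category $\underline{\rm CM}(B_{k,n})$ (valid since the stable category is $2$-Calabi--Yau). Combining these two gives the identification
\[
\Ext^1_{{\rm CM}}(M,M) \;=\; \underline{\Hom}_{\rm CM}(M,\tau M),
\]
which is what I will exploit.

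First I would observe that if $\tau^{-1}(M) = M$ in $\underline{\rm CM}(B_{k,n})$, then applying the autoequivalence $\tau$ to both sides yields $M = \tau M$. Substituting into the displayed formula above produces
\[
\Ext^1_{{\rm CM}}(M,M) \;=\; \underline{\Hom}_{\rm CM}(M,M).
\]
Next I would note that $M$, viewed in $\underline{\rm CM}(B_{k,n})$, is a nonzero object (i.e.\ a non-projective module of ${\rm CM}(B_{k,n})$); otherwise the equality $\tau^{-1}M=M$ is vacuous in the stable category and there is nothing to prove. For such an $M$, the identity morphism $\mathrm{id}_M$ does not factor through a projective, so it represents a nonzero class in $\underline{\Hom}_{\rm CM}(M,M)$. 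Consequently $\underline{\Hom}_{\rm CM}(M,M)\neq 0$, and hence $\Ext^1_{{\rm CM}}(M,M)\neq 0$, which is precisely the assertion that $M$ is non-rigid.

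The argument is essentially a one-line deduction from the $2$-CY identification and the non-vanishing of the identity in the stable category, so there is no serious obstacle. The only subtlety worth spelling out carefully is the conventional point that an object being ``in $\underline{\rm CM}(B_{k,n})$'' with $\tau^{-1}M=M$ presupposes $M$ is non-projective (otherwise $\tau$ is not defined on $M$ in the usual Auslander--Reiten sense), which is what guarantees that $\mathrm{id}_M$ survives in the stable quotient.
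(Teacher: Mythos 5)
Your argument is correct, but it is genuinely different from the one in the paper. The paper's proof is pure Auslander--Reiten theory: from $\tau^{-1}(M)=M$ it produces an almost split sequence $M\to E\to M$, which by definition does not split, so $\Ext^1_{\rm CM}(M,M)\neq 0$. You instead run the $2$-Calabi--Yau identification from Remark~\ref{rem:rigid-tau}, namely $\Ext^1_{\rm CM}(M,M)=\underline{\Hom}_{\rm CM}(M,\Omega^{-1}M)=\underline{\Hom}_{\rm CM}(M,\tau M)$, and then use $\tau M=M$ to reduce the claim to the non-vanishing of $\mathrm{id}_M$ in $\underline{\End}(M)$. Both are one-line deductions; what yours buys is that it does not need $M$ to be indecomposable (an almost split sequence ending at $M$ requires $M$ indecomposable non-projective, which the paper's proof tacitly assumes), only that $M$ is nonzero in the stable category, and it makes the source of the non-split self-extension completely explicit as the identity morphism. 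What the paper's approach buys is that it avoids invoking the $2$-CY property and stays within elementary AR theory. Your remark that $\tau^{-1}M=M$ only makes sense for $M$ non-projective, which in turn guarantees $\mathrm{id}_M\neq 0$ in the stable category, is exactly the right point to flag.
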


\begin{proof}
If $M=\tau^{-1}(M)$, then there is an almost split sequence 
\[
M\to E \to M 
\]
in ${\rm CM}(B_{k,n})$. This sequence does not split, so $M$ is not rigid. 
\end{proof}

\begin{lemma} \label{lem:tauM and M have the same profile implies that M is non-rigid}
For any indecomposable module $M$ in $\underline{{\rm CM}}(B_{k,n})$, if the profile of 
$\tau^{-1}(M)$ is the same as 
the profile of $M$, then $M$ is non-rigid. 
\end{lemma}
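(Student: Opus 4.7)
The plan is to argue by contradiction and reduce to the previous lemma. Suppose $M$ is rigid. Since $\underline{{\rm CM}}(B_{k,n})$ is $2$-Calabi--Yau with $\tau = \Omega^{-1}$, the translation $\tau^{-1}$ is an autoequivalence of the stable category, so Remark \ref{rem:rigid-tau} gives that $\tau^{-1}(M)$ is again rigid, and $\tau^{-1}(M)$ remains indecomposable because autoequivalences preserve indecomposability.

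Next I would invoke the uniqueness of the rank $1$ filtration for rigid modules recalled in Section \ref{ssec:CM-setup} (\cite[Lemma 6.2]{JKS}). The key consequence I need is that a rigid indecomposable module in ${\rm CM}(B_{k,n})$ is determined up to isomorphism by its profile. Applying this to the two rigid indecomposables $M$ and $\tau^{-1}(M)$, the hypothesis $P_{\tau^{-1}(M)} = P_M$ yields an isomorphism $\tau^{-1}(M) \cong M$. Lemma \ref{lem:tauM=M_M-non-rigid} now applies directly and forces $M$ to be non-rigid, contradicting the assumption. Hence $M$ is non-rigid.

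The main obstacle in this plan is the step that upgrades the canonicity of the ordered set of rank $1$ composition factors to an actual isomorphism of modules with the same profile. The excerpt states uniqueness of the filtration for rigid modules only up to the ordered sequence of rank $1$ factors, so I would want to verify carefully that this canonical filtration data reconstructs the rigid indecomposable up to isomorphism, rather than take it for granted. Once that is in hand, the rest of the argument is a routine application of Lemma \ref{lem:tauM=M_M-non-rigid} and the stability of rigidity under $\tau^{-1}$.
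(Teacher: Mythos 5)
Your proof is correct and follows essentially the same route as the paper: assume $M$ rigid, use that rigid indecomposables are determined by their profiles to get $\tau^{-1}(M)\cong M$, and conclude via Lemma~\ref{lem:tauM=M_M-non-rigid}. The step you flag as a potential obstacle is exactly the assertion the paper itself makes without further elaboration (``rigid indecomposable modules are uniquely determined by their profiles''), so your argument matches the published one.
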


\begin{proof}
Suppose that $\tau^{-1}(M)$ and $M$ have the same profile. If $M$ is rigid, then $\tau^{-1}(M)=M$ since rigid indecomposable modules are uniquely determined by their profiles. By Lemma \ref{lem:tauM=M_M-non-rigid} , 
$M$ is non-rigid. This is a contradiction. Therefore $M$ is non-rigid.
\end{proof}

Consider the subcategory ${\rm Sub}\, Q_k$ of the module category   
of the preprojective algebra of type $A_{n-1}$ (on vertices $1,2,\dots,n-1$) 
consisting of modules with socle concentrated at the vertex $k$. In other words,  ${\rm Sub}\, Q_k$ 
is the exact subcategory of modules having injective envelope in the additive hull of the injective at 
$k$. It has been studied in the work \cite{gls} of Geiss-Leclerc-Schr\"oer on the cluster algebra 
structure for the coordinate ring of the affine open cell in the Grassmannian. 
There is a triangle equivalence between the 
stable categories $\underline{{\rm CM}}(B_{k,n})$ and $\underline{{\rm Sub}}\, Q_k$, \cite[Corollary 4.6]{JKS}.

\begin{proposition}\label{propos:AR-tubes} 
Let $\Gamma$ be the Auslander--Reiten quiver of $\underline{{\rm CM}}(B_{k,n})$ and let $C$ be a component of $\Gamma$. Then 
\begin{itemize}
\item $\Gamma =C =\mathbb{Z}\Delta/G$ where $\Delta$ is a Dynkin quiver and $G$ is an automorphism group of $\mathbb{Z}\Delta$ in the finite cases $(2,n), (3,6), (3,7)$ and $(3,8)$. 
\item  each $C$ is of the form $\mathbb{Z}A_{\infty} / G_{\lambda}$, where each $G_{\lambda}$ is a power of $\tau$, in the infinite cases.
\end{itemize}
\end{proposition}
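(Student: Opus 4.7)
The plan is to combine Riedtmann's structure theorem for AR-quivers of Krull--Schmidt Hom-finite categories with the $2$-Calabi--Yau property of $\uCM(B_{k,n})$ (Remark \ref{rem:rigid-tau}). Riedtmann asserts that any connected component $C$ of $\Gamma$ has the form $\ZZ T_C/\Pi_C$ for a valued tree $T_C$ (the tree class) and an admissible group $\Pi_C$ of automorphisms of $\ZZ T_C$. Since the category is $2$-CY, $\tau$ is an auto-equivalence of $\uCM(B_{k,n})$ and stabilises each component.

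In the four finite cases, the diagram $J_{k,n}$ is Dynkin (of type $A_{n-3}$, $D_{4}$, $E_{6}$, $E_{7}$ respectively). By \cite[Section 2]{JKS} the indecomposable objects of $\CM(B_{k,n})$ are in bijection with the positive real roots of $J_{k,n}$, so there are only finitely many. The AR-quiver $\Gamma$ is therefore a finite connected translation quiver, and Riedtmann's theorem forces the tree class to be the Dynkin diagram $\Delta = J_{k,n}$, yielding the stated form $\ZZ\Delta/G$ with $G$ generated by $\tau$ composed with an appropriate involution in the simply laced case.

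For the infinite cases, the approach is to show that every connected component is a tube. The key claim is that every non-projective indecomposable in $\uCM(B_{k,n})$ is $\tau$-periodic; once this is established, a standard translation-quiver result in the Krull--Schmidt Hom-finite triangulated setting (the analogue of Happel--Preiser--Ringel for $2$-CY triangulated categories) implies that any connected component containing a periodic module is of the form $\ZZ A_\infty/\langle \tau^r\rangle$ for some $r \geq 1$. To establish $\tau$-periodicity, I would use that $\tau = \Omega^{-1}$ preserves the $Z$-rank (projective covers in the Cohen--Macaulay setting preserve rank), so each $\tau$-orbit lies in a fixed rank. For rank $1$ the result is immediate since the $k$-subsets of $[n]$ form a finite $\tau$-stable set by \cite[Proposition 5.2]{JKS}. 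For higher rank, I would combine the triangle equivalence $\uCM(B_{k,n}) \cong \uSub Q_k$ from \cite[Corollary 4.6]{JKS} with the cyclic rotational symmetry of $B_{k,n}$ and the known finiteness of $\tau$-orbits in stable subcategories of preprojective algebras of type $A_{n-1}$ studied by Geiss--Leclerc--Schr\"oer.

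The main obstacle is precisely this last step, namely producing $\tau$-periodic indecomposables in components that do not meet rank $1$: the interaction of $\tau$ with profiles in higher rank is subtle and the profile of $\tau^{-1}(M)$ is not read off directly from that of $M$. One alternative route, well-suited to the case $k = 3$ treated in the later sections, is to exploit the explicit AR-sequences constructed in Section \ref{sec:Auslander-Reiten quiver} to realise the mouth of each tube by hand and propagate periodicity throughout the component. Once $\tau$-periodicity is secured, the identification of the tree class with $A_\infty$ and of $\Pi_C$ with $\langle \tau^r\rangle$ is a routine translation-quiver argument.
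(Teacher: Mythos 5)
Your overall architecture (a Riedtmann/Liu-type structure theorem plus $\tau$-periodicity) is the same as the paper's, but the step you yourself identify as the main obstacle is exactly the step you have not supplied, and the mechanism you propose for it is false. You claim that $\tau=\Omega^{-1}$ preserves the $Z$-rank because ``projective covers in the Cohen--Macaulay setting preserve rank.'' They do not: for a rank $1$ module $L_I$ whose rim has $m$ peaks, the minimal projective cover is a direct sum of $m$ rank $1$ projectives, so $\Omega(L_I)$ has rank $m-1$ (see Example~\ref{ex:tau-of-rk-1} and the discussion preceding Theorem~\ref{thm:AR-sequences-3-peaks}, where $\tau^{-1}(L_I)$ is a rank $2$ module when $I$ has three peaks). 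Hence the set of rank $1$ modules is not $\tau$-stable, $\tau$-orbits do not live in a fixed rank, and your ``immediate'' base case collapses together with the induction on rank built on it. The paper avoids this entirely by quoting the result of \cite[Section 3]{BBG} that $\uCM(B_{k,n})$ is periodic under $\tau$ as a category (a functorial periodicity, ultimately coming from periodicity of the syzygy functor), which gives $\tau$-periodicity of every object at once; it then applies Liu's theorem \cite[Theorem 5.5]{Liu} to the triangle-equivalent category $\uSub Q_k$.

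A second gap: even granting periodicity, a Happel--Preiser--Ringel/Liu-type statement only tells you that a component containing a periodic object is either a stable tube $\ZZ A_\infty/\langle\tau^m\rangle$ or of the form $\ZZ\Delta/G$ with $\Delta$ Dynkin, i.e.\ finite. In the infinite cases you still have to exclude the finite alternative, and your proposal says nothing about this. The paper does it by passing to the cluster-tilted algebra $A=\End(T)$ for a cluster-tilting object $T$ (from \cite{BKM}), using \cite[Theorem IV.5.4]{ASS} (a finite AR-component of a connected algebra forces representation-finiteness) together with the Keller--Reiten equivalence $\mmod A\simeq \uCM(B_{k,n})/(T)$; representation-finiteness of $A$ is exactly what separates the four finite cases from the rest. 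Your treatment of the finite cases via the root correspondence of \cite{JKS} is fine, but as written the infinite case rests on an unproved periodicity claim supported by an incorrect rank argument, so the proof is incomplete.
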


\begin{proof}
Let $C$ be a connected component of $\Gamma$. 
The category ${\underline{\rm CM}}(B_{k,n})$ is periodic under $\tau$ by \cite[Section 3]{BBG}. 
So the triangle equivalent category $\underline{{\rm Sub}}(Q_k)$ is periodic under $\tau$. We also write $C$ for the connected component 
of the Auslander-Reiten component of $\underline{{\rm Sub}}(Q_k)$ corresponding to $C$. 
We can apply Liu's result \cite[Theorem 5.5]{Liu}, since 
$\underline{{\rm Sub}}(Q_k)$ is a Krull--Schmidt and Hom-finite $k$-category, to deduce that  $C$ is of the form $\mathbb{Z}\Delta/H$ where $H$ is an automorphism group of $\mathbb{Z}\Delta$ 
containing a power of the translation $\tau$ or $C$ is a stable 
tube, i.e., isomorphic to  $\mathbb{Z}A_{\infty} / \tau^m$ for some $m>0$.

Let $T$ be a cluster-tilting object in the stable category ${\underline{\rm CM}}(B_{k,n})$ (for example, 
as given in~\cite[Section 3]{BKM}, without the projective summands). Let $A$ be its endomorphism algebra. 
If the Auslander--Reiten quiver of ${\rm mod}\, A$ contains 
a finite connected component, then this is the only component and ${\rm mod} \, A$ is representation finite 
by \cite[Theorem IV.5.4]{ASS}. 
Then by \cite[Section 2.1]{KR}  we have the following equivalence 
${\rm mod} \, A \simeq {\underline{\rm CM}}(B_{k,n}) / (T)$ and hence $C$ is as claimed. 

If ${\rm mod} \, A$ does not have a finite component, $C$  has to be a tube by the above-mentioned result by Liu. 
\end{proof}

For further results on $\tau$ periodicity in  these categories the reader is referred to \cite{DL} and \cite{Keller}. From the previous proposition it follows that in the infinite cases each Auslander--Reiten sequence has a middle term with at most two indecomposable summands. 

We point out that while the Auslander--Reiten components for ${\rm CM}(B_{k,n})$ of 
infinite type  are always tubes, outside of the tame cases, there are many morphisms 
(those in the radical infinite) connecting the tubes. In these cases it is not true that 
the indecomposable rigid objects are 
precisely those sitting in the tubes at a level lower than the rank of the tube.

\subsection{A root system for the Grassmannian} \label{ssec:roots-Jkn} 

Recall the graph $J_{k,n}$ with vertices $1,2,\dots, n-1$ on a line and 
an additional vertex $n$ attached to the vertex $k$, see Figure~\ref{fig:diagram-Jkn}, as introduced in~\cite{JKS} in the 
study of ${\rm CM}(B_{k,n})$. 
This graph gives rise to a Kac-Moody root system (we also call this root system $J_{k,n}$). 
It is of Dynkin type $D_n$ for $k=2$ and 
of Dynkin type $E_6, E_7$,  and $E_8$, respectively, for $k=3$ and $n=6,7,$ and $8$, respectively. 
The root lattice of the Kac-Moody algebra of $J_{k,n}$ can be identified with the lattice 
$\ZZ^n(k):=\{x=(x_i)\in \ZZ^n\mid \mbox{ $k$ divides } \sum_i x_i\}$. 
We equip this with the inner product 
\begin{align} \label{eq:inner product for root lattice of Jkn}
(x,y)=\sum_{i=1}^n x_iy_i + \frac{2-k}{k^2} (\sum_{i=1}^n x_i) (\sum_{i=1}^n y_i)
\end{align}
and with the quadratic form $q(x):=(x,x)=\sum_{i=1}^n x_i^2 + \frac{2-k}{k^2} (\sum_{i=1}^n x_i)^2$. The roots of this root system satisfy $q(x)\le 2$, among them, real roots have $q(x)=2$ and imaginary ones have $q(x)<2$.

Let $e_1,\dots, e_n$ be the standard basis vector of $\mathbb{R}^n$. Define 
$\alpha_i=-e_i+e_{i+1}$ for $i\in [n-1]$ and $\beta=e_1+e_2+\dots+e_k$. Then the set 
$\{\alpha_1,\dots, \alpha_{n-1},\beta\}$ is a system of simple roots for the root system $J_{k,n}$. 
It will also be convenient to use $\alpha_n$ for $\beta$ and we switch freely between the two. The inner products of the simple roots are all equal 
to $2$ and 
\begin{align*}
(\alpha_i,\alpha_j) = \begin{cases}
-1, & \text{if } |i-j|=1, \\
0, & \text{otherwise,}
\end{cases} \qquad
(\beta,\alpha_i) = \begin{cases}
-1, & \text{if } i=k, \\
0, & \text{otherwise}.
\end{cases} 
\end{align*}

Denote by $s_{\alpha}$ the reflection about the hyperplane perpendicular to a root $\alpha$ of $J_{k,n}$ and we write $s_i = s_{\alpha_i}$ with $i \in [n-1]$ and $s_{\beta} = s_{\alpha_n}$. For any root $\alpha$ of $J_{k,n}$ and $v \in \mathbb{R}^n$, $s_{\alpha}(v)=v - 2 \frac{(v, \alpha)}{(\alpha, \alpha)} \alpha$. 
It is customary to abbreviate the scalar in front of $\alpha$ and to write 
$\langle v,\alpha^{\vee}\rangle:=2\frac{(v,\alpha)}{(\alpha,\alpha)}$ as we will do later. The {\bf Weyl group $W$ of $J_{k,n}$} is the group generated by the simple reflections.

In Section 2 in \cite{JKS}, there is a basis $e_1, \ldots, e_n$ of $\ZZ^n$ such that $\alpha_i=e_{i+1}-e_i$, $i \in [n-1]$, and $\beta=e_1+\ldots + e_k$. We write an element $x=\sum_{i=1}^n x_i e_i$ in $\ZZ^n$ as $x=(x_1, \ldots, x_n)$. 

\begin{lemma} \label{lem:action of Weyl group on root system}
For $i \in [n-1]$, $x=(x_1,\ldots, x_n)=\sum_{i=1}^n x_i e_i \in \ZZ^n$, we have that $s_i(x)$ is obtained from $x$ by interchanging $x_i$ and $x_{i+1}$, and $s_n(x) = (x_1+r,\ldots,x_k+r,x_{k+1},\ldots,x_n$, where $r = x_{k+1} + \ldots + x_n - 2 \frac{1}{k}\sum_{i=1}^n x_i$. 
\end{lemma}

\begin{proof}
For $i \in [n-1]$, by (\ref{eq:inner product for root lattice of Jkn}), we have that
\begin{align*}
s_i(\sum_{j=1}^n x_je_j) & =
 \sum_{j=1}^n x_je_j - 2 \frac{(\sum_{j=1}^n x_je_j, \alpha_i)}{(\alpha_i, \alpha_i)} \alpha_i \\
& = \sum_{j=1}^n x_je_j - (x_{i+1}-x_i)(e_{i+1}-e_i) = \sum_{j \in [n]\setminus \{i,i+1\}} x_je_j + x_{i+1}e_{i} + x_{i}e_{i+1}.
\end{align*}
Therefore $s_i(x)$ is obtained from $x$ by interchanging $x_i$ and $x_{i+1}$. Similarly, by (\ref{eq:inner product for root lattice of Jkn}),
\begin{align*}
s_n(x_1,\ldots,x_n) & = \sum_{j=1}^n x_je_j - 2 \frac{(\sum_{j=1}^n x_je_j, \alpha_n)}{(\alpha_n, \alpha_n)} \alpha_n \\
& = \sum_{j=1}^n x_je_j - \left( \sum_{i=1}^k x_i + \frac{2-k}{k^2} \cdot (\sum_{i=1}^n x_i) \cdot k \right)(e_1+\ldots + e_k ) \\
& = (x_1+r,\ldots,x_k+r,x_{k+1},\ldots,x_n),
\end{align*} 
where $r = x_{k+1} + \ldots + x_n - 2 \frac{1}{k}\sum_{i=1}^n x_i$. 
\end{proof}

Jensen, King and Su define in~\cite[Sections 2,8]{JKS} 
a map from the modules of ${\rm CM}(B_{k,n})$ to the root lattice of $J_{k,n}$ 
by associating a module with its class in the Grothendieck group of ${\rm CM}(B_{k,n})$ (the Grothendieck group is identified with the root lattice of $J_{k,n}$). This works as follows.
Let $M$ be a module and $P=P_M$ its profile. For $i=1,\dots, n$, 
let $x_i$ be the multiplicity of $i$ in the union of the $k$-subsets in the profile $P$. 
Then let $x(M)=x(P_M)=(x_1,x_2,\dots, x_n)\in \ZZ^n$ be the vector of the multiplicities. By definition, 
$k\mid \sum_i x_i$, so $x(M)\in \ZZ^n(k)$ and there is an element in 
$\text{Span}_{\mathbb{R}}\{\beta, \alpha_1, \ldots, \alpha_{n-1}\}$ corresponding to 
$x(M)$. We denote this element by $\varphi(M)$ and define $x(\varphi(M))=x(M)$. 
We say that $M\in {\rm CM}(B_{k,n})$ corresponds to a real (respectively, imaginary) root if 
$\varphi(M)$ is a real (respectively, imaginary) root of $J_{k,n}$. 
This terminology is motivated by the fact that in the finite types, all indecomposable  
modules map to roots 
and by our results on indecomposable rank 2 and rank 3 modules.

\begin{example}\label{ex:profile-root}
Let $M$ be a module in ${\rm CM}(B_{3,9})$ whose profile is 
$P_M=\begin{small}\begin{array}{c} 359\\ \hline 258\\ \hline 147\\ \hline  146\end{array}\end{small}$. 
Then  $x(M) = (2,1,1,2,2,1,1,1,1)$, $q(M)=2$, and 
$\varphi(M) = 4 \beta + 2 \alpha_1 + 5 \alpha_2 + 8 \alpha_3 + 6 \alpha_4
 + 4 \alpha_5 + 3 \alpha_6 + 2 \alpha_7 + \alpha_8$. 
\end{example}

\section{Canonical profiles and interlacing property} \label{sec:canonical profiles and interlacing property}

Modules in ${\rm CM}(B_{k,n})$ corresponding to real roots are closely related to canonical profiles 
which we will discuss in this section. In particular, we will prove that a weakly column 
decreasing profile is canonical if and only if any two rows of the profile are interlacing. 

\begin{definition}
We say that two sets $J_1, J_2$ of integers are $r$-interlacing if 
$|J_{1} \backslash J_{2}|=|J_{2} \backslash J_{1}|=r$ and 
$J_{1}\backslash J_{2} = \{a_1,a_2,\ldots,a_r\}$, $J_{2} \backslash J_{1} = \{b_1,b_2,\ldots,b_r\}$, 
$a_i < a_{i+1}$, $b_i < b_{i+1}$, $i \in [r-1]$, either $a_1 < b_1<a_2<b_2 < \cdots < b_{r-1} <a_r < b_r$ 
or $b_1<a_1<b_2<a_2 < \ldots < a_{r-1}<b_r<a_r$. 
In particular, if $J_1=J_2$, the sets are $0$-interlacing. 
We call $J_1, J_2$ interlacing if they are $i$-interlacing for some $i \in \ZZ_{\ge 0}$. 

A profile $P\in\mathcal P_{k,n}$ with $k$-subsets $J_1,\dots, J_m$ is called {\bf interlacing} if 
for any $i\ne j$, $J_i$ and $J_j$ are interlacing. 
\end{definition}

The profile $P_M$ from Example~\ref{ex:profile-root} is interlacing, e.g., 
the first row and the second row of $P_M$ are $2$-interlacing, 
the third row and the fourth row of $P_M$ are $1$-interlacing. 

\begin{remark}
The definition of $i$-interlacing we use in this paper is called tightly $i$-interlacing in \cite{BBG}.
\end{remark}

\begin{remark} 
Observe that there are profiles where any two successive rows and the first and the last row are interlacing but 
where the profile is not interlacing. An example is the profile 
$\begin{small}
\begin{array}{c}1 \hskip.2cm 8\ 15 \ 20 \\ \hline 2 \ 11 \ 18 \ 23 \\
 \hline  5 \ 17 \ 21 \ 30 \\ \hline 4 \ 13 \ 19 \ 29 \end{array} 
\end{small}$
where rows 1 and 3 are not interlacing. 
\end{remark}

Recall from the introduction that if $P \in \mathcal{P}_{k,n}$ is a profile with $m$ rows, we write 
$P=(P_{ij})$, $1\le i\le m$, $1\le j\le k$, for the profile $P$ with all rows written in increasing order.

\begin{definition}\label{def:canonical-real-profile}
Let $P$ be a profile in $\mathcal{P}_{k,n}$ with $m$ rows. 

\begin{enumerate}
\item $P$ is called {\bf weakly column decreasing} if 
for every $j\in[k]$ and for every $i\in[m-1]$, we have $P_{i,j} \ge P_{i+1,j}$. 
\item $P$ is called {\bf canonical}, if it is weakly column decreasing and if for all $j\in[2,k]$, 
$P_{m,j} \ge P_{1, j-1}$.
\item If $P$ is canonical and $q(P)=2$, we say it is {\bf real}. We write 
$C_{k,n}^{\rm re} \subset \mathcal{P}_{k,n}$ for the set of all canonical profiles $P$ 
with $q(P)=2$.
\end{enumerate}

\end{definition}

\begin{example}
The profile $P=\begin{small}\ffrac{359}{258}{147}{146}\end{small}$ 
of $\mathcal P_{3,9}$ is canonical and $P\in C_{3,9}^{\rm re}$. 
\end{example}

\begin{definition}\label{def:cyclic-perm}
Let $P$ be a profile in ${\mathcal P}_{k,n}$ with $m$ rows. 
Then a profile $P'$ is a {\bf cyclic permutation} of $P$ if $P'$ is obtained by cyclically 
permuting the rows of $P$. 
\end{definition}

\begin{proposition} \label{prop:canonical-interlace} 
Let $P \in \mathcal{P}_{k,n}$ be a weakly column decreasing profile. 
Then $P$ is canonical if and only if any two rows of $P$ are interlacing.
\end{proposition}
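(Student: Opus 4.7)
The plan is to first translate the interlacing condition on two $k$-subsets into a column-by-column inequality, and then to derive both implications by short chases of inequalities using the hypotheses of Definition~\ref{def:canonical-real-profile}. Concretely, I would first establish the following reformulation as a short standalone lemma: two $k$-subsets $I=\{i_1<\cdots<i_k\}$ and $J=\{j_1<\cdots<j_k\}$ of $[n]$ are interlacing if and only if
\[
i_{a-1}\le j_a \quad\text{and}\quad j_{a-1}\le i_a \qquad\text{for every } a\in[2,k].
\]
The cleanest proof is to consider the rank function $f(m)=|I\cap[1,m]|-|J\cap[1,m]|$, which starts and ends at $0$ and changes by $\pm 1$ on the symmetric difference $I\triangle J$; interlacing in the sense of the paper is equivalent to $|f(m)|\le 1$ for all $m$, which is in turn equivalent to the displayed column inequalities (a failure $j_{a-1}>i_a$ forces $f(i_a)\ge 2$, and symmetrically).

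For the forward implication, I would fix rows $i<i'$ of $P$ and verify the two inequalities of the key lemma for every $a\in[2,k]$. The first, $P_{i',a-1}\le P_{i,a}$, is immediate from weakly column decreasing together with $P_{i,a-1}<P_{i,a}$ within row $i$. The second, $P_{i,a-1}\le P_{i',a}$, is where the canonical hypothesis is used in a telescoping chain:
\[
P_{i',a}\;\ge\;P_{m,a}\;\ge\;P_{1,a-1}\;\ge\;P_{i,a-1},
\]
using weakly column decreasing for the first and third inequalities and the canonical condition $P_{m,a}\ge P_{1,a-1}$ for the middle one. The key lemma then yields that rows $i$ and $i'$ are interlacing.

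For the converse, I would specialise the interlacing hypothesis to the extreme pair: rows $1$ and $m$. The key lemma applied to these two rows produces $P_{1,a-1}\le P_{m,a}$ (together with $P_{m,a-1}\le P_{1,a}$, which is already implied by weakly column decreasing and therefore carries no extra content), and the first of these is exactly the canonical inequality of Definition~\ref{def:canonical-real-profile}(2). The only non-routine step in the whole argument is the reformulation lemma; the remainder is a bookkeeping chase of inequalities. I do not anticipate a real obstacle beyond writing out the rank-function argument carefully (or an equivalent direct induction on $a$).
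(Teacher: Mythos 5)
Your overall strategy is a genuinely different route from the paper's (which argues directly on the positions of the non-common entries for one implication and runs an ``active set'' induction for the other), and both of your inequality chases are correct. The converse implication is fine as you describe it, because it only uses the true direction of your lemma: interlacing does imply $i_{a-1}\le j_a$ and $j_{a-1}\le i_a$. But the key reformulation lemma is \emph{false as stated}, and this breaks the forward implication. The two families of column inequalities are indeed equivalent to $|f(m)|\le 1$ for all $m$, but that is strictly weaker than the paper's notion of interlacing, which (Definition 3.1, and Remark 3.3: this is the \emph{tight} version) requires the elements of $I\setminus J$ and $J\setminus I$ to alternate strictly. Take $I=\{1,4\}$, $J=\{2,3\}$: then $i_1=1\le j_2=3$ and $j_1=2\le i_2=4$, and $f$ takes the values $1,0,-1,0$, so $|f|\le 1$; yet $I\setminus J=\{1,4\}$ and $J\setminus I=\{2,3\}$ satisfy neither $a_1<b_1<a_2<b_2$ nor $b_1<a_1<b_2<a_2$, so the pair is not interlacing. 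The issue is that $|f|\le 1$ allows $f$ to change sign after returning to $0$, i.e.\ two consecutive elements of the symmetric difference may lie on the same side. Tight interlacing is equivalent to $f$ taking values only in $\{0,1\}$ or only in $\{-1,0\}$, not to $|f|\le 1$.

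The gap is repairable precisely because of the hypothesis you have not used at that point, namely weak column decrease. For rows $i<i'$ it gives $P_{i,j}\ge P_{i',j}$ for every $j$, hence $|\mathrm{row}_i\cap[1,m]|\le |\mathrm{row}_{i'}\cap[1,m]|$ for every $m$, i.e.\ $f\le 0$ identically (taking $I=\mathrm{row}_i$, $J=\mathrm{row}_{i'}$ in your rank function). Combined with $f\ge -1$, which is exactly what your telescoping chain $P_{i',a}\ge P_{m,a}\ge P_{1,a-1}\ge P_{i,a-1}$ delivers, you get $f\in\{-1,0\}$, and this \emph{does} force the jumps of $f$ to alternate strictly, hence tight interlacing. (Your other family $P_{i',a-1}\le P_{i,a}$ is then automatic from $f\le 0$ and carries no extra content.) So the fix is to restate the lemma for pairs of $k$-subsets one of which dominates the other componentwise --- or to record $f\le 0$ explicitly before invoking it --- after which your argument goes through and is arguably cleaner than the paper's case analysis.
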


\begin{proof}
For $m=1$  there is nothing to show. So assume $m\ge 2$. \\
($\Rightarrow$) Suppose that $P$ is canonical and $P$ has two or more rows. Write $P=(P_{ij})_{1\le i\le m, 1\le j\le k}$, $P_{ij} < P_{i,j+1}$, $i \in [m]$, $j \in [k-1]$. Consider any two rows
\[
\begin{array}{ccc}
P_{i_1,1} & \cdots & P_{i_1, k} \\
P_{i_2,1} & \cdots & P_{i_2, k}
\end{array}
\]
with $i_1<i_2$. Let $I_i:=\{P_{i_j,1},\ldots, P_{i_j,k}\}$ for $j=1,2$. These are both $k$-subsets of $[n]$ as $P$ is a profile.
Note that $|I_2\setminus I_1|=|I_1\setminus I_2|=:\ell$ for some $\ell\ge 0$. 

To check the interlacing property, we consider $\widetilde{I_i}:=I_i\setminus (I_1\cap I_2)$ for $i=1,2$. 
So we can write $\widetilde{I_1}=\{P_{i_1,j_1},P_{i_1,j_2}, \dots, P_{i_1,j_{\ell}} \}$ and 
$\widetilde{I_2}=\{P_{i_2,j_1'},P_{i_2,j_2'}, \dots, P_{i_2, j_{\ell}'} \}$ for 
$1\le j_1<j_2<\dots <j_{\ell}\le k$ and $1\le j_1'<j_2'<\dots <j_{\ell}'\le k$. 
Since $P$ is canonical, $j_r' \le j_r$ for $r=1,\dots,\ell$ and $j_r< j'_{r+1}$ for $r=1,\dots, \ell-1$.

The $\ell$-subsets $\widetilde{I_1}$ and $\widetilde{I_2}$ are disjoint subsets by construction. 
We need to see that they are $\ell$-interlacing, i.e., that 
$P_{i_2,j_1'}<P_{i_1,j_1}<P_{i_2,j_2'}<\dots<P_{i_1, j_{\ell}}$.

In case $j_r'=j_r$, we have $P_{i_2,j_r'}\le P_{i_1,j_r}$ by the weakly column decreasing condition and 
$P_{i_1, j_r}\le P_{i_2, j'_{r+1}}$ since $P$ is canonical. And 
since $\widetilde{I_1}$ and $\widetilde{I_2}$ are disjoint, 
both inequalities have to be strict.

So assume that $j_r'<j_r$. Then $P_{i_1, j_r'}<P_{i_1, j_r}$. By the weakly column decreasing condition, we have $P_{i_2, j_r'} \le P_{i_1,j_r'}$. Therefore $P_{i_2, j_r'}<P_{i_1, j_r}$. Furthermore, $P_{i_1, j_r}\le P_{i_2, j_{r+1}'}$ as $P$ is canonical and so, since 
$\widetilde{I_1}$ and $\widetilde{I_2}$ are disjoint, we have $P_{i_1,j_r}< P_{i_2, j_{r+1}'}$.

\vskip.5cm

($\Leftarrow$) 
Let $P\in \mathcal{P}_{k,n}$ with $m\ge 2$ rows. Write 
$P=(P_{ij})_{1\le i\le m, 1\le j\le k}$, $P_{ij} < P_{i,j+1}$, $i \in [m]$, $j \in [k-1]$. For some $1\le j<k$, 
consider the four elements in rows $1$ and $m$ of $P$ and in the two successive columns $j,j+1$:
\[
\begin{array}{cc}
P_{1,j} &  P_{1,j+1} \\
\ & \ \\
P_{m,j} & P_{m,j+1}. 
\end{array}
\]
We have to show that $P_{1,j}\le P_{m,j+1}$. Denote by $I_1, I_m$ the $k$-subsets of the first and last row 
respectively. 

\noindent
a) If $P_{1,j}=P_{m,j}$, then we have $P_{1,j}<P_{m,j+1}$ since 
$P_{1,j}=P_{m,j}<P_{m,j+1}$. 

\noindent
b) Let $P_{1,j+1}=P_{m,j+1}$. Then $P_{1,j}<P_{m,j+1}$ as $P_{m,j+1}=P_{1,j+1}>P_{1,j}$. 

\noindent 
So we have to consider the cases where $P_{1,j}\ne P_{mj}$ and $P_{1,j+1}\ne P_{m,j+1}$. 
Since $P$ is weakly column decreasing, this means that $P_{1,j}>P_{m,j}$ 
and $P_{1,j+1}>P_{m,j+1}$. 

\noindent
c) Let $P_{1,j}>P_{m,j}$ and $P_{1,j+1}>P_{m,j+1}$. Suppose that $P_{1,j}>P_{m,j+1}$. If $j=1$, 
then the first row and the last row are not interlacing. This is a contradiction. Therefore $j>1$. 
Now we have a set $\{P_{m,j}, P_{m,j+1}\}$ in which every element is smaller than $P_{1,j}$. 
We define this set $\{P_{m,j}, P_{m,j+1}\}$ to be the {\bf active set} and the element 
$P_{1,j}$ the {\bf active maximum}. 
We now check the elements $P_{1,j-1}, P_{1,j-2}, \ldots, P_{1,1}$ one by one as follows: 
we compare each of these elements with the smallest element in the active set and with further elements 
of row $m$. 
Denote the minimum in the active set by $A$. 
In the beginning, we have $A=P_{m,j}$. 
\begin{itemize}
\item 
If $P_{1,j-1}<A$ and $P_{1,j-1} \ne P_{m,j-1}$, then the elements in the active set 
$\{P_{m,j}, P_{m,j+1}\}$ lie between two consecutive numbers 
$P_{1,j-1}, P_{1,j}$ in $I_1 \setminus I_m$. 
This contradicts the assumption that the first and the last row are interlacing. 

\item 
If $P_{1,j-1}<A$ and $P_{1,j-1} = P_{m,j-1}$, we keep the active set and the active maximum and 
continue with $P_{1,j-2}$. 

\item 
If $P_{1,j-1}=A$, then we have $P_{m,j-1}<A=P_{1,j-1}$. 
We replace the active set by removing $A$ and adding $P_{m,j-1}$. As $P_{1,j}$ is still larger than the 
elements of the new active set, we keep the active maximum. We then continue with $P_{1,j-2}$. 

\item 
If $P_{1,j-1}>A$ and $P_{1,j-1} \not\in I_1 \cap I_m$, then $P_{m,j-1}<P_{m,j} = A<P_{1,j-1}$. 
We replace the active set by $\{P_{m,j-1}, P_{m,j}\} = \{P_{m,j-1}, A\}$ and replace the active 
maximum by $P_{1,j-1}$. We then continue with $P_{1,j-2}$. 

\item 
If $P_{1,j-1}>A$ and $P_{1,j-1} \in I_1 \cap I_m$, then $P_{m,j-1}<P_{m,j} = A<P_{1,j-1}<P_{1,j}$. 
We change the active set by removing $P_{1,j-1}$ (if it is contained in it - otherwise we do not remove 
anything from the active set) and by 
adding $P_{m,j-1}$. We keep the active maximum 
and continue with $P_{1,j-2}$. 
\end{itemize}

Continuing this procedure, the active set always has two or more elements. By construction, 
every element in the active set is smaller than the active maximum. 
We have one of the following situations which both contradict the assumption that $I_1, I_m$ are interlacing: 
\begin{itemize}
\item 
At some step, the elements in the active set lie between two consecutive elements in $I_1 \setminus I_m$. 

\item 
In the last step, after we check $P_{1,1}$, the elements in the active set are smaller than the 
active maximum element and this active maximum is the smallest element in $I_1 \setminus I_m$. 
\end{itemize}

We thus get $P_{1,j}\le P_{m,j+1}$ as claimed. 
\end{proof}

\section{Rank 2 rigid indecomposable modules in ${\rm CM}(B_{k,n})$} 
\label{sec:rk2-rigid}

In this section, we give a lower bound for the number of indecomposable modules of rank 
$2$ corresponding to roots in ${\rm CM}(B_{k,n})$.

By \cite[Section 5]{BBG}, every rank $2$ indecomposable module in ${\rm CM}(B_{k,n})$ 
corresponding to a real root is of the form $L_I\mid L_J$ where $I$ and $J$ are 
3-interlacing. 
From that we can deduce that there are at most $2 {n \choose 6} {n-6 \choose k-3}$ such rigid 
modules (as they come in pairs).

Recall from Subsection~\ref{ssec:subspace} that we can draw modules as lattice diagrams or as collections of rims, 
as in 
Figure~\ref{fig:subspace-poset-simplify}. For simplicity, we often do not write the numbers (the 
dimensions of the vector spaces) in the regions of the lattice diagrams. 

By abuse of notation, we will call a profile or the lattice diagram of a rank $m$ module 
a {\bf profile of rank $m$} or a {\bf lattice diagram of rank $m$}. 

Consider a lattice diagram of rank 2: it has two rims which may meet several times (in particular, they 
will do so if the module is indecomposable). Similarly, if we consider a lattice diagram of higher 
rank, any two successive rims may meet several times.

\begin{definition}  
Consider a rank $m$ module with filtration $L_{I_1}\mid L_{I_2}\mid \dots\mid L_{I_m}$. 
Let $R_1$ and $R_2$ be the rims of $L_{I_j}$ and $L_{I_{j+1}}$ for some $1\le j<m$.  

We call the non-empty regions formed by the two rims, between any two successive 
meeting points (reducing modulo $n$, if necessary) the {\bf quasi-boxes between} the two rims. 
In particular, we say that $R_1$ and $R_2$ {\bf form $n$ quasi-boxes} if there are $n$ quasi-boxes between them. 
If a quasi-box is of rectangular shape, we call it a {\bf box}. 

We define the {\bf size (or right-size) of a (quasi-)box} to be the sum of the sizes of the 
intervals in the part of $I_j$ corresponding to this box.

The {\bf co-size (or left-size) of a quasi-box} is the sum of the sizes of the intervals of 
$I_j^c$ corresponding to this box. 
\end{definition}

\begin{example}\label{ex:boxes}
Figure~\ref{fig:branching} shows an example for $(k,n)=(7,16)$. 
The $7$-subsets forming the rims are $I=\{4,5,8,10,13,14,16\}$ 
and $J=\{1,2,6,7 ,11,13,15\}$. 
There are four quasi-boxes, two of them are boxes. 
To illustrate size and co-size: 
The quasi-box between branching points 
$5$ and $10$ has size 2 and co-size 3, the quasi-box between branching points $10$ and $14$ 
has size 2 and co-size 2. 
\end{example}

\begin{figure}
\includegraphics[scale=.7]{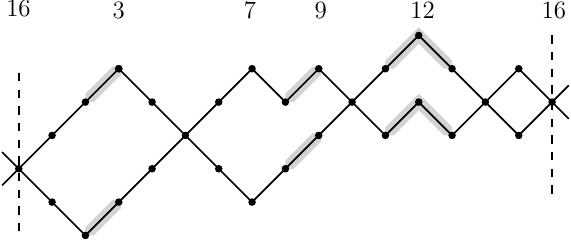}
\caption{Two rims formed by $7$-subsets of $n=16$, with branching points $\{5, 10,14,16\}$. 
Parallel segments are indicated by the shading.} 
\label{fig:branching} 
\end{figure}

The map from indecomposable modules to elements of the root system does not depend on the order 
of the factors in the filtration, it only depends on the elements of the $k$-subsets in its profile. 
We thus give this set a name.  
Let $P$ be the profile of an indecomposable module. The {\bf content of $P$}, ${\rm con}(P)$, 
is the multiset consisting of the elements of all $k$-subsets in $P$, for example, 
\[
{\rm con}
\left( \begin{small}\ffrac{147}{258}{358}{369} 
\end{small}\right)
=\{1,2,3,3,4,5,5,6,7,8,8,9\}.
\] 

If we have a rank 2 module $M=L_I\mid L_J$, adding a constant number to all entries in the involved 
$k$-subsets keeps some properties of the module (e.g., indecomposabilty, 
rigidness) but changes others, e.g., the content of $P_M$ (i.e., the labels appearing in 
$I\cup J$) will change in general.

We introduce operations on the profiles. 
The ``collapse'' operation removes parallel lines from profiles and the $a$-shift preserves the content as 
we will need to study different indecomposable modules which give rise to the same root.

\begin{definition}(Collapsing profiles and profile $a$-shift)\label{def:collapse-a-shift}

\begin{enumerate}

\item Let $P=I_1\mid I_2$ be a rank 2 profile in $\mathcal P_{k,n}$. 
Let $n'=n-| I_1^c\cap I_2^c|  -|I_1\cap I_2|$ and 
$\psi: [n]\setminus ((I_1^c\cap I_2^c) \cup (I_1\cap I_2))
\to [n']$ be the bijection respecting the order on these sets. We say that $\psi(I_1) \mid \psi (I_2)$ is the {\bf collapse}
of $I_1\mid I_2$.

\item Let $P=I_1\mid I_2\in \mathcal P_{k,n}$ be a rank 2 profile and $a\in[n]$. For $j=1,2$, 
define $I_j'$ as the set $ \psi^{-1}(\psi(I_j) + a \pmod {n'}) \cup (I_1 \cap I_2 ) $.  
Then $P' = I'_1\mid I'_2$ is called the {\bf $a$-shift } of $P$.
\end{enumerate}
 
\end{definition}

Observe that the elements of $I_1\cap I_2$ and the elements of 
$I_1^c\cap I_2^c$ are fixed under an $a$-shift for any $a\in [n]$. So for elements in $I_1\cap I_2$, the $a$-shift 
does not do anything whereas the elements in only one of the $k$-subsets might get moved to the other.

\begin{figure}
\[
\includegraphics[scale=.7]{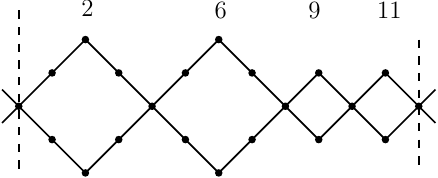}
\]
\caption{The collapse of the profile $I_1\mid I_2$ from Figure~\ref{fig:branching}.}\label{fig:collapsed}
\end{figure}

\begin{figure}
\includegraphics[scale=.7]{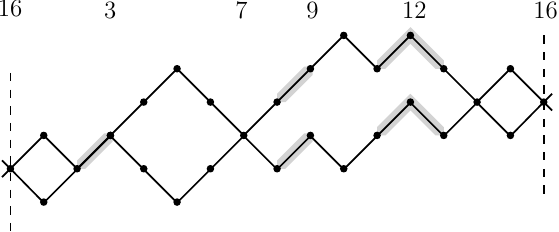}
\caption{The 2-shift of the profile from Figure~\ref{fig:branching}}\label{fig:2-shift}
\end{figure}

\begin{example} \label{ex:collapse-shift}
Let $P=I_1\mid I_2$ with $I_1=\{4,5,8,10,13,14,16\}$ and $I_2=\{1,2,6,7,11,13,15\}$. The collapse 
of $I_1\mid I_2$ is $\{3,4,7,8,10,12\}\mid\{1,2,5,6,9,11\}$, see Figure~\ref{fig:collapsed}. 
The $2$-shift $I_1\mid I_2$ is the profile 
$\{2,6,7,11,13,14,16\}\mid \{1,4,5,8,10,13,15\}$ it is shown in Figure~\ref{fig:2-shift}. 
\end{example}

The following is immediate from the definition of $a$-shifts. 
\begin{lemma}
Let $P$ be a rank $2$ profile in 
$\mathcal P_{k,n}$ and let $P'$ be an $a$-shift 
of $P$. Then ${\rm con}(P')={\rm con}(P)$. In particular, the root of $P$ is the same as the 
root of $P'$. Furthermore, if the $k$-subsets of $P$ are $r$-interlacing, then the $k$-subsets of $P'$ are 
also $r$-interlacing. 
\end{lemma}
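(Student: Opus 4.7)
The plan is to read off both assertions directly from the definition of the $a$-shift together with the fact that the collapse map $\psi$ is order-preserving. First I would decompose the ground set as $[n] = (I_1 \cap I_2) \sqcup (I_1^c \cap I_2^c) \sqcup (I_1 \triangle I_2)$. By construction $\psi$ restricts to an order-preserving bijection $I_1 \triangle I_2 \to [n']$ that sends $I_1 \setminus I_2$ to $\psi(I_1)$ and $I_2 \setminus I_1$ to $\psi(I_2)$, so $\psi(I_1)$ and $\psi(I_2)$ partition $[n']$.

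Since $\psi(I_1)$ and $\psi(I_2)$ partition $[n']$, their cyclic translates $\psi(I_1) + a$ and $\psi(I_2) + a$ also partition $[n']$. Pulling back by $\psi^{-1}$, the sets $\psi^{-1}(\psi(I_j) + a)$ for $j = 1, 2$ are disjoint and their union equals $I_1 \triangle I_2$. By Definition~\ref{def:collapse-a-shift}(2), this gives $I'_1 \cap I'_2 = I_1 \cap I_2$ and $I'_1 \triangle I'_2 = I_1 \triangle I_2$. Hence the multiset of all entries of $P'$, counted with multiplicity, is the same as for $P$, proving $\mathrm{con}(P') = \mathrm{con}(P)$. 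The root claim is then automatic, since $\varphi(P)$ is determined by the multiplicity vector $x(P) \in \ZZ^n$ from Section~\ref{ssec:roots-Jkn}, and $x(P)$ is precisely the characteristic vector of $\mathrm{con}(P)$.

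For the interlacing assertion, suppose $I_1, I_2$ are $r$-interlacing, so $|I_1 \triangle I_2| = 2r$ and $n' = 2r$, with $\psi(I_1)$ and $\psi(I_2)$ disjoint $r$-subsets partitioning $[2r]$. Because $\psi$ is order-preserving, the $r$-interlacing of $I_1, I_2$ transfers to the $r$-interlacing of $\psi(I_1), \psi(I_2)$ in $[2r]$. When two $r$-subsets partition $[2r]$, linear $r$-interlacing is equivalent to the two sets alternating around the cyclic group $\ZZ/2r\ZZ$: the two allowed linear patterns in the definition correspond precisely to the two ways of cutting the cyclic arrangement open. The translation $x \mapsto x + a \pmod{n'}$ preserves this cyclic alternation, so $\psi(I_1) + a$ and $\psi(I_2) + a$ are again $r$-interlacing in $[2r]$, and applying the order-preserving $\psi^{-1}$ yields $r$-interlacing subsets $I'_1 \setminus I'_2$ and $I'_2 \setminus I'_1$ in $[n]$, as required.

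There is no real obstacle; the only point deserving attention is the equivalence of linear $r$-interlacing with cyclic alternation on a ground set of size $2r$ that is partitioned by the two subsets, and that is a direct inspection of the two cases in the definition.
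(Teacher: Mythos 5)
Your proof is correct, and it is essentially the argument the paper has in mind: the paper states this lemma without proof as ``immediate from the definition of $a$-shifts,'' and your write-up simply makes that definitional unwinding explicit (the shift fixes $I_1\cap I_2$ and permutes $I_1\triangle I_2$ within itself, so content and hence the root are preserved, and interlacing is a cyclic alternation condition invariant under translation modulo $n'$). The one point you rightly flag --- that linear $r$-interlacing of two $r$-subsets partitioning $[2r]$ is the same as cyclic alternation --- is handled correctly.
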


Recall that we always assume $k\le n/2$.

\begin{lemma}\label{lm:rk2-condition}
Let $M$ be indecomposable of rank $2$ module in ${\rm CM}(B_{k,n})$ and assume that $M$ has a filtration $L_I\mid L_J$. 
Then the rims of $L_I$ and $L_J$ form at least three quasi-boxes. 
Furthermore, $q(M) \in \{2,0,-2,\ldots, 8-2k\}$ and $\varphi(M)$ is a root of $J_{k,n}$.
\end{lemma}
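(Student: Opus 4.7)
The plan is to establish the three assertions in sequence: the quasi-box count, the value of $q(M)$, and finally that $\varphi(M)$ is a root.

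For the quasi-box count, I would analyze the subspace configuration of $M$ introduced in Subsection~\ref{ssec:subspace}. Since $M$ has rank $2$, its lattice diagram admits only three possible multiplicities: $0$ above both rims, $1$ inside each quasi-box, and $2$ below both rims. The associated poset is therefore a star consisting of a central $2$-dimensional vertex and $r$ one-dimensional leaves (one per quasi-box), where $r = |I\setminus J| = |J\setminus I|$ is the interlacing number of $I$ and $J$. If $r = 0$, then $I = J$ and rigidity of rank-$1$ modules (\cite[Proposition 5.6]{JKS}) forces $M \cong L_I \oplus L_I$. If $r = 1$, any complement of the single one-dimensional subspace inside the two-dimensional space splits the representation. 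If $r = 2$, either the two one-dimensional subspaces coincide (and again a complement splits off a direct summand) or they span the two-dimensional space, in which case the representation decomposes as their direct sum. Each case contradicts indecomposability of $M$, hence $r \ge 3$ and the rims form at least three quasi-boxes.

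For the value of $q(M)$, I would substitute $x(M)$ directly into the quadratic form of Subsection~\ref{ssec:roots-Jkn}. Using $|I\cap J| = k - r$ and $|I\triangle J| = 2r$, one has $x_i = 2$ for $i \in I\cap J$, $x_i = 1$ for $i \in I\triangle J$, and $x_i = 0$ otherwise, giving $\sum_i x_i = 2k$ and $\sum_i x_i^2 = 4(k-r) + 2r = 4k - 2r$. Hence
\begin{equation*}
q(M) = (4k - 2r) + \tfrac{2-k}{k^{2}}(2k)^{2} = 4k - 2r + 4(2 - k) = 8 - 2r.
\end{equation*}
Together with $3 \le r \le k$ (the upper bound from $r \le |I| = k$), this gives $q(M) \in \{2, 0, -2, \ldots, 8 - 2k\}$.

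For the final claim, I would first verify that $\varphi(M)$ is a positive integer combination of simple roots with connected support in $J_{k,n}$. Writing $\varphi(M) = \sum_{j=1}^{n-1} c_j \alpha_j + d\beta$, one computes $d = 2$ and recursively that the sequence $(c_j)$ is unimodal with maximum at $j = k$, with support equal to an interval $[j_0, \max(I\cup J) - 1]$ containing $k$; attaching $\beta$ at the vertex $k$ then yields a connected subgraph. When $q(\varphi(M)) = 2$, the plan is to exhibit an explicit reflection sequence reducing $\varphi(M)$ to a simple root, certifying $\varphi(M)$ as a positive real root. When $q(\varphi(M)) \le 0$, I would instead use reflections $s_{\alpha_i}$ at indices where $\langle\varphi(M),\alpha_i^{\vee}\rangle > 0$ to move $\varphi(M)$ into the fundamental imaginary chamber $\{\alpha : \langle \alpha, \alpha_i^{\vee} \rangle \le 0 \text{ for all } i\}$, certifying it as a positive imaginary root.

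The main obstacle is this last step: for general Kac-Moody type $J_{k,n}$ (in particular in the hyperbolic regime), positivity, connected support, and $q \le 2$ are necessary but not sufficient to be a root, so one must supply an explicit reflection sequence. The cyclic interlacing pattern of $I$ and $J$ is expected to dictate such a sequence, with reflections indexed by the cyclic gaps between consecutive elements of $I \triangle J$ peeling off layers of $\varphi(M)$ in a controlled way until one reaches either a simple root or the imaginary chamber.
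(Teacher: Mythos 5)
Your treatment of the first two assertions is essentially sound and close in spirit to the paper's. For the quasi-box count the paper simply cites \cite[Remark 3.2]{BBG}, whereas you supply the star-shaped subspace-configuration argument directly; that is acceptable, but note one imprecision: the number of quasi-boxes equals the number of one-dimensional leaves, and this is in general strictly smaller than $r=|I\setminus J|$ (in Example~\ref{ex:boxes} there are four quasi-boxes while $|I\setminus J|=6$). Your conclusions survive only because $|I\setminus J|$ is bounded below by the number of quasi-boxes, so ``at least three quasi-boxes'' still forces $r\ge 3$; with that, your computation $q(M)=8-2r$ for $3\le r\le k$ agrees with the paper's count using $a=k-r$ twos and $b=2r$ ones in $x(M)$.

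The genuine gap is in the third assertion, and you have in effect conceded it: no reflection sequence is produced in the real case, and no conjugating element into the fundamental imaginary chamber is produced in the other cases, so ``$\varphi(M)$ is a root'' is not actually proved. The missing idea is that the ``cyclic interlacing pattern'' is irrelevant here: since each $s_i$, $i\in[n-1]$, acts on $x=(x_1,\dots,x_n)$ by transposing $x_i$ and $x_{i+1}$, the vector $x(M)$ is Weyl-conjugate to the sorted vector $(2,\dots,2,1,\dots,1,0,\dots,0)$ with $k-r$ twos and $2r$ ones, which reduces the whole problem to a single explicit element for each $r$. For $r=3$ one gets $\varphi(M)=2\beta+\alpha_{k-2}+2\alpha_{k-1}+3\alpha_k+2\alpha_{k+1}+\alpha_{k+2}$ and an explicit word in ten simple reflections carries it to $\beta$, so it is a real root. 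For $r>3$ the sorted $\varphi(M)$ satisfies $\langle\varphi(M),\alpha_i^{\vee}\rangle\le 0$ for all $i$ (the only nonzero pairings being $-1$ at $\alpha_a$ and $\alpha_{a+b}$, and $4-r\le 0$ at $\beta$) and has connected support, so it lies in the set $K$ of \cite[Theorem 5.4]{Kac} and is an imaginary root. Without first sorting, your plan of reflecting a general $\varphi(M)$ into the imaginary chamber, or of finding a pattern-dependent reduction to a simple root, is exactly the step that remains unproved.
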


\begin{proof}
That the two rims have to form at least 
three quasi-boxes for indecomposability follows from the subspace configurations 
(cf.\ Remark 3.2 in~\cite{BBG}). Let $l$ be the number of quasi-boxes. 

Let $x(M)=x(P_M)=(x_1,\dots, x_n)$ be the vector of the multiplicities (Section~\ref{ssec:roots-Jkn}).  

Since $M$ has rank 2, the sum $\sum_{i=1}^nx_i$ is equal to $2k$ and $0\le x_i\le 2$ for all $i$. 
Also, since $I$ and $J$ form 
at least three quasi-boxes, we have 
$|I\setminus J|=|J\setminus I|\ge 3$ and so the number of $x_i$'s which are equal to 2 is at most $k-3$. 
But then $\sum x_i^2\le 4(k-3)+6$. 

Denote by $a$ (resp.\ $b$) the number of $2$'s (resp.\ $1$'s) in $\{x_i : i \in [n]\}$. 
Note that $b$ is even and $b\ge 6$ since there are at least three quasi-boxes. 
Then $\sum_i x_i^2=4a+b \le 4k-6$ and $\sum_i x_i=2a+b=2k$. 
Therefore $0 \le a \le k-3$, $b=2k-2a$. Hence 
$q(M) = \sum_{i=1}^n x_i^2 - 4(k-2) \in \{2, 0, -2, \ldots, 8-2k\}$.

First we consider the case when $q(M)=2$. That is, $a=k-3$ and $b=6$. We denote $x(M)=x(\varphi(M))=x=(x_1, \ldots, x_n)$. Up to Weyl group action, we may assume that 
$x(M) = (2,\ldots, 2, 1, \ldots, 1, 0, \ldots, 0)$ (see Lemma \ref{lem:action of Weyl group on root system}), where the number of $2$'s is $k-3$ and the number of $1$'s is $6$. Therefore 
\begin{align*} 
\varphi(M) = 2 \beta + \alpha_{k-2} + 2 \alpha_{k-1} + 3\alpha_k + 2 \alpha_{k+1} +  \alpha_{k+2}. 
\end{align*} 
By Lemma \ref{lem:action of Weyl group on root system}, we have that $s_ks_{k+1}s_{k+2}s_{k-1}s_{k}s_{k+1}s_{k-2}s_{k-1}s_{k}s_{\beta}(\varphi(M))=\beta$ which is a simple root. Therefore $\varphi(M)$ is a real root of $J_{k,n}$ by definition, 
\cite[Section 5.1]{Kac}.

Now we consider the case when $q(M)<2$. That is $a<k-3$ and $b=2k-2a$. Denote by 
$Q^+ = \oplus_{i=1}^n \mathbb{Z}_{\ge 0}\alpha_i$ the positive part of the root lattice of $J_{k,n}$ where 
we write $\alpha_n$ for $\beta$ for the moment. For $\alpha \in Q^+$, denote by ${\rm supp}(\alpha)$ 
the support of $\alpha$, i.e., the subdiagram of $J_{k,n}$ corresponding to the simple roots having 
non-zero coefficients in $\alpha$. 
Let $K = \{\alpha \in Q^+ \backslash \{0\} \mid  \langle \alpha, \alpha_i^{\vee} \rangle \le 0, i \in [n] \text{ and ${\rm supp}(\alpha)$ is connected} \}$. 
By \cite[Theorem 5.4]{Kac}, the set of all positive imaginary roots of $J_{k,n}$ is equal to 
$\cup_{w \in W} w(K)$. 
Up to Weyl group action, we may assume that $x(M) = (2,\ldots, 2, 1, \ldots, 1, 0, \ldots, 0)$ (see Lemma \ref{lem:action of Weyl group on root system}), 
where the number of $2$'s is $a$ and the number of $1$'s is $b$. Therefore 
\begin{align*}
\scalemath{0.9}{
\varphi(M) = 2 \beta + \alpha_{a+1} + 2 \alpha_{a+2} + \cdots + (k-a)\alpha_k + (k-a-1) \alpha_{k+1} + \cdots + 2 \alpha_{a+b-2} + \alpha_{a+b-1}. }
\end{align*}
We have that 
$\langle \varphi(M), \alpha_{a}^{\vee} \rangle = \langle \varphi(M), \alpha_{a+b}^{\vee} \rangle = -1$, 
$\langle \varphi(M), \alpha_{i}^{\vee} \rangle = 0$ for $i \in [n-1]\setminus \{a,a+b\}$ and 
$\langle \varphi(M), \beta^{\vee} \rangle = 4 - (k-a) \le 0$ since $a < k-3$. 
Moreover, ${\rm supp}(\varphi(M))$ is connected. Therefore $\varphi(M)$ is an imaginary root.
\end{proof}

Note that if $a=0$, $b=2k$ and $q(M)=8-2k$, whereas if 
$a=k-3$, we have $b=6$ and $q(M)=2$. 
So when $k=3$, all indecomposable rank 2 modules have $q(M)=2$ and for 
$k>3$, we can find rank 2 modules with the lower bound $q(M)=8-2k$ by taking two 
$k$-interlacing subsets.

\begin{theorem} \label{thm:rank2-bound}
Consider rank $2$ modules in ${\rm CM}(B_{k,n})$. \\
(1) Assume that $M$ has filtration $L_I\mid L_J$. Then $M$ is rigid indecomposable if  
the rims of 
$L_I$ and $L_J$ form three boxes. \\
(2) The number of profiles of  
rigid indecomposable rank $2$ modules in ${\rm CM}(B_{k,n})$ 
is at least  
\begin{align*}
N_{k,n}=\sum_{r=3}^{k} ( \frac{2r}{3} \cdot p_1(r) +  2r \cdot p_2(r) + 4r \cdot p_3(r)) \cdot {n \choose 2r} {n-2r \choose k-r},
\end{align*}
where $p_i(r)$ is the number of partitions $r=r_1+r_2+r_3$ such that 
$r_1,r_2,r_3 \in \ZZ_{\ge 1}$ and $|\{r_1,r_2,r_3\}|=i$.
\end{theorem}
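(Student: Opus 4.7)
The plan is to establish (1) first and then derive (2) as a combinatorial count. For (1), assume $M$ has filtration $L_I\mid L_J$ whose rims form three rectangular boxes of sizes $r_1,r_2,r_3$, and set $r=r_1+r_2+r_3$, so $|I\setminus J|=|J\setminus I|=r$ and $|I\cap J|=k-r$. Indecomposability follows from the subspace configuration analysis of Subsection~\ref{ssec:subspace}: the configuration consists of a $2$-dimensional space with three distinct $1$-dimensional subspaces (one per box), with underlying poset the $D_4$ graph having multiplicity $2$ at the central vertex and $1$ at each of the three leaves. This dimension vector is the highest positive root of $D_4$ and corresponds to an indecomposable representation, so $M$ itself is indecomposable in $\CM(B_{k,n})$.

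For rigidity, I would combine the long exact sequences obtained by applying $\Hom(L_I,-)$ and $\Hom(-,L_J)$ to the defining sequence $0\to L_J\to M\to L_I\to 0$, together with the $2$-Calabi-Yau property of $\uCM(B_{k,n})$ (Remark~\ref{rem:rigid-tau}) and the rigidity of $L_I$ and $L_J$ (\cite[Proposition~5.6]{JKS}). The rectangular structure of the three boxes makes the relevant $\Ext^1(L_I,L_J)$ and $\Ext^1(L_J,L_I)$ controllable and lets one verify that the induced maps kill all possible obstructions, giving $\Ext^1(M,M)=0$. An alternative is to compute $\tau M$ directly and apply the Auslander-Reiten formula $\Ext^1(M,M)\cong D\,\underline{\Hom}(M,\tau M)$; the tube structure from Proposition~\ref{propos:AR-tubes} fixes the profile of $\tau M$ in a way that should exclude non-zero stable maps from $M$.

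For (2), the count decomposes according to $r\in[3,k]$. First choose $I\triangle J$ as a subset of $[n]$ of size $2r$ in $\binom{n}{2r}$ ways and $I\cap J$ as a subset of the remaining $n-2r$ slots of size $k-r$ in $\binom{n-2r}{k-r}$ ways. Having fixed $I\triangle J$ as a cyclically ordered set of $2r$ positions, a splitting producing rims that form three rectangular boxes corresponds to a cyclic $\{I,J\}$-coloring of these $2r$ positions whose arc-length pattern reads $(r_1,r_1,r_2,r_2,r_3,r_3)$, alternating in labels, for some ordered triple of positive integers summing to $r$. For each such ordered triple there are $2r$ choices of starting position for the first $I$-arc, but cyclic rotation of the triple together with the corresponding shift of the start produces the same coloring, introducing a stabilizer of order $3$. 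Classifying ordered triples by multiset type, there is $1$ of type $p_1$, $3\,p_2(r)$ of type $p_2$, and $6\,p_3(r)$ of type $p_3$; dividing by $3$ yields $\tfrac{2r}{3}\,p_1(r)+2r\,p_2(r)+4r\,p_3(r)$ colorings, and summing over $r$ gives $N_{k,n}$.

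The main obstacle is the rigidity statement in (1) for $r>3$: by Lemma~\ref{lm:rk2-condition}, $q(M)=8-2r$, so only $r=3$ yields a real root, where rigidity is already recorded in \cite[Section~5]{BBG}; for $r\ge 4$ the root is imaginary, so rigidity is not a formal consequence of the usual correspondence between rigid modules and real roots in finite type. The argument must use the precise rectangular nature of the three boxes in an essential way, since more general three-quasi-box configurations need not be rigid. Distinctness of the counted profiles in (2) is immediate: each tuple (choice of $r$, of $I\triangle J$, of $I\cap J$, and of the coloring) determines $I$ and $J$ uniquely as sets, and hence determines the profile $I\mid J$.
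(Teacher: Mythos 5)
Your part (2) is correct and is essentially the paper's count: fix $r=|I\setminus J|$, choose $I\triangle J$ and $I\cap J$, and count the alternating arc-patterns $(r_1,r_1,r_2,r_2,r_3,r_3)$ up to the order-$3$ rotation; the orbit count $\tfrac{2r}{3}p_1+2r\,p_2+4r\,p_3$ matches. (One implicit point you should make explicit: in the fully reduced situation the boxes being rectangles forces size $=$ co-size in each box, which is why the pattern is $(r_1,r_1,r_2,r_2,r_3,r_3)$ rather than something more general.) The indecomposability argument via the $D_4$ subspace configuration is also consistent with the paper's framework.

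The genuine gap is exactly where you flag it: rigidity in part (1) is asserted, not proved. Your first strategy cannot work as stated, because $\Ext^1(L_I,L_J)$ and $\Ext^1(L_J,L_I)$ are \emph{both nonzero} here ($M$ is itself a nonsplit extension, and the $2$-Calabi--Yau symmetry forces the other group to be nonzero as well); so "the relevant $\Ext^1$ groups are controllable" is precisely the content that is missing, and there is no indication of how the connecting maps would kill these classes. Your second strategy is closer, but you aim at showing $\underline{\Hom}(M,\tau M)=0$ directly, which is not how the paper proceeds and is harder than necessary. The paper's argument is: first pass to the \emph{full reduction} of $M$ (deleting all parallel segments $I\cap J$ and $I^c\cap J^c$), which preserves indecomposability and rigidity by the increase/decrease machinery of the Appendix (Corollary~\ref{cor:reducing}); for the fully reduced module with three rectangular boxes, the minimal projective cover has exactly three rank~$1$ summands, so $\Omega(M)=\tau^{-1}(M)$ has rank $3-2=1$. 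Rank~$1$ modules are rigid by \cite[Proposition~5.6]{JKS}, and rigidity is invariant under $\tau$ in the stable $2$-CY category (Remark~\ref{rem:rigid-tau}), so $M$ is rigid. This is the step that uses the rectangular shape of the boxes in an essential way (it pins down the number of summands of the projective cover, hence the rank of the syzygy), and it is absent from your proposal. Without it, part (1) --- and therefore the lower bound in part (2), which rests on (1) --- is not established.
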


\begin{proof}
Note that there are no indecomposable rank 2 modules for $k<3$ and $N_{k,n}=0$ in this case. 
So we can assume $k\ge 3$. 

To prove part (1), by Corollary~\ref{cor:reducing}, we can assume 
that $I\cap J=I^c\cap J^c=\emptyset$ (all parallel segments in the two rims can be removed). 
By Lemma~\ref{lm:rk2-condition}, the two rims have to form at least three quasi-boxes for a rank 2 module to be 
indecomposable. 

So assume  
that $L_I$ and $L_J$ form exactly three boxes. Since $I\cap J=I^c\cap J^c=\emptyset$, the syzygy $\Omega(M)$ 
is a rank 1 module, so $M$ is rigid: the projective cover of $M$ has exactly three summands and so $\Omega(M)=\tau^{-1}(M)$ is a rank 1 module. 
In particular, $M$ is rigid.

\medskip

For (2), we have to determine how many profiles with 2 rows exist with exactly three boxes, we claim this to be equal to $N_{k,r}$. 
To show this, we consider how many different profiles can arise from a given one  
through $a$-shifts and reordering of boxes. So assume $I$ and $J$ are the two rows of a rank 2 profile 
(of a rigid indecomposable module). 
{By Corollary~\ref{cor:reducing}, we can assume that $I$ and $J$ are fully reduced, 
i.e., that $I\cap J=I^c\cap J^c=\emptyset$}. 

We have that $|I\cup J|=2r$ for some $r\in [3,k]$. 
Let $r_1,r_2,r_3$ be the sizes of the three boxes (ordered such that the box of size $r_1$ 
starts with the smallest $i\in\{1,2\dots, n\}\setminus (I^c\cap J^c)$). 
If $|\{r_1,r_2,r_3\}|=1$, then $3\mid r$ and the partial shifts yield $2r/3$ different modules 
with the same content. 
If $|\{r_1,r_2,r_3\}|=2$, the partial shifts yield $2r$ different modules with the same content. 
If $|\{r_1,r_2,r_3\}|=3$, the partial shifts yield $2r$ additional different modules with the same content. 
Reordering the boxes so that the sizes are $r_1,r_3,r_2$ (i.e., interchanging two boxes) 
yields $2r$ additional modules 
with the same content (as the sizes are all different). So $4r$ different modules with the same 
content. 

The claim then follows with part (1), as every such profile with exactly three 
boxes gives a rigid indecomposable.
\end{proof}

\begin{conjecture}\label{conj:number-is-correct}
The number of profiles of rigid indecomposable  
rank $2$ modules in ${\rm CM}(B_{k,n})$ 
is $N_{k,n}$. 
\end{conjecture}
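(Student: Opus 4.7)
The plan is to establish the reverse direction of Theorem \ref{thm:rank2-bound}(2): every rigid indecomposable rank $2$ module in $\mathrm{CM}(B_{k,n})$ has a profile arising from the three-box construction of Theorem \ref{thm:rank2-bound}(1). Combined with the lower bound already obtained, this yields equality with $N_{k,n}$. The approach has two conceptual steps, after the standard reduction (removing parallel segments common to both rims) that allows us to assume $I \cap J = I^{c} \cap J^{c} = \emptyset$.

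The first step is to show that a rigid indecomposable rank $2$ module $M$ must satisfy $q(\varphi(M)) = 2$, i.e., $\varphi(M)$ is a real root. By Lemma \ref{lm:rk2-condition}, $q(M) \in \{2,0,-2,\ldots,8-2k\}$, and when $q(M) < 2$ the vector $x(\varphi(M))$ is, up to Weyl group action, an imaginary root as in the second case of the proof of Lemma \ref{lm:rk2-condition}. For this case I would combine Proposition \ref{propos:AR-tubes} with the $\tau$-periodicity of $\underline{\mathrm{CM}}(B_{k,n})$: in a tube of the AR quiver, a module corresponding to an imaginary root sits on the central $\tau$-orbit, so $P_{\tau^{-1}M} = P_M$ and Lemma \ref{lem:tauM and M have the same profile implies that M is non-rigid} forces $M$ to be non-rigid. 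Alternatively, using the $2$-Calabi--Yau structure of $\underline{\mathrm{CM}}(B_{k,n})$ the Euler form is controlled by $q$, and one obtains $\dim \Ext^{1}(M,M) \geq 2 - q(\varphi(M)) > 0$.

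The second and main step is to show that if $q(M) = 2$, so $|I \triangle J| = 6$ and the rims form exactly three quasi-boxes, then rigidity forces each quasi-box to be an honest rectangle. For this I would compute the syzygy $\Omega(M)$ (equivalently $\tau^{-1}M$) from the projective cover: the projective cover of $L_{I}\mid L_{J}$ is determined by the peaks of both rims (Definition \ref{def:peak}), and when the three quasi-boxes are rectangular these peaks arrange so that $\Omega(M)$ collapses to a rank $1$ module, giving rigidity exactly as in Theorem \ref{thm:rank2-bound}(1). If some quasi-box is non-rectangular, the projective cover acquires an extra indecomposable summand corresponding to the internal ``bump,'' so $\Omega(M)$ has rank $\geq 2$; using Remark \ref{rem:rigid-tau}, one constructs a nonzero class in $\Ext^{1}(M,M)$, contradicting rigidity. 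Establishing this dichotomy cleanly, and producing the required self-extension in the non-rectangular case, is the principal obstacle of the argument.

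Granting both reductions, the upper bound counting matches the lower bound in the proof of Theorem \ref{thm:rank2-bound}(2): a fully reduced rigid profile is specified by $r \in [3,k]$, a choice of $2r$ positions in $[n]$ for $I \cup J$, a choice of $k - r$ further positions for $I \cap J$, and an ordered triple $(r_{1},r_{2},r_{3})$ of box sizes summing to $r$. The combinatorial weights $\tfrac{2r}{3} p_{1}(r) + 2r\, p_{2}(r) + 4r\, p_{3}(r)$ record, for each unordered partition type, how many distinct profiles are produced by partial $a$-shifts and by reordering the three rectangles. This matches the lower bound of Theorem \ref{thm:rank2-bound}(2) exactly, proving Conjecture \ref{conj:number-is-correct}. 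The whole argument hinges on the second step, so I would attack that case first, perhaps by testing small cases in $\mathrm{CM}(B_{4,n})$ and $\mathrm{CM}(B_{5,n})$ where non-rectangular quasi-boxes can already appear.
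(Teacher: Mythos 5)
This statement is left open in the paper itself: it is stated as Conjecture~\ref{conj:number-is-correct}, and the missing ingredient is isolated in the appendix as Conjecture~\ref{conj:4-peaks-not-rigid}. So a proof would need genuinely new input, and your proposal has two fatal gaps. The first is that your opening reduction --- rigidity forces $q(\varphi(M))=2$ --- is false and contradicts the paper's own results. Theorem~\ref{thm:rank2-bound}(1) shows that any indecomposable rank $2$ module whose rims form three rectangular boxes is rigid, and such modules exist with $|I\bigtriangleup J|=2r$ for every $r\in[3,k]$; for $r>3$ these have $q(M)<2$ and correspond to imaginary roots. The formula $N_{k,n}$ deliberately sums over all $r$ up to $k$, and in the worked example $(k,n)=(4,8)$ the paper records exactly $8$ rigid indecomposable rank $2$ modules corresponding to imaginary roots. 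Both mechanisms you offer fail on these examples: they do not satisfy $P_{\tau^{-1}M}=P_M$ (they sit in the rigid range of tubes of rank $>1$, not on a $\tau$-fixed orbit, and nothing forces an imaginary-root module onto such an orbit), and no inequality of the form $\dim\Ext^1(M,M)\ge 2-q(\varphi(M))$ can hold, since these rigid modules have $q\le 0$.

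The actual content of the conjecture is the converse of Theorem~\ref{thm:rank2-bound}(1): a fully reduced rigid indecomposable rank $2$ module must have rims forming \emph{exactly three honest boxes} --- neither a non-rectangular quasi-box nor four or more quasi-boxes can occur. This is precisely Conjecture~\ref{conj:4-peaks-not-rigid}, and your sketch does not establish it. Observing that $\Omega(M)=\tau^{-1}(M)$ has rank $\ge 2$ when some quasi-box fails to be a rectangle does not produce a nonzero class in $\Ext^1(M,M)=\underline{\Hom}(M,\tau M)$: having a rank $1$ syzygy is a \emph{sufficient} condition for rigidity used in the paper, not a necessary one, so its failure proves nothing. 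You also never treat the case of four or more quasi-boxes, which is not excluded by $q(M)<2$. As you concede, this dichotomy is ``the principal obstacle,'' and it remains open; in the wild cases the maps in the infinite radical connecting different tubes prevent the position-in-a-tube argument from deciding rigidity, which is exactly why the paper only claims a lower bound.
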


Note that in the tame cases $(3,9)$ and $(4,8)$, the number of rigid indecomposable rank 2 modules can already be 
deduced from the results of~\cite{BBG}. In the general infinite case, there are many morphisms between tubes and even though 
the rank 2 modules are sitting low in their tubes, a priori, the formula of Theorem~\ref{thm:rank2-bound} (2) only gives a lower bound.

\begin{example}
In case $k=4$ and $n=8$, the formula gives 
\begin{align*}
N_{4,8}=\sum_{r=3}^{4} ( \frac{2r}{3} \cdot p_1(r) +  2r \cdot p_2(r) + 4r \cdot p_3(r)) \cdot {8 \choose 2r} {8-2r \choose 4-r},
\end{align*} 
The only possibilities to form three boxes are $r=3$ with $r_1=r_2=r_3=1$ and $r=4$ with 
$r_1=r_2=1$, $r_3=2$. So $p_1(3)=1$, $p_2(4)=1$ and all other $p_i(r)$ are $0$. The formula gives 
$N_{4,8}=2p_1(3){8\choose 6}{2\choose 1} + 8p_1(4) {8\choose 8}{0\choose 0}$
$=2\cdot 28\cdot 2 +8=120$, which is equal to the number of rigid indecomposables in this 
case, cf.~\cite[Section 7]{BBG}. 
For $r=3$, all these modules correspond to real roots and the 8 modules with imaginary roots arise 
from $r=4$. 
\end{example}


\section{Subspace configurations of rank 3 modules} \label{sec:subspace configurations}

In this section, we use the subspace configurations to 
derive necessary conditions for indecomposability (Subsection~\ref{ssec:necessary}) and 
then use these to study the roots associated with indecomposable rank 3 modules 
(Subsection~\ref{ssec:rk3-roots}). We automatically have $k\ge 3$, as there are no indecomposable 
higher rank modules for $k\le 2$. 
To find these necessary conditions, we consider the number of (quasi-)boxes between the two pairs of 
successive rims of $M$; the number of valleys of the upper rim is an upper bound for this number.

Note that if a module $M$ is indecomposable, any two successive rims of $M$ are ``closely packed'' in the 
sense that there cannot be a walk between the two rims, \cite[Section 6]{JKS}.

\subsection{Necessary conditions for indecomposability}\label{ssec:necessary}

The first case is when $I=J$, then there are no boxes between the two rims of $L_I=L_J$.

\begin{lemma} \label{lm:IJK-indec-0-4-boxes}
Let $M=L_I|L_J|L_K \in {\rm CM}(B_{k,n})$ be indecomposable. 
If $I=J$, 
then the rims of $L_J$ and $L_K$ form at least four quasi-boxes.
\end{lemma}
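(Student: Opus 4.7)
My plan is to analyze the subspace configuration of $M$ (Subsection~\ref{ssec:subspace}) and reduce to a classification question for representations of small Dynkin quivers. Throughout, let $b$ denote the number of quasi-boxes formed by the rims of $L_J$ and $L_K$; I will show $b \geq 4$ by deriving a contradiction in each of the cases $b \in \{0, 1, 2, 3\}$.

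The key observation is that $I = J$ forces the top two contours of $M$ to coincide, so the region ``between the top two contours'' is empty and the subspace configuration has no multiplicity-$1$ vertex. What remains is $b$ multiplicity-$2$ vertices (one per quasi-box between $L_J$ and $L_K$), each including into a single multiplicity-$3$ vertex corresponding to the ambient space. The underlying graph is therefore a star with $b$ leaves, oriented with all arrows pointing to the center, and no further simplification is possible since there are no multiplicity-$1$ vertices to remove.

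Since $M$ is indecomposable, this subspace configuration is an indecomposable representation of the oriented star quiver, by the principle recalled in Subsection~\ref{ssec:subspace}. For $b \leq 3$ the underlying graph is Dynkin (respectively a single vertex, $A_2$, $A_3$, $D_4$), so by Gabriel's theorem indecomposable representations correspond bijectively to positive roots of the underlying diagram. A direct case check shows that the relevant dimension vector, namely $3$ at the center and $2$ at each leaf, is not a positive root in any of these four cases; for $b = 3$ this amounts to noting that every positive root of $D_4$ has central coefficient at most $2$ (the highest root being $2\alpha_0 + \alpha_1 + \alpha_2 + \alpha_3$ in the standard labelling). Hence the subspace configuration decomposes, forcing $M$ to decompose and contradicting indecomposability.

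The main (mild) obstacle is the $D_4$ computation for $b = 3$, where one must appeal to the structure of the $D_4$ root system; the cases $b \in \{1, 2\}$ reduce to elementary $A$-type root checks, and the $b = 0$ case (forcing $I = J = K$) degenerates to a bare $3$-dimensional vector space, which decomposes as three copies of $\mathbb{C}$ in agreement with $M \cong L_I^{\oplus 3}$ by rigidity of $L_I$.
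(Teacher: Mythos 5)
Your proof is correct and follows essentially the same route as the paper: the paper likewise observes that $I=J$ forces the subspace configuration to be a star with one multiplicity-$3$ center and $m$ multiplicity-$2$ leaves, and asserts that such a configuration is decomposable for $m\le 3$. You have simply supplied the justification the paper leaves implicit (the Gabriel/root-system check that $(3;2,\dots,2)$ is not a positive root of the star-shaped Dynkin diagrams with at most three leaves), which is a valid and welcome elaboration.
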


\begin{proof}
Suppose that $M=L_I|L_J|L_K \in {\rm CM}(B_{k,n})$ is indecomposable, with $I=J$. 
In this case, the subspace configuration of $M$ looks like a star. It has one 
vertex with multiplicity 3 and $m\ge 0$ vertices with $2$, each with an arrow towards $3$. 
Such a subspace configuration is decomposable for $m\le 3$. 
\end{proof}

Let $I$ and $J$ be $k$-subsets. 
Instead of writing that (quasi-)boxes are formed by the rims of $L_I$ and $L_J$, we will also say 
that the (quasi-)boxes are formed by $L_I$ and $L_J$ or by $I$ and $J$.

\begin{lemma} \label{lm:IJK-indec-1-4-or-G}
Let  $M=L_I|L_J|L_K \in {\rm CM}(B_{k,n})$ be indecomposable. \\
(1) 
If $L_J$ and $L_K$ form up to three quasi-boxes, then $L_I, L_J$ form at least two quasi-boxes or the 
subspace configuration of $M$ is (G) in Figure~\ref{fig:IJ_one_box_JK_up_to_3}. \\
(2) 
If $L_I, L_J$ form one quasi-box, then $L_J, L_K$ form at least four quasi-boxes or the subspace configuration of 
$M$ is (G) in Figure \ref{fig:IJ_one_box_JK_up_to_3}. 
\end{lemma}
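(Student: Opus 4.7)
The plan is to translate each hypothesis into a statement about the shape of the subspace configuration of $M$ described in Subsection~\ref{ssec:subspace}, and then reduce to a short finite case analysis. Write $a$ for the number of quasi-boxes formed by $L_I, L_J$ and $b$ for the number formed by $L_J, L_K$. The (unsimplified) poset of $M$ has three layers: one vertex of multiplicity~$3$ at the bottom (the whole module), $b$ vertices of multiplicity~$2$ in the middle (one per 2-dimensional region between the lower two rims), and $a$ vertices of multiplicity~$1$ at the top (one per 1-dimensional region between the upper two rims), with covering relations determined by which regions lie directly above which.

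For part~(1), assume $b\le 3$. If $a=0$, i.e.\ $I=J$, then Lemma~\ref{lm:IJK-indec-0-4-boxes} forces $b\ge 4$, contradicting the hypothesis; hence $a\ge 1$. To finish, I would show that if $a=1$ and the configuration is not~(G), then $M$ must decompose. Let $V$ be the unique $1$-vertex and $W_1,\ldots,W_b$ the $2$-vertices (with $b\le 3$), and let $U$ be the $3$-vertex. Since $M$ is indecomposable, the poset must be connected, so $V$ is adjacent to at least one $W_i$ and every $W_i$ is adjacent to $U$. I would then run through the subcases $b\in\{1,2,3\}$: for $b=1$, the poset is the chain $V\subset W_1\subset U$, i.e.\ a representation of type $A_3$ with dimension vector $(1,2,3)$ that splits off a line in $U$; for $b=2$, either $V$ connects to only one of $W_1,W_2$, splitting the other off, or $V$ connects to both, in which case the simplification rule from Subsection~\ref{ssec:subspace} identifies $V$ with the forced one-dimensional intersection $W_1\cap W_2$ and reduces the poset to two $2$-subspaces of $U$, which decomposes; for $b=3$, if some $W_i$ is not adjacent to $V$ then that $W_i$ splits off, while if $V$ lies in all three $W_i$ the configuration is exactly~(G).

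Part~(2) follows by the same analysis applied in the opposite direction: the hypothesis $a=1$ is fixed, and assuming $b\le 3$ we rerun the $b\in\{1,2,3\}$ subcases above to conclude in each case that either $M$ decomposes or its configuration is~(G).

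The main obstacle is bookkeeping: in each subcase, one has to correctly enumerate the possible adjacency patterns between the single top vertex $V$ and the middle vertices $W_i$ (this is constrained by the geometry of the lattice diagram, since a $1$-dimensional region necessarily sits above a contiguous stretch of $2$-dimensional regions), and then apply the simplification rules from Subsection~\ref{ssec:subspace} carefully so as to distinguish a genuinely indecomposable representation from a configuration that merely looks connected.
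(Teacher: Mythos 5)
Your overall strategy is the same as the paper's: both parts reduce to enumerating the possible subspace configurations when $L_I,L_J$ form one quasi-box and $L_J,L_K$ form at most three (these are exactly the ten diagrams of Figure~\ref{fig:IJ_one_box_JK_up_to_3}) and checking which of them are indecomposable. However, your case analysis goes wrong precisely in the decisive subcase $b=3$. You assert that the indecomposable exception is the configuration in which the $1$-vertex $V$ is contained in all three $2$-vertices $W_i$; in the paper's figure that is configuration (J), not (G), and it is in fact \emph{decomposable}: three distinct planes in $\CC^3$ containing a common line pairwise intersect in that line, so the simplification rule of Subsection~\ref{ssec:subspace} deletes $V$, and three planes through a common line $L$ split as $L\oplus C$ for any complement $C$ of $L$ (each $W_i=L\oplus(W_i\cap C)$). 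The genuine exception (G) is the opposite configuration, where $V$ is contained in \emph{none} of the $W_i$: a line together with three planes in general position in $\CC^3$ is the indecomposable representation of the four-subspace quiver with dimension vector $(3;1,2,2,2)$, a real root of the associated $\widetilde{D}_4$ form.

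The source of the error is the principle ``if some $W_i$ is not adjacent to $V$ then that $W_i$ splits off,'' which is false: adjacency in the poset records containment of subspaces, not complementarity, and a plane that does not contain the line need not be a direct summand --- configuration (G) itself is the counterexample, since nothing splits off there. Your conclusions in the subcases $b=1,2$ happen to be correct (and for the diamond (D) your use of the simplification rule is the right argument), though the $b=2$ enumeration also omits the pattern where $V$ is adjacent to neither $W_i$ (configuration (F), decomposable since $(3;1,2,2)$ is not a root of $D_4$). As written, your argument would declare the true exception (G) decomposable and the decomposable configuration (J) indecomposable, so the lemma would come out wrong. The fix is to test indecomposability of each of the ten listed configurations directly (after simplification, via the quadratic form of the underlying star-shaped diagram), which is what the paper does.
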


\begin{proof}
Suppose that $M=L_I|L_J|L_K \in {\rm CM}(B_{k,n})$ is indecomposable and $L_J, L_K$ form at 
most three quasi-boxes. 
By Lemma \ref{lm:IJK-indec-0-4-boxes}, $I\ne J$, so $L_I, L_J$ form at least one quasi-box. 
All possible subspace configurations of $M$ when $I$ and $J$ form one quasi-box are 
in Figure~\ref{fig:IJ_one_box_JK_up_to_3}. The only indecomposable one among them is (G). 
This proves (1). 

For the same reason, if 
$L_I, L_J$ form one quasi-box, then $L_J, L_K$ form at least four quasi-boxes or the subspace configuration 
of $M$ is (G).
\end{proof}

\begin{figure}
\begin{subfigure}{0.19\textwidth}
\centering
\begin{tikzpicture}[scale=0.6]
\tikzstyle{state}=[thick,minimum size=2mm] 
		\node [state] (1) at (-8, 3) {$1$};  
		\node [state] (2) at (-8, 0.5) {$3$}; 
		\draw (1) to (2);  
\end{tikzpicture}
\caption{} 
\end{subfigure}
\begin{subfigure}{0.19\textwidth}
\centering
\begin{tikzpicture}[scale=0.6]
\tikzstyle{state}=[thick,minimum size=2mm] 
		\node [state] (1) at (-8, 3) {$1$};  
		\node [state] (2) at (-8, 1.75) {$2$}; 
		\node [state] (3) at (-8, 0.5) {$3$};
		\draw (1) to (2);
		\draw (2) to (3);  
\end{tikzpicture}
\caption{} 
\end{subfigure}
\begin{subfigure}{0.19\textwidth}
\centering
\begin{tikzpicture}[scale=0.6]
\tikzstyle{state}=[thick,minimum size=2mm]
		\node [state] (1) at (-9, 3) {$1$};
		\node [state] (2) at (-7, 3) {$2$};  
		\node [state] (3) at (-8, 0.5) {$3$};
		\draw (1) to (3);
		\draw (2) to (3);  
\end{tikzpicture}
\caption{} 
\end{subfigure}
\begin{subfigure}{0.19\textwidth}
\centering
\begin{tikzpicture}[scale=0.6]
\tikzstyle{state}=[thick,minimum size=2mm]
		\node [state] (1) at (-8, 3) {$1$};
		\node [state] (2) at (-9, 1.75) {$2$};  
		\node [state] (3) at (-7, 1.75) {$2$};  
		\node [state] (4) at (-8, 0.5) {$3$};
		\draw (1) to (2);
		\draw (1) to (3);  
		\draw (2) to (4);  
		\draw (3) to (4);  
\end{tikzpicture}
\caption{} 
\end{subfigure}
\begin{subfigure}{0.19\textwidth}
\centering
\begin{tikzpicture}[scale=0.6]
\tikzstyle{state}=[thick,minimum size=2mm]
		\node [state] (1) at (-8, 3) {$1$};
		\node [state] (2) at (-9, 1.75) {$2$};  
		\node [state] (3) at (-7, 1.75) {$2$};  
		\node [state] (4) at (-8, 0.5) {$3$};
		\draw (1) to (2);   
		\draw (2) to (4);  
		\draw (3) to (4);  
\end{tikzpicture}
\caption{} 
\end{subfigure}
\begin{subfigure}{0.19\textwidth}
\centering
\begin{tikzpicture}[scale=0.6]
\tikzstyle{state}=[thick,minimum size=2mm]
		\node [state] (1) at (-9, 3) {$1$};
		\node [state] (2) at (-8, 3) {$2$};  
		\node [state] (3) at (-7, 3) {$2$};  
		\node [state] (4) at (-8, 0.5) {$3$};
		\draw (1) to (4);
		\draw (2) to (4);  
		\draw (3) to (4);   
\end{tikzpicture}
\caption{} 
\end{subfigure}
\begin{subfigure}{0.19\textwidth}
\centering
\begin{tikzpicture}[scale=0.6]
\tikzstyle{state}=[thick,minimum size=2mm]
		\node [state] (1) at (-10, 3) {$1$};
		\node [state] (2) at (-9, 3) {$2$};  
		\node [state] (3) at (-8, 3) {$2$};
		\node [state] (4) at (-7, 3) {$2$};  
		\node [state] (5) at (-8.5, 0.5) {$3$};
		\draw (1) to (5);
		\draw (2) to (5);  
		\draw (3) to (5);   
		\draw (4) to (5);   
\end{tikzpicture}
\caption{} 
\end{subfigure}
\begin{subfigure}{0.19\textwidth}
\centering
\begin{tikzpicture}[scale=0.6]
\tikzstyle{state}=[thick,minimum size=2mm]
		\node [state] (1) at (-9, 3) {$1$};
		\node [state] (2) at (-9, 1.75) {$2$};  
		\node [state] (3) at (-8, 1.75) {$2$};
		\node [state] (4) at (-7, 1.75) {$2$};  
		\node [state] (5) at (-8, 0.5) {$3$};
		\draw (1) to (2);
		\draw (2) to (5);  
		\draw (3) to (5);   
		\draw (4) to (5);   
\end{tikzpicture}
\caption{} 
\end{subfigure}
\begin{subfigure}{0.19\textwidth}
\centering
\begin{tikzpicture}[scale=0.6]
\tikzstyle{state}=[thick,minimum size=2mm]
		\node [state] (1) at (-9, 3) {$1$};
		\node [state] (2) at (-9, 1.75) {$2$};  
		\node [state] (3) at (-8, 1.75) {$2$};
		\node [state] (4) at (-7, 1.75) {$2$};  
		\node [state] (5) at (-8, 0.5) {$3$};
		\draw (1) to (2);
		\draw (1) to (3);
		\draw (2) to (5);  
		\draw (3) to (5);   
		\draw (4) to (5);   
\end{tikzpicture}
\caption{} 
\end{subfigure}
\begin{subfigure}{0.19\textwidth}
\centering
\begin{tikzpicture}[scale=0.6]
\tikzstyle{state}=[thick,minimum size=2mm]
		\node [state] (1) at (-8, 3) {$1$};
		\node [state] (2) at (-9, 1.75) {$2$};  
		\node [state] (3) at (-8, 1.75) {$2$};
		\node [state] (4) at (-7, 1.75) {$2$};  
		\node [state] (5) at (-8, 0.5) {$3$};
		\draw (1) to (2);
		\draw (1) to (3);
		\draw (1) to (4);
		\draw (2) to (5);  
		\draw (3) to (5);   
		\draw (4) to (5);   
\end{tikzpicture}
\caption{} 
\end{subfigure}
\caption{All subspace configurations for $L_I|L_J|L_K$, where $L_I, L_J$ form one quasi-box and 
$L_J, L_K$ form at most three quasi-boxes.}
\label{fig:IJ_one_box_JK_up_to_3}
\end{figure}
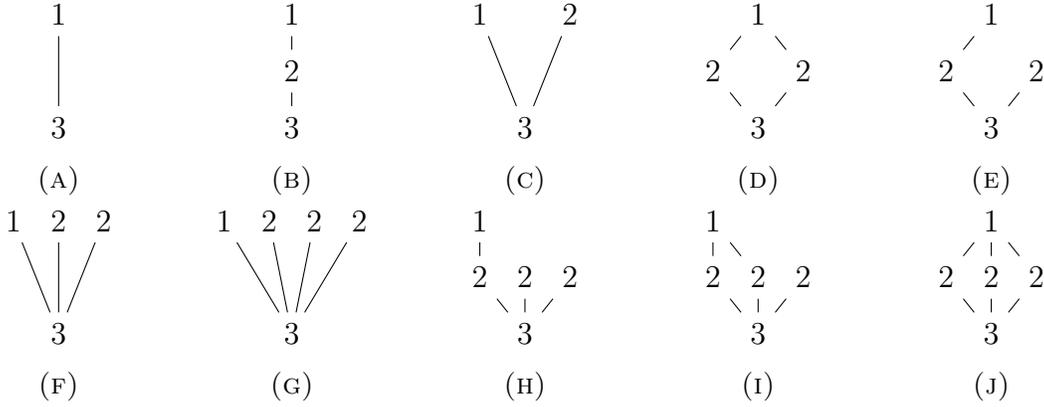

\begin{lemma} \label{lm:IJK-indec-2-3-or-K}
Let $M=L_I|L_J|L_K \in {\rm CM}(B_{k,n})$ be indecomposable, and assume that 
$L_I$ and $L_J$ form two quasi-boxes. 
Then $L_J, L_K$ form at least three quasi-boxes or the subspace configuration of $M$ is (K) from 
Figure~\ref{fig:IJ-2-boxes_JK_up_to_2}. 
\end{lemma}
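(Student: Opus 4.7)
The plan is to proceed exactly as in the proofs of Lemmas~\ref{lm:IJK-indec-0-4-boxes} and~\ref{lm:IJK-indec-1-4-or-G}: enumerate the possible subspace configurations arising when the rims of $L_I, L_J$ form exactly two quasi-boxes and the rims of $L_J, L_K$ form $s$ quasi-boxes for $s \in \{0,1,2\}$, and check indecomposability of each via the simplification rules of Section~\ref{ssec:subspace}.

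First I would record the vertex counts in the poset: two quasi-boxes between $I$ and $J$ produce two vertices of multiplicity $1$, $s$ quasi-boxes between $J$ and $K$ produce $s$ vertices of multiplicity $2$, and there is always a unique vertex of multiplicity $3$ below the bottom rim. The edges are determined by vertical adjacency of regions in the lattice diagram, so every multiplicity-$1$ vertex is joined to either a multiplicity-$2$ vertex or directly to the multiplicity-$3$ vertex, and every multiplicity-$2$ vertex is joined to the multiplicity-$3$ vertex. The close-packed condition on successive rims, together with the cyclic order on the circle, constrains which incidence patterns can actually occur, and I would list these patterns up to the obvious symmetry.

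For $s=0$ (so $J=K$) both multiplicity-$1$ vertices attach directly to the multiplicity-$3$ vertex. The associated representation is a pair of $1$-dimensional subspaces of $\CC^3$; it always has a $1$-dimensional direct complement and is decomposable. For $s=1$ there are only a handful of attachment patterns, distinguished by whether each multiplicity-$1$ vertex goes to the unique middle vertex $B$ or directly to $C$; after applying the simplifications of Section~\ref{ssec:subspace}, each resulting poset has underlying graph a Dynkin diagram with dimension vector which is not a positive root, so none is indecomposable. For $s=2$, the enumeration produces a short list of diagrams on two multiplicity-$1$ vertices, two multiplicity-$2$ vertices and one multiplicity-$3$ vertex; geometric constraints from the cyclic ordering of the quasi-boxes rule out all but a few, and a direct check shows that the only one that is indecomposable as a representation of the associated poset is precisely configuration (K).

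The main obstacle, as in the two previous lemmas, is the bookkeeping: ensuring that every edge pattern that can realistically arise from a triple of rims around the circle is listed once, that equivalent ones under obvious symmetries are identified, and that the simplified poset in each case is correctly tested against the list of positive roots of the relevant Dynkin type. Once this enumeration is complete, the conclusion is immediate.
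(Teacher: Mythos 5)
Your proposal is correct and follows essentially the same route as the paper: the paper's proof likewise enumerates all subspace configurations with two quasi-boxes between $L_I,L_J$ and at most two between $L_J,L_K$ (these are exactly the eleven diagrams of Figure~\ref{fig:IJ-2-boxes_JK_up_to_2}) and observes that only (K) is indecomposable. Your case split by $s\in\{0,1,2\}$ and the root-system check on the simplified posets is just a more explicit organization of that same enumeration.
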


\begin{proof}
Suppose that 
$M=L_I|L_J|L_K \in {\rm CM}(B_{k,n})$ is indecomposable and that $L_I, L_J$ form two quasi-boxes. 
Suppose that $L_J, L_K$ form at most two quasi-boxes. 
All subspace configurations for such $M$ where $L_J$ and $L_K$ form at most two quasi-boxes are 
in Figure~\ref{fig:IJ-2-boxes_JK_up_to_2}. The only indecomposable one of them is (K). 
This proves the claim. 
\end{proof}

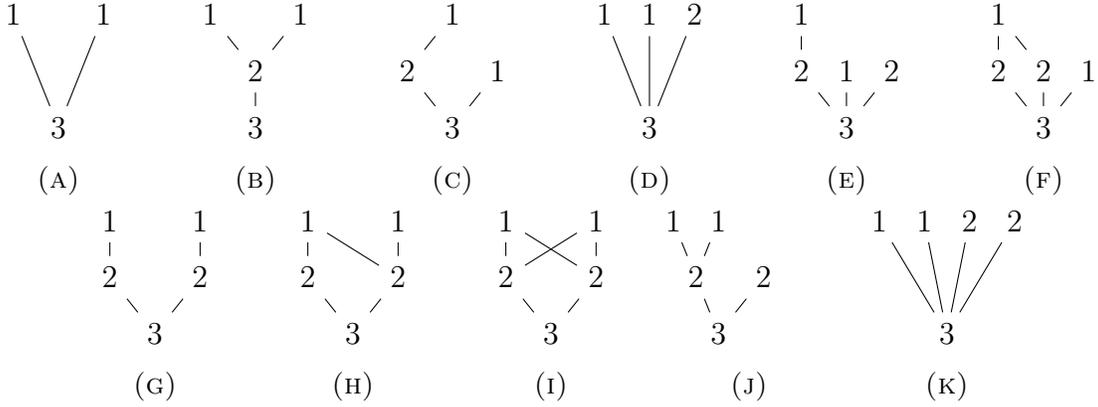
\begin{figure}
\begin{subfigure}{0.16\textwidth}
\centering
\begin{tikzpicture}[scale=0.6]
\tikzstyle{state}=[thick,minimum size=2mm]
		\node [state] (1) at (-9, 3) {$1$};
		\node [state] (2) at (-7, 3) {$1$};  
		\node [state] (3) at (-8, 0.5) {$3$};
		\draw (1) to (3);
		\draw (2) to (3);  
\end{tikzpicture}
\caption{} 
\end{subfigure}
\begin{subfigure}{0.16\textwidth}
\centering
\begin{tikzpicture}[scale=0.6]
\tikzstyle{state}=[thick,minimum size=2mm]
		\node [state] (1) at (-9, 3) {$1$};
		\node [state] (2) at (-7, 3) {$1$};  
		\node [state] (3) at (-8, 1.75) {$2$};  
		\node [state] (4) at (-8, 0.5) {$3$};
		\draw (1) to (3);
		\draw (2) to (3);  
		\draw (3) to (4);  
\end{tikzpicture}
\caption{} 
\end{subfigure}
\begin{subfigure}{0.16\textwidth}
\centering
\begin{tikzpicture}[scale=0.6]
\tikzstyle{state}=[thick,minimum size=2mm]
		\node [state] (1) at (-8, 3) {$1$};
		\node [state] (2) at (-9, 1.75) {$2$};  
		\node [state] (3) at (-7, 1.75) {$1$};  
		\node [state] (4) at (-8, 0.5) {$3$};
		\draw (1) to (2);   
		\draw (2) to (4);  
		\draw (3) to (4);  
\end{tikzpicture}
\caption{} 
\end{subfigure}
\begin{subfigure}{0.16\textwidth}
\centering
\begin{tikzpicture}[scale=0.6]
\tikzstyle{state}=[thick,minimum size=2mm]
		\node [state] (1) at (-9, 3) {$1$};
		\node [state] (2) at (-8, 3) {$1$};  
		\node [state] (3) at (-7, 3) {$2$};  
		\node [state] (4) at (-8, 0.5) {$3$};
		\draw (1) to (4);
		\draw (2) to (4);  
		\draw (3) to (4);   
\end{tikzpicture}
\caption{} 
\end{subfigure}
\begin{subfigure}{0.16\textwidth}
\centering
\begin{tikzpicture}[scale=0.6]
\tikzstyle{state}=[thick,minimum size=2mm]
		\node [state] (1) at (-9, 3) {$1$};
		\node [state] (2) at (-9, 1.75) {$2$};  
		\node [state] (3) at (-8, 1.75) {$1$};
		\node [state] (4) at (-7, 1.75) {$2$};  
		\node [state] (5) at (-8, 0.5) {$3$};
		\draw (1) to (2);
		\draw (2) to (5);  
		\draw (3) to (5);   
		\draw (4) to (5);   
\end{tikzpicture}
\caption{} 
\end{subfigure}
\begin{subfigure}{0.16\textwidth}
\centering
\begin{tikzpicture}[scale=0.6]
\tikzstyle{state}=[thick,minimum size=2mm]
		\node [state] (1) at (-9, 3) {$1$};
		\node [state] (2) at (-9, 1.75) {$2$};  
		\node [state] (3) at (-8, 1.75) {$2$};
		\node [state] (4) at (-7, 1.75) {$1$};  
		\node [state] (5) at (-8, 0.5) {$3$};
		\draw (1) to (2);
		\draw (1) to (3);
		\draw (2) to (5);  
		\draw (3) to (5);   
		\draw (4) to (5);   
\end{tikzpicture}
\caption{} 
\end{subfigure}
\begin{subfigure}{0.16\textwidth}
\centering
\begin{tikzpicture}[scale=0.6]
\tikzstyle{state}=[thick,minimum size=2mm]
		\node [state] (1) at (-9, 3) {$1$};
		\node [state] (2) at (-7, 3) {$1$};  
		\node [state] (3) at (-9, 1.75) {$2$};
		\node [state] (4) at (-7, 1.75) {$2$};  
		\node [state] (5) at (-8, 0.5) {$3$};
		\draw (1) to (3);
		\draw (2) to (4);
		\draw (3) to (5);
		\draw (4) to (5);   
\end{tikzpicture}
\caption{} 
\end{subfigure}
\begin{subfigure}{0.16\textwidth}
\centering
\begin{tikzpicture}[scale=0.6]
\tikzstyle{state}=[thick,minimum size=2mm]
		\node [state] (1) at (-9, 3) {$1$};
		\node [state] (2) at (-7, 3) {$1$};  
		\node [state] (3) at (-9, 1.75) {$2$};
		\node [state] (4) at (-7, 1.75) {$2$};  
		\node [state] (5) at (-8, 0.5) {$3$};
		\draw (1) to (3);
		\draw (1) to (4);
		\draw (2) to (4);
		\draw (3) to (5);
		\draw (4) to (5);   
\end{tikzpicture}
\caption{} 
\end{subfigure}
\begin{subfigure}{0.16\textwidth}
\centering
\begin{tikzpicture}[scale=0.6]
\tikzstyle{state}=[thick,minimum size=2mm]
		\node [state] (1) at (-9, 3) {$1$};
		\node [state] (2) at (-7, 3) {$1$};  
		\node [state] (3) at (-9, 1.75) {$2$};
		\node [state] (4) at (-7, 1.75) {$2$};  
		\node [state] (5) at (-8, 0.5) {$3$};
		\draw (1) to (3);
		\draw (1) to (4);
		\draw (2) to (3);
		\draw (2) to (4);
		\draw (3) to (5);
		\draw (4) to (5);   
\end{tikzpicture}
\caption{} 
\end{subfigure}
\begin{subfigure}{0.16\textwidth}
\begin{tikzpicture}[scale=0.6]
\tikzstyle{state}=[thick,minimum size=2mm]
		\node [state] (1) at (-9, 3) {$1$};
		\node [state] (2) at (-8, 3) {$1$};  
		\node [state] (3) at (-8.5, 1.75) {$2$};
		\node [state] (4) at (-7, 1.75) {$2$};  
		\node [state] (5) at (-8, 0.5) {$3$};
		\draw (1) to (3); 
		\draw (2) to (3);
		\draw (3) to (5);
		\draw (4) to (5);   
\end{tikzpicture}
\caption{} 
\end{subfigure}
\begin{subfigure}{0.16\textwidth}
\centering
\begin{tikzpicture}[scale=0.6]
\tikzstyle{state}=[thick,minimum size=2mm]
		\node [state] (1) at (-10, 3) {$1$};
		\node [state] (2) at (-9, 3) {$1$};  
		\node [state] (3) at (-8, 3) {$2$};
		\node [state] (4) at (-7, 3) {$2$};  
		\node [state] (5) at (-8.5, 0.5) {$3$};
		\draw (1) to (5);
		\draw (2) to (5);  
		\draw (3) to (5);   
		\draw (4) to (5);   
\end{tikzpicture}
\caption{} 
\end{subfigure}
\caption{All possible subspace configurations of $L_I|L_J|L_K$, $L_I, L_J$ form two quasi-boxes and $L_J, L_K$ form at most two quasi-boxes.}
\label{fig:IJ-2-boxes_JK_up_to_2}
\end{figure}

\subsection{Rank $3$ indecomposable modules in ${\rm CM}(B_{3,n})$}\label{ssec:rk3-roots}

If we restrict to $k=3$, we can say even more about the profile of a rank 3 indecomposable module, as we will show now. 

\begin{lemma} \label{lem:k=3_IJK-indec_no_G_K} 
Suppose that $M=L_I|L_J|L_K \in {\rm CM}(B_{3,n})$ is indecomposable. Then the subspace 
configuration of $M$ cannot be (G) of Figure \ref{fig:IJ_one_box_JK_up_to_3} or (K) of 
Figure \ref{fig:IJ-2-boxes_JK_up_to_2}. 
\end{lemma}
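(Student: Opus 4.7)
The plan is to argue by contradiction: assume that $M = L_I \mid L_J \mid L_K \in \mathrm{CM}(B_{3,n})$ is indecomposable and that its subspace configuration is (G) or (K), and derive a direct sum decomposition of $M$.

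The first observation is that in both configurations (G) and (K), the rims of $L_I$ and $L_J$ form at most two quasi-boxes. By the rank $2$ theory of Section~\ref{sec:rk2-rigid}, a rank $2$ module whose rims form fewer than three quasi-boxes is decomposable and $\mathrm{Ext}^1(L_I, L_J) = 0$, so the filtration $L_I \mid L_J$ splits as $L_I \oplus L_J$. Therefore $M$ is determined by a short exact sequence
\[
0 \longrightarrow L_K \longrightarrow M \longrightarrow L_I \oplus L_J \longrightarrow 0
\]
together with an extension class $(\alpha, \beta) \in \mathrm{Ext}^1(L_I, L_K) \oplus \mathrm{Ext}^1(L_J, L_K)$. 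If $\alpha = 0$, then $M \cong L_I \oplus (L_J \mid L_K)$, and if $\beta = 0$, then $M \cong L_J \oplus (L_I \mid L_K)$; in either case $M$ is decomposable. The heart of the argument is thus to show that under (G) or (K), at least one of $\alpha, \beta$ is forced to vanish.

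For this I would exploit the geometric content of (G) and (K): each $1$-dimensional region in the lattice diagram of $M$ is directly adjacent only to the $3$-dimensional region, never to any $2$-dimensional region. Equivalently, every such $1$-dim region lies above a portion of $L_J$'s rim where $L_J$ and $L_K$ coincide, i.e., outside all the quasi-boxes between $L_J$ and $L_K$. The plan is to translate this positional disjointness into the vanishing of the appropriate extension class component, by arguing that a non-trivial Ext between two rank $1$ modules in $\mathrm{CM}(B_{3,n})$ must be supported on a shared quasi-box between their rims, and such a shared quasi-box is precisely what (G) and (K) forbid.

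The main obstacle I anticipate is making this translation precise. I expect to reduce to a finite case analysis according to how the three $3$-subsets $I, J, K$ sit around the $n$-cycle: for (G) this splits into the subcases $I \cap K = \emptyset$ and $|I \cap K| = 1$, and (K) requires analyzing the positions of the two $L_I, L_J$ quasi-boxes relative to the two $L_J, L_K$ quasi-boxes. In each sub-case, the tight combinatorics of $3$-subsets should force the required Ext component to equal zero, producing the desired decomposition of $M$ and contradicting indecomposability.
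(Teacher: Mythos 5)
Your plan has two genuine gaps. First, the deduction that the quotient filtration $L_I\mid L_J$ splits because the rims form fewer than three quasi-boxes conflates two different statements: Lemma~\ref{lm:rk2-condition} (via Remark~3.2 of \cite{BBG}) only says that such a rank $2$ module is \emph{decomposable}, not that $\Ext^1(L_I,L_J)=0$. For crossing $k$-subsets these Ext groups are typically non-zero (cf.~\cite{bb16}); the non-split extensions simply happen to be decomposable modules, and the quotient $M/L_K$ need not decompose as $L_I\oplus L_J$ specifically. Second, and more seriously, the heart of your plan --- forcing a component of the extension class $(\alpha,\beta)$ to vanish from the positional disjointness of the quasi-boxes --- is left entirely unsubstantiated, and the proposed mechanism cannot be right as stated because it nowhere genuinely uses $k=3$: configurations (G) and (K) \emph{are} indecomposable as poset representations and can occur for indecomposable modules when $k\ge 4$, which is exactly why Lemmas~\ref{lm:IJK-indec-1-4-or-G} and~\ref{lm:IJK-indec-2-3-or-K} list them as exceptions. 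Any argument deriving a decomposition of $M$ purely from the shape of (G) or (K), without a quantitative input from $k$, would prove too much.

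What you are missing is the paper's one-line counting argument, which makes the Ext-theoretic machinery unnecessary. In both (G) and (K) the four top vertices correspond to four quasi-boxes occupying pairwise disjoint arcs of the circle (the $1$'s are disjoint from the $2$'s precisely because in these configurations no $1$-vertex maps into a $2$-vertex). Every quasi-box between the rims of $L_I$ and $L_J$ contains at least one element of $J\setminus I\subseteq J$, and every quasi-box between the rims of $L_J$ and $L_K$ contains at least one element of $J\setminus K\subseteq J$; hence the $k$-subset $J$ must meet four pairwise disjoint arcs, forcing $k\ge 4$. This is the only place where $k=3$ needs to enter.
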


\begin{proof}
The subspace configurations (G) of Figure~\ref{fig:IJ_one_box_JK_up_to_3} and (K) of 
Figure~\ref{fig:IJ-2-boxes_JK_up_to_2} both correspond to a partition of a $k$-subset into four 
non-trivial parts, hence only occur for $k\ge 4$. 
\end{proof}

\begin{lemma} \label{lem:k3_rk3_interlacing-r1-r2}
If $M=L_I|L_J|L_K \in {\rm CM}(B_{3,n})$ is indecomposable, $I$ and $J$ are interlacing. 
\end{lemma}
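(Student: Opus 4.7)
Plan: The starting point is to combine the preceding lemmas of this subsection. By Lemma~\ref{lm:IJK-indec-0-4-boxes} together with Lemma~\ref{lem:k=3_IJK-indec_no_G_K} we already get $I\ne J$, so the rims of $L_I$ and $L_J$ form at least one quasi-box. Lemma~\ref{lm:IJK-indec-1-4-or-G} combined with Lemma~\ref{lem:k=3_IJK-indec_no_G_K} then rules out the one-quasi-box case, so the rims of $L_I$ and $L_J$ form at least two quasi-boxes. Since $k=3$ forces at most three quasi-boxes between any two rank~$1$ factors, the count is exactly two or three. In the two-quasi-box subcase, Lemma~\ref{lm:IJK-indec-2-3-or-K} together with Lemma~\ref{lem:k=3_IJK-indec_no_G_K} further forces $L_J$ and $L_K$ to form exactly three quasi-boxes.

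The next step is to re-express the interlacing condition in a way that interacts well with the lattice diagram. Let $f(i):=|I\cap[1,i]|-|J\cap[1,i]|$; then $I$ and $J$ are $r$-interlacing exactly when $f$ takes its values only in $\{0,1\}$ or only in $\{0,-1\}$, with $r$ equal to the number of maximal subintervals on which $f\ne 0$. Geometrically, this is the statement that the rim of $L_I$ stays weakly on one side of the rim of $L_J$ everywhere, and that the two rims never deviate by more than one half-step. Consequently, $I$ and $J$ fail to be interlacing precisely when either (i) the two rims cross (so $f$ assumes both a positive and a negative value), or (ii) some quasi-box has depth at least two (so $|f|\ge 2$ at some position).

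The plan is then to argue by contradiction: assuming $I$ and $J$ are not interlacing, in each of the two surviving configurations (two or three quasi-boxes between $L_I$ and $L_J$) I would build the corresponding subspace configuration of $M$ in the spirit of Figures~\ref{fig:IJ_one_box_JK_up_to_3} and \ref{fig:IJ-2-boxes_JK_up_to_2}, and show that the presence of a rim crossing or a depth-$2$ quasi-box exhibits a proper direct summand, contradicting the indecomposability of $M$. The main obstacle is the explicit case analysis, especially when $L_I$ and $L_J$ form three quasi-boxes of mixed orientation, or two quasi-boxes of depth one but of opposite orientation: here one must enumerate how the constrained rim of $L_K$ (3 quasi-boxes against $L_J$, by Lemma~\ref{lm:IJK-indec-2-3-or-K}) can interact with the $L_I,L_J$ structure, and verify in each subcase that no choice of $K$ can glue the pieces back into an indecomposable configuration.
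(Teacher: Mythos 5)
Your reduction to the surviving configurations is the same as the paper's: using Lemmas~\ref{lm:IJK-indec-0-4-boxes}, \ref{lm:IJK-indec-1-4-or-G}, \ref{lm:IJK-indec-2-3-or-K} together with Lemma~\ref{lem:k=3_IJK-indec_no_G_K} and the bound of three quasi-boxes coming from $k=3$, one lands in the situation where $L_I,L_J$ form exactly two quasi-boxes and $L_J,L_K$ form exactly three. (Your ``three quasi-boxes of mixed orientation'' and ``two quasi-boxes of opposite orientation'' cases are vacuous: in the stacked lattice diagram the rim of $L_J$ lies weakly below that of $L_I$, so all quasi-boxes have the same orientation and, for $k=3$, three quasi-boxes forces each to have size $1$, i.e.\ $I,J$ are $3$-interlacing. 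The paper uses exactly this observation to say that ``not interlacing'' already implies at most two quasi-boxes.)

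The genuine gap is in your final step, which is announced but not carried out, and whose proposed mechanism would not work as stated. You plan to derive the contradiction by building the subspace configuration and exhibiting a direct summand. But the poset of a module with two quasi-boxes between $L_I,L_J$ and three between $L_J,L_K$ (two $1$'s, three $2$'s, one $3$) is precisely the configuration that \emph{is} indecomposable for the genuine $2$-interlacing/$3$-interlacing modules of Theorem~\ref{thm:k=3_rk=3_real_canonical}(a); decomposability of the subspace configuration is therefore not the obstruction that distinguishes the non-interlacing case. The paper closes the argument with a different, purely combinatorial incompatibility on the middle rim: since $|I\setminus J|=3$ and there are two quasi-boxes, their sizes are $1$ and $2$, and non-interlacing forces the size-$2$ quasi-box to be a $2\times 2$ square; the rim of $L_J$ then has only two valleys, while the number of valleys of $L_J$ bounds the number of quasi-boxes it can form with $L_K$, contradicting the three quasi-boxes required between $L_J$ and $L_K$. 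Some argument of this kind — playing the constraint on the $(I,J)$ side against the constraint on the $(J,K)$ side of the \emph{same} rim — is what your proposal is missing.
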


\begin{proof}
Suppose that $M=L_I|L_J|L_K \in {\rm CM}(B_{3,n})$ is indecomposable. Since $k=3$, 
$J$ and $K$ form at most three quasi-boxes. 
Suppose that $I$ and $J$ are not interlacing, then they form at most two quasi-boxes (as $k=3$). 
By Lemma \ref{lm:IJK-indec-1-4-or-G}, $I$ and $J$ then form exactly two quasi-boxes 
(as the subspace configuration 
of $M$ cannot be Figure~\ref{fig:IJ_one_box_JK_up_to_3} (G) by 
Lemma~\ref{lem:k=3_IJK-indec_no_G_K}).

Now we use Lemma~\ref{lm:IJK-indec-2-3-or-K} to see that $J$ and $K$ form at least three quasi-boxes 
(as the subspace configuration cannot be Figure~\ref{fig:IJ-2-boxes_JK_up_to_2} (K) 
by Lemma~\ref{lem:k=3_IJK-indec_no_G_K}). As $k=3$, $J$ and $K$ form exactly three quasi-boxes.

Since $k=3$, one of the quasi-boxes has size 1 and one has size 2. Since $I$ and $J$ are not interlacing, 
the quasi-box of size 2 must be a $2 \times 2$ square. 
But then the rim of $L_J$ has exactly two valleys, and since the number of valleys is 
an upper bound for the number of quasi-boxes, $J$ and $K$ cannot form three  quasi-boxes. A contradiction. 
\end{proof}

\begin{theorem}\label{thm:rank3-in-CM-3-n}
Let $M\in{\rm CM}(B_{3,n})$ be indecomposable of rank $3$. Then $q(M)\in \{0,2\}$ 
and $\varphi(M)$ is a real or imaginary root of $J_{3,n}$. 
\end{theorem}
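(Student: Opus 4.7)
Write $M = L_I \mid L_J \mid L_K$ with $3$-subsets $I, J, K$ of $[n]$, and let $x(M) = (x_1, \ldots, x_n)$ be the multiplicity vector. Denote by $a_j$ the number of indices lying in exactly $j$ of the sets $I, J, K$, so $a_1 + 2a_2 + 3a_3 = 9$. Since $\sum_i x_i^2 = a_1 + 4a_2 + 9a_3$ and $\sum_i x_i = 9$, the quadratic form for $k=3$ reduces to
\begin{align*}
q(M) = \sum_i x_i^2 - \tfrac{1}{9}\bigl(\textstyle\sum_i x_i\bigr)^2 = 2a_2 + 6a_3.
\end{align*}
Hence $q(M)\in\{0,2\}$ is equivalent to $a_2+3a_3\le 1$, and this will be the core estimate to prove.

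My first step is to establish that both $(I,J)$ and $(J,K)$ are interlacing. The first case is Lemma~\ref{lem:k3_rk3_interlacing-r1-r2}; the second follows by rerunning the same argument with the roles of $L_I$ and $L_K$ swapped, using the evident dual versions of Lemmas~\ref{lm:IJK-indec-1-4-or-G} and~\ref{lm:IJK-indec-2-3-or-K} (which hold because the listed subspace configurations in Figures~\ref{fig:IJ_one_box_JK_up_to_3} and~\ref{fig:IJ-2-boxes_JK_up_to_2} are symmetric under swapping the top and bottom rims). Writing $r_1,r_2\in\{1,2,3\}$ for the interlacing degrees (nonzero by Lemma~\ref{lm:IJK-indec-0-4-boxes}), we get $|I\cap J|=3-r_1$, $|J\cap K|=3-r_2$, and
\begin{align*}
a_2 + 3a_3 = |I\cap J| + |J\cap K| + |I\cap K| = 6 - r_1 - r_2 + |I\cap K|.
\end{align*}

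The bulk of the work is a finite case analysis on $(r_1,r_2)\in\{1,2,3\}^2$ and $|I\cap K|\in\{0,1,2,3\}$, excluding every configuration with $r_1+r_2-|I\cap K|\le 4$. In each forbidden case the obstruction comes from the subspace configuration of $M$: an excess of overlap either produces parallel rim segments that split off a summand of the underlying poset, or, as in the endgame of the proof of Lemma~\ref{lem:k3_rk3_interlacing-r1-r2}, forces the rim of $L_J$ to have too few valleys to support the required number of quasi-boxes with $L_K$. I expect this to be the principal technical obstacle, because $I$ and $K$ are non-adjacent in the filtration and so $|I\cap K|$ must be controlled by a more global combinatorial analysis than that of Lemmas~\ref{lm:IJK-indec-1-4-or-G} and~\ref{lm:IJK-indec-2-3-or-K}.

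Once $q(M)\in\{0,2\}$ is known, identifying $\varphi(M)$ as a root is a Weyl-group reduction modeled on the proof of Lemma~\ref{lm:rk2-condition}. If $q(M)=2$, then up to the $s_i$'s (which permute coordinates) we may take $x(M)=(2,1,1,1,1,1,1,1,0,\ldots)$; one application of $s_\beta$ followed by resorting gives $(1,1,1,1,1,1,0,\ldots)$, and a second application of $s_\beta$ followed by resorting yields $(1,1,1,0,\ldots)=\beta$, so $\varphi(M)$ is in the Weyl orbit of a simple root, hence a real root. If $q(M)=0$, then up to the $s_i$'s we may take $x(M)=(1,1,1,1,1,1,1,1,1,0,\ldots)$; a direct computation shows $\langle x(M),\alpha_i^\vee\rangle\le 0$ for every simple root and that the support in $J_{3,n}$ is connected (it covers $\{\alpha_1,\ldots,\alpha_8,\beta\}$). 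Hence $x(M)$ lies in the fundamental Weyl chamber $K$ of~\cite[Theorem 5.4]{Kac}, and so $\varphi(M)$ is an imaginary root.
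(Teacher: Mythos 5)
Your reduction is correct and is a cleaner bookkeeping of what the paper does: the identity $q(M)=2a_2+6a_3$ and the reformulation $r_1+r_2-|I\cap K|\ge 5$ are exactly equivalent to the paper's statement that $\mathrm{con}(M)$ must consist of $8$ or $9$ distinct numbers. The Weyl-group endgame is also essentially the paper's: for $q(M)=2$ the paper exhibits an explicit reduced word sending $\varphi(M)$ to $\beta$, and your two applications of $s_\beta$ interleaved with coordinate sorting compute the same orbit (I checked: $s_\beta$ sends $(2,1^7,0,\dots)$ to a resort of $(1^6,0,\dots)$ and that to a resort of $\beta$); for $q(M)=0$ both arguments invoke \cite[Theorem 5.4]{Kac} with the same dominance and connectedness check.

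The genuine gap is that the theorem's content lives almost entirely in the step you defer. Excluding the configurations with $r_1+r_2-|I\cap K|\le 4$ is not a routine finite check: it is where the paper spends the bulk of its proof, running through the cases ($I,J$ two quasi-boxes with $|I\cup J\cup K|=7$; $I,J$ three quasi-boxes with $|I\cup J\cup K|\in\{6,7\}$ and the various placements of $K$ inside $I\cup J$) and, in each, performing the poset simplification of Section~\ref{ssec:subspace} to exhibit a decomposition. Your one-sentence justification does not substitute for this: parallel rim segments do not by themselves split off a summand (one must remove the redundant $1$'s and verify that the residual configuration, typically several $2$'s each with a single edge into the $3$, is decomposable), and the valley-counting argument from the end of Lemma~\ref{lem:k3_rk3_interlacing-r1-r2} only controls adjacent rims, whereas the problematic quantity is $|I\cap K|$ for the non-adjacent pair. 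In particular the cases $r_1=r_2=3$ with $|I\cap K|\in\{2,3\}$, and $\{r_1,r_2\}=\{2,3\}$ with $|I\cap K|=1$, are not covered by any lemma you cite and require the explicit subspace-configuration analysis. A secondary, smaller issue: the "dual" of Lemma~\ref{lem:k3_rk3_interlacing-r1-r2} (that $J$ and $K$ interlace) is true but your justification by symmetry of the figures is not quite enough, since the paper's proof of that lemma ends with a valley count on the rim of $L_J$ relative to the rim below it; one either reruns that argument with peaks or invokes the duality of the category reversing filtrations. As written, the core estimate $q(M)\in\{0,2\}$ is asserted rather than proved.
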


\begin{proof}
Suppose that $M=L_I|L_J|L_K$ is an indecomposable module of rank $3$ in ${\rm CM}(B_{3,n})$. 
If $n<8$, there are no indecomposable rank 3 modules. If $n=8$, all indecomposables correspond to real roots, \cite{JKS}. So let $n\ge 9$. 
It suffices to show that the content of $M$ consists of $8$ or $9$ different 
numbers. If this is the case, we get, up to reordering the $x_i$, that the vector of multiplicities of $M$ is 
$x(M)=(2,1,1,1,1,1,1,0,\dots,0)\in \ZZ^n$ in the first case, with 
$q(M)=2$ and $x(M)=(1,1,1,1,1,1,1,1,1,0,\dots,0)\in \ZZ^n$ in the second case, with $q(M)=0$. 

We consider the quasi-boxes formed by the rims. Note that since $k=3$, any two successive rims form at 
most three  quasi-boxes. We go through these cases. 

\begin{itemize}
\item 
If $I=J$, 
$M$ decomposes by Lemma~\ref{lm:IJK-indec-0-4-boxes}, a contradiction. 

\item
Suppose next that $I$ and $J$ form one quasi-box. Since $J$ and $K$ form at most three quasi-boxes, 
Lemma~\ref{lm:IJK-indec-1-4-or-G} tells that the subspace configuration of $M$ is (G) in 
Figure~\ref{fig:IJ_one_box_JK_up_to_3}. This is not possible by 
Lemma~\ref{lem:k=3_IJK-indec_no_G_K}. 

\item
Assume now that $I$ and $J$ form two quasi-boxes. Then using Lemma~\ref{lm:IJK-indec-2-3-or-K} 
and Lemma~\ref{lem:k=3_IJK-indec_no_G_K} we find that $J$ and $K$ have to form three quasi-boxes 
(each of size 1). In particular, they are 3-interlacing, $|J\cup K|=6$ and $J\cap K=\emptyset$. 

By Lemma~\ref{lem:k3_rk3_interlacing-r1-r2}, $I$ and $J$ are interlacing and since we have two quasi-boxes, 
$I$ and $J$ are 2-interlacing. So $|I\cap J|=1$. This implies that $I \cup J \cup K$ has at least 7 
different elements.

If $|I\cup J\cup K|=7$, $I$ and $K$ have one element in common. 
This implies that in the subspace configuration, there is a 2 mapping into a 3 
(from the common element of $I$ and $J$). There is also a 1 mapping into two 
different 2's. 
Using the reduction from Section~\ref{ssec:subspace}, we can remove this 1. 
The resulting diagram has two vertices with a 2, each with a single edge mapping into the 3. 
Such a subspace configuration is never indecomposable. 

So we get $|I\cup J\cup K|\ge 8$ as claimed.

\item
Suppose now that $L_I,L_J$ form three quasi-boxes. In particular, as $k=3$, 
they are 3-interlacing, $|I\cup J|=6$ and $|I\cap J|=\emptyset$.

The subspace configurations for the cases where 
$J$ and $K$ form at most one quasi-box are not indecomposable, see 
Figure~\ref{fig:subspace configuration, ij 3 boxes, jk 0 or 1 box}. 
So $J$ and $K$ form at least two quasi-boxes. 

If $|I\cup J\cup K|=6$, we have $K\subset I\cup J$ and since $J$ and $K$ form at least two quasi-boxes, 
$|J\cap K|\le 1$. So either $K=I$ or two elements of $K$ are in $I$, one is in $J$. In the first case, the 
subspace configuration consists of three 1's included simultaneously in two 2's. 
So they can be removed by the reduction of Section~\ref{ssec:subspace}. This leaves three 2's mapping into one 3, 
not an indecomposable configuration. 
In the second case, we have two 1's mapping simultaneously into two 2's, they can be removed. The 
resulting subspace configuration contains two 2's with a single edge into the 3. Again, this is not 
indecomposable. 

If $|I \cup J \cup K|=7$, two elements of $K$ are in $I\cup J$. These can be both in $I$ or one in $I$ and 
one in $J$. If they are both in $I$, we have, as before, two 1's in the subspace configuration mapping 
into two 2's each. Removing them leaves two 2's, each with only one edge to the 3. This is 
not an indecomposable subspace configuration. 

If one is in $I$ and one in $J$, we can also reduce two 1's mapping into two 2's each and get a subspace 
configuration which is not indecomposable. 

Therefore $I \cup J \cup K$ has at least $8$ different numbers.
\end{itemize}

We now prove that $\varphi(M)$ is a real or imaginary root in $J_{k,n}$. We will use a similar method 
as in the proof of Lemma \ref{lm:rk2-condition}.

First we consider the case when $q(M)=2$. Up to Weyl group action, we may assume that $x(M) = (2,1,1,1,1,1,1,1, 0, \ldots, 0)$. Then
\begin{align*}
\varphi(M) = 3\beta+\alpha_1+3\alpha_2+5\alpha_3+4\alpha_4+3\alpha_5+2\alpha_6+\alpha_7.
\end{align*}
We have that  
\begin{align*}
s_3s_4s_5s_2s_3s_4s_1s_2s_3s_{\beta}s_6s_7s_5s_6s_4s_5s_3s_4s_2s_3s_{\beta}(\varphi(M)) = \beta
\end{align*}
which is a simple root. Therefore $\varphi(M)$ is a real root of $J_{3,n}$. 

Now we consider the case when $q(M)<2$. Up to Weyl group action, we may assume that $x(M) = (1,1,1,1,1,1,1,1,1, 0, \ldots, 0)$. Therefore 
\begin{align*}
\varphi(M) = 3\beta+2\alpha_1+4\alpha_2+6\alpha_3+5\alpha_4+4\alpha_5+3\alpha_6+2\alpha_7+\alpha_8.
\end{align*}
We have that $\langle \varphi(M), \alpha_{9}^{\vee} \rangle = -1$, $\langle \varphi(M), \alpha_{i}^{\vee} \rangle = 0$ for $i \in [n] \setminus \{9\}$ (we denote $\beta = \alpha_n$). Moreover, ${\rm supp}(\varphi(M))$ is connected. 
By \cite[Theorem 5.4]{Kac}, $\varphi(M)$ is  an imaginary root of $J_{3,n}$.\end{proof}

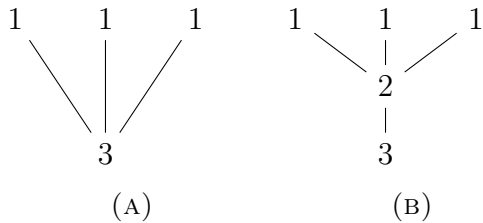
\begin{figure}
\begin{subfigure}{0.23\textwidth}
\begin{tikzpicture}[scale=0.6]
\tikzstyle{state}=[thick,minimum size=2mm]
		\node [state] (1) at (-10, 3) {$1$};
		\node [state] (2) at (-8, 3) {$1$};
		\node [state] (3) at (-6, 3) {$1$}; 
		\node [state] (4) at (-8, 0) {$3$};
		\draw (1) to (4);
		\draw (2) to (4);
		\draw (3) to (4);
\end{tikzpicture}
\caption{} 
\end{subfigure} 
\begin{subfigure}{0.23\textwidth}
\begin{tikzpicture}[scale=0.6]
\tikzstyle{state}=[thick,minimum size=2mm]
		\node [state] (1) at (-10, 3) {$1$};
		\node [state] (2) at (-8, 3) {$1$};
		\node [state] (3) at (-6, 3) {$1$}; 
		\node [state] (4) at (-8, 1.5) {$2$};
		\node [state] (5) at (-8, 0) {$3$};
		\draw (1) to (4);
		\draw (2) to (4);
		\draw (3) to (4);
		\draw (4) to (5);
\end{tikzpicture}
\caption{} 
\end{subfigure} 
\caption{Subspace configurations for $I,J$ forming three quasi-boxes and 
(A) $J=K$ or (B) $J$ and $K$ form one quasi-box} 
\label{fig:subspace configuration, ij 3 boxes, jk 0 or 1 box}
\end{figure}

Recall from Definition~\ref{def:canonical-real-profile} that a profile $P=(P_{ij})\in\mathcal P_{k,n}$ 
is said to be canonical if its entries are weakly column decreasing and if for all $j\ge 2$, 
$P_{m,j}\ge P_{1,j-1}$. The set of canonical profiles with real root is denoted by 
$C_{k,n}^{\rm re}$. 
Recall also that a cyclic permutation of a profile is obtained by permuting its rows cyclically 
(Definition~\ref{def:cyclic-perm}). 

\begin{theorem} \label{thm:k=3_rk=3_real_canonical} 
Let $M\in {\rm CM}(B_{3,n})$ be an indecomposable module of rank $3$.  
If $q(M)=2$, then $P_M$ is a cyclic permutation of some $P$ in 
$C_{3,n}^{\rm re}$. 
\end{theorem}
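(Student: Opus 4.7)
My plan is to show that every indecomposable rank-$3$ module $M=L_I\mid L_J\mid L_K$ in ${\rm CM}(B_{3,n})$ with $q(M)=2$ has all three pairs of rows in $P_M$ pairwise interlacing; the theorem then follows from Proposition \ref{prop:canonical-interlace} after choosing a suitable cyclic permutation of the rows.

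\emph{Combinatorial setup.} From the proof of Theorem \ref{thm:rank3-in-CM-3-n}, the condition $q(M)=2$ forces the multiplicity vector to have exactly one entry equal to $2$ and seven entries equal to $1$, so $|I\cup J\cup K|=8$ with a unique element $a$ belonging to two of the three sets. Combining this with Lemmas \ref{lm:IJK-indec-0-4-boxes}, \ref{lm:IJK-indec-1-4-or-G}, \ref{lm:IJK-indec-2-3-or-K}, and the exclusion of configurations (G) and (K) for $k=3$ (Lemma \ref{lem:k=3_IJK-indec_no_G_K}), the possibilities reduce to three cases: (i) $a\in I\cap J$, with $(I,J)$ $2$-interlacing and $(J,K)$ $3$-interlacing; (ii) the symmetric case $a\in J\cap K$; (iii) $a\in I\cap K$, with both $(I,J)$ and $(J,K)$ $3$-interlacing.

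\emph{Pairwise interlacing.} Lemma \ref{lem:k3_rk3_interlacing-r1-r2} gives that $I$ and $J$ interlace; a symmetric argument applied to the bottom two rims of $L_I\mid L_J\mid L_K$ (using the analogous classifications on the pair $(J,K)$) gives that $J$ and $K$ also interlace. The crucial step is to show that the non-adjacent pair $(I,K)$ is interlacing as well. In each of the three cases above, I plan to enumerate the finitely many circular arrangements of the eight elements of $I\cup J\cup K$ consistent with the adjacent-pair interlacings. Arrangements where $(I,K)$ automatically interlaces require no further argument. For the remaining arrangements, I verify that the subspace configuration of $M$ admits the reductions described in Section \ref{ssec:subspace} and simplifies to a non-root poset representation; this forces $M$ to decompose, contradicting indecomposability. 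Such arrangements are thereby ruled out, so $(I,K)$ must interlace.

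\emph{Conclusion.} Once all three pairs interlace, each pair admits a coordinate-wise comparison ($3$-interlacing of disjoint sets gives strict dominance, while $2$-interlacing gives dominance with equality in exactly one position). Transitivity of coordinate-wise dominance then makes the induced preorder on $\{I,J,K\}$ total, so some cyclic permutation $(X,Y,Z)$ of $(I,J,K)$ satisfies $X\ge Y\ge Z$ entrywise and is thus weakly column decreasing. Proposition \ref{prop:canonical-interlace} now declares this profile canonical, and since cyclic permutation of rows preserves the content and hence $q$, the resulting profile lies in $C_{3,n}^{\rm re}$. The principal obstacle is the case-by-case enumeration in the second step, together with the subspace configuration analysis required to rule out the non-interlacing $(I,K)$ arrangements; this diagrammatic work is the only genuinely nontrivial part of the argument.
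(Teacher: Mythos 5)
Your overall strategy --- reduce to a finite combinatorial enumeration constrained by the box-counting lemmas, then kill the bad configurations with subspace-configuration arguments --- is the same as the paper's, and the first two steps are sound: $q(M)=2$ forces a single repeated element, and Lemmas~\ref{lm:IJK-indec-0-4-boxes}--\ref{lem:k3_rk3_interlacing-r1-r2} do pin the adjacent pairs down to the interlacing types you list. The gap is in your concluding paragraph. Pairwise interlacing does give coordinate-wise comparability of any two rows (essentially one direction of Proposition~\ref{prop:canonical-interlace}), and transitivity makes the dominance order on $\{I,J,K\}$ total; but this only shows that \emph{some} permutation of the rows is weakly column decreasing, hence canonical --- not that this permutation is \emph{cyclic}. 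For three rows the decreasing rearrangement can a priori be a transposition of the filtration order: if the dominance order is $K>J\ge I$ while the filtration is $I\mid J\mid K$, the canonical arrangement $K\mid J\mid I$ is the reversal, and none of its cyclic permutations equals $I\mid J\mid K$. Interlacing alone does not exclude this: the paper's case (e), $i_1i_4i_7\mid i_1i_5i_8\mid i_3i_6i_9$, has all three pairs interlacing and is weakly column \emph{increasing}; it is not a cyclic permutation of a canonical profile, and it is eliminated only because its subspace configuration turns out to be decomposable.

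So the compatibility of the filtration order with the dominance order is itself a consequence of indecomposability and must be checked on \emph{ordered} profiles. Your enumeration is explicitly over ``circular arrangements of the eight elements,'' i.e.\ over unordered triples $\{I,J,K\}$; that suffices to establish that $(I,K)$ interlaces, but not the ordering statement. The paper performs exactly the finer check: for each admissible unordered triple it lists all filtration orders (cases (a)--(h) when $(I,J)$ is $2$-interlacing, (i)--(ii) when it is $3$-interlacing), determines which subspace configurations are indecomposable (only (a), and the two profiles in (i),(ii)), and observes that the survivors are precisely cyclic rotations of canonical profiles. To repair your proof, either fold the filtration order into your enumeration, or give a separate argument (e.g.\ via projective covers or the extension structure) that an indecomposable filtration never realizes a non-cyclic rearrangement of the dominance order. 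A minor further point: your parenthetical that $2$-interlacing gives ``dominance with equality in exactly one position'' is not quite accurate --- the shared element may occupy different positions in the two sorted triples, giving strict dominance in every coordinate --- but this does not affect the argument.
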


\begin{proof}
Write $M=L_I|L_J|L_K$ with $3$-subsets $I,J,K$. Recall that $k\ge 3$ as $M$ has rank 3. 
We already know from the proof of Theorem~\ref{thm:rank3-in-CM-3-n} that 
$x(M)=(2,1,\dots, 1,0,\dots, 0)$, up to reordering the entries (the entry $1$ appearing exactly 7 times).

In particular, $I,$ $J,$ and $K$ are pairwise different. If any two of them are interlacing, they have to be 
2-interlacing or 3-interlacing (if they were 1-interlacing, $x(M)$ would contain at least two entries equal 
to 2).

By Lemma \ref{lem:k3_rk3_interlacing-r1-r2}, $I$ and $J$ are interlacing. 
Assume first they are 2-interlacing. Then the rims of $L_I$ and $L_J$ form two quasi-boxes. By 
Lemma~\ref{lm:IJK-indec-2-3-or-K}, $L_J$ and $L_K$ have to form three quasi-boxes, as otherwise, the 
subspace configuration of $M$ would be the one in Figure~\ref{fig:IJ-2-boxes_JK_up_to_2} (K), 
which does not occur for $k=3$ (Lemma~\ref{lem:k=3_IJK-indec_no_G_K}). So $J$ and $K$ 
are 3-interlacing.

Let $\{i_1,\dots, i_9\}$ be the entries of the profile $P_M$ of $M$. 
Without loss of generality we can assume 
$i_1=i_2<i_3<\dots<i_9$ as the other cases are analogous. 

Since $I$ and $J$ have the element $i_1$ in common, $I$ and $J$ are 2-interlacing and $J$ and 
$K$ are 3-interlacing, the only possible choices for the rank 1 modules are the following: 

\begin{align*}
(a)\hskip.2cm & L_I = i_1 i_5 i_8, \ L_J=i_1 i_4 i_7, \ L_K=i_3 i_6 i_9, \\
(b)\hskip.2cm  & L_I=i_1 i_5 i_9, \ L_J=i_1 i_4 i_7, \ L_K=i_3 i_6 i_8, \\
(c)\hskip.2cm & L_I=i_1 i_6 i_8, \ L_J=i_1 i_4 i_7, \ L_K=i_3 i_5 i_9, \\
(d)\hskip.2cm & L_I=i_1 i_6 i_9, \ L_J=i_1 i_4 i_7, \ L_K=i_3 i_5 i_8, \\
(e)\hskip.2cm & L_I=i_1 i_4 i_7, \ L_J=i_1 i_5 i_8, \ L_K=i_3 i_6 i_9, \\
(f)\hskip.2cm & L_I=i_1 i_3 i_7, \ L_J=i_1 i_5 i_8, \ L_K=i_4 i_6 i_9, \\
(g)\hskip.2cm & L_I=i_1 i_4 i_6, \ L_J=i_1 i_5 i_8, \ L_K=i_3 i_7 i_9, \\
(h)\hskip.2cm & L_I=i_1 i_3 i_6, \ L_J=i_1 i_5 i_8, \ L_K=i_4 i_7 i_9.
\end{align*}
The only indecomposable subspace configuration among these is the one in (a) as one can 
check. 

If $M$ is as in (a), the profile $P_M$ is a cyclic permutation of the canonical profile 
$\thfrac{i_3 i_6 i_9}{i_1 i_5 i_8}{i_1 i_4 i_7}$.

Assume now that $I$ and $J$ are 3-interlacing. Then $J$ and $K$ are interlacing as otherwise, 
the subspace configuration would be three leaves with 1 mapping into a vertex with a 2 and from there one 
edge into a vertex with 3. 
Since $x(M)$ has only one entry 2, we must have that either $I,K$ are 2-interlacing with $J,K$ 
3-interlacing, case (i), or that $I,K$ are 3-interlacing with $J,K$ 2-interlacing, case (ii). 

As before, we assume without loss of generality that the entries of $I\cup J\cup K$ 
are of the form $i_1=i_2<i_3<\dots<i_9$. 
Then the only indecomposable subspace configurations are 
$\thfrac{i_1 i_4 i_7}{i_3 i_6 i_9}{i_1 i_5 i_8}$ for (i), $\thfrac{i_3 i_6 i_9}{i_1 i_5 i_8}{i_1 i_4 i_7}$ 
for (ii).  
\end{proof}

We expect that Theorem~\ref{thm:k=3_rk=3_real_canonical} is true for any indecomposable module 
in ${\rm CM}(B_{k,n})$, in arbitrary rank.

\begin{conjecture} \label{conj:real-roots-interlacing}
Let $M\in {\rm CM}(B_{k,n})$ be rigid indecomposable and assume that $\varphi(M)$ is a real root in $J_{k,n}$. 
Then $P_M$ is a cyclic permutation of a real canonical profile.
\end{conjecture}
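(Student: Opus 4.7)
The plan is to reduce the conjecture to two tasks via Proposition \ref{prop:canonical-interlace}: (i) establish that any two rows of $P_M$ are interlacing as $k$-subsets of $[n]$, and (ii) show that some cyclic permutation of the rows produces a weakly column decreasing profile. Once both hold, Proposition \ref{prop:canonical-interlace} immediately yields that the resulting profile is canonical. The conjecture is non-trivial because, as the example in Section \ref{sec:canonical profiles and interlacing property} shows, mere interlacing of consecutive rows (plus first-last) is strictly weaker than full pairwise interlacing; the rigidity hypothesis and the real-root condition $q(M)=2$ must both be used.

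For task (i), I would proceed in two stages. First, establish pairwise interlacing of \emph{consecutive} rows by generalizing Lemma \ref{lem:k3_rk3_interlacing-r1-r2}. Given a filtration $L_{I_1}\mid\cdots\mid L_{I_m}$ of a rigid indecomposable $M$, look at the local subspace configuration between two consecutive rims $L_{I_j}$ and $L_{I_{j+1}}$: if $I_j$ and $I_{j+1}$ were not interlacing, some quasi-box between them would be a rectangle whose $1$-dimensional leaves detach, producing a direct summand of $M$ through the close-packing/subspace-configuration formalism of Subsection \ref{ssec:subspace}. Second, use the real-root constraint. The equality $q(M)=2$ combined with $\sum_i x_i=mk$ yields $\sum_i x_i^2 = 2 + (k-2)m^2$, which rigidly fixes the multiset of multiplicities $\{x_1,\dots,x_n\}$ up to permutation; any two rows $I_i,I_j$ that fail to be interlacing would produce an excess of high multiplicities $x_\ell$ incompatible with this identity, or else reduce $q(M)$ below $2$. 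The bookkeeping should parallel, and generalize, the classification of vectors $x(M)$ carried out in the proofs of Lemma \ref{lm:rk2-condition} and Theorem \ref{thm:rank3-in-CM-3-n}.

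For task (ii), once pairwise interlacing is in hand, interlacing defines a natural cyclic order on the rows: for each column $j$, ordering the values $P_{1,j},\dots,P_{m,j}$ modulo the cyclic symmetry of the rim gives a compatible cyclic structure (pairwise interlacing guarantees that all columns induce the same cyclic order). Choosing the unique cyclic representative with $P_{1,1}$ maximal yields a weakly column decreasing profile; Proposition \ref{prop:canonical-interlace} then finishes the proof. A cleaner argument might go through the Auslander--Reiten translate $\tau$: by Proposition \ref{propos:AR-tubes}, the module sits in a tube, so its $\tau$-orbit provides exactly the cyclic permutations of its profile, and one can select the canonical representative within this orbit.

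The main obstacle will be task (i), specifically the step from consecutive interlacing to pairwise interlacing of arbitrary rows. The naive subspace-configuration argument only sees two adjacent rims at a time and so is insufficient. My expected workaround is to use the rigidity assumption more globally: compute $\underline{\Hom}(M,\tau M)$ via the filtration (using Remark \ref{rem:rigid-tau}), identify its contribution from each pair of rank $1$ factors $L_{I_i}, L_{I_j}$, and show that a non-interlacing pair produces a non-zero extension class obstructing $\Ext^1(M,M)=0$. Executing this rigorously, however, requires an Auslander--Reiten-type formula or a concrete model for extensions between rank $1$ modules in ${\rm CM}(B_{k,n})$ in higher rank, which is where the bulk of the technical work would lie.
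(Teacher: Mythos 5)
This statement is an open conjecture in the paper: the authors prove it only for rank $3$ modules in ${\rm CM}(B_{3,n})$ (Theorem \ref{thm:k=3_rk=3_real_canonical}) and explicitly leave the general case as Conjecture \ref{conj:real-roots-interlacing}, so there is no proof of theirs to compare against. The real question is therefore whether your proposal settles the conjecture, and it does not: it is a research plan whose central step is missing, as you yourself concede when you say that passing from interlacing of consecutive rows to pairwise interlacing of arbitrary rows ``requires \ldots{} a concrete model for extensions between rank $1$ modules \ldots{} which is where the bulk of the technical work would lie.'' That unfinished step is precisely the content of the conjecture.

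Beyond the acknowledged gap, two of your supporting claims are incorrect. First, the assertion that $q(M)=2$ ``rigidly fixes the multiset of multiplicities'' and that a non-interlacing pair of rows ``would produce an excess of high multiplicities'' fails: two disjoint $k$-subsets such as $\{1,2,3\}$ and $\{4,5,6\}$ are not interlacing yet contribute only multiplicities $0$ and $1$, so the vector $x(M)$ cannot detect failure of interlacing (and already for $k=3$, $m=4$ the constraints $\sum x_i=12$, $\sum x_i^2=18$ admit more than one multiset, e.g.\ $(2^3,1^6)$ and $(3,1^9)$). In the paper's rank-$3$ argument the multiplicity vector is used only to bound the possible interlacing degrees \emph{after} interlacing has been forced by the subspace-configuration analysis of Lemmas \ref{lm:IJK-indec-0-4-boxes}--\ref{lem:k3_rk3_interlacing-r1-r2}, and that analysis is tied to $k=3$ and rank $3$, where the number of quasi-boxes is small enough to enumerate. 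Second, task (ii) is not established either: you would need to prove that pairwise interlacing induces a single consistent cyclic order on the rows for which the profile becomes weakly column decreasing, and the suggested shortcut via the $\tau$-orbit does not work as stated, since $\tau$ acts on profiles by shifting labels (cf.\ Example \ref{ex:tau-of-rk-1} and Theorem \ref{thm:AR-sequences-3-peaks}), not by cyclically permuting the rows of a fixed profile. The overall strategy (reduce to Proposition \ref{prop:canonical-interlace} by proving pairwise interlacing plus a weakly column decreasing ordering) is a sensible framing, but as written the proposal leaves the conjecture open.
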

 
\begin{theorem} \label{thm:k=3-mod-imaginary}
Let $M\in {\rm CM}(B_{3, n})$ be indecomposable of rank $3$ and assume that $\varphi(M)$ 
is imaginary. 
Then the profile $P_M$ of $M$ is one of the following:
\begin{align*}
\thfrac{ i_1   i_5   i_7}{ i_3   i_6   i_9}{ i_2   i_4   i_8 },\thfrac{ i_2   i_6   i_8}{ i_1   i_4   i_7}{ i_3   i_5   i_9 },\thfrac{ i_3   i_7   i_9}{ i_2   i_5   i_8}{ i_1   i_4   i_6 },\thfrac{ i_1   i_4   i_8}{ i_3   i_6   i_9}{ i_2   i_5   i_7 },\thfrac{ i_2   i_5   i_9}{ i_1   i_4   i_7}{ i_3   i_6   i_8 },\thfrac{ i_1   i_3   i_6}{ i_2   i_5   i_8}{ i_4   i_7   i_9 },\thfrac{ i_2   i_4   i_7}{ i_3   i_6   i_9}{ i_1   i_5   i_8 },\thfrac{ i_3   i_5   i_8}{ i_1   i_4   i_7}{ i_2   i_6   i_9 },\thfrac{ i_4   i_6   i_9}{ i_2   i_5   i_8}{ i_1   i_3   i_7 },
\thfrac{i_1 i_4 i_7}{i_3 i_6 i_9}{i_2 i_5 i_8}, \thfrac{i_2 i_5 i_8}{i_1 i_4 i_7}{i_3 i_6 i_9}, \thfrac{i_3 i_6 i_9}{i_2 i_5 i_8}{i_1 i_4 i_7},
\end{align*}
where $i_1<\cdots <i_9 \in \ZZ_{\ge 1}$.
\end{theorem}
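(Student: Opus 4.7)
The argument combines the already-established content and interlacing constraints with an enumeration of candidate profiles and a subspace-configuration analysis. By Theorem~\ref{thm:rank3-in-CM-3-n}, if $M = L_I|L_J|L_K$ is indecomposable of rank $3$ and corresponds to an imaginary root, then $q(M) = 0$ and (up to reordering) $x(M) = (1,\ldots,1,0,\ldots,0)$ with nine ones. Hence the content of $P_M$ consists of nine distinct entries $i_1 < \cdots < i_9$ in $[n]$, each appearing once, and $I$, $J$, $K$ are pairwise disjoint $3$-subsets of $\{i_1,\ldots,i_9\}$. Relabeling $i_\ell \mapsto \ell$, I work inside $[9]$.

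By Lemma~\ref{lem:k3_rk3_interlacing-r1-r2}, $I$ and $J$ are interlacing, hence $3$-interlacing since disjoint; the argument used there is symmetric in the roles of the $(I,J)$- and $(J,K)$-rims (both Lemmas~\ref{lm:IJK-indec-1-4-or-G} and~\ref{lm:IJK-indec-2-3-or-K} treat the two quasi-box sets symmetrically), so $J$ and $K$ are likewise $3$-interlacing. Because $J$ is $3$-interlacing with both $I$ and $K$, each cyclic gap between consecutive elements of $J$ must contain at least one element of $I$ and at least one of $K$. With three gaps summing to $6$ and each of size at least $2$, every gap has size exactly $2$, forcing
\[
J \in \{\{1,4,7\},\{2,5,8\},\{3,6,9\}\}.
\]
For each such $J$, one element of each gap-pair is assigned to $I$ and the other to $K$, giving $2^3=8$ candidate $(I,K)$-splits and $24$ candidate profiles in total.

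The remaining task is to decide which of these $24$ profiles yield an indecomposable module, using the subspace configuration. For each candidate, the configuration has three $1$-dimensional subspaces (from the three boxes between $L_I$ and $L_J$), three $2$-dimensional subspaces (from the three boxes between $L_J$ and $L_K$), and one $3$-dimensional subspace at the bottom. Reading from the contours which $1$-dim sits inside which $2$-dim, and applying the simplification rules of Subsection~\ref{ssec:subspace}, one checks whether the resulting poset representation is indecomposable. Exactly twelve of the $24$ configurations pass this test, and these correspond precisely to the profiles listed in the statement.

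\textbf{Main obstacle.} The heart of the argument is the case analysis in the last step. I cut it down using the cyclic shift symmetry of Definition~\ref{def:shift} and Lemma~\ref{lem:shift-rigid}: shifting labels in $[9]$ by $1$ cyclically permutes the three possible choices of $J$, so it suffices to check the eight candidates with $J=\{1,4,7\}$. Organizing the answer by unordered triples $\{I,J,K\}$, the $9$ non-regular triples (those containing exactly one regular $3$-subset, which is forced to be $J$) each contribute a single indecomposable ordering, accounting for the first $9$ profiles in the list; the unique regular triple $\{\{1,4,7\},\{2,5,8\},\{3,6,9\}\}$ contributes three indecomposable orderings, its three cyclic permutations, which give the last three (non-rigid) profiles in the list.
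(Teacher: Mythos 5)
Your proposal is correct and follows essentially the same strategy as the paper's proof: reduce to nine distinct content entries via Theorem~\ref{thm:rank3-in-CM-3-n}, show that both consecutive pairs of rims are $3$-interlacing, enumerate the $24$ candidate profiles, and discard those whose subspace configuration decomposes. Two organizational differences are worth noting, both improvements in presentation. First, you extract upfront the constraint that each cyclic gap of the middle subset $J$ must contain one element of $I$ and one of $K$, forcing $J\in\{\{i_1,i_4,i_7\},\{i_2,i_5,i_8\},\{i_3,i_6,i_9\}\}$; the paper obtains the same $24$ candidates implicitly by listing eight possibilities for each of the three positions of $i_1$. Second, your accounting by unordered triples (nine non-regular triples contributing one admissible ordering each, and the regular triple contributing its three cyclic orderings) cleanly explains the shape of the final list and matches the paper's answer. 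Two points deserve tightening. (i) Your claim that $J$ and $K$ are $3$-interlacing ``by symmetry'' of Lemmas~\ref{lm:IJK-indec-1-4-or-G} and~\ref{lm:IJK-indec-2-3-or-K} is a bit quick, since Lemma~\ref{lm:IJK-indec-2-3-or-K} is stated in only one direction; the paper instead argues directly that if the disjoint sets $J,K$ fail to be $3$-interlacing, the resulting subspace configuration decomposes, and you should either do the same or justify that the relevant poset-indecomposability criteria do not depend on the orientation of the configuration. (ii) The reduction from $24$ to $8$ checks is not literally an $a$-shift in the sense of Definition~\ref{def:shift}, because the entries $i_1<\cdots<i_9$ need not be consecutive in $[n]$, so adding $1$ modulo $n$ does not send $i_\ell$ to $i_{\ell+1}$; what you are really using is that indecomposability of the subspace configuration depends only on the cyclic order of the content (for instance, first apply the co-decrease reductions of the appendix to land in the case $n=9$, then shift). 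Neither point affects the conclusion, and the decisive computational step --- verifying which of the eight configurations with $J=\{i_1,i_4,i_7\}$ are indecomposable --- is left as a routine check in your write-up exactly as it is in the paper's (``as one can check'').
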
 

\begin{proof}
Denote by $i_1 \le \ldots \le i_9$ the entries of $P_M$. 
Since $\varphi(M)$ is imaginary, we have $q(M)=0$ and the vector $x(M)$ of multiplicities of $M$ 
is $(1,1,\dots, 1,0,\dots, 0)$, up to reordering the $x_i$, with $1$ occurring nine times, as seen in the 
proof of Theorem~\ref{thm:rank3-in-CM-3-n}.  
Therefore $I,$ $J,$ and $K$ are pairwise different and if any two of them interlace, they 
are $3$-interlacing. 

By Lemma~\ref{lem:k3_rk3_interlacing-r1-r2}, $I$ and $J$ are thus 3-interlacing. 
If $L_J, L_K$ are not interlacing, then $k=3$ implies that $L_J, L_K$ form one quasi-box. 
But then the subspace configuration of $M$ is a graph with 3 leaves with label 1, mapping into 
a vertex with label 2 and the latter into a vertex with label 3, not indecomposable. 
So both $I,J$ and $J,K$ are 3-interlacing.

Assume that  $i_1\in I$. Then the three 3-subsets have to be as follows:  
\begin{align*}
& (A) \qquad I = i_1 i_3 i_6, \ J=i_2 i_5 i_8, \ K=i_4 i_7 i_9, \\
& (B) \qquad I=i_1 i_3 i_7, \ J=i_2 i_5 i_8, \ K=i_4 i_6 i_9, \\
& (C) \qquad I=i_1 i_4 i_6, \ J=i_2 i_5 i_8, \ K=i_3 i_7 i_9, \\
& (D) \qquad I=i_1 i_4 i_7, \ J=i_2 i_5 i_8, \ K=i_3 i_6 i_9, \\
& (E) \qquad I=i_1 i_4 i_7, \ J=i_3 i_6 i_9, \ K=i_2 i_5 i_8, \\
& (F) \qquad I=i_1 i_4 i_8, \ J=i_3 i_6 i_9, \ K=i_2 i_5 i_7, \\
& (G) \qquad I=i_1 i_5 i_7, \ J=i_3 i_6 i_9, \ K=i_2 i_4 i_8, \\
& (H) \qquad I=i_1 i_5 i_8, \ J=i_3 i_6 i_9, \ K=i_2 i_4 i_7.
\end{align*}
Only modules (A), (E), (F), (G) have indecomposable subspace configurations, as one can check. 

Similary, if $i_1$ appears in the second row of $P_M$, then $P_M$ is one of the following:
\begin{align*}
\thfrac{ i_2  i_6  i_8}{ i_1  i_4  i_7}{ i_3  i_5  i_9 }, \ \thfrac{ i_2  i_5  i_9}{ i_1  i_4  i_7}{ i_3  i_6  i_8 }, \ \thfrac{ i_3  i_5  i_8}{ i_1  i_4  i_7}{ i_2  i_6  i_9 }, \ \thfrac{ i_2  i_5  i_8}{ i_1  i_4  i_7}{ i_3  i_6  i_9 } \,.
\end{align*}
If $i_1$ appears in the third row of $P_M$, then $P_M$ is one of the following:
\begin{align*}
\thfrac{ i_3  i_7  i_9}{ i_2  i_5  i_8}{ i_1  i_4  i_6 }, \ \thfrac{ i_2  i_4  i_7}{ i_3  i_6  i_9}{ i_1  i_5  i_8 }, \ \thfrac{ i_4  i_6  i_9}{ i_2  i_5  i_8}{ i_1  i_3  i_7 }, \ \thfrac{ i_3  i_6  i_9 }{ i_2  i_5  i_8}{ i_1  i_4  i_7 }\,.
\end{align*}
\end{proof}

Recall that if we add a fixed number to every element of a profile 
$P$, reducing modulo $n$, the result is called a shift of $P$ (Definition~\ref{def:shift}). 

\begin{corollary}\label{cor:39-imaginary}
In ${\rm CM}(B_{3,9})$, the profile of an indecomposable rank $3$ module with imaginary root has 
to be a shift of one of the following two:   
{\small $$\thfrac{157}{369}{248}, \quad \thfrac{147}{369}{258}.
$$ }
\end{corollary}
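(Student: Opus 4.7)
The plan is to deduce the corollary directly from Theorem~\ref{thm:k=3-mod-imaginary} by specialising to $n=9$ and then checking that the $12$ profiles listed there organise into exactly two shift-orbits.

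First I would fix $n=9$ and note that the constraint $1\le i_1<i_2<\cdots<i_9\le 9$ in Theorem~\ref{thm:k=3-mod-imaginary} forces $i_j=j$ for $j=1,\dots,9$. Substituting into the twelve general profiles, one gets twelve concrete profiles in $\mathcal P_{3,9}$. Since any indecomposable rank $3$ module $M\in {\rm CM}(B_{3,9})$ with imaginary root satisfies Theorem~\ref{thm:k=3-mod-imaginary}, it suffices to show that each of these twelve concrete profiles is a shift (in the sense of Definition~\ref{def:shift}) of either $\thfrac{157}{369}{248}$ or $\thfrac{147}{369}{258}$.

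Next I would partition the twelve profiles into the first nine and the last three of the list. For the last three, observe that shifting $\thfrac{147}{369}{258}$ by $1$ (mod $9$) sends the rows, column by column, to $\thfrac{258}{147}{369}$, and a further shift by $1$ yields $\thfrac{369}{258}{147}$; a shift by $3$ returns the original profile, so this orbit has size $3$ and coincides with the last three profiles in the list of Theorem~\ref{thm:k=3-mod-imaginary} specialised to $n=9$. For the first nine profiles, the analogous computation starting from $\thfrac{157}{369}{248}$ produces the nine profiles successively: for instance, shifting by $1$ sends $\{1,5,7\}\mid\{3,6,9\}\mid\{2,4,8\}$ to $\{2,6,8\}\mid\{1,4,7\}\mid\{3,5,9\}$, matching the second profile in the list; iterating shows that the shifts by $0,1,\dots,8$ run through the first nine profiles and the orbit has size $9$.

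The main obstacle is simply bookkeeping: one has to verify the nine shifts of $\thfrac{157}{369}{248}$ hit each of the nine corresponding profiles in the theorem on the nose (remembering to sort entries within each row after reducing modulo $9$, for example $\{1,5,7\}+2=\{3,7,9\}$ and $\{3,6,9\}+2=\{5,8,2\}=\{2,5,8\}$). Once these elementary modular-arithmetic checks are completed, the corollary follows, because together the two orbits account for all $9+3=12$ profiles produced by Theorem~\ref{thm:k=3-mod-imaginary} at $n=9$, and every indecomposable rank $3$ module with imaginary root must have one of them as its profile.
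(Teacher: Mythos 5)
Your proposal is correct and follows exactly the route the paper intends: the corollary is stated without proof as an immediate specialisation of Theorem~\ref{thm:k=3-mod-imaginary} to $n=9$ (where $i_j=j$ is forced), and the only remaining content is the modular-arithmetic check that the first nine listed profiles form the shift-orbit of $\thfrac{157}{369}{248}$ and the last three form the shift-orbit of $\thfrac{147}{369}{258}$, which you carry out correctly.
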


We will see in Proposition~\ref{proposition:non-rigid modules} that modules with 
the second profile are not rigid. 

%
\section{Auslander-Reiten quiver} \label{sec:Auslander-Reiten quiver}

The goal of this section is to characterise part of the Grassmannian cluster category  
${\rm CM}(B_{3,n})$. 
In particular, we will give all Auslander-Reiten sequences where the end terms 
are a rank 1 and a rank 2 module respectively. We will use these results to show that there are 
exactly 225 rigid indecomposable rank 3 modules in the case $n=9$.

%
\subsection{Profiles of Auslander-Reiten translates}

We will need to determine profiles of (inverses of) 
Auslander-Reiten translates of indecomposable modules. 
This is done as follows. \\
Let $M$ be a rigid non-projective indecomposable module in ${\rm CM}(B_{k,n})$ with 
filtration  $M=L_{I_1}| L_{I_2}| \dots | L_{I_s}$. 
Recall that a peak of a rank 1 module $L_I$  is an element $i\notin I$ such that  $i+1\in I$ 
(Definition~\ref{def:peak}). 

For $j\in [n]$ let $P_j=L_{\{j+1,j+2,\dots, j+k\}}$ be the projective rank 1 module in 
${\rm CM}(B_{k,n})$ with peak at $j$. 

\begin{remark}\label{rem:profile-tau} 
Let $M\in \underline{{\rm CM}}(B_{k,n})$ be rigid indecomposable. Recall that $\tau^{-1}(M)$ is 
also rigid (Remark~\ref{rem:rigid-tau}). 
To find the profile of $\tau^{-1}(M)$ we use a strategy to compute the first syzygy of 
$M$, which is the same as $\tau^{-1}(M)$. In practice, already finding the 
minimal projective  cover of $M$ is difficult. 
We will only need to do this in small ranks (ranks 1,2,3) where it is still feasible. 

Let $\oplus_{j\in U}P_j$ be the minimal projective 
cover of $M$, where $U$ is a multiset with entries in $\{0,1,\dots, n-1\}$. 
We consider the lattice diagrams of $\oplus_{j\in U}P_j$ and of $M$. Then 
the dimensions in the corresponding lattice diagram of the syzygy of $M$, i.e., of 
$\tau^{-1}(M)$,  are given as 
$\dim\oplus_{j\in U} P_j-\dim M$.  
We will call this infinite tuple the {\bf dimension lattice} of $\tau^{-1}(M)$. 
Assume that $\tau^{-1}(M)$ has filtration $L_{J_1}|L_{J_2}|\dots |L_{J_t}$. 

From the dimension lattice $\dim\oplus_{j\in U} P_j-\dim M$ we can find the filtration factors of 
$\tau^{-1}(M)$ iteratively. Draw the lattice diagram for the dimension lattice of $\tau^{-1}(M)$. Then the set $J_1$ consists of the $i\in[n]$ such that the arrow $x_i$ is in the top rim of the lattice diagram. 
The set $J_2$ consists of  the $i\in[n]$ such that $x_i$ is in the top rim of the lattice diagram obtained by 
removing the rim of $L_{J_1}$, etc. 
\end{remark}

\begin{example}\label{ex:tau-of-rk-1}
For $I=\{i,j,j+1,\dots, j+k-2\}$, Remark~\ref{rem:profile-tau} gives 
$\tau^{-1}(L_I)=L_J$ with $J=\{i+1,i+2,\dots, i+k-1,j+k-1\}$. 
\end{example}
%
\subsection{Auslander-Reiten sequences}

Observe that if $L_I$ has $m$ peaks, then by Remark~\ref{rem:rigid-tau}, 
$\tau^{-1}(L_I)$ is also a rigid indecomposable module and since the rank is additive on short exact sequences, 
this module has rank $m-1$. 
We also recall that 
in the case where $L_I$ has 2 peaks and where the $k$-subset $I$ consists of two intervals, 
one with a single element, Auslander-Reiten sequences with these end terms have been determined 
in~\cite[Theorem 3.12]{BBG}. Here, we give a more general result for Auslander-Reiten sequences 
in ${\rm CM}(B_{3,n})$ with end terms a rank 1 and a rank 2 module. 

\begin{theorem} \label{thm:AR-sequences-3-peaks}
Let $L_I$ be a rank $1$ module in ${\rm CM}(B_{3,n})$, where $I=\{i_1, i_2, i_3\}$ 
with $i_1<i_2<i_3<i_1$ in the cyclic order 
is a $3$-subset with three peaks. 
Then the Auslander-Reiten sequence starting at $L_I$ is as follows: 
\[
L_I\hookrightarrow M\twoheadrightarrow \frac{L_X}{L_Y},
\]
where $X=\{i_1+1, i_2+1, i_3+1\}$ and $Y=\{i_1+2, i_2+2, i_3+2\}$. 

If the middle term $M$ is indecomposable, then $P_M=X|I|Y$. 
\end{theorem}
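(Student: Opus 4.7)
The strategy is to compute $\tau^{-1}(L_I)$ via the syzygy method of Remark~\ref{rem:profile-tau}, which will give the right end of the Auslander--Reiten sequence directly, and then to identify the profile of the indecomposable middle term from its contour picture.

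Since $i_j < i_{j+1}-1$, the elements of $I=\{i_1,i_2,i_3\}$ are pairwise non-consecutive, so $L_I$ has exactly three peaks at $i_1-1,\ i_2-1,\ i_3-1$. Consequently its minimal projective cover is $P_{i_1-1}\oplus P_{i_2-1}\oplus P_{i_3-1}$, where $P_j = L_{\{j+1,j+2,j+3\}}$. Computing the syzygy via the dimension lattice $\dim\bigl(\bigoplus_j P_{i_j-1}\bigr)-\dim L_I$ and reading off its rims iteratively as in Remark~\ref{rem:profile-tau}, one finds that $\tau^{-1}(L_I)=\Omega(L_I)$ has rank $2$: the outermost rim has peaks at $\{i_1,i_2,i_3\}$ (each peak of $L_I$ shifts by one), giving the $3$-subset $X=\{i_1+1,i_2+1,i_3+1\}$; once this rim is removed, the next rim gives $Y=\{i_1+2,i_2+2,i_3+2\}$. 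Hence $\tau^{-1}(L_I)\cong L_X\mid L_Y$. The existence of Auslander--Reiten sequences in the Frobenius category $\mathrm{CM}(B_{3,n})$ now produces the displayed sequence $L_I\hookrightarrow M\twoheadrightarrow L_X/L_Y$, with $\rk(M)=3$ by additivity of rank.

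Now suppose $M$ is indecomposable. Then its rank-$1$ composition factors are exactly $L_X$, $L_I$, $L_Y$, and it remains to identify the canonical ordering. I would draw the contour picture of $M$ using that the peaks of $X$ coincide with the valleys of $I$ (both equal to $\{i_1,i_2,i_3\}$), so the rim of $L_X$ sits snugly above that of $L_I$, and that the valleys of $X$ coincide with the peaks of $Y$ (both equal to $\{i_1+1,i_2+1,i_3+1\}$), so the rim of $L_Y$ sits snugly below that of $L_X$. Arranging these three rims into the unique indecomposable subspace configuration on three layers compatible with such placements yields $P_M=X\mid I\mid Y$.

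The hard part is this last step. The short exact sequence supplied by the AR formalism exhibits $L_I$ as a submodule of $M$, which a priori produces only the filtration $X\mid Y\mid I$ (with $L_Y$ in the middle), whereas the claimed generic profile is $X\mid I\mid Y$ (with $L_Y$ at the bottom). Reconciling these requires exhibiting the alternate flag $0\subset L_Y\subset N'\subset M$ with $N'/L_Y\cong L_I$ and $M/N'\cong L_X$; this can be done via a direct analysis of the subspace configuration of Section~\ref{sec:subspace configurations}, narrowing down the indecomposable configuration on composition factors $L_X,L_I,L_Y$. In the real-root case one can alternatively invoke Theorem~\ref{thm:k=3_rk=3_real_canonical} to restrict the profile to a cyclic permutation of a canonical one, and in the imaginary case one matches directly against the list in Theorem~\ref{thm:k=3-mod-imaginary}, in both cases singling out the ordering $X\mid I\mid Y$.
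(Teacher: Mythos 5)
Your first half — computing $\tau^{-1}(L_I)=L_X\mid L_Y$ from the minimal projective cover $P_{i_1-1}\oplus P_{i_2-1}\oplus P_{i_3-1}$ via the dimension lattice of Remark~\ref{rem:profile-tau}, and then getting the AR sequence with a rank $3$ middle term by additivity — is exactly the paper's argument and is fine. The problems are in the second half. First, you assert that the rank-$1$ composition factors of $M$ ``are exactly $L_X$, $L_I$, $L_Y$.'' The exact sequence only forces ${\rm con}(M)={\rm con}(L_I)\cup{\rm con}(L_X\mid L_Y)$; it does not force the generic filtration of $M$ to use those particular $3$-subsets. The paper handles this by observing that the content has $8$ or $9$ distinct elements (proof of Theorem~\ref{thm:rank3-in-CM-3-n}) and then invoking Theorem~\ref{thm:k=3_rk=3_real_canonical} (real case) or Theorem~\ref{thm:k=3-mod-imaginary} (imaginary case) to list \emph{all} profiles with that content, which is your own fallback route — so this gap is repairable, but your primary ``contour picture'' argument starts from an unjustified premise.

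The more serious gap is the last step. Even after the classification theorems you are left with several candidates (the three cyclic permutations of a canonical profile in the real case; twelve profiles in the imaginary case), and you say these are resolved ``in both cases singling out the ordering $X\mid I\mid Y$'' without giving any mechanism for doing so. Your only concrete suggestion — exhibiting an alternate flag by ``direct analysis of the subspace configuration'' — is not carried out, and the subspace configuration is insensitive to which cyclic ordering is the generic filtration, so it cannot by itself distinguish $X\mid I\mid Y$ from $I\mid Y\mid X$ or $Y\mid X\mid I$. The paper's device, which is missing from your proposal, is a projective cover comparison: the minimal projective cover of the middle term of an Auslander--Reiten sequence must be a direct summand of the projective cover of the direct sum of the end terms, and checking the peaks of each candidate profile against the peaks of $L_I\oplus(L_X\mid L_Y)$ eliminates all candidates except $X\mid I\mid Y$. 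Without this (or an equivalent selection principle) the proof is incomplete.
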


\begin{proof}
Let $L_I$ be a rank $1$ module in ${\rm CM}(B_{3,n})$, where 
$I=\{i_1, i_2, i_3\}$ is a $3$-subset with three peaks. Note that this implies $n\ge 6$. 
Using 
Remark~\ref{rem:profile-tau} we find 
$\tau^{-1}(L_I)=\frac{L_X}{L_Y}$, where $X=\{i_1+1, i_2+1, i_3+1\}$, $Y=\{i_1+2, i_2+2, i_3+2\}$.
Therefore, the Auslander-Reiten sequence starting at $L_I$ is 
$L_I \to M \to \frac{L_X}{L_Y}$, where $M$ is a rank $3$ module. 

We now show that if $M$ is indecomposable, its profile is as claimed. For that, we will study the content 
of $M$ and use it to find candidates for $M$. 

So suppose that $M$ is indecomposable. This implies $n\ge 8$, since there are no rank 3 
indecomposables for $n=6,7$. 
The content ${\rm con}(M)$ of $M$ is the union of the contents of the end terms 
$L_I$ and $L_X|L_Y$, so ${\rm con}(M)=\{i_1,i_2,i_3, i_1+1,i_2+1,i_3+1, i_1+2,i_2+2,i_3+2\}$. 
By the proof of Theorem~\ref{thm:rank3-in-CM-3-n}, this content consists of 8 or 9 different 
elements. If these numbers are all different, we have $i_1+2<i_2,\ \ i_2+2<i_3\ \mbox{and} \ i_3+2<i_1$. 
In this case, 
$q(M)=0$, i.e., $M$ corresponds to an imaginary root. 
By Theorem~\ref{thm:k=3-mod-imaginary}, there are 12 possibilities for the profile $P_M$ of $M$. 
The projective cover of the sum $L_I\oplus \frac{L_X}{L_Y}$ of the end terms 
has to contain the projective cover of the middle term as a summand. 
Comparing with these 12 profiles, we see that only the module with filtration $L_X|L_I|L_Y$ works, 
as claimed.

If the content of $M$ has exactly $8$ different numbers, then we have one of the following situations:
\begin{align*}
(i)\hskip.2cm & i_1+2=i_2, \ i_2+2<i_3,\ \mbox{ and } i_3+2<i_1, \\
(ii)\hskip.2cm & i_1+2<i_2 ,\  i_2+2=i_3, \ \mbox{ and } i_3+2<i_1, \\
(iii)\hskip.2cm & i_1+2<i_2, \ i_2+2<i_3,\ \mbox{ and } i_3+2=i_1.
\end{align*}
In this case, $q(M)=2$, i.e., $M$ corresponds to a real root of $J_{3,n}$. 
By Theorem~\ref{thm:k=3_rk=3_real_canonical}, the corresponding profile $P_M$ is one of the 
three cyclic permutations of a canonical one. 
In (i), 
the canonical profile is 
$\begin{small}
\begin{array}{ccc}
i_2 & i_2+2 & i_3+2  \\ 
\hline 
i_1+1 & i_2+1 & i_3+1 \\
\hline 
i_1 & i_2 & i_3 
\end{array}
\end{small}
$.
Comparing projective covers, as above, 
we find that $P_M$ is in fact equal to this, which 
is $X|I|Y$ as claimed.  
Similarly, for (ii) and (iii),  where the profiles of $M$ are the following two: 
\[
\begin{small}
\begin{array}{ccc}
i_1+1& i_2+1& i_3+1\\
\hline
i_1 & i_2 & i_3\\ 
\hline
i_1+2 & i_3 & i_3+2 
\end{array},
\hskip1.5cm
\begin{array}{ccc}
i_1+1 & i_2+1 & i_3+1 \\ 
\hline
i_1 & i_2 & i_3\\ 
\hline
i_1 & i_3 +2 & i_3+2 
\end{array}
\end{small}
\]
Therefore $P_M$ is $X|I|Y$ as claimed, in all cases. 
\end{proof}

\begin{lemma}\label{lm:AR-direct-summand}
Let $I=\{i,i+2,i+4\}$ (entries taken modulo $n$). 
Let $L_I\hookrightarrow M\twoheadrightarrow L_X|L_Y$ be as in 
Theorem~\ref{thm:AR-sequences-3-peaks}, with $X=\{i+1,i+3,i+5\}$ 
and $Y=\{i+2,i+4,i+6\}$. Assume that 
$M=L_A\oplus N$ with $A=\{i+1,i+2,i+4\}$. The module $N$ is indecomposable 
if and only if $n\ge 7$ and in this case, 
$N=L_B|L_Y$ for $B=I\cup X\setminus A=\{i,i+3,i+5\}$. 
\end{lemma}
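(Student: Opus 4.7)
The plan is to derive the structure of $N$ from a content computation combined with the indecomposability criterion of Lemma~\ref{lm:rk2-condition}. By the Auslander-Reiten sequence of Theorem~\ref{thm:AR-sequences-3-peaks}, the middle term $M$ has rank $3$ and content
\[
{\rm con}(M) = I + X + Y = \{i, i+1, i+2, i+2, i+3, i+4, i+4, i+5, i+6\}
\]
as a multiset modulo $n$. Since $L_A$ has rank $1$, the summand $N$ has rank $2$ and
\[
{\rm con}(N) = {\rm con}(M) - A = \{i, i+2, i+3, i+4, i+5, i+6\} \pmod n.
\]

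Assume first that $n \geq 7$. Then the six entries of ${\rm con}(N)$ are distinct in $[n]$, so any filtration $N = L_{B_1}|L_{B_2}$ has $B_1, B_2$ partitioning these entries into disjoint $3$-subsets. For $N$ to be indecomposable, Lemma~\ref{lm:rk2-condition} forces $B_1, B_2$ to form at least three quasi-boxes, which since $k=3$ means they must be $3$-interlacing. Sorting the content cyclically as $i, i+2, i+3, i+4, i+5, i+6$, the unique such partition (alternating odd/even positions) is $\{B, Y\}$ with $B = \{i, i+3, i+5\}$. To fix the filtration order $N = L_B|L_Y$ rather than $L_Y|L_B$, I would construct an explicit short exact sequence
\[
0 \to L_I \to L_A \oplus (L_B|L_Y) \to L_X|L_Y \to 0,
\]
with the embedding of $L_I$ assembled from a nonzero morphism $L_I \to L_A$ together with the composition $L_I \to L_Y \hookrightarrow L_B|L_Y$ through the socle $L_Y$ of $L_B|L_Y$. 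Uniqueness of the AR middle term then forces $N = L_B|L_Y$, which is indecomposable by Lemma~\ref{lm:rk2-condition} since $B$ and $Y$ are $3$-interlacing.

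For $n = 6$ we have $i+6 \equiv i \pmod 6$, hence $Y = I$, and ${\rm con}(N) = \{i, i, i+2, i+3, i+4, i+5\}$ with $i$ of multiplicity $2$. Any partition into two $3$-subsets $B_1, B_2$ then satisfies $i \in B_1 \cap B_2$, so $B_1$ and $B_2$ form at most two quasi-boxes, and Lemma~\ref{lm:rk2-condition} forces $N$ to be decomposable. This confirms that $n \geq 7$ is necessary.

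The main obstacle is the filtration-order determination in the case $n \geq 7$: one must verify that the induced surjection $L_A \oplus (L_B|L_Y) \to L_X|L_Y$ has kernel exactly $L_I$ rather than some other rank-$1$ diagonal submodule. An alternative distinction between $L_B|L_Y$ and $L_Y|L_B$ can be read off from their minimal projective covers---the cover of $L_Y|L_B$ requires an additional summand $P_{i-1}$ to reach the element $i$ of its content---and one can then compare the resulting candidates for $\tau^{-1}(N)$ against the expected decomposition $\tau^{-1}(M) = \tau^{-1}(L_A) \oplus \tau^{-1}(N)$.
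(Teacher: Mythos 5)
Your content computation, the use of Lemma~\ref{lm:rk2-condition} to force $3$-interlacing, and the projective-cover comparison to pin down the order $L_B|L_Y$ versus $L_Y|L_B$ all match the paper's argument, and your $n=6$ analysis (the doubled entry $i$ forces $|B_1\cap B_2|\ge 1$, hence at most two quasi-boxes) is a clean replacement for the paper's appeal to the known $\Gr(3,6)$ Auslander--Reiten quiver. However, there is a genuine gap: you only prove the implication ``if $N$ is indecomposable then $n\ge 7$ and $N=L_B|L_Y$.'' The lemma is an ``if and only if,'' so for $n\ge 7$ you must also \emph{prove} that $N$ is indecomposable, i.e.\ rule out that the rank~$2$ module $N$ splits as a direct sum $C\oplus E$ of two rank~$1$ modules. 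Your proposal never addresses this direction; everything after ``For $N$ to be indecomposable\dots'' is conditional on indecomposability.

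This missing half is where the paper does real work. It assumes $N=C\oplus E$, observes that $\con(C)\cup\con(E)=\{i,i+2,i+3,i+4,i+5,i+6\}$ must be partitioned into two $3$-subsets whose rims together realize the four peaks $\{i+1,i+3,i+5,i+6\}$ (forced by the surjection onto $L_X|L_Y$) and whose projective covers match those of $L_I\oplus(L_X|L_Y)$ minus $P$ of $L_A$; only the partition $\{i,i+3,i+4\}\cup\{i+2,i+5,i+6\}$ survives. It then computes $\tau^{-1}(E)=L_{\{i+3,i+4,i+7\}}$ and uses the local shape of the Auslander--Reiten quiver (the content of $L_X|L_Y$ must lie in $\con(E)\cup\con(\tau^{-1}E)$) to force $i+7\equiv i+1 \pmod n$, i.e.\ $n=6$, a contradiction. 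Without some argument of this kind your proof establishes only the uniqueness of the candidate profile, not the existence claim of the lemma.
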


\begin{proof}
If $n=6$, the Auslander-Reiten quiver is well-known, see \cite[Figure 10]{JKS} 
and the only two 3-subsets with three peaks are 
$\{1,3,5\}$ and $\{2,4,6\}$. For $I=\{1,3,5\}$, we have $X=\{2,4,6\}$, 
$Y=\{1,3,5\}$, $A=\{2,3,5\}$, and $N=L_{\{1,4,5\}}\oplus L_{\{1,3,6\}}$. 

So assume $n\ge 7$. 
If $N$ is indecomposable, we are in case (A) of Figure~\ref{fig:AR-sequence-M-decomposable}. 
For $N$ to be indecomposable, the two 3-subsets of the filtration of $N$ have to be 3-interlacing. 
Thus the module $N$ must be either $L_B|L_Y$ or $L_Y|L_B$ with $B=\{i,i+3,i+5\}$. 
Comparing the projective covers shows that $N=L_B|L_Y$. 

If $N=C \oplus E$ is a direct sum of two rank $1$ modules, the Auslander-Reiten quiver locally is as in (B) 
of Figure~\ref{fig:AR-sequence-M-decomposable} with modules $C',C''$ $E',E''$, and where 
$E'$, $E''$ may be trivial modules. 
We have $\con(C)\cup\con(E)=\{i,i+2,i+3,i+4,i+5,i+6\}$. These six elements are all different since $n\ge 7$. 
The six entries have to be partitioned into two $3$-subsets $J_1, J_2$, with 
$C=L_{J_1}$ and $E=L_{J_2}$ in a way that $C$ and $E$ combined 
have the four peaks $\{i+1,i+3,i+5,i+6\}$. There are three ways to do so: 
(i) $\{i,i+2,i+6\}\cup \{i+3,i+4,i+5\}$, 
(ii) $\{i,i+4,i+5\}\cup \{i+2,i+3,i+6\}$, or 
(iii) $\{i,i+3,i+4\}\cup \{i+2,i+5,i+6\}$. Only in case (iii), the direct sum $C\oplus E$ has the 
peaks $\{i,i+2,i+3,i+5\}$ which are needed in order to match the projective covers. 
So assume 
\[
C=L_{\{i,i+3,i+4\}}\ \mbox{ and }\ E=L_{\{i+2,i+5,i+6\}}. 
\]
We then find $E''=\tau^{-1}(E)=L_{\{i+3,i+4,i+7\}}$, cf.\ Example~\ref{ex:tau-of-rk-1}. 
Since the content of $L_X|L_Y$ has to be contained in 
$\con(E)\cup \con(E'')$, this would imply $i+7\equiv i+1$ modulo $n$, 
i.e., $n=6$, a contradiction to the assumptions. 
\end{proof} 

\begin{figure}\scalebox{.6}{
\hspace{-3cm}
\begin{subfigure}{0.33\textwidth} 
\begin{tikzpicture}  
		\node  (1) at (0, 5) {$A'$};
		\node  (2) at (4, 5) {$L_A$};
		\node  (3) at (8, 5) {$A''$};
		\node  (4) at (2, 3.5) {$L_I$}; 
		\node  (5) at (6, 3.5) {$L_X|L_Y$}; 		
		\node  (6) at (0, 2) {$N'$};
		\node  (7) at (4, 2) {$N$};
		\node  (8) at (8, 2) {$N''$};

		\draw (1) to (4);
		\draw (2) to (4);
		\draw (2) to (5);
		\draw (3) to (5);
		\draw (4) to (6);
		\draw (4) to (7);
		\draw (5) to (7);
		\draw (5) to (8);
\end{tikzpicture}
\caption{}
\end{subfigure}
\hspace{5cm}
\begin{subfigure} {0.5\textwidth}
\begin{tikzpicture}  
		\node  (1) at (0, 5) {$A'$};
		\node  (2) at (4, 5) {$L_A$}; 
		\node  (3) at (8, 5) {$A''$};
		\node  (4) at (0, 3.5) {$C'$};
		\node  (5) at (2, 3.5) {$L_I$};
		\node  (6) at (4, 3.5) {$C$};
		\node  (7) at (6, 3.5) {$L_X|L_Y$}; 		
		\node  (8) at (8, 3.5) {$C''$};
		\node  (9) at (0, 2) {$E'$};
		\node  (10) at (4, 2) {$E$};
		\node  (11) at (8, 2) {$E''$};

		\draw (1) to (5);
		\draw (2) to (5);
		\draw (2) to (7);
		\draw (3) to (7);
		\draw (4) to (5);
		\draw (5) to (6);
		\draw (5) to (9);
		\draw (5) to (10);
		\draw (6) to (7);
		\draw (7) to (8);
		\draw (7) to (10);
		\draw (7) to (11);
\end{tikzpicture}
\caption{}
\end{subfigure}
}
\caption{Part of the Auslander-Reiten quiver of ${\rm CM}(B_{3,n})$.}
\label{fig:AR-sequence-M-decomposable}
\end{figure}
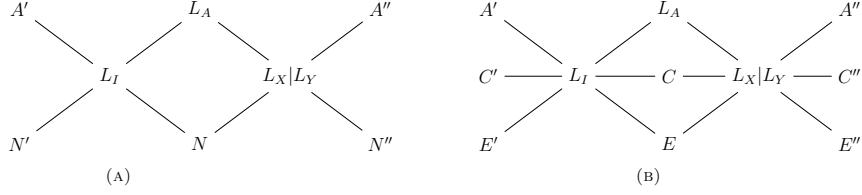

We also have  dual versions of Theorem~\ref{thm:AR-sequences-3-peaks} and  
Lemma~\ref{lm:AR-direct-summand}. 
We leave out their proofs as they are analogous to the above.  

\begin{theorem} \label{thm:AR-sequence-3-peaks-dual}
Let $L_I$ be a rank $1$ module in ${\rm CM}(B_{3,n})$, $n \in \ZZ_{\ge 7}$, where $I=\{i_1, i_2, i_3\}$ 
($i_1<i_2<i_3<i_1$ in the cyclic order) is a $3$-subset with three peaks. 
Then the Auslander-Reiten sequence ending at $L_I$ is 
\[
\frac{L_X}{L_Y}\hookrightarrow M\twoheadrightarrow L_I,
\]
for $X=\{i_1-2, i_2-2, i_3-2\}$ and $Y=\{i_1-1, i_2-1, i_3-1\}$. 

If $M$ is indecomposable then $P_M=X|I|Y$. 
\end{theorem}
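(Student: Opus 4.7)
The plan is to run the argument of Theorem~\ref{thm:AR-sequences-3-peaks} in its dual form, with $\tau$ playing the role previously played by $\tau^{-1}$. The only genuinely new input is the computation of $\tau(L_I)$; once that is done, every step of the subsequent analysis transfers verbatim.

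First I would compute $\tau(L_I)$. Since $\underline{{\rm CM}}(B_{k,n})$ is $2$-Calabi--Yau, $\tau = \Omega^{-1}$, so $\tau(L_I)$ is the cosyzygy of $L_I$, obtained from the short exact sequence $0 \to L_I \to E \to \tau(L_I) \to 0$ where $E$ is the injective envelope of $L_I$. Because the category is Frobenius, injectives coincide with projectives, and the injective envelope of $L_I$ is a direct sum $\bigoplus_\ell P_{j_\ell}$ indexed by the three valleys of $I$, dually to the projective-cover recipe in Remark~\ref{rem:profile-tau}. Drawing the corresponding lattice diagram and subtracting $\dim L_I$ one reads off, by the same iterative peeling of rims as in Remark~\ref{rem:profile-tau}, that $\tau(L_I) = L_X|L_Y$ with $X=\{i_1-2,i_2-2,i_3-2\}$ and $Y=\{i_1-1,i_2-1,i_3-1\}$. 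This yields the claimed AR sequence $L_X|L_Y \hookrightarrow M \twoheadrightarrow L_I$, with $M$ of rank $3$.

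Next, assuming $M$ is indecomposable, I would analyse its profile exactly as in the proof of Theorem~\ref{thm:AR-sequences-3-peaks}. The content of $M$ is $\mathrm{con}(L_I)\cup\mathrm{con}(L_X|L_Y)=\{i_j, i_j-1, i_j-2\}_{j=1,2,3}$, and by the proof of Theorem~\ref{thm:rank3-in-CM-3-n} this multiset has either $8$ or $9$ distinct elements. In the $9$-element case one has $q(M)=0$, and Theorem~\ref{thm:k=3-mod-imaginary} gives $12$ candidate profiles; comparing the projective cover of each candidate with the projective cover of $L_I\oplus (L_X|L_Y)$ (which must contain the cover of $M$ as a summand) selects precisely $P_M=X|I|Y$. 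In the $8$-element case one has $q(M)=2$, and Theorem~\ref{thm:k=3_rk=3_real_canonical} reduces $P_M$ to a cyclic permutation of a canonical profile. There are three subcases according to which of the three inequalities $i_1-2<i_3$, $i_2-2<i_1$, $i_3-2<i_2$ (cyclically) collapses to an equality, and in each subcase the canonical profile with the matching projective cover is $X|I|Y$.

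The main obstacle, as in Theorem~\ref{thm:AR-sequences-3-peaks}, is the tedious but routine matching of projective covers in the case analysis; the conceptual content lies entirely in the dual computation of $\tau(L_I)$ in the first paragraph.
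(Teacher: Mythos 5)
Your proposal is correct and is essentially the argument the paper intends: the paper omits the proof of this theorem, stating only that it is dual/analogous to Theorem~\ref{thm:AR-sequences-3-peaks}, and your dualization (computing $\tau(L_I)$ as the cosyzygy via the injective $=$ projective envelope at the three valleys, then running the same content/root/projective-cover analysis) is precisely that intended argument.
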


\begin{lemma}\label{lm:AR-direct-summand-dual}
In the situation of Theorem~\ref{thm:AR-sequence-3-peaks-dual}, 
if $I=\{i,i+2,i+4\}$ and $M=L_A\oplus N$ for $A=\{1,i+2,i+3\}$, then 
$N$ is indecomposable if and only if $n\ge 7$ and in this case, 
$N=L_X|L_B$ for $B=\{i-1,i+1,i+4\}=I\cup Y\setminus A$.  
\end{lemma}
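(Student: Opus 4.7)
The plan is to mirror the proof of Lemma~\ref{lm:AR-direct-summand}, using the dual AR sequence of Theorem~\ref{thm:AR-sequence-3-peaks-dual}. I read $A$ as $\{i, i+2, i+3\}$, which is the natural dual of the primal $A = \{i+1, i+2, i+4\}$: both are obtained from $I$ by replacing one valley with an adjacent peak, but in opposite cyclic directions.

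For $n = 6$, the AR-quiver of $\underline{\rm CM}(B_{3,6})$ is explicit (cf.\ \cite[Figure 10]{JKS}), and inspection shows that the middle of this AR sequence splits as a sum of three rank $1$ modules, so $N$ is decomposable.

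For $n \geq 7$, suppose $M = L_A \oplus N$. Since rank and content are additive on short exact sequences and direct sums, $N$ has rank $2$ and $\con(N) = (X \cup Y \cup I) \setminus A = \{i-2, i-1, i, i+1, i+2, i+4\}$, a six-element set whose entries are pairwise distinct precisely for $n \geq 7$. If $N$ is indecomposable, then by Lemma~\ref{lm:rk2-condition} and the rank $2$ analysis of Section~\ref{sec:rk2-rigid} its two filtration factors are $3$-interlacing disjoint $3$-subsets partitioning $\con(N)$. A routine inspection of the ten unordered partitions of this six-element set into two $3$-subsets shows that only $\{X, B\}$ with $B = \{i-1, i+1, i+4\}$ is $3$-interlacing. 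Thus $N$ is either $L_X | L_B$ or $L_B | L_X$, and a comparison of the minimal projective cover of $M$ with the one forced by the AR sequence (as in the primal proof) rules out the second option.

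Conversely, if $N = C \oplus E$ with $C, E$ of rank $1$, then $\{\con C, \con E\}$ must partition $\con(N)$ in such a way that the combined peak multiset of $C$ and $E$ matches the minimal projective cover required by the AR sequence after subtracting the contribution of $L_A$. A case analysis parallel to that of Lemma~\ref{lm:AR-direct-summand} shows that the only admissible partition forces a modular identity of the form $i + r \equiv i + s \pmod n$ that holds only for $n = 6$. The main obstacle is this final projective-cover bookkeeping, which requires carefully tracking how the peaks of $L_I$, $L_X$, and $L_A$ interact both to pin down the filtration order of $N$ and to rule out direct-sum decompositions when $n \geq 7$.
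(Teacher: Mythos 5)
Your proposal is correct and takes essentially the approach the paper intends: the paper omits the proof of Lemma~\ref{lm:AR-direct-summand-dual}, stating only that it is analogous to that of Lemma~\ref{lm:AR-direct-summand}, and your argument is precisely that dual reconstruction (content count forcing the unique $3$-interlacing partition $\{X,B\}$, projective covers fixing the order, and the decomposable case collapsing to $n=6$). You also correctly read the misprint $A=\{1,i+2,i+3\}$ as $A=\{i,i+2,i+3\}$, which is what $B=I\cup Y\setminus A$ requires.
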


\subsection{Rank $3$ rigid indecomposable modules in ${\rm CM}(B_{3,9})$}
 
We now characterise the rigid indecomposable rank 3 modules of 
${\rm CM}(B_{3,9})$.

\begin{theorem} \label{thm:rigid-ind-rk3-39}
Let $M$ be a rigid indecomposable rank $3$ module in ${\rm CM}(B_{3,9})$. 
Then either $q(M)=2$ and $M$ is a cyclic permutation of a real canonical profile, 
or $q(M)=0$ and the profile of $M$ is a shift of 
$\begin{small}\begin{array}{c} 157\\ \hline 369\\ \hline 248\end{array}\end{small}$. 
Furthermore, every cyclic permutation of a real canonical profile and every shift of the 
above profile yields a rigid indecomposable module.
\end{theorem}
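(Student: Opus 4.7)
The plan is to split according to whether $M$ corresponds to a real or an imaginary root, since Theorem~\ref{thm:rank3-in-CM-3-n} guarantees $q(M) \in \{0,2\}$ for any indecomposable rank $3$ module in ${\rm CM}(B_{3,n})$. The forward direction (characterization) reduces almost entirely to earlier results, while the reverse direction (verification of rigidity) uses the tube structure of the Auslander-Reiten quiver given by Proposition~\ref{propos:AR-tubes}.

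For the forward direction, if $q(M) = 2$ then Theorem~\ref{thm:k=3_rk=3_real_canonical} immediately yields that $P_M$ is a cyclic permutation of a canonical profile, and a direct check shows this canonical profile lies in $C_{3,9}^{\rm re}$. If $q(M) = 0$ then Corollary~\ref{cor:39-imaginary} restricts $P_M$ to a shift of either $157\mid369\mid248$ or $147\mid369\mid258$. To eliminate the second possibility, I would compute $\tau^{-1}(L_{147}\mid L_{369}\mid L_{258})$ via Remark~\ref{rem:profile-tau}: identify the peaks of the top rim, form the minimal projective cover $\bigoplus P_j$, and subtract the lattice diagram of $M$ to read off the profile of the syzygy. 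The expected outcome is that $\tau^{-1}$ preserves this profile, whereupon Lemma~\ref{lem:tauM and M have the same profile implies that M is non-rigid} forces non-rigidity, contradicting the hypothesis on $M$. This step realises Proposition~\ref{proposition:non-rigid modules} in the case $n=9$.

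For the reverse direction, Lemma~\ref{lem:shift-rigid} reduces the problem to one representative per shift-orbit. For the imaginary case, I would compute $\tau^{-1}(L_{157}\mid L_{369}\mid L_{248})$ by the same procedure; the profile obtained is a nontrivial shift of $P_M$, so the $9$ shifts of $L_{157}\mid L_{369}\mid L_{248}$ form a single $\tau$-orbit of length $9$. Since in the tame case $n=9$ all Auslander-Reiten components are tubes of finite rank, this orbit sits at the mouth of one such tube and rigidity follows. For the real case, the strategy is to generate all cyclic permutations of real canonical profiles as middle terms (or summands of middle terms) of the Auslander-Reiten sequences in Theorems~\ref{thm:AR-sequences-3-peaks} and \ref{thm:AR-sequence-3-peaks-dual}: every rank $1$ module $L_I$ whose $3$-subset $I$ has three peaks produces a candidate rank $3$ module $L_X\mid L_I\mid L_Y$, with the decomposable case handled by Lemma~\ref{lm:AR-direct-summand} (and its dual, Lemma~\ref{lm:AR-direct-summand-dual}). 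These modules lie at the mouth of their tubes, hence are rigid.

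The main obstacle will be the bookkeeping for the real case: matching the combinatorial enumeration of cyclic permutations of real canonical profiles with the modules produced by the AR sequences, verifying that we obtain exactly $216$ distinct profiles (grouped into tubes of the appropriate period under $\tau$), and confirming that no profile is missed or double-counted. A convenient reduction is to use the $\ZZ_9$ shift action together with cyclic rotation of rows to cut the count down to a small number of canonical representatives whose tubes can be described explicitly; the total count of $216$ real plus $9$ imaginary rigid indecomposables of rank $3$ then produces Corollary~\ref{cor:count-rigid-3-9}.
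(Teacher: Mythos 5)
The forward direction of your proposal is sound and coincides with the paper's: $q(M)\in\{0,2\}$ from Theorem~\ref{thm:rank3-in-CM-3-n}, the real case from Theorem~\ref{thm:k=3_rk=3_real_canonical}, and the elimination of shifts of $147\mid 369\mid 258$ by showing $\tau^{-1}$ fixes that profile (this is exactly Proposition~\ref{proposition:non-rigid modules} combined with Lemma~\ref{lem:tauM and M have the same profile implies that M is non-rigid}). The reverse direction, however, has two genuine gaps. First, your treatment of the imaginary case rests on a false computation: $\tau^{-1}$ of the module with profile $157\mid 369\mid 248$ is \emph{not} a shift of that profile, and these nine modules do not form a $\tau$-orbit of length $9$ at the mouth of a tube. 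Since $\tau^{-1}=\Omega$ here, $\operatorname{rk}\Omega(M)$ equals the number of generators of $M$ minus $\operatorname{rk}M$, so $\tau$ need not preserve rank; in fact the shift $469\mid 258\mid 137$ of this profile sits in the interior (row $4$) of the rank-$6$ tube whose mouth consists of the rank~$1$ modules $L_{124}, L_{356},\dots$, and its $\tau$-translate in that row is the rank-$2$ module $269\mid 148$ (Figure~\ref{fig:tube_L124}). Rigidity of the nine imaginary modules must therefore be obtained by locating them inside the rigid region of those tubes, not by a periodicity argument at a mouth.

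Second, for the real case, the Auslander--Reiten sequences of Theorems~\ref{thm:AR-sequences-3-peaks} and~\ref{thm:AR-sequence-3-peaks-dual} only ever produce profiles of the very special form $(I{+}1)\mid I\mid(I{+}2)$, respectively $(I{-}2)\mid I\mid(I{-}1)$, where $I$ is one of the $30$ three-peak $3$-subsets of $[9]$. That yields at most on the order of $60$ profiles, and most of the $216$ real ones --- e.g.\ $258\mid 147\mid 136$ or $269\mid 158\mid 147$ --- are not of this shape, so they cannot arise as (summands of) middle terms of those sequences. The paper's proof instead has to compute, row by row, all the relevant tubes, partitioned into $\mathcal R_{\ge 1}$, $\mathcal R_{\ge 2}$, $\mathcal R_{\ge 3}$ according to the smallest rank occurring at the mouth, pinning down each new profile by comparing projective covers across the mesh; the tubes whose mouths already consist of rank-$3$ modules (Figures~\ref{fig:tube_L269L158L147}--\ref{fig:tube_L358L247L136}) alone account for $108$ of the modules and lie entirely outside the reach of your proposed mechanism. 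Your closing remark that the bookkeeping is the main obstacle is right in spirit, but the machinery you propose to generate the candidates is structurally insufficient, not merely tedious to organize.
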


There are 72 real canonical profiles of rank 3, listed in Table~\ref{table:all-canon-real-rk3}. 
Thus the following is immediate.

\begin{corollary}\label{cor:count-rigid-3-9}
In ${\rm CM}(B_{3,9})$, there are $225$ rigid indecomposable rank $3$ modules. 
Out of them, $216$ correspond to a real root and $9$ to an imaginary root. 
\end{corollary}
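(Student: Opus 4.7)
The plan is to combine the profile restrictions from the previous sections with a careful analysis of the tube structure of the Auslander--Reiten quiver of $\underline{{\rm CM}}(B_{3,9})$.

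First, I would assemble the classification. Suppose $M$ is an indecomposable rank $3$ module in ${\rm CM}(B_{3,9})$. By Theorem~\ref{thm:rank3-in-CM-3-n}, $q(M)\in\{0,2\}$ and $\varphi(M)$ is a root of $J_{3,9}$. If $q(M)=2$, Theorem~\ref{thm:k=3_rk=3_real_canonical} forces $P_M$ to be a cyclic permutation of a real canonical profile. If $q(M)=0$, Corollary~\ref{cor:39-imaginary} leaves only two possibilities up to shift, namely $\thfrac{157}{369}{248}$ and $\thfrac{147}{369}{258}$. This settles the classification side of the theorem, modulo excluding one of the two imaginary profiles.

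Next, I would rule out $\thfrac{147}{369}{258}$. For the module $M$ with this profile, the computation described in Remark~\ref{rem:profile-tau} (read off the minimal projective cover from the peaks of the rank $1$ factors, draw the dimension lattice of the syzygy, peel off rims to recover the filtration of $\tau^{-1}(M)$) yields $\tau^{-1}(M)$ with the same profile as $M$. By Lemma~\ref{lem:tauM and M have the same profile implies that M is non-rigid}, $M$ is non-rigid, so this profile contributes nothing. The symmetry making this work is that adding $1$ modulo $9$ to each entry of $\thfrac{147}{369}{258}$ reproduces the three rows in a different order; the $\tau^{-1}$ computation honours this symmetry. For the remaining imaginary profile $\thfrac{157}{369}{248}$ the same calculation gives a strictly different profile for $\tau^{-1}(M)$, so Lemma~\ref{lem:tauM and M have the same profile implies that M is non-rigid} does not apply and rigidity must be established by other means.

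The heart of the proof is then the tube analysis. By Proposition~\ref{propos:AR-tubes}, every component of the AR quiver of $\underline{{\rm CM}}(B_{3,9})$ is a tube $\mathbb{Z}A_\infty/\tau^m$, so each indecomposable sits at a well-defined level in a unique tube, and rigidity is equivalent to the module being below the rank of its tube. For the real case, each rank $1$ module $L_I$ with $|I|=3$ and three peaks generates, via Theorem~\ref{thm:AR-sequences-3-peaks}, an AR sequence whose middle term (when indecomposable) is precisely a rank $3$ module with canonical profile $X\mid I\mid Y$; when the middle term decomposes, Lemma~\ref{lm:AR-direct-summand} locates a rank $2$ summand $L_B\mid L_Y$ which in turn gives rise, via a further AR sequence, to another rank $3$ module on the boundary of the same tube. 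Iterating $\tau^{-1}$ along each tube and applying the dual statements (Theorem~\ref{thm:AR-sequence-3-peaks-dual}, Lemma~\ref{lm:AR-direct-summand-dual}), I would enumerate every rank $3$ module at the bottom of every tube and verify that each cyclic permutation of a real canonical profile, and each shift of $\thfrac{157}{369}{248}$, appears exactly once as a rigid indecomposable, while profiles higher in the tube are identified with the non-rigid ones. Counting then gives the corollary: the $72$ real canonical profiles of Table~\ref{table:all-canon-real-rk3} each contribute $3$ cyclic permutations for a total of $216$ real rigid indecomposables, and the $9$ shifts of $\thfrac{157}{369}{248}$ contribute $9$ more, yielding $225$.

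The main obstacle is the tube bookkeeping in paragraph three: one must simultaneously track $\tau^{-1}$ on rank $1$, rank $2$ and rank $3$ indecomposables, decide at each step whether the middle term of the relevant AR sequence stays indecomposable or splits off a rank $1$ summand, and match the resulting profiles against the list of $72+1$ canonical/imaginary targets. This is a finite but delicate case analysis, and the cleanest way to carry it out is to fix representatives of each tube (say, by using the cyclic $\mathbb{Z}/9$-symmetry to reduce the count) and run the AR sequences from Theorems~\ref{thm:AR-sequences-3-peaks} and~\ref{thm:AR-sequence-3-peaks-dual} until the tube closes up.
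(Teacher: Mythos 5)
Your proposal follows essentially the same route as the paper: the upper bound of $216+9$ comes from Theorem~\ref{thm:rank3-in-CM-3-n}, Theorem~\ref{thm:k=3_rk=3_real_canonical} and Corollary~\ref{cor:39-imaginary} together with the $\tau$-invariance argument excluding $147\mid 369\mid 258$ (Proposition~\ref{proposition:non-rigid modules}), and the matching lower bound is obtained by locating each remaining candidate profile in the rigid region of a tube via the Auslander--Reiten sequences, which is exactly the proof of Theorem~\ref{thm:rigid-ind-rk3-39} from which the corollary is immediate. The only caveat is your assertion that rigidity is \emph{equivalent} to sitting below the rank of the tube: this is safe here only because $(3,9)$ is tame with standard tubular components (the paper explicitly warns the equivalence fails in the wild cases), and in any case the count only needs the one direction that objects in rows $1,\dots,r-1$ are rigid.
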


\begin{proposition} \label{proposition:non-rigid modules}
If $M$ is an indecomposable module and if its profile is a cyclic permutation of 
the profile $\begin{array}{c} 369\\ \hline 258 \\ \hline 147\end{array}$,
then $M$ is not rigid.  
\end{proposition}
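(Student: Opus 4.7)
The plan is to apply Lemma~\ref{lem:tauM and M have the same profile implies that M is non-rigid}, for which it suffices to show that $\tau^{-1}(M)$ is an indecomposable module with the same profile as $M$. By Lemma~\ref{lem:shift-rigid}, the shift-by-$1$ autoequivalence of ${\rm CM}(B_{3,9})$ preserves indecomposability and rigidity and cyclically permutes the three profiles in question, so I may assume $P_M=\thfrac{369}{258}{147}$.

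The first step is to show that $L_{\{2,5,8\}}$ lies at the mouth of a rank-$2$ tube. Applying Theorem~\ref{thm:AR-sequences-3-peaks} to $I=\{2,5,8\}$ (so $X=\{3,6,9\}$, $Y=\{1,4,7\}$) gives $\tau^{-1}L_{\{2,5,8\}}=L_{\{3,6,9\}}\mid L_{\{1,4,7\}}$, while the dual Theorem~\ref{thm:AR-sequence-3-peaks-dual} applied to the same $I$ (so $X=\{9,3,6\}=\{3,6,9\}$ and $Y=\{1,4,7\}$) yields $\tau L_{\{2,5,8\}}=L_{\{3,6,9\}}\mid L_{\{1,4,7\}}$ as well. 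Hence $\tau^{2}L_{\{2,5,8\}}\cong L_{\{2,5,8\}}$ and, by Proposition~\ref{propos:AR-tubes}, $L_{\{2,5,8\}}$ lies in a rank-$2$ tube $T$ of the Auslander--Reiten quiver of $\underline{{\rm CM}}(B_{3,9})$.

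The second step is to locate $M$ at level $2$ of $T$. Both of the above theorems predict that the middle terms $E$ and $F$ of the AR sequences starting and ending at $L_{\{2,5,8\}}$ carry profile $X\mid I\mid Y=\thfrac{369}{258}{147}=P_M$ whenever indecomposable. Since $E$ and $F$ sit on the two $\tau$-rays of $T$ at level $2$, they are distinct indecomposables related by $\tau$ and sharing the profile of $M$. So if $M\cong E$ or $M\cong F$, then $\tau^{-1}M$ is the other one, and Lemma~\ref{lem:tauM and M have the same profile implies that M is non-rigid} yields non-rigidity.

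The main obstacle is to justify that every indecomposable with profile $\thfrac{369}{258}{147}$ is isomorphic to $E$ or $F$. The cleanest way to bypass this is to compute $\tau^{-1}M$ directly via Remark~\ref{rem:profile-tau}: inspecting the lattice diagram of $M=L_{\{3,6,9\}}\mid L_{\{2,5,8\}}\mid L_{\{1,4,7\}}$ (in which the three rims are cyclic rotations of one another by $1$ and touch pairwise at exactly the peaks of the rim above) shows the minimal projective cover to be the rank-$6$ module $\bigoplus_{j\in\{1,2,4,5,7,8\}}P_j$, so $\tau^{-1}M$ has rank $3$, content vector $(1,1,\ldots,1)$, and rims that can be read off from the resulting lattice diagram; combining this with Corollary~\ref{cor:39-imaginary} pins down $P_{\tau^{-1}M}=P_M$. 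The technical crux is this combinatorial projective-cover computation, in particular tracking which peaks of the lower rims contribute to $M/\rad(M)$ versus being absorbed into $\rad(M)$, which is also precisely what distinguishes the present non-rigid profile from the rigid imaginary-root profile $\thfrac{157}{369}{248}$ appearing in Theorem~\ref{thm:225-rigid-ind-rk3-39}.
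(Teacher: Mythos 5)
Your final argument --- computing $\tau^{-1}M$ via the projective cover $\bigoplus_{j\in\{1,2,4,5,7,8\}}P_j$ and Remark~\ref{rem:profile-tau} to conclude $P_{\tau^{-1}M}=P_M$, then invoking Lemma~\ref{lem:tauM and M have the same profile implies that M is non-rigid} and Lemma~\ref{lem:shift-rigid} --- is essentially the paper's own proof, which simply asserts that $\tau^{-1}(M)$ has the same profile and cites those same two lemmas. The preliminary tube discussion is dispensable, and you correctly diagnose why it alone does not suffice (it only treats the two AR middle terms rather than an arbitrary indecomposable with this profile); just note that the rim-reading of the dimension lattice must actually return $369\mid 258\mid 147$ on the nose and not merely a cyclic permutation of it, since the cited lemma requires equality of profiles, not equality up to cyclic permutation.
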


\begin{proof}
If $M$ is indecomposable, with the above profile, then $\tau^{-1}(M)$ has the same profile. 
So $M$ is not rigid by Lemma~\ref{lem:tauM and M have the same profile implies that M is non-rigid}. 
The claim then follows from Lemma~\ref{lem:shift-rigid}
since any cyclic permutation of one of the above profiles is also equal to a shift of this profile.
\end{proof}

Our strategy to prove Theorem~\ref{thm:rigid-ind-rk3-39} is as follows: we will show that for each of the 
predicted profiles there exists a rigid indecomposable module of rank 3. These are the three cyclic 
permutations of the 72 canonical real profiles (Theorem~\ref{thm:k=3_rk=3_real_canonical})
and the nine shifts of the profile of the imaginary root (Corollary~\ref{cor:39-imaginary} 
and Proposition~\ref{proposition:non-rigid modules}), i.e., of 
$\begin{small} 
\begin{array}{c} 157\\ \hline 369 \\ \hline 248 \end{array} 
\end{small}. 
$

The 72 real rank 3 profiles are listed in Table~\ref{table:all-canon-real-rk3}. They give 
216 candidates for rigid indecomposable rank 3 modules. 
By Lemma~\ref{lem:shift-rigid}, it is enough 
to consider them up to a shift (up to adding a constant number to each entry in the profile). 
Since the content of every such profile has seven distinct numbers and one number appearing twice, 
all 9 shifts of a given profile are different. So up to a shift, we only have to consider 24 candidates. 
These candidates are listed in Tables~\ref{table:candidates-39-I}, 
~\ref{table:candidates-39-II}, and ~\ref{table:candidates-39-III}. 
The first entry in Table~\ref{table:candidates-39-I} 
is the representative of the 9 imaginary profiles (which are all shifts of each other). 

We will show that each of these candidates appears in the region of rigid objects in a 
tube of the Auslander-Reiten quiver of ${\rm CM}(B_{3,9})$.

\begin{table}
\begin{tabular}{l}
$
\thfrac{258}{147}{136}, 
\thfrac{259}{147}{136}, 
\thfrac{258}{247}{136}, 
\thfrac{358}{247}{136}, 
\thfrac{259}{247}{136}, 
\thfrac{259}{148}{136}, 
\thfrac{359}{247}{136}, 
\thfrac{259}{248}{136}, 
\thfrac{358}{247}{146}, 
\thfrac{259}{148}{137}, 
\thfrac{359}{248}{136}, 
\thfrac{359}{247}{146}, 
\thfrac{358}{257}{146}, 
\thfrac{269}{148}{137}, 
\thfrac{259}{248}{137}, 
\thfrac{368}{257}{146}, 
\thfrac{359}{257}{146}, 
\thfrac{359}{248}{146}, 
$ \\
\\
$
\thfrac{359}{248}{137}, 
\thfrac{269}{248}{137}, 
\thfrac{269}{158}{137}, 
\thfrac{369}{257}{146}, 
\thfrac{359}{258}{146}, 
\thfrac{369}{248}{137}, 
\thfrac{269}{258}{137}, 
\thfrac{368}{257}{147}, 
\thfrac{359}{248}{147}, 
\thfrac{269}{158}{147}, 
\thfrac{369}{258}{146}, 
\thfrac{369}{258}{137}, 
\thfrac{369}{257}{147}, 
\thfrac{369}{248}{147}, 
\thfrac{369}{158}{147}, 
\thfrac{368}{258}{147}, 
\thfrac{359}{258}{147}, 
\thfrac{269}{258}{147}, 
$ \\
\\ 
$
\thfrac{469}{258}{147}, 
\thfrac{379}{258}{147}, 
\thfrac{369}{358}{147}, 
\thfrac{369}{268}{147}, 
\thfrac{369}{259}{147}, 
\thfrac{369}{258}{247}, 
\thfrac{369}{258}{157}, 
\thfrac{369}{258}{148}, 
\thfrac{469}{358}{147}, 
\thfrac{379}{268}{147}, 
\thfrac{369}{358}{247}, 
\thfrac{469}{258}{157}, 
\thfrac{369}{268}{157}, 
\thfrac{379}{258}{148}, 
\thfrac{369}{259}{148}, 
\thfrac{469}{358}{247}, 
\thfrac{469}{358}{157}, 
\thfrac{469}{268}{157}, 
$ \\
\\ 
$
\thfrac{379}{268}{157}, 
\thfrac{379}{268}{148}, 
\thfrac{379}{259}{148}, 
\thfrac{479}{268}{157}, 
\thfrac{469}{368}{157}, 
\thfrac{379}{269}{148}, 
\thfrac{469}{358}{257}, 
\thfrac{379}{268}{158}, 
\thfrac{479}{368}{157}, 
\thfrac{469}{368}{257}, 
\thfrac{479}{268}{158}, 
\thfrac{379}{269}{158}, 
\thfrac{479}{368}{257}, 
\thfrac{479}{368}{158}, 
\thfrac{479}{269}{158}, 
\thfrac{479}{369}{158}, 
\thfrac{479}{368}{258}, 
\thfrac{479}{369}{258}
$
\end{tabular}
\caption{The canonical profiles of rank $3$ in $C_{3,9}^{\rm re}$.}
\label{table:all-canon-real-rk3}
\end{table}
 
\begin{table}
\[
\begin{array}{c} 4   6   9\\ \hline 2  5  8\\ \hline 1  3  7 \end{array};\hskip 1cm 
\begin{array}{c} 3 6 9\\ \hline 2 5 8\\ \hline 1 3 7 \end{array} ,  \hskip.2cm
\begin{array}{c} 3 7 9\\ \hline 2 5 8\\ \hline 1 4 7 \end{array} , \hskip.2cm
\begin{array}{c} 3 5 9\\ \hline 2 4 7\\ \hline 1 4 6 \end{array} , \hskip.2cm
 \begin{array}{c} 3 5 8\\ \hline 2 5 7\\ \hline 4 6 9 \end{array} , \hskip.2cm
\begin{array}{c} 4 6 9\\ \hline 3 5 8\\ \hline 1 5 7 \end{array}, \hskip.2cm
\begin{array}{c} 4 6 9\\ \hline 2 6 8\\ \hline 1 5 7 \end{array} , \hskip.2cm
\begin{array}{c} 2  6  9\\ \hline 1  5  8\\ \hline 1  3  7 \end{array}, \hskip.2cm
\begin{array}{c} 3 5 9\\ \hline 2 5 7\\ \hline 1 4 6 \end{array}.
\]
\caption{Candidates for rigid indecomposable rank 3 modules in ${\rm CM}(B_{3,9})$, 
cf. Figures~\ref{fig:tube_L124}--\ref{fig:tube_L158}. The first one  is imaginary.} 
\label{table:candidates-39-I}
\end{table}

\begin{table}
\[
\begin{array}{c} 3 5 9\\ \hline 2 4 7\\ \hline 1 3 6 \end{array} , \hskip.2cm
\begin{array}{c} 3 6 9\\ \hline 2 4 8\\ \hline 1 3 7 \end{array} , \hskip.2cm
\begin{array}{c} 4 6 9\\ \hline 3 5 8\\ \hline 1 4 7 \end{array} , \hskip.2cm
\begin{array}{c} 3 5 9\\ \hline 2 4 8\\ \hline 1 3 6 \end{array}.
\]
\caption{Candidates for rigid indecomposable rank 3 modules in ${\rm CM}(B_{3,9})$, 
cf. Figures~\ref{fig:tube_L258L146}--\ref{fig:tube_L268L147}.} 
\label{table:candidates-39-II}
\end{table}

{\small
\begin{table}
\[
\begin{array}{c} 2  6  9\\ \hline 1  5  8\\ \hline 1  4  7 \end{array}, \hskip.2cm
\begin{array}{c} 3  6 9\\\hline  1  5  8\\ \hline 1  4 7 \end{array}, \hskip.2cm
\begin{array}{c} 3 5 9\\\hline  2 5 8\\ \hline 1 4 6 \end{array}, \hskip.2cm
\begin{array}{c} 2 6 9\\\hline  2 5 8\\ \hline 1 4 7 \end{array}, \hskip.2cm
\begin{array}{c} 4 6 9\\ \hline 3 5 8\\ \hline 2 5 7 \end{array} , \hskip.2cm
\begin{array}{c} 3 6 9\\ \hline 3 5 8\\ \hline 1 4 7 \end{array}, \hskip.2cm
\begin{array}{c} 4 6 9\\ \hline 2 5 8 \\ \hline 1 5 7 \end{array} ,  \hskip.2cm
\begin{array}{c} 2 5 8\\ \hline 1 4 7\\ \hline 1 3 6 \end{array} , \hskip.2cm
\begin{array}{c} 3 6 9\\ \hline 2 6 8\\ \hline 1 5 7 \end{array} , \hskip.2cm
\begin{array}{c} 3  7 9\\ \hline 2  6  9\\ \hline 1 5  8 \end{array}, \hskip.2cm
\begin{array}{c} 3 6 8\\ \hline 2 5 7\\ \hline 1 4 6 \end{array} , \hskip.2cm
\begin{array}{c} 3 5 8\\ \hline 2 4 7\\ \hline 1 3 6 \end{array} .
\]
\caption{Candidates for rigid indecomposable rank 3 modules in ${\rm CM}(B_{3,9})$, 
cf. Figures~\ref{fig:tube_L269L158L147}--\ref{fig:tube_L358L247L136}.}
\label{table:candidates-39-III}
\end{table}
}

\subsection{Proof of Theorem~\ref{thm:rigid-ind-rk3-39}} \label{sec:theorem-rigid-ind}

The category ${\rm CM}(B_{3,9})$ is known to be tubular with standard components (see Proposition \ref{propos:AR-tubes}), it means 
that when we study the indecomposable objects and the irreducible maps between them, 
they form tubes which are bounded on one end. 
Consider a tube in the Auslander-Reiten quiver of the category ${\rm CM}(B_{3,9})$. 
This has the shape of an infinite cylinder, bounded on one end. In general, on each level (row), 
it has the same number of vertices which correspond to the indecomposable objects of 
the category. The only exceptions are the tubes which contain projective-injective indecomposables 
at the end, such a row only has half of the entries of the other rows in the tube. 
We will consider the row with projective-injective objects as row 0. Then row 1 is 
called the {\bf mouth} of the tube. The {\bf rank} of a tube is its width. It is a fact that the indecomposable 
objects in rows $1,2,\dots, r-1$ (and in row 0, when present) are all rigid 
(e.g., see \cite{BM}).
 
So what we will prove is that for every candidate in the Tables~\ref{table:candidates-39-I}, 
~\ref{table:candidates-39-II}, and ~\ref{table:candidates-39-III} there exists 
a rank 3 module in the rigid region in a tube of 
the Auslander-Reiten quiver of ${\rm CM}(B_{3,9})$. 
Note that the rank of a module is additive on short exact sequences. This means that in any tube, 
the lowest rank modules are at the mouth and that this rank grows when moving away from the mouth. 

It will thus be enough to consider the set of tubes which have rank 1 modules in their mouth,  the set of 
tubes which have rigid rank 2 modules in their mouth but no rank 1 modules, and the set of tubes which 
have rigid rank 3 modules in their mouth but no rank 2 modules. We call these sets of 
tubes ${\mathcal R}_{\ge 1}$, ${\mathcal R}_{\ge 2}$ and ${\mathcal R}_{\ge 3}$, respectively.

The tubes in the sets ${\mathcal R}_{\ge 1}$ and ${\mathcal R}_{\ge 2}$ have been determined 
in~\cite{BBG}:  the tubes of the subfigures (A), (B), (C), and (D) of Figure 6 in~\cite{BBG} all belong to
$\mathcal R_{\ge 1}$. There are 81 different rigid indecomposable modules of rank 3 in such tubes. 

The tubes of the subfigures (G), (H), (I),  and (J) of Figure 6 in~\cite{BBG} 
belong to $\mathcal R_{\ge 2}$; these tubes contain 36 different rigid indecomposable 
rank 3 modules. 

We will give more details here. For ${\mathcal R}_{\ge 1}$, 
we will give the filtrations of representatives of these rank 3 modules in 
Figures~\ref{fig:tube_L124}--\ref{fig:tube_L158} explicitly and explain how they can be computed. 
For ${\mathcal R}_{\ge 2}$, we give the filtrations of representatives of these rank 3 modules 
in Figures~\ref{fig:tube_L258L146}--
\ref{fig:tube_L268L147}.  

If we can prove that the tubes in $\mathcal R_{\ge 3}$ contain 108 rigid indecomposable 
rank 3 modules, we are done. Since all rank 3 modules are at the mouths of the tubes of 
$\mathcal R_{\ge 3}$, it is enough to compute the first $\tau$-orbit (the mouth) in each tube 
of rank $>1$. For this, we first have to determine the modules for the candidates of 
Tables~\ref{table:candidates-39-I},~\ref{table:candidates-39-II}, 
and~\ref{table:candidates-39-III}
in the 
tubes of $\mathcal R_{\ge 1}$ and of $\mathcal R_{\ge 2}$: they are the ones remaining 
after finding the rank 3 modules in $\mathcal R_{\ge 1}$ and $\mathcal R_{\ge 2}$. 

We give all the remaining representatives in Figures~\ref{fig:tube_L269L158L147}--
\ref{fig:tube_L358L247L136}. 
In total, there are 108 rigid indecomposable rank 3 modules in such tubes, adding up to 
225 rigid indecomposable rank 3 modules in ${\rm CM}(B_{3,9})$ as claimed 
and this will finish the proof. 

In all the figures, we indicate the representatives of the profiles 
(for Tables~\ref{table:candidates-39-I},~\ref{table:candidates-39-II},
~\ref{table:candidates-39-III}) 
by writing them in blue.

\subsection{Tubes with rank 1 modules}

There are four types of tubes in ${\mathcal R}_{\ge 1}$, shown in Figures~\ref{fig:tube_L124}--\ref{fig:tube_L158}. 
We explain how to obtain Figure~\ref{fig:tube_L124}, determining the 
profiles of the rank 3 modules works similarly in the other cases. 

Consider the Auslander-Reiten sequence $M\to M'\oplus N'\to N$ in Figure~\ref{fig:tube_L124} 
with 
\[
M=\begin{array}{c}359\\ \hline 246\end{array}, \hskip .5cm 
M'=\begin{array}{c}135\\ \hline 246\end{array}, \hskip .3cm \mbox{and} \hskip.3cm
N=\begin{array}{c}135\\ \hline 247\end{array}\,\, .
\]
Since the category is tubular, $N'$ is indecomposable, of rank 2. 
It is rigid as it is in the rigid range in this tube. 
By Theorem~\ref{thm:rank2-bound} (1), 
the two $3$-subsets of $N'$ form three quasi-boxes, i.e., are 3-interlacing. 
So the module $N'$ 
is either $L_{359}|L_{247}$ or it is $L_{247}|L_{359}$. Comparing the projective covers 
of $M\oplus N$ and $M'\oplus N'$ shows that $N'=L_{359}|L_{247}$. 
Then we compute $\tau^{-1}(N')$ and the rest of this row using Remark~\ref{rem:profile-tau}. 

To find the profiles of the rank 3 modules in row 5, we consider the Auslander-Reiten sequence 
$M\to M'\oplus N'\to N$ for 
\[
M=\begin{array}{c} 359\\ \hline 247  \end{array}, \hskip.5cm
M'=\begin{array}{c} 135 \\ \hline 247 \end{array}, \hskip.3cm \mbox{ and } \hskip.3cm 
N=\begin{array}{c} 136 \\ \hline 258 \\ \hline 479 \end{array} \,\, .
\]

\begin{figure}
\[
\xymatrix@=0.4em{
&& & 456\ar@{-}[rd]\ar@{-}[ld]& & &&789\ar@{-}[rd] \ar@{-}[ld]&&  &&123\ar@{-}[ld]\ar@{-}[rd]& \\
124\ar@{.}[rr]\ar@{--}[dd]\ar@{-}[rd] && 356 \ar@{.}[rr]\ar@{-}[rd] 
 && 457 \ar@{.}[rr]\ar@{-}[rd] 
 &&689\ar@{.}[rr]\ar@{-}[rd] && 
178\ar@{.}[rr]\ar@{-}[rd]  && 239 \ar@{.}[rr]\ar@{-}[rd] && 124\ar@{--}[dd]  \\
\ar@{.}[r]& \frac{135}{246}\ar@{-}[rd]\ar@{-}[ld]\ar@{.}[rr]\ar@{-}[ru] && 357\ar@{.}[rr]\ar@{-}[ru] 
 && \frac{468}{579}\ar@{-}[rd]\ar@{-}[ld]\ar@{.}[rr]\ar@{-}[ru] && 
 168 \ar@{-}[rd]\ar@{-}[ld]\ar@{.}[rr]\ar@{-}[ru] && \frac{279}{138}\ar@{-}[rd]\ar@{-}[ld]\ar@{.}[rr]\ar@{-}[ru] 
  && 249 \ar@{-}[rd]\ar@{-}[ld]\ar@{.}[r]\ar@{-}[ru] & \\
  \frac{359}{246}\ar@{.}[rr]\ar@{--}[dd]\ar@{-}[rd] && \frac{135}{247}\ar@{-}[ru] \ar@{-}[ld]\ar@{.}[rr]\ar@{-}[rd] 
 && \frac{368}{579}\ar@{-}[lu]\ar@{-}[ld] \ar@{.}[rr]\ar@{-}[rd] 
 &&\frac{468}{157}\ar@{-}[ld]\ar@{.}[rr]\ar@{-}[rd] && 
\frac{269}{138}\ar@{-}[ld]\ar@{.}[rr]\ar@{-}[rd]  &&\frac{279}{148} \ar@{-}[ld]\ar@{.}[rr]\ar@{-}[rd] && \frac{359}{246}\ar@{-}[ld]\ar@{--}[dd]  \\
\ar@{.}[r]& \frac{359}{247}\ar@{-}[rd]\ar@{-}[ld]\ar@{.}[rr]\ar@{-}[ru] && {\tiny\thfrac{136}{258}{479}}\ar@{.}[rr]\ar@{-}[ru] 
 && \frac{368}{157}\ar@{-}[rd]\ar@{-}[ld]\ar@{.}[rr]\ar@{-}[ru] && 
 \textcolor{blue}{{\tiny\thfrac{469}{258}{137}}}\ar@{-}[rd]\ar@{-}[ld]\ar@{.}[rr]\ar@{-}[ru] && \frac{269}{148}\ar@{-}[rd]\ar@{-}[ld]\ar@{.}[rr]\ar@{-}[ru] 
  && {\tiny\thfrac{379}{258}{146}} \ar@{-}[rd]\ar@{-}[ld]\ar@{.}[r]\ar@{-}[ru] & \\
{\tiny\thfrac{379}{258}{147}}\ar@{.}[rr]\ar@{--}[dd]\ar@{-}[ru] && {\tiny\thfrac{369}{258}{479}} \ar@{.}[rr]\ar@{-}[ru] 
 && {\tiny\thfrac{136}{258}{147}}\ar@{.}[rr]\ar@{-}[ru] \ar@{-}[lu]
 &&\textcolor{blue}{{\tiny\thfrac{369}{258}{137}}}\ar@{.}[rr]\ar@{-}[ru] && 
{{\tiny\thfrac{469}{258}{147}}}\ar@{.}[rr]\ar@{-}[ru]  && {\tiny\thfrac{369}{258}{146}} \ar@{.}[rr]\ar@{-}[ru] && \textcolor{blue}{{\tiny\thfrac{379}{258}{147}}}\ar@{--}[dd]  \\
&& && && && && && \\
&& && && && && && \\
}
\]
\caption{The tube containing $L_{124}$.}
\label{fig:tube_L124}
\end{figure}
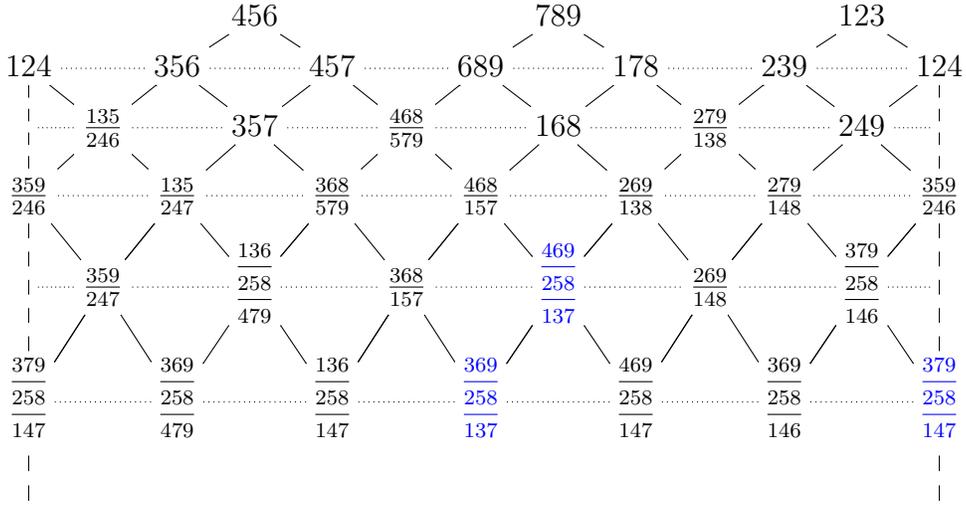

Similarly as before, $N'$ is rigid indecomposable, it is of rank 3. 
The content of $N'$ is $\{2,3,4,5,6,7,8,9,9\}$, so $q(N')=2$, i.e.,  $N'$ corresponds to a real root. 
By Theorem~\ref{thm:k=3_rk=3_real_canonical}, $P_{N'}$ is a cyclic permutation of a canonical profile. 
The only possible choices of $N'$ are the following:
\[
\begin{array}{c} 3 6 9\\ \hline 2 5 8\\ \hline 4 7 9 \end{array}, \hskip.5cm
\begin{array}{c} 4 7 9 \\\hline  3 6 9\\ \hline  2 5 8 \end{array}, \hskip.3cm  \mbox{ and } \hskip.3cm 
\begin{array}{c} 2 5 8 \\ \hline  4 7 9 \\ \hline  3 6 9 \end{array}\,\,.
\]
Comparing the projective covers, 
we find that $N'=L_{369}|L_{258}|L_{479}$. 
Then we use Remark~\ref{rem:profile-tau} to find $\tau^{-1}{N'}$ and so on. 

In Figures~\ref{fig:tube_L124}, \ref{fig:tube_L256}, \ref{fig:tube_L268}, and \ref{fig:tube_L158}, 
we have chosen 
representatives of the rigid rank 3 modules up to a shift. They are coloured in blue.

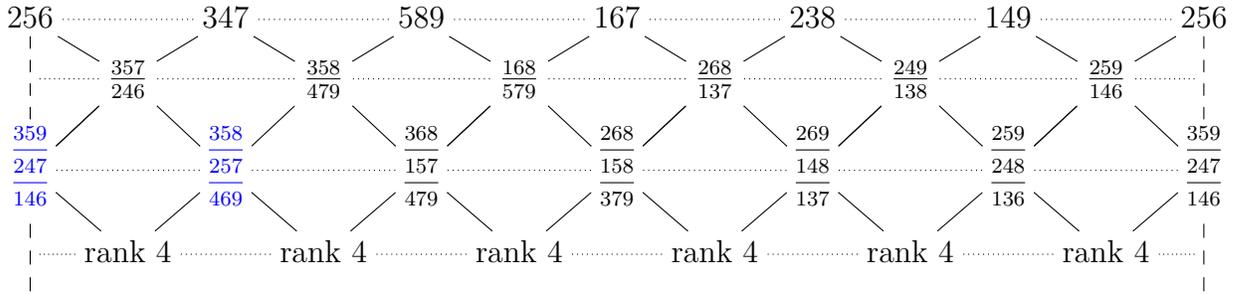
\begin{figure}
\[
\xymatrix@=0.4em{
256\ar@{.}[rr]\ar@{--}[dd]\ar@{-}[rd] && 347 \ar@{.}[rr]\ar@{-}[rd] 
 &&  589 \ar@{.}[rr]\ar@{-}[rd] 
 &&167\ar@{.}[rr]\ar@{-}[rd] && 
238\ar@{.}[rr]\ar@{-}[rd]  && 149 \ar@{.}[rr]\ar@{-}[rd] && 256\ar@{--}[dd]  \\
\ar@{.}[r]& \frac{357}{246}\ar@{-}[rd]\ar@{-}[ld]\ar@{.}[rr]\ar@{-}[ru] && \frac{358}{479}\ar@{.}[rr]\ar@{-}[ru] 
 && \frac{168}{579}\ar@{-}[rd]\ar@{-}[ld]\ar@{.}[rr]\ar@{-}[ru] && 
 \frac{268}{137}\ar@{-}[rd]\ar@{-}[ld]\ar@{.}[rr]\ar@{-}[ru] && \frac{249}{138}\ar@{-}[rd]\ar@{-}[ld]\ar@{.}[rr]\ar@{-}[ru] 
  && \frac{259}{146} \ar@{-}[rd]\ar@{-}[ld]\ar@{.}[r]\ar@{-}[ru] & \\
\textcolor{blue}{{\tiny\thfrac{359}{247}{146}}}\ar@{.}[rr]\ar@{--}[dd]\ar@{-}[ru] && \textcolor{blue}{{\tiny\thfrac{358}{257}{469}}} \ar@{.}[rr]\ar@{-}[ru] 
 && {\tiny\thfrac{368}{157}{479}}\ar@{.}[rr]\ar@{-}[ru] \ar@{-}[lu]
 &&{{\tiny\thfrac{268}{158}{379}}}\ar@{.}[rr]\ar@{-}[ru] && 
{{\tiny\thfrac{269}{148}{137}}}\ar@{.}[rr]\ar@{-}[ru]  && {\tiny\thfrac{259}{248}{136}} \ar@{.}[rr]\ar@{-}[ru] && {{\tiny\thfrac{359}{247}{146}}}\ar@{--}[dd]  \\
\ar@{.}[r]& \mbox{rank 4}   \ar@{-}[lu]\ar@{.}[rr]\ar@{-}[ru] && \mbox{rank 4}  \ar@{.}[rr]\ar@{-}[ru]\ar@{-}[lu] 
 && \mbox{rank 4} \ar@{-}[lu]\ar@{.}[rr]\ar@{-}[ru] && 
\mbox{rank 4} \ar@{-}[lu]\ar@{.}[rr]\ar@{-}[ru] &&  \mbox{rank 4} 
\ar@{-}[lu]\ar@{.}[rr]\ar@{-}[ru] 
  && \mbox{rank 4}  \ar@{-}[lu]\ar@{.}[r]\ar@{-}[ru] & \\
&& && && && && && \\
}
\]
\caption{The tube containing $L_{256}$.}
\label{fig:tube_L256}
\end{figure}

\begin{figure}
\[
\xymatrix@=0.4em{
268\ar@{.}[rr]\ar@{--}[dd]\ar@{-}[rd] && \frac{379}{148} \ar@{.}[rr]\ar@{-}[rd] 
 &&  259 \ar@{.}[rr]\ar@{-}[rd] 
 &&\frac{136}{247}\ar@{.}[rr]\ar@{-}[rd] && 
358\ar@{.}[rr]\ar@{-}[rd]  && \frac{469}{157} \ar@{.}[rr]\ar@{-}[rd] && 268\ar@{--}[dd]  \\
\ar@{.}[r]& {\tiny\thfrac{379}{268}{148}} \ar@{-}[rd]\ar@{-}[ld]\ar@{.}[rr]\ar@{-}[ru] && {\tiny\thfrac{379}{259}{148}}\ar@{.}[rr]\ar@{-}[ru] 
 && {\tiny\thfrac{136}{259}{247}}\ar@{-}[rd]\ar@{-}[ld]\ar@{.}[rr]\ar@{-}[ru] && 
 {\tiny\thfrac{136}{358}{247}}\ar@{-}[rd]\ar@{-}[ld]\ar@{.}[rr]\ar@{-}[ru] && \textcolor{blue}{{\tiny\thfrac{469}{358}{157}}}\ar@{-}[rd]\ar@{-}[ld]\ar@{.}[rr]\ar@{-}[ru] 
  && \textcolor{blue}{{\tiny\thfrac{469}{268}{157}}} \ar@{-}[rd]\ar@{-}[ld]\ar@{.}[r]\ar@{-}[ru] & \\
 \mbox{rank 5} \ar@{.}[rr]\ar@{--}[d]\ar@{-}[ru] &&  \mbox{rank 4} \ar@{.}[rr]\ar@{-}[ru] 
 && \mbox{rank 5}\ar@{.}[rr]\ar@{-}[ru] \ar@{-}[lu]
 &&  \mbox{rank 4} \ar@{.}[rr]\ar@{-}[ru] && 
 \mbox{rank 5} \ar@{.}[rr]\ar@{-}[ru]  &&  \mbox{rank 4} \ar@{.}[rr]\ar@{-}[ru] && 
  \mbox{rank 5} \ar@{--}[d]  \\
&& && && && && && \\
}
\]
\caption{The tube containing $L_{268}$.}
\label{fig:tube_L268}
\end{figure}

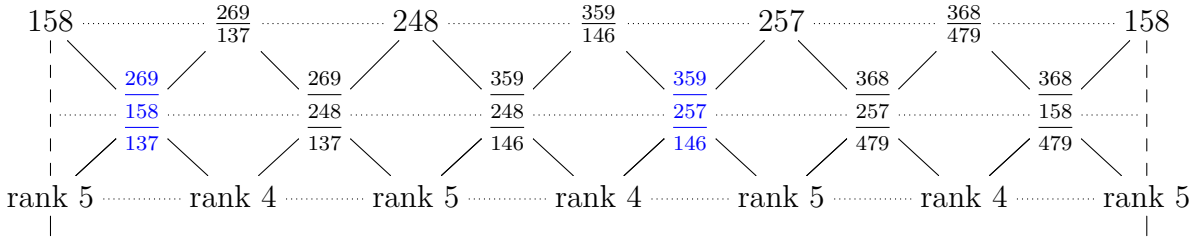
\begin{figure}
\[
\xymatrix@=0.4em{
158\ar@{.}[rr]\ar@{--}[dd]\ar@{-}[rd] && \frac{269}{137} \ar@{.}[rr]\ar@{-}[rd] 
 &&  248 \ar@{.}[rr]\ar@{-}[rd] 
 &&\frac{359}{146}\ar@{.}[rr]\ar@{-}[rd] && 
257\ar@{.}[rr]\ar@{-}[rd]  && \frac{368}{479} \ar@{.}[rr]\ar@{-}[rd] && 158\ar@{--}[dd]  \\
\ar@{.}[r]& \textcolor{blue}{{\tiny\thfrac{269}{158}{137}}} \ar@{-}[rd]\ar@{-}[ld]\ar@{.}[rr]\ar@{-}[ru] && {\tiny\thfrac{269}{248}{137}}\ar@{.}[rr]\ar@{-}[ru] 
 && {\tiny\thfrac{359}{248}{146}}\ar@{-}[rd]\ar@{-}[ld]\ar@{.}[rr]\ar@{-}[ru] && 
 \textcolor{blue}{{\tiny\thfrac{359}{257}{146}}}\ar@{-}[rd]\ar@{-}[ld]\ar@{.}[rr]\ar@{-}[ru] &&{{\tiny\thfrac{368}{257}{479}}}\ar@{-}[rd]\ar@{-}[ld]\ar@{.}[rr]\ar@{-}[ru] 
  && {{\tiny\thfrac{368}{158}{479}}} \ar@{-}[rd]\ar@{-}[ld]\ar@{.}[r]\ar@{-}[ru] & \\
 \mbox{rank 5} \ar@{.}[rr]\ar@{--}[d]\ar@{-}[ru] &&  \mbox{rank 4} \ar@{.}[rr]\ar@{-}[ru] 
 && \mbox{rank 5}\ar@{.}[rr]\ar@{-}[ru] \ar@{-}[lu]
 &&  \mbox{rank 4} \ar@{.}[rr]\ar@{-}[ru] && 
 \mbox{rank 5} \ar@{.}[rr]\ar@{-}[ru]  &&  \mbox{rank 4} \ar@{.}[rr]\ar@{-}[ru] && 
  \mbox{rank 5} \ar@{--}[d]  \\
&& && && && && && \\
}
\]
\caption{The tube containing $L_{158}$.}
\label{fig:tube_L158}
\end{figure}

\subsection{Rigid rank $3$ modules in $\mathcal R_{\ge 2}$}

In this section, we have the rigid rank 3 modules from tubes in $\mathcal R_{\ge 2}$, 
as shown in Figures~\ref{fig:tube_L258L146}--\ref{fig:tube_L268L147}.  
Part of these tubes appear in~\cite[Figure 6]{BBG}. 
Here, we have determined the rows with 
rigid rank 3 modules.

\begin{figure}[h]
\[
\xymatrix@=0.4em{
\frac{258}{146}\ar@{.}[rr]\ar@{--}[dd]\ar@{-}[rd] && \textcolor{blue}{{\tiny\thfrac{359}{247}{136}}} \ar@{.}[rr]\ar@{-}[rd] 
 &&  \frac{258}{479} \ar@{.}[rr]\ar@{-}[rd] 
 &&{\tiny\thfrac{368}{157}{469}}\ar@{.}[rr]\ar@{-}[rd] && 
\frac{258}{137}\ar@{.}[rr]\ar@{-}[rd]  && {\tiny \thfrac{269}{148}{379} }\ar@{.}[rr]\ar@{-}[rd] && \frac{258}{146}\ar@{--}[dd]  \\
\ar@{.}[r]& \mbox{rank 5}   \ar@{-}[lu]\ar@{.}[rr]\ar@{-}[ru] && \mbox{rank 5}  \ar@{.}[rr]\ar@{-}[ru]\ar@{-}[lu] 
 && \mbox{rank 5} \ar@{-}[lu]\ar@{.}[rr]\ar@{-}[ru] && 
\mbox{rank 5} \ar@{-}[lu]\ar@{.}[rr]\ar@{-}[ru] &&  \mbox{rank 5} 
\ar@{-}[lu]\ar@{.}[rr]\ar@{-}[ru] 
  && \mbox{rank 5}  \ar@{-}[lu]\ar@{.}[r]\ar@{-}[ru] & \\
&& && && && && && \\
}
\]
\caption{The tube containing $L_{258}|L_{146}$.}
\label{fig:tube_L258L146}
\end{figure}
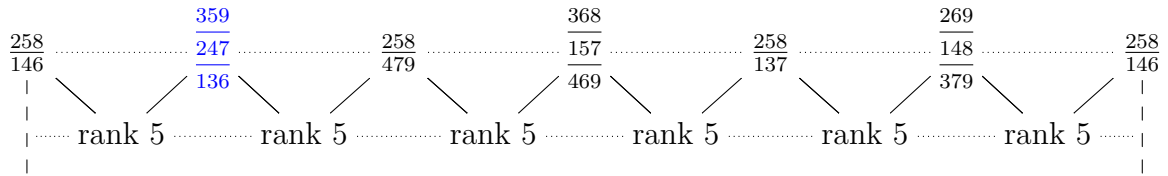

\begin{figure}[h]
\[
\xymatrix@=0.4em{
\frac{268}{157}\ar@{.}[rr]\ar@{--}[dd]\ar@{-}[rd] && \textcolor{blue}{{\tiny\thfrac{369}{248}{137}}} \ar@{.}[rr]\ar@{-}[rd] 
 &&  \frac{259}{148} \ar@{.}[rr]\ar@{-}[rd] 
 &&{\tiny\thfrac{369}{257}{146}}\ar@{.}[rr]\ar@{-}[rd] && 
\frac{358}{247}\ar@{.}[rr]\ar@{-}[rd]  && {\tiny \thfrac{369}{158}{479} }\ar@{.}[rr]\ar@{-}[rd] && \frac{268}{157}\ar@{--}[dd]  \\
\ar@{.}[r]& \mbox{rank 5}   \ar@{-}[lu]\ar@{.}[rr]\ar@{-}[ru] && \mbox{rank 5}  \ar@{.}[rr]\ar@{-}[ru]\ar@{-}[lu] 
 && \mbox{rank 5} \ar@{-}[lu]\ar@{.}[rr]\ar@{-}[ru] && 
\mbox{rank 5} \ar@{-}[lu]\ar@{.}[rr]\ar@{-}[ru] &&  \mbox{rank 5} 
\ar@{-}[lu]\ar@{.}[rr]\ar@{-}[ru] 
  && \mbox{rank 5}  \ar@{-}[lu]\ar@{.}[r]\ar@{-}[ru] & \\
&& && && && && && \\
}
\]
\caption{The tube containing $L_{268}|L_{157}$.}
\label{fig:tube_L268L146}
\end{figure}

\begin{figure}[h]
\[
\xymatrix@=0.4em{
\frac{368}{257}\ar@{.}[rr]\ar@{--}[dd]\ar@{-}[rd] && \textcolor{blue}{{\tiny\thfrac{469}{358}{147}}} \ar@{.}[rr]\ar@{-}[rd] 
 &&  \frac{269}{158} \ar@{.}[rr]\ar@{-}[rd] 
 &&{\tiny\thfrac{379}{268}{147}}\ar@{.}[rr]\ar@{-}[rd] && 
\frac{359}{248}\ar@{.}[rr]\ar@{-}[rd]  && {\tiny \thfrac{136}{259}{147} }\ar@{.}[rr]\ar@{-}[rd] && \frac{368}{257}\ar@{--}[dd]  \\
\ar@{.}[r]& \mbox{rank 5}   \ar@{-}[lu]\ar@{.}[rr]\ar@{-}[ru] && \mbox{rank 5}  \ar@{.}[rr]\ar@{-}[ru]\ar@{-}[lu] 
 && \mbox{rank 5} \ar@{-}[lu]\ar@{.}[rr]\ar@{-}[ru] && 
\mbox{rank 5} \ar@{-}[lu]\ar@{.}[rr]\ar@{-}[ru] &&  \mbox{rank 5} 
\ar@{-}[lu]\ar@{.}[rr]\ar@{-}[ru] 
  && \mbox{rank 5}  \ar@{-}[lu]\ar@{.}[r]\ar@{-}[ru] & \\
&& && && && && && \\
}
\]
\caption{The tube containing $L_{368}|L_{257}$.}
\label{fig:tube_L368L257}
\end{figure}

\begin{figure}[h]
\[
\xymatrix@=0.4em{
\frac{268}{147}\ar@{.}[rr]\ar@{--}[dd]\ar@{-}[rd] && \textcolor{blue}{{\tiny\thfrac{359}{248}{136}}} \ar@{.}[rr]\ar@{-}[rd] 
 &&  \frac{259}{147} \ar@{.}[rr]\ar@{-}[rd] 
 &&{\tiny\thfrac{368}{257}{469}}\ar@{.}[rr]\ar@{-}[rd] && 
\frac{358}{147}\ar@{.}[rr]\ar@{-}[rd]  && {\tiny \thfrac{269}{158}{379} }\ar@{.}[rr]\ar@{-}[rd] && \frac{268}{147}\ar@{--}[dd]  \\
\ar@{.}[r]& \mbox{rank 5}   \ar@{-}[lu]\ar@{.}[rr]\ar@{-}[ru] && \mbox{rank 5}  \ar@{.}[rr]\ar@{-}[ru]\ar@{-}[lu] 
 && \mbox{rank 5} \ar@{-}[lu]\ar@{.}[rr]\ar@{-}[ru] && 
\mbox{rank 5} \ar@{-}[lu]\ar@{.}[rr]\ar@{-}[ru] &&  \mbox{rank 5} 
\ar@{-}[lu]\ar@{.}[rr]\ar@{-}[ru] 
  && \mbox{rank 5}  \ar@{-}[lu]\ar@{.}[r]\ar@{-}[ru] & \\
&& && && && && && \\
}
\]
\caption{The tube containing $L_{268}|L_{147}$.}
\label{fig:tube_L268L147}
\end{figure}

\subsection{Rigid rank 3 modules in $\mathcal R_{\ge 3}$}

For the tubes in Figures \ref{fig:tube_L269L158L147}--\ref{fig:tube_L358L247L136}, 
we only need to establish the first row, using the strategy from Remark~\ref{rem:profile-tau}, 
starting with the remaining candidates for rank 3 modules from 
Tables~\ref{table:candidates-39-I}, 
~\ref{table:candidates-39-II}, and~\ref{table:candidates-39-III}.

\begin{figure}
\[
\xymatrix@=0.4em{
\textcolor{blue}{\tiny \thfrac{269}{158}{147}}\ar@{.}[rr]\ar@{--}[dd]\ar@{-}[rd] && {\tiny\ffrac{369}{258}{247}{136}} \ar@{.}[rr]\ar@{-}[rd] 
 && {\tiny \thfrac{359}{248}{147}} \ar@{.}[rr]\ar@{-}[rd] 
 &&{\tiny\ffrac{369}{258}{157}{469}}\ar@{.}[rr]\ar@{-}[rd] && 
{\tiny \thfrac{368}{257}{147}} \ar@{.}[rr]\ar@{-}[rd]  && {\tiny\ffrac{369}{258}{148}{379}}\ar@{.}[rr]\ar@{-}[rd] && {\tiny \thfrac{269}{158}{147}}\ar@{--}[dd]  \\
\ar@{.}[r]& \mbox{rank 7}   \ar@{-}[lu]\ar@{.}[rr]\ar@{-}[ru] && \mbox{rank 7}  \ar@{.}[rr]\ar@{-}[ru]\ar@{-}[lu] 
 && \mbox{rank 7} \ar@{-}[lu]\ar@{.}[rr]\ar@{-}[ru] && 
\mbox{rank 7} \ar@{-}[lu]\ar@{.}[rr]\ar@{-}[ru] &&  \mbox{rank 7} 
\ar@{-}[lu]\ar@{.}[rr]\ar@{-}[ru] 
  && \mbox{rank 7}  \ar@{-}[lu]\ar@{.}[r]\ar@{-}[ru] & \\
&& && && && && && \\
}
\]
\caption{The tube containing $L_{269}|L_{158}|L_{147}$.}
\label{fig:tube_L269L158L147}
\end{figure}

\begin{figure}
\[
\xymatrix@=0.4em{
\textcolor{blue}{\tiny \thfrac{369}{158}{147}}\ar@{.}[rr]\ar@{--}[dd]\ar@{-}[rd] && {\tiny \thfrac{269}{258}{137}} \ar@{.}[rr]\ar@{-}[rd] 
 && {\tiny \thfrac{369}{248}{147}} \ar@{.}[rr]\ar@{-}[rd] 
 &&\textcolor{blue}{{\tiny \thfrac{359}{258}{146}}}\ar@{.}[rr]\ar@{-}[rd] && 
{\tiny \thfrac{369}{257}{147}} \ar@{.}[rr]\ar@{-}[rd]  && {\tiny \thfrac{368}{258}{479}}\ar@{.}[rr]\ar@{-}[rd] && {\tiny \thfrac{369}{158}{147}}\ar@{--}[dd]  \\
\ar@{.}[r]& \mbox{rank 6}   \ar@{-}[lu]\ar@{.}[rr]\ar@{-}[ru] && \mbox{rank 6}  \ar@{.}[rr]\ar@{-}[ru]\ar@{-}[lu] 
 && \mbox{rank 6} \ar@{-}[lu]\ar@{.}[rr]\ar@{-}[ru] && 
\mbox{rank 6} \ar@{-}[lu]\ar@{.}[rr]\ar@{-}[ru] &&  \mbox{rank 6} 
\ar@{-}[lu]\ar@{.}[rr]\ar@{-}[ru] 
  && \mbox{rank 6}  \ar@{-}[lu]\ar@{.}[r]\ar@{-}[ru] & \\
&& && && && && && \\
}
\]
\caption{The tube containing $L_{369}|L_{158}|L_{147}$.}
\label{fig:tube start with L369L158L147}
\label{fig:tube_L369L158L147}
\end{figure}

\begin{figure}
\[
\xymatrix@=0.4em{
\textcolor{blue}{\tiny \thfrac{269}{258}{147}}\ar@{.}[rr]\ar@{--}[dd]\ar@{-}[rd] && {\tiny\ffrac{369}{358}{247}{146}} \ar@{.}[rr]\ar@{-}[rd] 
 && {\tiny \thfrac{359}{258}{147}} \ar@{.}[rr]\ar@{-}[rd] 
 &&{\tiny\ffrac{369}{268}{157}{479}}\ar@{.}[rr]\ar@{-}[rd] && 
{\tiny \thfrac{368}{258}{147}} \ar@{.}[rr]\ar@{-}[rd]  && {\tiny\ffrac{369}{259}{148}{137}}\ar@{.}[rr]\ar@{-}[rd] && {\tiny \thfrac{269}{258}{147}}\ar@{--}[dd]  \\
\ar@{.}[r]& \mbox{rank 7}   \ar@{-}[lu]\ar@{.}[rr]\ar@{-}[ru] && \mbox{rank 7}  \ar@{.}[rr]\ar@{-}[ru]\ar@{-}[lu] 
 && \mbox{rank 7} \ar@{-}[lu]\ar@{.}[rr]\ar@{-}[ru] && 
\mbox{rank 7} \ar@{-}[lu]\ar@{.}[rr]\ar@{-}[ru] &&  \mbox{rank 7} 
\ar@{-}[lu]\ar@{.}[rr]\ar@{-}[ru] 
  && \mbox{rank 7}  \ar@{-}[lu]\ar@{.}[r]\ar@{-}[ru] & \\
&& && && && && && \\
}
\]
\caption{The tube containing $L_{269}|L_{258}|L_{147}$.}
\label{fig:tube_L269L258L147}
\end{figure}

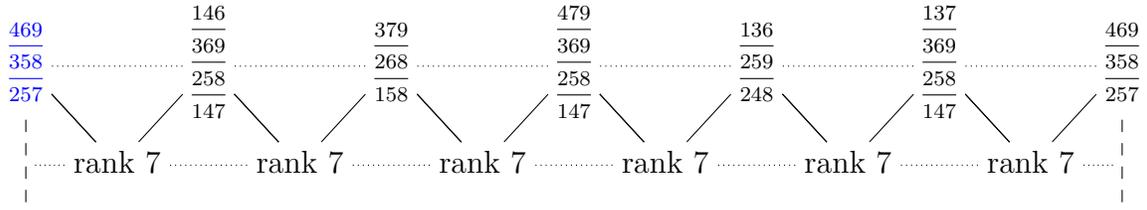
\begin{figure}
\[
\xymatrix@=0.4em{
\textcolor{blue}{\tiny \thfrac{469}{358}{257}}\ar@{.}[rr]\ar@{--}[dd]\ar@{-}[rd] && {\tiny\ffrac{146}{369}{258}{147}} \ar@{.}[rr]\ar@{-}[rd] 
 && {\tiny \thfrac{379}{268}{158}} \ar@{.}[rr]\ar@{-}[rd] 
 &&{\tiny\ffrac{479}{369}{258}{147}}\ar@{.}[rr]\ar@{-}[rd] && 
{\tiny \thfrac{136}{259}{248}} \ar@{.}[rr]\ar@{-}[rd]  && {\tiny\ffrac{137}{369}{258}{147}}\ar@{.}[rr]\ar@{-}[rd] && {\tiny \thfrac{469}{358}{257}}\ar@{--}[dd]  \\
\ar@{.}[r]& \mbox{rank 7}   \ar@{-}[lu]\ar@{.}[rr]\ar@{-}[ru] && \mbox{rank 7}  \ar@{.}[rr]\ar@{-}[ru]\ar@{-}[lu] 
 && \mbox{rank 7} \ar@{-}[lu]\ar@{.}[rr]\ar@{-}[ru] && 
\mbox{rank 7} \ar@{-}[lu]\ar@{.}[rr]\ar@{-}[ru] &&  \mbox{rank 7} 
\ar@{-}[lu]\ar@{.}[rr]\ar@{-}[ru] 
  && \mbox{rank 7}  \ar@{-}[lu]\ar@{.}[r]\ar@{-}[ru] & \\
&& && && && && && \\
}
\]
\caption{The tube containing $L_{469}|L_{358}|L_{257}$.}
\label{fig:tube_L469L358L257}
\end{figure}

\begin{figure}
\[
\xymatrix@=0.4em{
\textcolor{blue}{\tiny \thfrac{369}{358}{147}}\ar@{.}[rr]\ar@{--}[dd]\ar@{-}[rd] && 
\textcolor{blue}{{\tiny \thfrac{469}{258}{157}}} \ar@{.}[rr]\ar@{-}[rd] 
 && {\tiny \thfrac{369}{268}{147}} \ar@{.}[rr]\ar@{-}[rd] 
 &&{{\tiny \thfrac{379}{258}{148}}}\ar@{.}[rr]\ar@{-}[rd] && 
{\tiny \thfrac{369}{259}{147}} \ar@{.}[rr]\ar@{-}[rd]  && {\tiny \thfrac{136}{258}{247}}\ar@{.}[rr]\ar@{-}[rd] && {\tiny \thfrac{369}{358}{147}}\ar@{--}[dd]  \\
\ar@{.}[r]& \mbox{rank 6}   \ar@{-}[lu]\ar@{.}[rr]\ar@{-}[ru] && \mbox{rank 6}  \ar@{.}[rr]\ar@{-}[ru]\ar@{-}[lu] 
 && \mbox{rank 6} \ar@{-}[lu]\ar@{.}[rr]\ar@{-}[ru] && 
\mbox{rank 6} \ar@{-}[lu]\ar@{.}[rr]\ar@{-}[ru] &&  \mbox{rank 6} 
\ar@{-}[lu]\ar@{.}[rr]\ar@{-}[ru] 
  && \mbox{rank 6}  \ar@{-}[lu]\ar@{.}[r]\ar@{-}[ru] & \\
&& && && && && && \\
}
\]
\caption{The tube containing $L_{369}|L_{358}|L_{147}$.}
\label{fig:tube_L369L358L147}
\end{figure}

\begin{figure}
\[
\xymatrix@=0.4em{
\textcolor{blue}{\tiny \thfrac{258}{147}{136}}\ar@{.}[rr]\ar@{--}[dd]\ar@{-}[rd] && {\tiny\ffrac{259}{248}{137}{369}} \ar@{.}[rr]\ar@{-}[rd] 
 && {\tiny \thfrac{258}{147}{469}} \ar@{.}[rr]\ar@{-}[rd] 
 &&{\tiny\ffrac{358}{257}{146}{369}}\ar@{.}[rr]\ar@{-}[rd] && 
{\tiny \thfrac{258}{147}{379}} \ar@{.}[rr]\ar@{-}[rd]  && {\tiny\ffrac{268}{158}{479}{369}}\ar@{.}[rr]\ar@{-}[rd] && {\tiny \thfrac{258}{147}{136}}\ar@{--}[dd]  \\
\ar@{.}[r]& \mbox{rank 7}   \ar@{-}[lu]\ar@{.}[rr]\ar@{-}[ru] && \mbox{rank 7}  \ar@{.}[rr]\ar@{-}[ru]\ar@{-}[lu] 
 && \mbox{rank 7} \ar@{-}[lu]\ar@{.}[rr]\ar@{-}[ru] && 
\mbox{rank 7} \ar@{-}[lu]\ar@{.}[rr]\ar@{-}[ru] &&  \mbox{rank 7} 
\ar@{-}[lu]\ar@{.}[rr]\ar@{-}[ru] 
  && \mbox{rank 7}  \ar@{-}[lu]\ar@{.}[r]\ar@{-}[ru] & \\
&& && && && && && \\
}
\]
\caption{The tube containing $L_{258}|L_{147}|L_{136}$.}
\label{fig:tube_L258L147L136}
\end{figure}
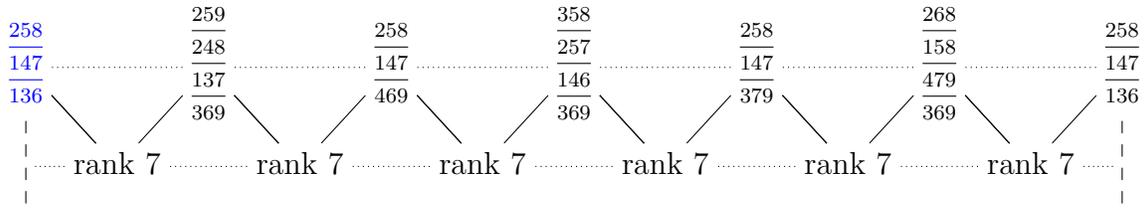

\begin{figure}
\[
\xymatrix@=0.4em{
\textcolor{blue}{\tiny \thfrac{369}{268}{157}}\ar@{.}[rr]\ar@{--}[dd]\ar@{-}[rd] && {\tiny\ffrac{479}{368}{258}{147}} \ar@{.}[rr]\ar@{-}[rd] 
 && {\tiny \thfrac{369}{259}{148}} \ar@{.}[rr]\ar@{-}[rd] 
 &&{\tiny\ffrac{137}{269}{258}{147}}\ar@{.}[rr]\ar@{-}[rd] && 
{\tiny \thfrac{369}{358}{247}} \ar@{.}[rr]\ar@{-}[rd]  && {\tiny\ffrac{146}{359}{258}{147}}\ar@{.}[rr]\ar@{-}[rd] && {\tiny \thfrac{369}{268}{157}}\ar@{--}[dd]  \\
\ar@{.}[r]& \mbox{rank 7}   \ar@{-}[lu]\ar@{.}[rr]\ar@{-}[ru] && \mbox{rank 7}  \ar@{.}[rr]\ar@{-}[ru]\ar@{-}[lu] 
 && \mbox{rank 7} \ar@{-}[lu]\ar@{.}[rr]\ar@{-}[ru] && 
\mbox{rank 7} \ar@{-}[lu]\ar@{.}[rr]\ar@{-}[ru] &&  \mbox{rank 7} 
\ar@{-}[lu]\ar@{.}[rr]\ar@{-}[ru] 
  && \mbox{rank 7}  \ar@{-}[lu]\ar@{.}[r]\ar@{-}[ru] & \\
&& && && && && && \\
}
\]
\caption{The tube containing $L_{369}|L_{268}|L_{157}$.}
\label{fig:tube start with L369L268L157}
\label{fig:tube_L369L268L157}
\end{figure}

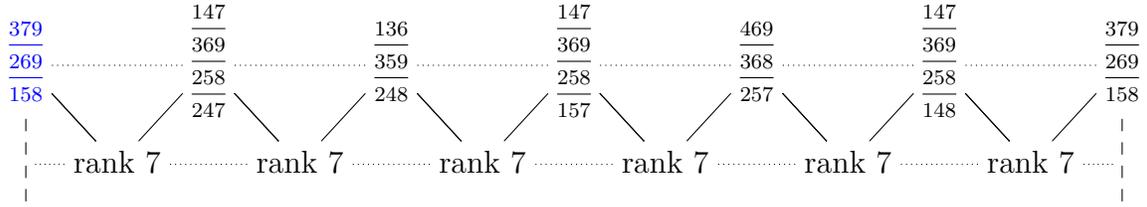
\begin{figure}
\[
\xymatrix@=0.4em{
\textcolor{blue}{\tiny \thfrac{379}{269}{158}}\ar@{.}[rr]\ar@{--}[dd]\ar@{-}[rd] && {\tiny\ffrac{147}{369}{258}{247}} \ar@{.}[rr]\ar@{-}[rd] 
 && {\tiny \thfrac{136}{359}{248}} \ar@{.}[rr]\ar@{-}[rd] 
 &&{\tiny\ffrac{147}{369}{258}{157}}\ar@{.}[rr]\ar@{-}[rd] && 
{\tiny \thfrac{469}{368}{257}} \ar@{.}[rr]\ar@{-}[rd]  && {\tiny\ffrac{147}{369}{258}{148}}\ar@{.}[rr]\ar@{-}[rd] && {\tiny \thfrac{379}{269}{158}}\ar@{--}[dd]  \\
\ar@{.}[r]& \mbox{rank 7}   \ar@{-}[lu]\ar@{.}[rr]\ar@{-}[ru] && \mbox{rank 7}  \ar@{.}[rr]\ar@{-}[ru]\ar@{-}[lu] 
 && \mbox{rank 7} \ar@{-}[lu]\ar@{.}[rr]\ar@{-}[ru] && 
\mbox{rank 7} \ar@{-}[lu]\ar@{.}[rr]\ar@{-}[ru] &&  \mbox{rank 7} 
\ar@{-}[lu]\ar@{.}[rr]\ar@{-}[ru] 
  && \mbox{rank 7}  \ar@{-}[lu]\ar@{.}[r]\ar@{-}[ru] & \\
&& && && && && && \\
}
\]
\caption{The tube containing $L_{379}|L_{269}|L_{158}$.}
\label{fig:tube_L379L269L158}
\end{figure}

\begin{figure}
\[
\xymatrix@=0.4em{
\textcolor{blue}{\tiny \thfrac{368}{257}{146}}\ar@{.}[rr]\ar@{--}[dd]\ar@{-}[rd] && {\tiny \thfrac{359}{248}{137}}\ar@{-}[rd] \ar@{.}[rr]
 && {\tiny \thfrac{269}{158}{479}} \ar@{.}[rr] \ar@{-}[rd]
 &&{{\tiny \thfrac{368}{257}{146}}}\ar@{--}[dd]\\
\ar@{.}[r]&  \mbox{rank 6} \ar@{.}[rr]\ar@{-}[ru] &&  \mbox{rank 6} \ar@{.}[rr]\ar@{-}[ru] 
 &&  \mbox{rank 6} \ar@{-}[ru]\ar@{.}[r]&  \\
&& && && \\
}
\]
\caption{The tube containing $L_{368}|L_{257}|L_{146}$.}
\label{fig:tube_L368L257L146}
\end{figure}

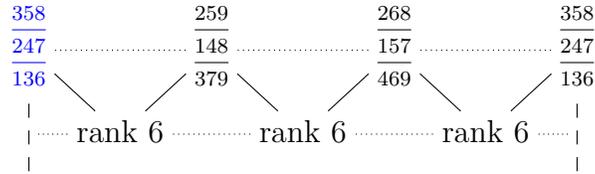
\begin{figure}
\[
\xymatrix@=0.4em{
\textcolor{blue}{\tiny \thfrac{358}{247}{136}}\ar@{.}[rr]\ar@{--}[dd]\ar@{-}[rd] && {\tiny \thfrac{259}{148}{379}}\ar@{-}[rd] \ar@{.}[rr]
 && {\tiny \thfrac{268}{157}{469}} \ar@{.}[rr] \ar@{-}[rd]
 &&{{\tiny \thfrac{358}{247}{136}}}\ar@{--}[dd]\\
\ar@{.}[r]&  \mbox{rank 6} \ar@{.}[rr]\ar@{-}[ru] &&  \mbox{rank 6} \ar@{.}[rr]\ar@{-}[ru] 
 &&  \mbox{rank 6} \ar@{-}[ru]\ar@{.}[r]&  \\
&& && && \\
}
\]
\caption{The tube containing $L_{358}|L_{247}|L_{136}$.}
\label{fig:tube_L358L247L136}
\end{figure}

\clearpage
\appendix

\section{Reduction techniques}\label{sec:reductions}

Here we present tools for studying rigidity of modules. We define maps on modules in ${\rm CM}(B_{k,n})$, 
on the one hand to modules for $B_{k+1,n+1}$ and $B_{k,n+1}$, Definition~\ref{def:1-increase}, 
on the other hand to modules for $B_{k-1,n-1}$ and to $B_{k,n-1}$, Definition~\ref{def:1-decrease}. 
The latter is related to the collapse map from Definition~\ref{def:collapse-a-shift}. 

Let $M\in {\rm CM}(B_{k,n})$ be a rank $s$ module. For $i=1,\dots, n$, let $V_i=\mathbb{C}[[t]]^s$. 
We write $M$ as a representation of $B_{k,n}$ with matrices $x_i,y_i$ of size $s\times s$: 

\begin{equation}\label{eq:M-rep}
\xymatrix@=0.7em{
 V_n \ar@<3pt>[rr]^{x_1} && V_1\ar@<3pt>[rr]^{x_2}\ar@<3pt>[ll]^{y_1} && V_2\ar@<3pt>[rr]^{x_3}\ar@<3pt>[ll]^{y_2} && 
 V_3\ar@<3pt>[rr]\ar@<3pt>[ll]^{y_3}  && \cdots \ar@<3pt>[ll] \ar@<3pt>[rr] && 
 V_{n-1}\ar@<3pt>[rr]^{x_n}\ar@<3pt>[ll] && V_n\ar@<3pt>[ll]^{y_n} 
}
\end{equation}

The first map we define adds a vertex to the circular graph $C$ from Section~\ref{ssec:CM-setup} 
(see Figure~\ref{fig:graph C and quiver Q_C}). 
The map changes the algebra $B_{k,n}$ by increasing $n$ by 1. 
There are two versions of this - we will either increase the size of the $k$-subsets, hence going from 
$B_{k,n}$ to $B_{k+1,n+1}$, or we keep $k$ fixed and increase the size of the complements, going from 
$B_{k,n}$ to $B_{k,n+1}$. 
Since we will move between these circular graphs of varying sizes, we write 
$C(n)$ for the circular graph on $n$ vertices and $Q_{C(n)}$ for the corresponding quiver.

Note that since in all our modules, the maps $x_i$ and $y_i$ satisfy $x_iy_i=t\Id$, they are invertible as matrices over  $\mathbb{C}((t))$. 

Note also that the central element $t=\sum x_iy_i$ will change when increasing or decreasing the number of vertices of the circular quiver. As we identify $\mathbb{C}[[\sum_{i=1}^n x_iy_i]]$ with $\mathbb{C}[[\sum_{i=1}^{n+1} x_iy_i]]$, by abuse of notation, $t$ will always denote the element $\sum_{i=1}^m x_i y_i$ of $Q(m)$.

\begin{definition}[1-increase and 1-co-increase]\label{def:1-increase}
Let $M\in {\rm CM}(B_{k,n})$ be as in (\ref{eq:M-rep}) with $V_i=\mathbb{C}[[t]]^s$ for $i=1,\dots, n$. Define 
$V_{n+1}=\mathbb{C}[[t]]^s$. 

(a) The {\bf 1-increase of $M$ at $j$} is the representation 
$$\inc_j(M)=(V_i, i\in [n+1]; x_1,\dots, x_{j}, \Id, x_{j+1},,\dots, x_{n}, 
y_1,\dots, y_{j},t\Id, y_j,\dots, y_n)$$ 
of $Q_{C(n+1)}$: 
\begin{equation}\label{eq:inc-j-M}
\xymatrix@=0.5em{
 V_{n+1} \ar@<3pt>[rr]^{x_1} && V_1\ar@<3pt>[rr]^{x_2}\ar@<3pt>[ll]^{y_1} && V_2\ar@<3pt>[rr]^{x_3}\ar@<3pt>[ll]^{y_2} && 
 V_3\ar@<3pt>[rr]\ar@<3pt>[ll]^{y_3}  && \cdots \ar@<3pt>[ll] \ar@<3pt>[rr]^{x_j}
 && V_j \ar@<3pt>[ll]^{y_j} \ar@<3pt>[rr]^{\Id}
 && V_{j+1} \ar@<3pt>[ll]^{t\Id} \ar@<3pt>[rr]^{x_{j+1}}
 && \cdots \ar@<3pt>[ll]^{y_{j+1}} \ar@<3pt>[rr] && 
 V_{n}\ar@<3pt>[rr]^{x_n}\ar@<3pt>[ll] && V_{n+1}\ar@<3pt>[ll]^{y_n} 
}
\end{equation}

(b) 
The {\bf 1-co-increase of $M$ at $j$} is the representation 
$$\inc_j^c(M)=(V_i, i\in [n+1]; 
x_1,\dots, x_{j}, t\Id,x_{j+1},\dots, x_{n+1}, y_1,\dots, y_{j},\Id, y_{j+1},\dots, y_{n+1} )$$ 
of $Q_{C(n+1)}$: 
\begin{equation}\label{eq:coinc-j-M}
\xymatrix@=0.5em{
 V_{n+1} \ar@<3pt>[rr]^{x_1} && V_1\ar@<3pt>[rr]^{x_2}\ar@<3pt>[ll]^{y_1} && V_2\ar@<3pt>[rr]^{x_3}\ar@<3pt>[ll]^{y_2} && 
 V_3\ar@<3pt>[rr]\ar@<3pt>[ll]^{y_3}  && \cdots \ar@<3pt>[ll] \ar@<3pt>[rr]^{x_j}
 && V_j \ar@<3pt>[ll]^{y_j} \ar@<3pt>[rr]^{t\Id}
 && V_{j+1} \ar@<3pt>[ll]^{\Id} \ar@<3pt>[rr]^{x_{j+1}}
 && \cdots \ar@<3pt>[ll]^{y_{j+1}} \ar@<3pt>[rr] && 
 V_{n}\ar@<3pt>[rr]^{x_n}\ar@<3pt>[ll] && V_{n+1}\ar@<3pt>[ll]^{y_n} 
}
\end{equation}
\end{definition}

\begin{lemma}\label{A2}
Let $M$ be a $B_{k,n}$-module and $j\in[n]$. Then 
$\inc_j(M)$ is a $B_{k+1,n+1}$-module and 
$\inc_j^c(M)$ is a $B_{k,n+1}$-module.
\end{lemma}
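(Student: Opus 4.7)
The plan is to verify directly that $\inc_j(M)$ and $\inc_j^c(M)$ satisfy the defining relations of $B_{k+1,n+1}$ and $B_{k,n+1}$ respectively, and that each vertex space remains free of the same rank over the new centre, so the Cohen--Macaulay property is preserved.

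For the local commuting relations $x_iy_i=y_{i+1}x_{i+1}$, nothing new is needed at vertices away from the insertion site, since there the maps are inherited from $M$. At the new vertex and at its neighbour, one checks directly that both sides of the relation equal $t\Id$, using that the inserted arrows are $\Id$ and $t\Id$ together with the identities $xy=yx=t\Id$ holding in $M$.

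The main work --- and the step I expect to be the obstacle --- is verifying the long relation $x^{k+1}=y^{n-k}$ at every starting vertex in $\inc_j(M)$. The key observation is that from any vertex, the forward $x$-path of length $k+1$ and the backward $y$-path of length $n-k$ together traverse the entire new cycle of $n+1$ arrows, and are therefore complementary: the inserted arrow lies in exactly one of them. This splits into two cases. If the $\Id$ arrow appears in the $x$-path, then the new $x^{k+1}$ collapses to a product of $k$ old $x$'s and the new $y^{n-k}$ is the corresponding product of $n-k$ old $y$'s, so equality is immediate from the old long relation $x^k_{\mathrm{old}}=y^{n-k}_{\mathrm{old}}$. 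If instead the $t\Id$ arrow lies in the $y$-path, then $x^{k+1}_{\mathrm{new}}=x^{k+1}_{\mathrm{old}}$ while $y^{n-k}_{\mathrm{new}}=t\cdot y^{n-k-1}_{\mathrm{old}}$; to match these I rewrite $x^{k+1}_{\mathrm{old}}=x\cdot x^k_{\mathrm{old}}=x\cdot y^{n-k}_{\mathrm{old}}$ via the old relation and absorb the leading $xy$-pair using $xy=t\Id$ at the appropriate vertex, yielding $t\cdot y^{n-k-1}_{\mathrm{old}}$ as required. Keeping the index bookkeeping consistent across the cyclic wrap-around is the delicate point; everything else is formal.

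Finally, each vertex space is chosen to be $\mathbb{C}[[t]]^s$, so all vertex spaces have the same rank as in $M$, and the central element $\sum x_i^{\mathrm{new}}y_i^{\mathrm{new}}$ acts as a scalar on each, confirming that $\inc_j(M)\in \mathrm{CM}(B_{k+1,n+1})$. The argument for $\inc_j^c(M)$ is formally dual, obtained by interchanging the roles of $x$ and $y$ throughout both the insertion and the case analysis, so it reduces to exactly the same combinatorial verification with $k+1$ replaced by $k$ and $n-k$ by $n-k+1$.
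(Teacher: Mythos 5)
Your proof is correct, and the overall strategy (direct verification of the defining relations, with the co-increase handled by symmetry) matches the paper's. The execution of the key step differs, though. For the long relation the paper does not check $x^{k+1}=y^{n-k}$ vertex by vertex; instead it replaces it by the equivalent full-cycle relation $x^{n+1}=t^{n-k}\Id$, which is trivially inherited from $x^{n}=t^{n-k}\Id$ in $M$ because inserting an identity arrow does not change the product around the cycle. That equivalence implicitly uses that the $x$-maps are injective (the vertex spaces are free over $\CC[[t]]$ and $xy=t$), so one may cancel $x^{n-k}$. Your two-case analysis --- the inserted arrow lies in exactly one of the complementary $x$- and $y$-paths, and in the second case one peels off the leading $x$, applies the old relation, and absorbs the resulting $xy$-pair into a factor of $t$ --- avoids any appeal to injectivity at the cost of the cyclic index bookkeeping you flag; I checked that the manipulation $x^{k+1}_{\mathrm{old}}=x\cdot y^{n-k}_{\mathrm{old}}=t\cdot y^{n-k-1}_{\mathrm{old}}$ does land on the correct path, so the argument goes through. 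Your closing remark on freeness over the new centre is not part of the paper's proof of this lemma (it is split off as a separate lemma there), but it is harmless and correct.
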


\begin{proof}
We prove the statement for the increase $\inc_j(M)$ of $M$, 
the one about the co-increase follows 
similarly. 
We check that this module satisfies all the relations for $B_{k+1,n+1}$. 
Without loss of generality let $j=n$ and so as a representation, 
$$
\inc_n(M):\quad 
\xymatrix@=0.7em{
 V_{n+1} \ar@<3pt>[rr]^{x_1} && V_1\ar@<3pt>[rr]^{x_2}\ar@<3pt>[ll]^{y_1} && V_2\ar@<3pt>[rr]^{x_3}\ar@<3pt>[ll]^{y_2} && 
 V_3\ar@<3pt>[rr]\ar@<3pt>[ll]^{y_3}  && \cdots \ar@<3pt>[ll] \ar@<3pt>[rr] && 
 V_{n-1}\ar@<3pt>[rr]^{x_n}\ar@<3pt>[ll] && V_n\ar@<3pt>[rr]^{\Id}\ar@<3pt>[ll]^{y_n} && V_{n+1}\ar@<3pt>[ll]^{t\Id} 
}
$$
To see that $\inc_n(M)$ is in 
${\rm CM}(B_{k+1,n+1})$, we have to check that the relations $xy=yx$ and $x^{k+1}=y^{n+1-{k+1}}=y^{n-k}$ hold everywhere. 
The relations $xy=yx$ are clear as for the maps of $M$, $x_iy_i=t\Id$, for $i=1,\dots, n$, and as 
the only new maps are $x_{n+1}=\Id$ and $y_{n+1}=t\Id$ which satisfy $x_{n+1}y_{n+1}=t\Id$, too.  To prove that $x^{k+1}=y^{n-k}$, note that this relation is equivalent to $x^{n+1}=t^{n-k}\Id$. Since for the module $M$ we know that $x^n=t^{n-k}\Id$, and since $x_{n+1}=\Id$, it follows that, for the module  $\inc_n(M)$, $x^{n+1}=t^{n-k}\Id.$
So $\inc_n(M)$ is indeed a representation of $B_{k+1,n+1}$. 
\end{proof}

The following lemma follows directly 
from the construction. 
\begin{lemma}\label{lm:freeness}
Let $M\in {\rm CM}(B_{k,n})$. 
Then for every $j\in[n]$, the $1$-increase $\inc_j(M)$ is free 
over the center of $B_{k+1,n+1}$ and the 
$1$-co-increase $\inc_j^c(M)$ is free over 
the center of $B_{k,n+1}$. 
\end{lemma}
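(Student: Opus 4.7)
The plan is to verify directly that in each construction every vertex space is a free module of the same rank over the enlarged center, which is precisely the condition for the new representation to lie in the corresponding Cohen--Macaulay category. The argument is a bookkeeping computation once one identifies the central element.

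First, for $\inc_j(M)$, I would identify the generator $t'$ of the center $Z' = \mathbb{C}[[t']]$ of $B_{k+1,n+1}$, namely $t' = \sum_{i=1}^{n+1} x_i^{\mathrm{new}} y_i^{\mathrm{new}}$, and compute its action on each vertex space of $\inc_j(M)$. Since $t'$ is a sum of length--two cycles at the various vertices, its restriction to $V_\ell^{\mathrm{new}}$ is just multiplication by $x_\ell^{\mathrm{new}} y_\ell^{\mathrm{new}}$. For $\ell \neq j+1$ this product coincides, after the re-indexing dictated by the definition of $\inc_j$, with one of the original products $x_i y_i$ of $M$, which equals $t\,\Id$ because $M$ is Cohen--Macaulay. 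At the newly inserted vertex $j+1$ we have by construction $x_{j+1}^{\mathrm{new}} y_{j+1}^{\mathrm{new}} = \Id\cdot t\Id = t\Id$. Hence $t'$ acts as multiplication by $t$ on every $V_\ell^{\mathrm{new}}$.

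Each $V_\ell^{\mathrm{new}}$ is either one of the original vertex spaces $V_i$ of $M$ (for $\ell\le j$ or $\ell\ge j+2$) or the duplicated copy $\mathbb{C}[[t]]^s$ placed at the inserted vertex (for $\ell=j+1$). In either case it is free of rank $s$ over $Z = \mathbb{C}[[t]]$. Via the canonical identification $t'\leftrightarrow t$, the $Z'$--module structure induced by the action of $t'$ agrees with the original $Z$--module structure, so every $V_\ell^{\mathrm{new}}$ is free of rank $s$ over $Z'$. This proves the claim for $\inc_j(M)$ (and shows in fact that $\inc_j(M)$ is a rank $s$ CM module over $B_{k+1,n+1}$).

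For $\inc_j^c(M)$ the argument is entirely symmetric: at the inserted vertex the roles of $\Id$ and $t\Id$ are swapped, so $x_{j+1}^{\mathrm{new}} y_{j+1}^{\mathrm{new}} = t\Id\cdot\Id = t\Id$ still holds, and the same freeness conclusion follows for the center of $B_{k,n+1}$. The only (minor) obstacle is combinatorial: one must check carefully that the index shifts for $x_i,y_i$ with $i>j$ are consistent in both constructions so that each product $x_i^{\mathrm{new}} y_i^{\mathrm{new}}$ genuinely matches an original $x_\ell y_\ell$. This is immediate from the formulas in Definition~\ref{def:1-increase}.
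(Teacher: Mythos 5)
Your proof is correct and is exactly the direct verification that the paper has in mind: the paper dismisses this lemma with "follows directly from the construction," and your computation — that the new central element $t'=\sum_i x_i y_i$ acts on every vertex space (including the inserted one, where $\Id\cdot t\Id=t\Id$) as multiplication by the old $t$, so each vertex space stays free of rank $s$ — is precisely that direct argument, consistent with the identification of the two centers made in the remark preceding Definition~\ref{def:1-increase}. The only cosmetic quibble is that $x_iy_i$ acting as $t$ on $M_i$ is the definition of the central element rather than a consequence of $M$ being Cohen--Macaulay, but this does not affect the argument.
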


\begin{lemma}\label{lm:hom-iso}
Let $M$ and $N$ be in ${\rm CM}(B_{k,n})$. 
Then for every $j\in[n]$ there are isomorphisms 
$$
\begin{array}{lcl}
 \Hom_{B_{k,n}}(M,N)  & \cong  & \Hom_{B_{k+1,n+1}}(\inc_j(M),\inc_j(N)) \\
 \Hom_{B_{k,n}}(M,N)  & \cong  & \Hom_{B_{k,n+1}}(\inc_j^c(M),\inc_j^c(N))
\end{array}
$$
\end{lemma}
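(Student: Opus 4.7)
The plan is to construct explicit mutually inverse maps between the two sides of each claimed isomorphism. The key observation is that inserting a vertex whose incoming $x$-arrow is the identity matrix forces the corresponding component of any module homomorphism to be uniquely determined. Consequently, the data of a homomorphism before and after the insertion is essentially the same.

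I will treat the first isomorphism in detail; the second is obtained by the symmetric argument. Write $M = (V_i; x_i, y_i)$ and $N = (W_i; x_i', y_i')$, so a morphism $f \in \Hom_{B_{k,n}}(M,N)$ is a tuple $(f_1, \dots, f_n)$ of $\mathbb{C}[[t]]$-linear maps $f_i : V_i \to W_i$ with $f_i x_i = x_i' f_{i-1}$ and $f_{i-1} y_i = y_i' f_i$ for all $i$. In the new representation $\inc_j(M)$ a new vertex is inserted so that, relabelling, the new vertex sits between the old vertices $j$ and $j+1$, and the two new arrows are the identity and $t\cdot \Id$ on $V_j$; similarly for $\inc_j(N)$ with $W_j$.

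Define
\[
\Phi : \Hom_{B_{k,n}}(M,N) \longrightarrow \Hom_{B_{k+1,n+1}}(\inc_j(M),\inc_j(N))
\]
by $\Phi(f_1,\dots,f_n) = (f_1,\dots,f_j,\, f_j,\, f_{j+1},\dots,f_n)$, where the duplicated entry $f_j$ occupies the new vertex. All commutation relations away from the inserted vertex agree with those for $f$; at the inserted vertex one checks $f_j \circ \Id = \Id \circ f_j$ and $f_j \circ (t\,\Id) = (t\,\Id) \circ f_j$, which are trivial. Hence $\Phi(f)$ is a well-defined morphism in $B_{k+1,n+1}\text{-mod}$. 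Conversely, any $g = (g_1,\dots,g_{n+1}) \in \Hom_{B_{k+1,n+1}}(\inc_j(M),\inc_j(N))$ must satisfy, at the inserted vertex, $g_{\text{new}} \circ \Id_{V_j} = \Id_{W_j} \circ g_j$, forcing $g_{\text{new}} = g_j$. Therefore removing the component at the inserted vertex gives a well-defined map $\Psi(g) \in \Hom_{B_{k,n}}(M,N)$, and by construction $\Psi \circ \Phi = \mathrm{id}$ and $\Phi \circ \Psi = \mathrm{id}$.

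The argument for $\inc_j^c$ is identical, with the two inserted arrows swapped: now the inserted $y$-arrow is the identity, which again forces the component at the new vertex to be determined. The only point to notice is that the inserted arrows still satisfy $xy = yx = t\,\Id$ and do not affect the $B_{k,n+1}$ relations (cf.\ the verification in Lemma~\ref{A2}), so $\Phi$ lands in the correct Hom space. I do not expect any serious obstacle; the statement is essentially a bookkeeping observation that a trivially extended representation has a trivially extended morphism space.
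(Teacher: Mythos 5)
Your proposal is correct and follows essentially the same route as the paper: both exploit that the inserted arrow equal to the identity forces the morphism component at the new vertex to coincide with its neighbour, so that restriction and duplication give mutually inverse bijections on Hom spaces. The paper simply normalises to $j=n$ and states the same argument more briefly.
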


\begin{proof}
It is enough to prove the claim for the increase as the other 
one follows similarly. 
Without loss of generality, let $j=n$. Let $r=\rk M$ and $s=\rk N$. 
By construction, 
$x_{n+1}=\Id_r$ and $y_{n+1}=t\Id_r$ in  $\inc_n M$, 
$x_{n+1}=\Id_s$ and $y_{n+1}=t\Id_s$ in $\inc_n N$. So if $F=(F_i)_{i=1}^{n+1}$ is a homomorphism from $\inc_n M$ to $\inc_n N$, then 
$F_n=F_{n+1}$, and the restriction $F=(F_i)_{i=1}^{n}$ is a homomorphism from $M$ to $N$. Conversely, each morphism from $M$ to $N$ is defined by $n$ maps $(F_i)_{i=1}^n$ and can be extended accordingly repeating the map $F_n$ at the vertex $n+1$. Thus, there is a bijection between the homomorphisms from $M$ to $N$ and the homomorphisms from $\inc_n M$ to $\inc_n N$.
\end{proof}

\begin{lemma}\label{lm:inc-indec}
Let $M\in {\rm CM}(B_{k,n})$ be indecomposable with filtration $L_{I_1}\mid \dots \mid L_{I_s}$, let $j\in[n]$. 
We have the following: 

(1) The $1$-increase $\inc_j(M)$ of $M$ 
is indecomposable. 
Furthermore, $\inc_j(M)$ has filtration $L_{I_1(j)}\mid\dots \mid L_{I_s(j)}$, where 
$I_m(j)=\{i\mid i\in I_m, i\le  j\}\cup \{j+1 \} \cup \{i+1\mid i\in I_m,i> j\}$ for $m=1,\dots, s$. 

(2) The $1$-co-increase $\inc_j^c(M)$ of $M$ 
is indecomposable. It has 
filtration $L_{I_1}^c\mid \dots \mid L_{I_s}^c$ 
where 
$I_m(j)^c=\{i\mid i\in I_m, i\le  j\}\cup \{i+1\mid i\in I_m,i> j\}$ for $m=1,\dots, s$.
\end{lemma}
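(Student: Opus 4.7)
The plan is to promote $\inc_j$ and $\inc_j^c$ to additive exact functors on $\mathrm{CM}(B_{k,n})$ and then derive both parts of the statement formally. First I would verify functoriality: a morphism $f = (f_i)_{i=1}^n \colon M \to N$ in $\mathrm{CM}(B_{k,n})$ extends to $\inc_j(f) := (f_1,\dots,f_j, f_j, f_{j+1},\dots,f_n)$ in $\mathrm{CM}(B_{k+1,n+1})$ by inserting a copy of $f_j$ at the new vertex; the two extra squares commute because the inserted maps $\Id$ and $t\cdot\Id$ commute with any $\mathbb{C}[[t]]$-linear $f_j$. This assignment is inverse to the restriction used in the proof of Lemma~\ref{lm:hom-iso}, so the Hom-isomorphism there is in fact a natural ring isomorphism on $\End$; exactly the same verification works for $\inc_j^c$.

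Second, I would check exactness. For a short exact sequence $0 \to M' \to M \to M'' \to 0$ in $\mathrm{CM}(B_{k,n})$, at every old vertex one recovers the given vertexwise exact sequence, and at the inserted vertex the sequence becomes
\[
0 \to \mathbb{C}[[t]]^{\rk M'} \to \mathbb{C}[[t]]^{\rk M} \to \mathbb{C}[[t]]^{\rk M''} \to 0,
\]
which is exact because rank is additive on short exact sequences in $\mathrm{CM}(B_{k,n})$. Combined with Lemma~\ref{A2} and Lemma~\ref{lm:freeness}, this confirms that $\inc_j$ and $\inc_j^c$ are additive exact functors into $\mathrm{CM}(B_{k+1,n+1})$ and $\mathrm{CM}(B_{k,n+1})$ respectively.

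With these ingredients both conclusions become routine. For indecomposability, $\mathrm{CM}(B_{k,n})$ is Krull--Schmidt, so $\End_{B_{k,n}}(M)$ is local; the ring isomorphism $\End_{B_{k+1,n+1}}(\inc_j M) \cong \End_{B_{k,n}}(M)$ makes the former local as well, whence $\inc_j(M)$ is indecomposable, and the same argument applies to $\inc_j^c(M)$. For the filtration claim, apply the exact functor $\inc_j$ to a filtration $0 \subset M^{(1)} \subset \cdots \subset M^{(s)} = M$ with successive quotients $L_{I_{s-i+1}}$ to obtain a filtration of $\inc_j(M)$ with successive quotients $\inc_j(L_{I_m})$. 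It then remains to identify $\inc_j(L_{I_m})$ as the rank one module $L_{I_m(j)}$ over $B_{k+1,n+1}$: reading off the $(k+1)$-subset as the set of edges on which $x$ acts as $1$, the inserted edge $j+1$ enters the subset because $x_{j+1}=\Id$ and $y_{j+1}=t\Id$, the old indices $i\le j$ retain both their label and their role, while old indices $i>j$ get shifted up by one. A symmetric reading (where the inserted edge is excluded because $x_{j+1}=t\Id$, $y_{j+1}=\Id$) handles $\inc_j^c$ and produces $I_m(j)^c$.

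The only anticipated obstacle is careful bookkeeping with the index shift above $j$ and the verification of exactness at the inserted vertex; once those are in place, both parts are direct corollaries of Lemmas~\ref{A2},~\ref{lm:freeness}, and~\ref{lm:hom-iso} together with the Krull--Schmidt property of $\mathrm{CM}(B_{k,n})$.
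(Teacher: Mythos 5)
Your proposal is correct and follows essentially the same route as the paper: indecomposability via the endomorphism-ring isomorphism of Lemma~\ref{lm:hom-iso} (with $N=M$) together with locality of $\End(M)$, and the filtration claim read off from the fact that the increase inserts a common parallel step into every rank~1 factor. You supply more detail than the paper (which dismisses the filtration statement as ``clear''), and the only tiny wrinkle is your justification of exactness at the inserted vertex --- rank additivity alone does not force exactness of a sequence of free modules, but the point is moot since the maps there are literal copies of $f_j$, so the sequence at the new vertex coincides with the (exact) sequence at vertex $j$.
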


\begin{proof}
It is enough to prove (1). \\
(a) For the indecomposability, 
we use Lemma~\ref{lm:hom-iso} for $N=M$ and the fact that a module is 
indecomposable if and 
only if its endomorphism ring is local. 

\noindent
(b) That the filtrations are as claimed is clear: the increase operation adds a common element to all $k$-subsets 
and inserts a common parallel step to all profiles, see Definition \ref{def:rank1}. \end{proof} 

We next define a (refined) version of the collapse map for modules in ${\rm CM}(B_{k,n})$. 
Let $M$ be in ${\rm CM}(B_{k,n})$, write $M$ as representation $M=(V_i, i \in [n]; x_i, y_i, i \in [n])$ 
and assume $M$ has filtration $L_{I_1}\mid \dots \mid L_{I_s}$. 

\begin{remark}\label{rem:diag-form} 
Let $M=(V_i;x_i,y_i)$ be in ${\rm CM}(B_{k,n})$, assume it 
has filtration 
$L_{I_1}\mid \dots \mid L_{I_s}$. If there exists 
$j\in I_1\cap \dots\cap I_s$, then up to base change we can assume that $x_j=\Id_s$ and 
$y_j=t\Id_s$. 
Similarly, if there exists $j\in I_1^c\cap\dots\cap I_s^c$ then we 
can assume that $x_j=t\Id_s$ and that $y_j=\Id_s$.  
\end{remark}

\begin{definition}[1-decrease and 1-co-decrease]\label{def:1-decrease}
Let $M\in {\rm CM}(B_{k,n})$. 
Assume that $M$ has filtration $L_{I_1}\mid \dots \mid L_{I_s}$, and let 
$M=(V_i, i \in [n]; x_i, y_i, i \in [n])$. 

(a) Let $j \in I_1\cap \dots \cap I_s\ne \emptyset$ and assume $x_j=\Id_s$, $y_j=t\Id_s$. 
The {\bf 1-decrease of $M$ at $j$} is the representation 
$\dec_j(M)$ of $Q_{C(n-1)}$ obtained by removing vertex $j$ of $Q_{C(n)}$ and the 
arrows $x_j$ and $y_j$, so that the arrow $x_{j+1}$ goes from $j-1$ to $j+1$ and the arrow $y_{j+1}$ 
from $j+1$ to $j-1$. 

(b) Assume that $j\in I_1^c\cap \dots \cap I_s^c\ne \emptyset$ and assume that $x_j=t\Id_s$ and 
$y_j=\Id_s$. The the {\bf 1-co-decrease of $M$ at $j$} is the representation of $Q_{C(n-1)}$ 
obtained by removing the vertex $j$ of $Q_{C(n)}$ and the arrows $x_j$ and $y_j$, 
with arrow $x_{j+1}:j-1\to j+1$ and arrow $y_{j+1}: j+1\to j-1$. 

As representations of $Q_{C(n-1)}$, $\dec_j(M)$ and $\dec_j^c(M)$ are as follows: 

$$
\xymatrix@=0.4em{
 V_{n-1} \ar@<3pt>[rr]^{x_1} && V_{1} \ar@<3pt>[rr]^{x_2}\ar@<3pt>[ll]^{y_1} && 
 V_{2}  \ar@<3pt>[rr]^{x_3}\ar@<3pt>[ll]^{y_2} && 
 \cdots \ar@<3pt>[rr]^{x_{j-1}}\ar@<3pt>[ll]^{y_3}   && V_{j-1}\ar@<3pt>[rr]^{x_{j+1}}\ar@<3pt>[ll]^{y_{j-1}}
  && V_{j+1}\ar@<3pt>[rr]^{x_{j+2}}\ar@<3pt>[ll]^{y_{j+1}}  && 
 \cdots\ar@<3pt>[rr]\ar@<3pt>[ll]^{y_{j+2}} &&   
 V_{n-1}\ar@<3pt>[rr]^{x_{n}}\ar@<3pt>[ll] &&  V_{n}\ar@<3pt>[ll]^{y_{n}} 
}
$$

\end{definition}

\begin{lemma}\label{lm:decrease}
Let $M\in {\rm CM}(B_{k,n})$. Then any $1$-decrease of $M$ is a module for $B_{k-1,n-1}$ which is free over the center of 
$B_{k-1,n-1}$ and any $1$-co-decrease 
of $M$ is a module for $B_{k,n-1}$ which is free over the center of $B_{k,n-1}$. 
If $M$ is indecomposable, then its $1$-decreases and $1$-co-decreases are 
indecomposable. 
\end{lemma}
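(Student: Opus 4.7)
The plan is to reduce the three assertions---module structure, freeness over the centre, and preservation of indecomposability---to the corresponding statements for $1$-increases, already established in Lemmas~\ref{A2},~\ref{lm:freeness}, and~\ref{lm:hom-iso}. I treat only the $1$-decrease; the $1$-co-decrease case is entirely analogous, with the roles of $x_j$ and $y_j$ interchanged.

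First I would invoke Remark~\ref{rem:diag-form} to perform a base change on $M$ making $x_j=\Id_s$ and $y_j=t\Id_s$; this is legitimate because $j\in I_1\cap\cdots\cap I_s$. The key observation is then that $1$-decrease is a left inverse of $1$-increase: re-inserting at the appropriate position in the smaller quiver a vertex carrying the identity/$t\Id$ pair of arrows reproduces $M$, so $\inc_{j-1}(\dec_j(M))\cong M$ canonically.

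Freeness over the centre of $B_{k-1,n-1}$ is then immediate, since each $V_i$ is unchanged and the central element $t=\sum_i x_i^{\text{new}} y_i^{\text{new}}$ still acts as multiplication by the formal parameter on each $V_i$: the identities $x_i^{\text{new}} y_i^{\text{new}}=t\Id$ are inherited directly from $M$ away from $j$, while at the vertex adjacent to the removed one we have $x_{j+1}^{\text{new}} y_{j+1}^{\text{new}}=x_{j+1}^{\text{old}} y_{j+1}^{\text{old}}=t\Id$, because $x_j=\Id$ canonically identifies $V_{j-1}$ with $V_j$. To see that $\dec_j(M)$ satisfies the $B_{k-1,n-1}$-relations, the commutations $xy=yx$ at every vertex reduce to these same identities, while on a module with $xy=yx=t\Id$ whose spaces are free over the centre, the relation $x^{k-1}=y^{n-k}$ is equivalent to the single global identity $x^{n-1}=t^{n-k}\Id$ (the full loop around the new cycle). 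This follows at once from the analogous identity $x^n=t^{n-k}\Id$ in $M$: a full loop in the new quiver is a full loop in the old quiver with the identity step at $j$ deleted.

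For indecomposability, Lemma~\ref{lm:hom-iso} applied to the pair $(\dec_j(M),\dec_j(M))$ gives a ring isomorphism
\[
\End_{B_{k-1,n-1}}(\dec_j(M))\;\cong\;\End_{B_{k,n}}(\inc_{j-1}\dec_j(M))\;=\;\End_{B_{k,n}}(M).
\]
Since $M$ is indecomposable the right-hand side is local, hence so is the left, whence $\dec_j(M)$ is indecomposable. The main (if modest) obstacle in the entire argument is the careful matching of paths across the collapsed edge---in particular, the verification that $\inc_{j-1}\circ\dec_j$ reproduces $M$ on the nose and that the relation $x^{k-1}=y^{n-k}$ for $\dec_j(M)$ corresponds to the known relation $x^n=t^{n-k}\Id$ for $M$---but this bookkeeping is transparent precisely because $x_j$ has been arranged to be the identity, which is a unit.
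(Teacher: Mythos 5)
Your proposal is correct and follows essentially the same route as the paper: the relations and freeness are verified exactly as in the increase case (via the global loop identity $x^{n-1}=t^{n-k}\Id$), and indecomposability comes from an endomorphism-ring isomorphism. The one small refinement is that instead of proving an analogue of Lemma~\ref{lm:hom-iso} for the decreasing maps (as the paper does), you exploit the identity $\inc_{j-1}\circ\dec_j\cong\mathrm{id}$ to apply Lemma~\ref{lm:hom-iso} as already stated, which is a clean way to avoid redoing that verification.
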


\begin{proof}
We prove the statement about 1-decreases, the claim on 1-co-decreases follows similarly. 
Let $M=L_{I_1}\mid\dots \mid L_{I_s}$. 
Without loss of generality we assume that $n\in I_1\cap \dots\cap I_s$. 

(a) By using analogous arguments as in the proof of Lemma~\ref{A2}, one can easily prove that the relations of $B_{k-1,n-1}$ hold. So $\dec_n(M)$ is indeed a $B_{k-1,n-1}$-module.  

(b) The modules $\dec_j(M)$ are free over the centre of $B_{k-1,n-1}$ by construction. 

(c) Since the endomorphism rings of a module and its 1-decrease are isomorphic, by an 
analogue of Lemma~\ref{lm:hom-iso} for the decreasing maps, 
the claim about indecomposability follows. 
\end{proof}

\begin{remark}
Using $\inc$ and $\dec$ we can view every module in ${\rm CM}(B_{k,n})$ as a module in ${\rm CM}(B_{k+1,n+1})$ or ${\rm CM}(B_{k,n+1})$ as follows: 
Set 
$$ {\rm CM}(B_{k+1,n+1})^j =\{M\in {\rm CM}(B_{k+1,n+1})\mid \mbox{ $j$ is in every filtration factor of $M$}\}$$ and 
$${\rm CM}(B_{k,n+1})^{\widehat{j}}=\{M\in {\rm CM}(B_{k,n+1})\mid \mbox{ none of the filtration 
factors of $M$ contains $j$}\}.$$ Then we have diagrams 
\[
\xymatrix{
 {\rm CM}(B_{k,n})\ar@<3pt>[rr]^{\inc_j}  && {\rm CM}(B_{k+1,n+1})^{j+1} \ar@<3pt>[ll]^{\dec_{j+1}} \\
 {\rm CM}(B_{k,n})\ar@<3pt>[rr]^{\inc_j^c} && {\rm CM}(B_{k,n+1})^{\widehat{j+1}}\ar@<3pt>[ll]^{\dec_{j+1}^c}&
}
\]
\end{remark}

\begin{lemma}\label{lm:increase-rigid}
Let $M\in {\rm CM}(B_{k,n})$ be indecomposable rigid. Then $\inc_j(M)$ is rigid in ${\rm CM}(B_{k+1,n+1})$ 
for any $j\in [n]$ and $\inc_j^c(M)$ is rigid in ${\rm CM}(B_{k,n+1})$ for any $j\in [n]$. 
\end{lemma}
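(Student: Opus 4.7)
The plan is to realize $\inc_j$ as an equivalence onto an extension-closed subcategory of ${\rm CM}(B_{k+1,n+1})$ and then transport self-extensions across this equivalence.

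First, I would consider the full subcategory $\mathcal{C}^{j+1} := {\rm CM}(B_{k+1,n+1})^{j+1}$ introduced in the remark preceding Lemma \ref{lm:increase-rigid}, and establish the intrinsic characterization that $N \in \mathcal{C}^{j+1}$ if and only if the structure map $x_{j+1}^N \colon V_j^N \to V_{j+1}^N$ is an isomorphism of $\mathbb{C}[[t]]$-modules. This is immediate from Definition \ref{def:rank1} applied filtration-factor by filtration-factor, together with Remark \ref{rem:diag-form}. Given a short exact sequence $0 \to A \to E \to B \to 0$ in ${\rm CM}(B_{k+1,n+1})$ with $A, B \in \mathcal{C}^{j+1}$, the snake lemma applied to the $x_{j+1}$-square between vertices $j$ and $j+1$ forces $x_{j+1}^E$ to be an isomorphism, so $E \in \mathcal{C}^{j+1}$; hence $\mathcal{C}^{j+1}$ is extension-closed.

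Next I would verify that $\inc_j$ and $\dec_{j+1}$ define mutually inverse equivalences between ${\rm CM}(B_{k,n})$ and $\mathcal{C}^{j+1}$. Full faithfulness on Hom-sets is Lemma \ref{lm:hom-iso}. For essential surjectivity, given $N \in \mathcal{C}^{j+1}$, Remark \ref{rem:diag-form} provides a base change normalizing $x_{j+1}^N = \Id$ and $y_{j+1}^N = t\Id$, after which $\inc_j \circ \dec_{j+1}(N) = N$ and $\dec_{j+1}\circ \inc_j(M) = M$ literally. Both functors preserve short exact sequences: SES in ${\rm CM}$ are tested vertexwise, and inserting or deleting a vertex whose adjacent arrows are scalar clearly preserves vertexwise exactness.

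Combining these ingredients, any self-extension $0 \to \inc_j M \to F \to \inc_j M \to 0$ in ${\rm CM}(B_{k+1,n+1})$ has $F \in \mathcal{C}^{j+1}$ by extension-closure. Applying $\dec_{j+1}$ gives a self-extension $0 \to M \to \dec_{j+1}(F) \to M \to 0$ in ${\rm CM}(B_{k,n})$, which splits by rigidity of $M$; applying $\inc_j$ to the splitting splits the original, so $\inc_j M$ is rigid. The proof for $\inc_j^c$ is entirely dual, with $\mathcal{C}^{j+1}$ replaced by ${\rm CM}(B_{k,n+1})^{\widehat{j+1}}$, characterized intrinsically by $y_{j+1}^N$ being an isomorphism, and $\dec_{j+1}$ replaced by $\dec_{j+1}^c$.

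The main obstacle I anticipate is the bookkeeping around the base change in Remark \ref{rem:diag-form}: to make $\dec_{j+1}$ into an honest functor inverse to $\inc_j$ (not merely inverse up to twisting the arrows adjacent to vertex $j+1$ by powers of $t$) one must choose the normalization compatibly with morphisms and check it is well-defined up to natural isomorphism. Once this bookkeeping is pinned down, the rest of the argument is essentially formal.
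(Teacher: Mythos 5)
Your proposal is correct and follows essentially the same route as the paper: given a self-extension of $\inc_j(M)$, one observes that the middle term again lies in the subcategory ${\rm CM}(B_{k+1,n+1})^{j+1}$, normalizes the maps at the new vertex via Remark~\ref{rem:diag-form}, and applies $\dec_{j+1}$ to produce a self-extension of $M$, which splits by rigidity. The only cosmetic difference is that you detect membership of the middle term in the subcategory by a five-lemma argument on the $x_{j+1}$-square, whereas the paper argues via the content of the filtration factors; both are valid and the remaining steps coincide.
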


\begin{proof}
We show this for $\inc_j(M)$, the claim about co-increasing follows similarly. Let $M$ be of rank $s$, with 
filtration $L_{I_1}\mid\dots\mid L_{I_s}$. 
Without loss of generality, we consider $j=n$ and let 
$\inc_n(M)$ be the following representation with $V_i=\mathbb{C}[[t]]^s$ for $i=1,\dots, n+1$. 
$$
\xymatrix@=0.7em{
 V_{n+1} \ar@<3pt>[rr]^{x_1} && V_1\ar@<3pt>[rr]^{x_2}\ar@<3pt>[ll]^{y_1} && V_2\ar@<3pt>[rr]^{x_3}\ar@<3pt>[ll]^{y_2} && 
 V_3\ar@<3pt>[rr]\ar@<3pt>[ll]^{y_3}  && \cdots \ar@<3pt>[ll] \ar@<3pt>[rr] && 
 V_{n-1}\ar@<3pt>[rr]^{x_n}\ar@<3pt>[ll] && V_n\ar@<3pt>[rr]^{\Id}\ar@<3pt>[ll]^{y_n} && V_{n+1}\ar@<3pt>[ll]^{t\Id} 
}
$$
Note that $\inc_n(M)$ has filtration 
$L_{I_1(j)}\mid\dots \mid L_{I_s(j)}$ (Lemma~\ref{lm:inc-indec}) where $j\in I_1(j)\cap\dots\cap I_s(j)$.

Assume for contradiction that $\inc_n(M)$ is not rigid, i.e., that there exists a module $N'$ in ${\rm CM}(B_{k+1,n+1})$ and 
a non-trivial short exact sequence 
$$0 \to \inc_n(M)\stackrel{F}{\to} N'\stackrel{G}{\to} \inc_n(M) \to 0.$$ 
The module $N'$, as a representation of the circular 
quiver $Q_{C(n+1)}$, is of the form
$$N'=(U_i, i\in [n+1]; X_i,Y_i, i\in [n+1]),$$ 
with $\mathbb{C}[[t]]^{2s}$ for $i=1,\dots, n+1$. 
The module $\inc_n(M)$ has a label $n+1$ in every rank 1 filtration factor and therefore, 
in any filtration of $N'$, the 
element $n+1$ appears in every $(k+1)$-subset of the rank 1 filtration factors as the content of the middle term is the union of the contents of the end terms of the short exact sequence. 

Furthermore, we can assume $X_{n+1}=\Id_{2s}$ and $Y_{n+1}=t\Id_{2s}$ (by Remark~\ref{rem:diag-form}). 
In particular, we  
can apply the decrease map at $n+1$ to $N'$ and to $\inc_n(M)$ to construct a 
self-extension of $M$ as we will do now.  

Note that in the homomorphism $F=(F_i)_i$, each $F_i$ is $2s\times s$ matrix, $i=1,\dots, n-1$. 
And each of the $G_i$ in $G=(G_i)_i$ is a $s\times 2s$ matrix. 
Since $X_{n+1}=\Id_{2s}$ and $Y_{n+1}=t\Id_{2s}$, we get 
$F_{n+1}=F_n$ and $G_{n+1}=G_n$.

Let $N=\dec_{n+1}(N')$ and recall that by construction, 
$M=\dec_{n+1}\inc_n(M)$. 
By abuse of  notation we write $F$ for $(F_i)_{1\le i\le n}$ and 
$G$ for $(G_i)_{1\le i\le n}$.
Then 
$$
0 \to M\stackrel{F}{\to} N\stackrel{G}{\to} M \to 0
$$
is a short exact sequence in ${\rm CM}(B_{k,n})$. If 
$N$ was the direct sum $M\oplus M$, then we would have 
$N'=\inc_n(M)\oplus \inc_n(M)$, which is not the case. 
\end{proof}

The following lemma can be proved with the same arguments 
as the previous one. 

\begin{lemma}
Let $M\in {\rm CM}(B_{k,n})$ be indecomposable rigid. 
Then any $1$-decrease of $M$ is rigid in ${\rm CM}(B_{k-1,n-1})$ and any $1$-co-decrease is rigid in ${\rm CM}(B_{k,n-1})$. 
\end{lemma}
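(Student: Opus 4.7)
The plan is to mirror the proof of Lemma~\ref{lm:increase-rigid}, exploiting that $\inc$ and $\dec$ are mutually inverse on the appropriate subcategories of ${\rm CM}(B_{k,n})$. Let $M=L_{I_1}|\cdots|L_{I_s}\in{\rm CM}(B_{k,n})$ be indecomposable and rigid, and assume $\dec_j(M)$ is defined, so that $j\in\bigcap_m I_m$; the case of $\dec_j^c$ is entirely analogous, using $\inc^c$ at the corresponding index and working with $j\in\bigcap_m I_m^c$.

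Suppose for contradiction that $\dec_j(M)$ admits a non-split self-extension
\[
0\to \dec_j(M)\stackrel{F}{\to} N\stackrel{G}{\to}\dec_j(M)\to 0
\]
in ${\rm CM}(B_{k-1,n-1})$. The content of $N$ is the union of the contents of the end terms, so every $(k-1)$-subset appearing in a filtration of $N$ is a re-indexed version of a $k$-subset from the filtration of $M$ with $j$ removed; after a base change in the spirit of Remark~\ref{rem:diag-form} we may assume $N$ is presented in a form compatible with re-inserting the removed vertex. Now apply $\inc$ (at the appropriate index) component-wise to the modules and to $F,G$. Since $\inc$ inserts a new vertex with identity and $t\cdot\Id$ morphisms, it is exact on representations, and any homomorphism extends uniquely by repeating its value at the adjacent vertex, precisely the mechanism used in the proof of Lemma~\ref{lm:hom-iso}. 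Because $j\in\bigcap_m I_m$, this $\inc$ recovers $M$ from $\dec_j(M)$, and we obtain a short exact sequence
\[
0\to M\to \inc(N)\to M\to 0
\]
in ${\rm CM}(B_{k,n})$.

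If this lifted sequence were split, so that $\inc(N)\cong M\oplus M$, then applying $\dec_j$ -- which is additive and inverse to $\inc$ on modules whose filtration factors all contain the inserted vertex, as is visibly the case for $\inc(N)$ -- would yield $N\cong \dec_j(M)\oplus \dec_j(M)$, contradicting the choice of a non-split extension. Hence the lifted sequence is non-split, contradicting rigidity of $M$. The main obstacle is essentially bookkeeping: tracking the correct index shift between $\inc$ and $\dec$ and confirming that $\inc$ both preserves short-exactness and commutes with direct sums. Both amount to the observation that inserting a vertex carrying identity and $t\cdot\Id$ morphisms contributes trivially to kernels, cokernels and direct sum decompositions, and is respected by every $B$-module homomorphism -- exactly as in Lemma~\ref{lm:increase-rigid}.
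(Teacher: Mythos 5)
Your proposal is correct and follows essentially the same route as the paper: the paper omits the proof, stating only that it goes ``with the same arguments'' as Lemma~\ref{lm:increase-rigid}, and your argument is precisely that mirror image — lift a putative non-split self-extension of the decrease through $\inc$ (using the Hom-isomorphism of Lemma~\ref{lm:hom-iso} and the exactness of vertex insertion) to a non-split self-extension of $M$. The level of rigor in the final splitting step (middle term not isomorphic to a direct sum of two copies) matches what the paper itself uses in the proof of Lemma~\ref{lm:increase-rigid}.
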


Let $M$ be in ${\rm CM}(B_{k,n})$. If $M'\in {\rm CM}(B_{k',n'})$ is obtained 
from $M$ by using all possible (co-) decreases, we call $M'$ 
the full reduction of $M$. 
Then we have the following.
\begin{corollary}\label{cor:reducing}
Let $M$ be a rank $2$ module in ${\rm CM}(B_{k,n})$ with filtration 
$L_I\mid L_J$. Then $M$ is indecomposable rigid if and only 
if the full reduction of $M$ is indecomposable rigid. 
In particular, if the rims of an indecomposable module form exactly three boxes, it is rigid. 
\end{corollary}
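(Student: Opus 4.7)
The plan is to prove the equivalence by iteratively applying the $1$-decrease and $1$-co-decrease operations of Definition~\ref{def:1-decrease} to pass to the full reduction $M'$, and then to deduce the ``in particular'' statement from the iff part together with a syzygy argument in the fully reduced case.

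For the iff part I would induct on $N(M):=|I\cap J|+|I^c\cap J^c|$. When $N(M)=0$ there is nothing to show. Otherwise pick $j\in I\cap J$ (the case $j\in I^c\cap J^c$ is symmetric, using $\dec_j^c$). By Remark~\ref{rem:diag-form}, after a base change we may assume $x_j=\Id_s$ and $y_j=t\,\Id_s$, so $\dec_j(M)\in {\rm CM}(B_{k-1,n-1})$ is defined. Lemma~\ref{lm:decrease} and the last lemma of the appendix show that $\dec_j$ preserves indecomposability and rigidity. Conversely, under the normalization $x_j=\Id_s$, $y_j=t\,\Id_s$, the $1$-decrease and the corresponding $1$-increase are mutually inverse, so Lemma~\ref{lm:inc-indec} and Lemma~\ref{lm:increase-rigid} give that $\dec_j(M)$ indecomposable rigid implies $M$ indecomposable rigid. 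Since $N(\dec_j(M))=N(M)-1$, the induction closes.

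For the ``in particular'' statement, let $M$ be indecomposable with rims forming exactly three boxes. By the iff part it suffices to show that the fully reduced $M'$ is rigid. Each $\dec_j$ or $\dec_j^c$ only erases a stretch of the two rims where they run in parallel, hence preserves the meeting points and thus the number of boxes; therefore $M'$ still has exactly three boxes, and $I'\cap J'=(I')^c\cap (J')^c=\emptyset$ forces $n'=2k'$. Each box contributes exactly one indecomposable summand to the minimal projective cover of $M'$, so this cover is a direct sum of three rank $1$ projectives and has total rank $3$. Since $\mathrm{rk}\,M'=2$, the syzygy $\Omega(M')$ has rank $1$ and is rigid by \cite[Proposition~5.6]{JKS}. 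As $\underline{{\rm CM}}(B_{k',n'})$ is $2$-Calabi--Yau we have $\Omega\cong \tau^{-1}$, so Remark~\ref{rem:rigid-tau} gives that $M'$ itself is rigid.

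The main obstacle is identifying the indecomposable summands of the minimal projective cover of $M'$ with the three boxes: this requires a careful inspection of the lattice diagram of $M'$ to check that its generators sit precisely one above each box and that no additional summands are needed. Once this bijection is in place, the rank count is immediate and the remaining rigidity transfer is a formal consequence of the $2$-Calabi--Yau structure together with the reduction machinery of the appendix.
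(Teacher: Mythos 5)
Your proposal is correct and follows essentially the same route as the paper: the equivalence is obtained by iterating the (co-)decrease operations and invoking the appendix lemmas on preservation of indecomposability and rigidity (with the increase lemmas giving the converse direction), and the three-box statement is deduced exactly as in the proof of Theorem~\ref{thm:rank2-bound}(1), namely that the minimal projective cover of the fully reduced module has three rank $1$ summands, so $\Omega(M')=\tau^{-1}(M')$ has rank $1$ and is therefore rigid. The one step you flag as needing care (matching boxes with summands of the projective cover) is likewise asserted without further detail in the paper, so your write-up is, if anything, more explicit than the original.
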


\begin{proof}
It only remains to prove the statement about the three boxes. 
Assume that $M$ is a fully reduced indecomposable module 
and that its rims form three boxes. Then $\tau(M)$ is a rank 1 module, hence $M$ is rigid by Remark \ref{rem:rigid-tau}. The statement follows. 
\end{proof}

\begin{remark} The statement of Corollary~\ref{cor:reducing} is expected by Le and Y\i ld\i r\i m, \cite{LY}. 
They use an approach via webs to deduce it. Let us mention that King also expects this statement to hold, as was told to us in private communication. 
\end{remark}

To complete the characterisation of rigid indecomposable rank 2 modules, it remains to show that if $M$ is an indecomposable rank 2 module where the rims form 
at least 3 quasi-boxes which are not exactly three boxes, then $M$ is not rigid. By  Corollary~\ref{cor:reducing}, it is 
enough to consider fully reduced modules. 

\begin{conjecture}\label{conj:4-peaks-not-rigid} 
Let $M\in {\rm CM}(B_{k,2k})$ be a fully reduced rank $2$ module where the rims form at least three quasi-boxes but 
not exactly three boxes. Then $M$ is not rigid. 
\end{conjecture}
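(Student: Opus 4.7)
Let $M=L_I\mid L_J$ be a fully reduced indecomposable rank $2$ module in $\mathrm{CM}(B_{k,2k})$, so that $I\cap J=I^c\cap J^c=\emptyset$ and hence $J=I^c$ in $[2k]$. Let $m$ denote the number of quasi-boxes formed by the rims of $L_I$ and $L_J$; by Lemma \ref{lm:rk2-condition}, $m\ge 3$, and our hypothesis is that either $m\ge 4$, or $m=3$ but at least one quasi-box is not rectangular. The aim is to exhibit a non-split self-extension, i.e.\ a non-zero class in $\mathrm{Ext}^1(M,M)$.

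The plan is to compute $\tau^{-1}M=\Omega M$ and then use the $2$-Calabi--Yau property. By the strategy of Remark \ref{rem:profile-tau}, the minimal projective cover of $M$ has exactly one indecomposable summand for each peak of the top rim of $M$. Since $M$ is fully reduced with $J=I^c$, the top rim of $M$ coincides with the rim of $L_I$, and the number of its peaks equals $m$. Hence the projective cover has $m$ summands and $\mathrm{rank}(\tau^{-1}M)=m-2$. Invoking Remark \ref{rem:rigid-tau} and $2$-Calabi--Yau duality in $\underline{\mathrm{CM}}(B_{k,2k})$, we have
\[
\mathrm{Ext}^1(M,M)\;\cong\;D\,\underline{\mathrm{Hom}}(\tau^{-1}M,M),
\]
so it suffices to produce a non-zero stable morphism $\tau^{-1}M\to M$.

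In the case $m\ge 4$, the rank of $\tau^{-1}M$ is at least $2$, and its rim profile can be read off from the lattice diagram of the minimal projective resolution as in Example \ref{ex:tau-of-rk-1}. Using that $L_I$ has $m\ge 4$ peaks, I would identify a rank-$2$ sub-filtration of $\tau^{-1}M$ whose rims match (after a single cyclic shift) the rims of $M$, and construct the corresponding morphism $\tau^{-1}M\to M$ by projecting onto that sub-filtration and then mapping into $M$. The key point is that, with $m\ge 4$, two non-adjacent peaks of the top rim produce two essentially independent routes through the projective cover; this forces the constructed morphism to survive modulo projectives.

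In the case $m=3$ with a non-rectangular quasi-box, I would apply the reduction techniques of the appendix. A non-rectangular quasi-box corresponds to a mismatch between the length of a run in $I$ and the length of the adjacent run in $J$; by repeatedly applying $1$-decrease and $1$-co-decrease (Definition \ref{def:1-decrease}) to align the shorter runs, one produces a fully reduced rank $2$ module $M'\in\mathrm{CM}(B_{k',2k'})$ with $k'<k$ whose rims still form at least three quasi-boxes, but with strictly fewer non-rectangular quasi-boxes (or with $m'\ge 4$). By Corollary \ref{cor:reducing}, $M$ is rigid if and only if $M'$ is, so induction on $k$ plus the $m\ge 4$ case finishes the argument; the base case $k=3$ is vacuous since in $B_{3,6}$ all fully reduced rank $2$ quasi-boxes are automatically $1\times 1$ rectangles.

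The main obstacle will be the explicit construction of the non-zero stable morphism $\tau^{-1}M\to M$ in the case $m\ge 4$. Producing the morphism as a tuple of matrices is routine once $\tau^{-1}M$ has been computed; the delicate point is proving that it is not stably zero, i.e.\ that it does not factor through any projective summand of the cover of $M$. In the infinite-type regime ($k\ge 5$), where the stable category has many morphisms between tubes, a purely combinatorial argument using the rim diagram of $\tau^{-1}M$ seems likely to be required, and this is where I expect the technical heart of the proof to lie.
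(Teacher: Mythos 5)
First, note that the statement you are proving is left open in the paper: it is stated as Conjecture~\ref{conj:4-peaks-not-rigid}, with only the case $(k,n)=(4,8)$ verified (there, via the tube structure and the fact that a module whose profile recurs under $\tau$ cannot be rigid). So there is no proof in the paper to compare against, and your argument has to stand on its own. It does not: both branches have genuine gaps.

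In the case $m\ge 4$, the entire content of the conjecture is the existence of a non-zero \emph{stable} morphism $\tau^{-1}M\to M$, and this is exactly the step you defer (``I would identify\ldots'', ``this is where I expect the technical heart of the proof to lie''). No candidate morphism is written down, and no argument is given that whatever morphism one writes down does not factor through a projective --- which, as you yourself note, is the delicate point precisely because in the wild cases there are many maps in the infinite radical. In addition, the preliminary claim that the number of peaks of the top rim equals the number $m$ of quasi-boxes (used to get $\rk\tau^{-1}M=m-2$) is asserted without proof; a single quasi-box between two meeting points of the rims can contain more than one peak of the upper rim, so this equality needs justification even in the fully reduced situation.

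The case $m=3$ with a non-rectangular quasi-box is worse: your induction never starts. By hypothesis $M$ is \emph{fully reduced}, i.e.\ $I\cap J=I^c\cap J^c=\emptyset$, and Definition~\ref{def:1-decrease} only permits a $1$-decrease at an element lying in \emph{every} filtration factor (resp.\ a $1$-co-decrease at an element lying in every complement). For a fully reduced module no such element exists, so no further decrease or co-decrease can be applied. Moreover these operations delete a common column from all rims simultaneously; they do not ``align the lengths of adjacent runs'' or change the shape of a quasi-box, so even if they were applicable they would not convert a non-rectangular quasi-box into a rectangular one. This is exactly the residual case that Corollary~\ref{cor:reducing} does not cover, and it is why the statement remains a conjecture in the paper.
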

Note that in case $(k,n)=(4,8)$, the claim is true as such a module is in the non-rigid range of a tube of rank 4. Also, one can check that the  profile of such a module appears repeatedly in the corresponding tube.


\clearpage

\end{document}